\newtheorem{theorem}{Theorem}[section]
\newtheorem{lemma}[theorem]{Lemma}
\newtheorem{proposition}[theorem]{Proposition}
\theoremstyle{definition}
\newtheorem{definition}[theorem]{Definition}
\newtheorem{example}[theorem]{Example}
\newtheorem{notation}[theorem]{Notation}
\newtheorem{construction}[theorem]{Construction}
\newtheorem{corollary}[theorem]{Corollary}
\theoremstyle{remark}
\newtheorem{remark}[theorem]{Remark}
\DeclareMathOperator{\colim}{colim}
\definecolor{DefColor}{rgb}{0.6,0.15,0.25}
\newcommand{\Fun}{\mathrm{Fun}}
\newcommand{\Mod}{\mathrm{Mod}}
\newcommand{\Map}{\mathrm{Map}}
\newcommand{\Hom}{\mathrm{Hom}}
\newcommand{\Pure}{\mathrm{Pure}}
\newcommand{\Rex}{\mathrm{Rex}}
\newcommand{\Cat}{\mathrm{Cat}}
\newcommand{\Ob}{\mathrm{Ob}}
\newcommand{\Mor}{\mathrm{Mor}}
\newcommand{\Ind}{\mathrm{Ind}}
\newcommand{\CAlg}{\mathrm{CAlg}}
\newcommand{\ev}{\mathrm{ev}}
\newcommand{\op}{\mathrm{op}}
\newcommand{\Ex}{\mathrm{Ex}}
\newcommand{\Ret}{\mathrm{Ret}}
\newcommand{\wRet}{\mathrm{wRet}}
\newcommand{\Id}{\mathrm{Id}}
\newcommand{\cC}{\mathcal{C}}
\newcommand{\cD}{\mathcal{D}}
\newcommand{\cE}{\mathcal{E}}
\newcommand{\cT}{\mathcal{T}}
\newcommand{\cP}{\mathcal{P}}
\newcommand{\sset}{\mathrm{Set}^{\Delta^{\mathrm{op}}}}
\newcommand{\seset}{\mathrm{Set}^{(\Delta^+)^{\mathrm{op}}}}
\begin{document}
\title{The $\infty$-Categorical Reflection Theorem and Applications}
\date{}
\author{Shaul Ragimov \and Tomer M. Schlank}
\maketitle 
\begin{abstract}
   We prove an $\infty$-categorical version of the reflection theorem of \cite{AdamekRosicky89}. Namely, that a full subcategory of a presentable $\infty$-category which is closed under limits and $\kappa$-filtered colimits is a presentable $\infty$-category. We then use this theorem in order to classify subcategories of a symmetric monoidal $\infty$-category which are equivalent to a category of modules over an idempotent algebra.
\end{abstract}
\tableofcontents
\hypersetup{linkcolor=blue}
\clearpage

\section{Introduction}

Presentable $\infty$-categories provide a natural setting for modern homotopy theory and higher algebra. These are large $\infty$-categories that admit all small colimits and are generated under $\kappa$-filtered colimits by a small subcategory of $\kappa$-compact objects. Familiar examples include the $\infty$-categories of spaces, spectra, chain complexes, and sheaves of spaces on a site. These categories play a role in higher category theory similar to that of locally presentable categories in ordinary category theory: they serve as ambient categories for doing mathematics, where constructions like colimits and adjunctions are available and well-behaved.

In fact, in the higher-categorical world, presentability plays an even more crucial role. In this setting, where direct, “by-hand” definitions and constructions are often substantially more intricate, many arguments rely on abstract categorical principles. These principles frequently depend on some form of presentability to ensure that constructions are well-defined and behave as expected. As a result, presentability becomes not just a convenience but a necessity for working effectively in higher category theory.

A fundamental feature of presentable $\infty$-categories is that they support powerful adjoint functor theorems~\cite[Corollary 5.5.2.9]{lurie2009higher}. In this context, an important class of subcategories are the \emph{reflective subcategories}, that is, full subcategories $\cD \subset \cC$ such that the inclusion admits a left adjoint. Examples include the sheafification functor for subcategories of presheaves and Bousfield localizations in the category of spectra.

This leads to a natural and important question:
\begin{quote}
\emph{Given a full subcategory $\cD$ of a presentable $\infty$-category $\cC$, under what conditions is $\cD$ also presentable, and when does the inclusion $\cD \hookrightarrow \cC$ admit a left adjoint?}
\end{quote}

In the $1$-categorical setting, this question was settled by Adámek and Rosický~\cite{AdamekRosicky89}, who proved that if $\cD$ is closed under limits and $\kappa$-filtered colimits (for some regular cardinal $\kappa$), then $\cD$ is presentable and reflective in $\cC$. This result, known as the \emph{reflection theorem}, has become a fundamental tool in the theory of locally presentable categories.

The main goal of this paper is to establish an $\infty$-categorical generalization of this theorem:

\begin{theorem}[\cref{theorem: reflaction}]\label{theorem: thm1}
Let $\cC$ be a presentable $\infty$-category and $\cD \subset \cC$ a full subcategory. Then the following are equivalent:
\begin{enumerate}
    \item $\cD$ is closed in $\cC$ under all limits and $\kappa$-filtered colimits for some regular cardinal $\kappa$.
    \item $\cD$ is presentable and the inclusion $\cD \hookrightarrow \cC$ admits a left adjoint.
\end{enumerate}
\end{theorem}

The implication $(2) \Rightarrow (1)$ follows formally from~\cite[5.5.2.9]{lurie2009higher}. However, the converse direction $(1) \Rightarrow (2)$ is more subtle and requires a genuinely higher-categorical approach. While many aspects of the $1$-categorical proof of~\cite{AdamekRosicky94} carry over, key steps require new ideas. The central technical challenge lies in \Cref{proposition: prop4}, where we prove that subcategories closed under limits and filtered colimits are also closed under a class of morphisms called \emph{pure morphisms} (see \cref{definition: def1}). We note that, unlike other presentability criteria such as~\cite[Lemma 5.5.4.14]{lurie2009higher}, which assume a ``small generation'' condition on the subcategory, our result derives the necessary smallness purely from closure properties.

The $1$-categorical version of \cref{theorem: thm1} has a notable history. A special case of this result appeared already in~\cite{makkai1987some}, where the main theorem corresponds precisely to \cref{theorem: thm1} in the setting of compactly generated ordinary categories, under the additional assumption that the subcategory $\cD$ is closed under filtered colimits. Later, Adámek and Rosický generalized and strengthened this result in~\cite{AdamekRosicky89}, establishing the reflection theorem in full generality for ordinary presentable categories. However, their proof does not straightforwardly extend to the $\infty$-categorical context.

There is also a close connection between reflection results and a large cardinal axiom known as the \emph{Vopěnka principle}. This principle, which  implies the existence of measurable cardinals, moreover ensures, by work of Rosický and Tholen~\cite{rosicky2003left} in the $1$-categorical setting and by~\cite{https://doi.org/10.48550/arxiv.2105.04251} in the $\infty$-categorical setting, that every full subcategory of a presentable category that is closed under limits is also closed under sufficiently filtered colimits\footnote{We have been informed by members of the community that the proof in~\cite{https://doi.org/10.48550/arxiv.2105.04251} may contain a gap. As we do not rely on this result, we have not verified the details of the argument. However, we believe the conclusion should nonetheless hold.}. In fact, these papers show that the preceding statement is equivalent to the Vopěnka principle.

Combining this with \cref{theorem: thm1} shows that, under the Vop\v{e}nka principle, every limit-closed full subcategory of a presentable category is reflective. In particular, this means that \cref{theorem: thm1} is, in a precise sense, optimal: any stronger statement would require additional large-cardinal assumptions.

\subsection{Applications of Our Theorem in the Literature}

Since the appearance of the initial preprint, several works have relied on \cref{theorem: thm1} to establish presentability and the existence of sheafification functors in contexts ranging from algebraic geometry to abstract homotopy theory. We now briefly survey some of these applications, which illustrate the utility and reach of the theorem in practice.

\paragraph{Segalification.} 
Algebraic objects in an $\infty$-category $\cC$ are often defined as functors into $\cC$ satisfying Segal-type conditions. More precisely, one considers an indexing category $I$ and defines a full subcategory $\cD \subset \Fun(I, \cC)$ consisting of functors that satisfy suitable conditions encoding the desired algebraic structure. Many familiar examples fit this pattern, including monoids, commutative monoids, category objects, and higher commutative monoids.

In such constructions, the indexing category $I$ typically has a distinguished object, often denoted $\mathrm{pt}$, such that evaluation at $\mathrm{pt}$ corresponds to ``forgetting the algebraic structure.'' The associated free object construction then factors as a composite of left adjoints:
\[
\cC \xrightarrow{\mathrm{pt}_!} \Fun(I, \cC) \xrightarrow{L} \cD,
\]
where the first functor is the left adjoint to evaluation at $\mathrm{pt}$, and the second is the left adjoint to the inclusion. In other words, the subcategory $\cD$ must be reflective.

This framework appears, for instance, in the theory of higher semiadditivity (also known as ambidexterity), where it underlies the definition of $m$-commutative monoids. The existence of the corresponding localization is established using \cref{theorem: thm1}; see~\cite[Proposition 4.6]{ben2024higher}.\footnote{A similar result already appears in~\cite[Lemma 5.17]{harpaz2020ambidexterity}, where the author implicitly assumes the conclusion of our theorem.}

\paragraph{Complete modules.} In many situations, one defines a highly structured algebraic object $R$, and the definition automatically gives rise to a notion of $R$-modules. However, the resulting category is often too broad: not all such modules are ``complete'' in the sense appropriate to the context. One is therefore led to define, by hand, a full subcategory of \emph{complete} $R$-modules.

In such settings, many constructions are automatically functorial with respect to the uncompleted category of $R$-modules. The existence of a suitable completion operation then ensures functoriality with respect to the category of complete modules. This completion is realized as a left adjoint to the inclusion of the subcategory of complete modules into the ambient category of $R$-modules.

A prominent example arises in the theory of analytic stacks, where one needs to isolate a full subcategory of completed modules. The existence of the corresponding completion functor in this context is established using \cref{theorem: thm1}; see~\cite[Minute 0:50–8:40]{youtube}\footnote{We cite the YouTube lecture as written notes have not yet been released.}.

\paragraph{Further examples (briefly).} In~\cite{gleason2023meromorphic}, meromorphic vector bundles on the Fargues--Fontaine curve are studied via sheaves valued in rigid categories. The existence of a sheafification functor in this setting is established using \cref{theorem: thm1}; see~\cite[Theorem A.10]{gleason2023meromorphic}.

The main theorem of~\cite{haine2024exodromy}, which asserts that their version of the exit-path category is well-behaved, relies on the fact that a full subcategory of an atomically generated $\infty$-category that is closed under limits and colimits is also atomically generated. To establish this the author use \cref{theorem: thm1}; see~\cite[Proposition 1.1.13]{haine2024exodromy}.

In~\cite{porta2025stoksderived}, Stokes-filtered local systems are modeled by functors satisfying certain conditions. The geometric input ensures that, in favorable cases, these functors form a subcategory closed under limits and colimits inside the full functor category; see~\cite[Theorem 1.7]{porta2025stoksderived}. The authors then apply \cref{theorem: thm1} to conclude that the category of Stokes-filtered local systems is presentable; see~\cite[Theorem 1.8]{porta2025stoksderived}.

\cref{theorem: thm1} has been used in additional contexts not detailed here. For example, see~\cite[Lemma 4.20]{burklund2022chromatic}, \cite[Definition A.1]{hamann2023torsion}, and~\cite[Proposition 4.18]{barthel2024chromatic}, among others.

\subsection{Applications: Recognizing Properties Classified by Idempotent Algebras}

A natural application of the reflection theorem is to the problem of classifying reflective subcategories in symmetric monoidal $\infty$-categories in terms of algebraic structure. Specifically, one is often interested in the following question:
\begin{quote}
\emph{When is a full subcategory of a symmetric monoidal $\infty$-category equivalent to the category of modules over a commutative algebra object $A$?}\footnote{Such that under the equivalence, the forgetful functor is identified with the inclusion.}
\end{quote}

This question arises frequently in categorical and algebraic contexts, where one seeks to determine whether a given property can be captured by a universal algebra object. When such an algebra exists, it is called an \emph{idempotent algebra}. A classical example is Lurie's construction of the symmetric monoidal structure on spectra, by recognizing the $\infty$-category of spectra as the category of modules over the idempotent algebra classifying stability~\cite[Proposition 4.8.2.18]{Lurie11}.

One can also define idempotent algebras intrinsically: Let $\cC$ be a symmetric monoidal $\infty$-category and let $A \in \cC$ be an object. A map $u \colon \mathbbm{1}_{\cC} \to A$ exhibits $A$ as an \emph{idempotent algebra} if the map
\[
A \simeq A \otimes \mathbbm{1} \xrightarrow{1 \otimes u} A \otimes A
\]
is an equivalence. By~\cite[4.8.2.9]{Lurie11}, in this case $A$ admits a unique commutative algebra structure for which $u$ is the unit. Like we claimed above a fundamental feature of idempotent algebras is that the forgetful functor $\Mod_A(\cC) \to \cC$ is fully faithful. In particular, being an $A$-module is a \emph{property} of objects in~$\cC$.

Given a property of objects in $\cC$, it is natural to ask whether it is classified by an idempotent algebra—that is, whether the full subcategory of objects satisfying the property is equivalent to $\Mod_A(\cC)$ for some idempotent $A$. In the final section of this paper, we apply Theorem~1.1 to answer this question in two important cases. The first is when $\cC \in \CAlg(\Pr^L)$.

\begin{theorem}\cref{theorem: smashing in C}
Let $\cC \in \CAlg(\Pr^L)$ and let $\iota \colon \cD \hookrightarrow \cC$ be the inclusion of a full subcategory. The following are equivalent:
\begin{enumerate}
    \item There exists an object $A \in \CAlg^{\mathrm{idem}}(\cC)$ such that $\cD \simeq \Mod_A(\cC)$, and under this equivalence, $\iota$ identifies with the forgetful functor $\Mod_A(\cC) \to \cC$;
    \item The inclusion $\iota$ admits a left adjoint $L$, and $L$ is a smashing localization;
    \item The subcategory $\cD$ is closed under limits and colimits in $\cC$, and for all $d \in \cD$ and $c \in \cC$, both $d \otimes c$ and $\Hom^{\cC}(c, d)$ lie in $\cD$.
\end{enumerate}
\end{theorem}

The second case is where $\cC = \Pr^L$. Idempotent algebras in this category have been studied under the name \emph{modes} in~\cite{carmeli2021ambidexterity}. We obtain a similar characterization to Theorem~1.2, though since $\Pr^L$ is not itself presentable, additional set-theoretic assumptions are required.

\begin{theorem}\cref{theorem: smashing in PrL}
Let $\cP \hookrightarrow \Pr^L$ be a full subcategory. The following are equivalent:
\begin{enumerate}
    \item $\cP$ is equivalent to the category of modules over an idempotent algebra in $\Pr^L$;
    \item 
    \begin{enumerate}
        \item $\cP$ is closed under colimits in $\Pr^L$;
        \item If $\cD \in \cP$, then $\cD^{\Delta^1} := \Fun(\Delta^1, \cD)$ also lies in $\cP$;
        \item There exists a regular cardinal $\kappa$ such that for all $\kappa \leq \pi$, and for all diagrams $p \colon I \to \cP_\pi$, we have: for all $\kappa \leq \mu \leq \pi$, the category $\Ind_\mu(\lim p(i)^\pi)$ lies in $\cP$.
    \end{enumerate}
\end{enumerate}
\end{theorem}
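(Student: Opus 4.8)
The plan is to prove the two implications separately, the implication $(2)\Rightarrow(1)$ being the substantial one.

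For $(1)\Rightarrow(2)$, write $\cP=\Mod_A(\Pr^L)$ for an idempotent algebra $A$ (a mode in the sense of \cite{carmeli2021ambidexterity}), so that $\cP$ is the essential image of the localization $A\otimes(-)\colon\Pr^L\to\Pr^L$. Since $\Pr^L$ is presentably symmetric monoidal in the large sense, $(-)\otimes A$ preserves colimits, so as the essential image of a colimit-preserving localization endofunctor $\cP$ is closed under colimits, giving (a). For (b), one has $\Fun(\Delta^1,\cD)\simeq\Fun(\Delta^1,\mathcal{S})\otimes\cD$ — cotensoring by $\Delta^1$ is tensoring with the fixed presentable $\infty$-category $\Fun(\Delta^1,\mathcal{S})$ — so $A\otimes\Fun(\Delta^1,\cD)\simeq\Fun(\Delta^1,\mathcal{S})\otimes(A\otimes\cD)\simeq\Fun(\Delta^1,\cD)$ and $\cD^{\Delta^1}$ is again an $A$-module. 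For (c), choose $\kappa$ large enough that $A$ is $\kappa$-compactly generated and that, for every regular $\pi\ge\kappa$, the algebra $A^\pi$ is idempotent in the $\infty$-category $\Rex_\pi$ of small $\infty$-categories with $\pi$-small colimits and functors preserving them; the equivalence $\Pr^L_\pi\simeq\Rex_\pi$ then identifies $\cP_\pi$ with $\Mod_{A^\pi}(\Rex_\pi)$. Given $p\colon I\to\cP_\pi$, the category $\lim_i p(i)^\pi$ — a limit computed underlying in $\Cat_\infty$ — is a limit of $A^\pi$-modules, hence (local objects being closed under limits) an $A^\pi$-module, hence, restricting scalars along the algebra map $A^\mu\to A^\pi$, an $A^\mu$-module in $\Rex_\mu$ for every $\kappa\le\mu\le\pi$; therefore $\Ind^\mu(\lim_i p(i)^\pi)$ is a module over $\Ind^\mu(A^\mu)\simeq A$ and lies in $\cP$.

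For $(2)\Rightarrow(1)$ I would proceed in four steps. (i) \emph{$\cP$ is a $\otimes$-ideal.} The key observation is that $\Pr^L$ is generated under small colimits by the two objects $\mathcal{S}$ and $\Fun(\Delta^1,\mathcal{S})$: the $\infty$-category $\Cat_\infty$ is generated under colimits by $\Delta^0$ and $\Delta^1$, the presheaf functor $\cE\mapsto\Fun(\cE^{\op},\mathcal{S})$ into $\Pr^L$ is colimit-preserving, and every presentable $\infty$-category is an accessible localization of a presheaf category, hence — via the standard presentation of localizations as inversions of arrows — a colimit of a presheaf category together with copies of $\Fun(\Delta^1,\mathcal{S})$ and $\mathcal{S}$. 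Consequently, for $\cD\in\cP$ and arbitrary $\cE\in\Pr^L$, the category $\cD\otimes\cE$ is a colimit of copies of $\cD\otimes\mathcal{S}\simeq\cD$ and of $\cD\otimes\Fun(\Delta^1,\mathcal{S})\simeq\cD^{\Delta^1}$, all lying in $\cP$ by (a) and (b), so $\cD\otimes\cE\in\cP$. (ii) \emph{$\cP$ is presentable.} One has $\Pr^L\simeq\colim_\kappa\Pr^L_\kappa$, a filtered colimit over regular cardinals with each stratum $\Pr^L_\kappa\simeq\Rex_\kappa$ presentable; setting $\cP_\kappa:=\cP\cap\Pr^L_\kappa$, the formulas expressing limits and $\kappa$-filtered colimits in $\Pr^L$ through compact objects turn (a) and (c) into the statement that $\cP_\kappa$ is closed under limits and $\kappa$-filtered colimits inside $\Pr^L_\kappa$, so the Reflection Theorem \cref{theorem: thm1} makes each $\cP_\kappa$ presentable; the quantification over the whole range $\kappa\le\mu\le\pi$ in (c) then controls the transition functors $\cP_\kappa\to\cP_\lambda$ well enough to conclude that $\cP=\colim_\kappa\cP_\kappa$ is itself presentable. (iii) \emph{The inclusion has a left adjoint.} Because $\cP$ is presentable by (ii) and the inclusion $\cP\hookrightarrow\Pr^L$ preserves limits (the cases $\mu=\pi$ of (c)) and is accessible, the adjoint functor theorem \cite[3.2.5]{NguyenRaptisScharadn18}, applied exactly as in the proof of \cref{theorem: thm1}, produces a left adjoint $L\colon\Pr^L\to\cP$. (iv) \emph{Conclusion.} The functor $L$ is a colimit-preserving localization (it is a left adjoint and $\cP$ is closed under colimits) onto a $\otimes$-ideal; by the theory of smashing localizations \cite[\S4.8.2]{Lurie11}, as in the proof of \cref{theorem: thm2}, such a localization has the form $A\otimes(-)$ for the idempotent algebra $A:=L(\mathcal{S})$, whence $\cP=\Mod_A(\Pr^L)$.

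The main obstacle is step (ii). The Reflection Theorem cannot be applied to $\Pr^L$ directly, since $\Pr^L$ is not presentable — it has only small colimits, while its accessibility is visible only relative to a larger universe — so one is forced down to the genuinely presentable strata $\Pr^L_\kappa\simeq\Rex_\kappa$, at the price that these sit inside $\Pr^L$ along non-full inclusions and that limits, filtered colimits and the completions $\Ind^\mu$ interact delicately with the compact-generation threshold. Matching condition (c), with its quantification over all $\kappa\le\mu\le\pi$, to precisely the closure hypotheses of \cref{theorem: thm1} for $\cP_\kappa\subseteq\Pr^L_\kappa$, and then reassembling the strata $\cP_\kappa$ into a single presentable $\cP$ — that is, controlling the accessibility of the colimit $\colim_\kappa\cP_\kappa$ — is the delicate part of the argument; once it is in place, the tensor-ideal property of step (i) and the adjoint functor theorem reduce everything to the formal theory of smashing localizations.
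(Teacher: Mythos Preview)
Your $(1)\Rightarrow(2)$ direction is essentially correct and close to the paper's argument.

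For $(2)\Rightarrow(1)$, step (i) (the $\otimes$-ideal property) matches the paper. However, steps (ii) and (iii) contain a genuine gap. You aim to show that $\cP$ itself is presentable and then apply the adjoint functor theorem to $\iota\colon\cP\hookrightarrow\Pr^L$. Neither of these works: the filtered colimit $\cP=\colim_\kappa\cP_\kappa$ is indexed over the large poset of regular cardinals, so even though each $\cP_\kappa$ is presentable by the Reflection Theorem, there is no reason for $\cP$ to be presentable (and typically it is not, just as $\Pr^L$ itself is not). And even granting presentability of $\cP$, the adjoint functor theorem you cite requires the codomain to be accessible, which $\Pr^L$ is not in the small universe. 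Your own caveat in the final paragraph identifies the problem but does not resolve it: ``controlling the accessibility of $\colim_\kappa\cP_\kappa$'' is not a matter of bookkeeping but is simply false in general.

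The paper avoids this entirely. It never claims $\cP$ is presentable. Instead, it applies the Reflection Theorem at each level to obtain left adjoints $L_\mu\colon\Pr^L_\mu\to\cP_\mu$, and then uses condition (c) (specifically the quantification over $\mu\le\pi$) to verify that the \emph{right} adjoints $\iota_\mu$ assemble into a natural transformation of functors $\mathrm{Crd}^{\op}_{\ge\kappa}\to\Pr^R$; passing to left adjoints and taking the colimit over cardinals produces a global left adjoint $L=\colim_\mu L_\mu$ directly, using the description of mapping spaces in a filtered colimit of categories. This is where the full strength of (c) is spent: it guarantees that the right adjoint of $\cP_\mu\to\cP_\pi$ is the restriction of $\Ind^\mu((-)^\pi)$, which is exactly what makes the $\iota_\mu$ natural.

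Your step (iv) has a related issue: you invoke \cite[\S4.8.2]{Lurie11} or \cref{theorem: thm2} to conclude that a colimit-preserving localization onto a $\otimes$-ideal is smashing, but those results are stated for $\cC\in\CAlg(\Pr^L)$, and $\Pr^L$ is not an object of $\Pr^L$. The paper instead proves by hand that $\Fun^L(L\mathcal{S},\cT)\to\Fun^L(\mathcal{S},\cT)$ is an equivalence for $\cT\in\cP$, checking it on objects (adjunction) and on arrows (using condition (b) that $\cT^{\Delta^1}\in\cP$); this then upgrades to all $\cC$ via the inner-$\Hom$ adjunction.
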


\subsection{Outline of the Proof of the Reflection Theorem}

Before beginning the outline, we note that by the adjoint functor theorem~\cite[3.2.5]{NguyenRaptisScharadn18}, to prove the reflection theorem it suffices to show that a full subcategory of a presentable $\infty$-category that is closed under limits and sufficiently filtered colimits is accessibly embedded. The proof is devoted to verifying this.

We now outline the proof.\\

\textbf{Special class of morphisms:} In section 2 we define a special class of morphisms that we call $\kappa$-pure morphisms (see \cref{definition: def1}), depending on a cardinal $\kappa$. We then show that every $\kappa$-pure morphism is a $\kappa$-filtered colimit of split morphisms with a fixed domain, where split morphisms are morphisms that have a left inverse (see \cref{proposition: prop1}). Therefore, we get that a $\kappa$-pure morphism $f:A \to B$ gives rise to a functor $F:I^{\lhd} \to \cC$ where $I$ is $\kappa$-filtered and all the maps $F(-\infty \to i)$ admit a retraction (here $-\infty$ is the cone point). We call such functors quasi-split cones on $I$.\\

\textbf{Cone with retracts:} Let $F:I^{\lhd} \to \cC$ be a quasi-split cone. By choosing a left inverse for each map $F(-\infty \to i)$, $F$ gives rise to a functor from the pushout (see \cref{construction: cons1})
\[
L_I:= I^{\lhd} \underset{\Ob(I) \times \Delta^1}\coprod (\Ob(I) \times \Ret) \to \cC
\]
where $\Ret$ is the universal category with a retraction (see \cref{definition: def4}). We call the $\infty$-category $L_I$ the cone with retracts on $I$. In section $3$ we show that if $I$ is equivalent to a $1$-category then $L_I$ is equivalent to a $1$-category. Furthermore, when $I$ is a poset we give an explicit description of $L_I$ (see \cref{corollary: cor2}). The idea is that gluing a copy of $\Ret$ along the morphism $-\infty \to i$ is equivalent to gluing a free arrow from $i$, i.e. taking the pushout
\[
\begin{tikzcd}
    {\Delta^0} & I \\
    {\Delta^1} & {C_I}
    \arrow["s"', from=1-1, to=2-1]
    \arrow[from=1-1, to=1-2]
    \arrow[from=2-1, to=2-2]
    \arrow[from=1-2, to=2-2]
\end{tikzcd}
\]
and then inverting the unique arrow from $-\infty$ to the ``new" object (i.e. the object in $C_I$ and not in $C$) in $C_I^{\lhd}$. Therefore, it suffices to show that an $\infty$-category obtained by gluing a free arrow to a $1$-category is equivalent to a $1$-category (see \cref{proposition: prop2}), and that the $\infty$-category obtained by inverting the arrow from $-\infty$ to the new object is equivalent to a $1$-category (see \cref{proposition: prop3}).\\

\textbf{Closedness under pure morphisms:}
Let $\cC$ be a presentable $\infty$-category and $\cD \subset \cC$ be a full subcategory. We say that $\cD$ is closed under $\kappa$-pure morphisms if for any $B \in \cD$ and a $\kappa$-pure morphism $f:A \to B$ the object $A$ is in $\cD$ as well. In section $4$ we show that a full subcategory closed under limits and $\kappa$-filtered colimits in a category of presheaves is closed under $\mu$-pure morphisms for $\mu$ large enough (see \cref{proposition: prop4}). This is the heart of the argument. The general approach is to show that if $f:A \to B$ is $\kappa$-pure then $A$ can be obtained from $B$ by limits and $\kappa$-filtered colimits. To build the relevant diagram, we use the explicit description of $L_I$ from \cref{corollary: cor2}. This is also the only place where the fact that $L_I$ is a 1-category is used, but it is used in an essential way, as it allows for a combinatorial description of the diagram. \\

\textbf{Factoring through pure morphism:}
Let $\cC$ be a category of presheaves. In section $5$ we show that for any large enough cardinal $\kappa$, there exists a cardinal $\gamma \gg \kappa$ such that for any morphism $f:A \to B$ in $\cC$ with $A$ $\gamma$-compact there exists a factorization:
\[
\begin{tikzcd}
    A & B \\
    {A'}
    \arrow[from=1-1, to=2-1]
    \arrow["f", from=1-1, to=1-2]
    \arrow["{f'}"', from=2-1, to=1-2]
\end{tikzcd}
\]
where $f'$ is $\kappa$-pure and $A'$ is $\gamma$-compact.\\

\textbf{Finishing the proof:}
Let $\cC$ be a presentable $\infty$-category and $\cD \subset \cC$ a full subcategory closed under limits and $\kappa$-filtered colimits. Since $\cD$ is closed under $\kappa$-filtered colimits it is closed under $\mu$-filtered colimits for all $\mu \geq \kappa$. Therefore, from the result of section $4$, by enlarging $\kappa$, we may assume that $\cD$ is closed under $\kappa$-pure morphisms. Fix $B \in \cD$. Let $\gamma \gg \kappa$ as above and denote $\Pure_\kappa(\cC^\gamma)_{/B}$ the subcategory of $\cC^\gamma_{/B}$ spanned by the pure morphisms to $B$. Using the results of section $5$, we get that the inclusion
\[
\Pure_\kappa(\cC^\gamma)_{/B} \hookrightarrow (\mathcal{C}^\gamma)_{/B}
\]
is cofinal. This means that every object of $\cD$ is a filtered colimit of objects in $\cC^\gamma \cap \cD$, thus $\cD$ is an accessible category. From our assumption on $\cD$ we get that the inclusion $i: \cD \to \cC$ is an accessible functor between accessible categories. The adjoint functor theorem \cite[3.2.5]{NguyenRaptisScharadn18} then says that $i$ admits an accessible left adjoint. Hence $\cD$ is presentable (see \cref{theorem: reflaction}).\\

One should note that the general structure of the proof is analogous to the structure of the proof of the $1$-categorical case as presented in \cite{AdamekRosicky94}. The main difference is the proof of \cref{proposition: prop4} which is much more straight-forward in the $1$-categorical case.

\subsection{Conventions}

We shall generally follow \cite{lurie2009higher} in notation and terminology regarding $\infty$-categories. Having said that, a category with no more adjectives will always mean for us an $\infty$-category, unlike \cite{lurie2009higher}. However, since the $1$-categories in this paper usually play the role of "indexing" categories, we will distinguish between $1$-categories and $\infty$-categories notationally, as indicated below.

\begin{notation}$\empty$
\begin{itemize}
\item $\infty$-categories will be denoted by calligraphic font e.g. $\cC, \ \cD, \ \cE$.
\item $1$-categories will be denoted by capital letters e.g. $C, \ D, \ E$.
\item For a $1$-category $C$, we denote the discrete category on the objects of $C$ by $C_\delta$ \footnote{One should note that the operation $C \mapsto C_\delta$ is not invariant under equivalence of categories. However, we will only use it when $C$ is a poset and then it is an invariant notion.}. We will use the same notion for the set of vertices of a simplicial set $K$.
\item For a category $\cC$  we let $\cC^\kappa$ be the full subcategory on $\kappa$-compact objects.
\item For a symmetric monoidal category $\cC$  we let $\CAlg(\cC)$ be the the category of commutative algebra in $\cC$.
\item For a symmetric monoidal category $\cC$ and $R\in \CAlg(\cC)$ we let $\Mod_R(\cC)$ be the category of $R$ modules in $\cC$.
\item $\mathrm{Set}$ will denote the category of small sets.
\item $\Cat_1$ will denote the category of small  $1$-categories.
\item $\Cat_\infty$ will denote the category of small $\infty$-categories.
\item $\mathcal{S}$ will denote the category of small spaces.
\item $\Delta$ will denote the category of finite non-empty sets and monotone increasing maps.
\item $\Delta^+$ will denote the category of finite non-empty sets and strictly monotone increasing maps. 
\item $\sset$ will denote the category of simplicial sets i.e. the category $\Fun(\Delta^{\op}, \mathrm{Set})$, and $\Delta^n$, $\Lambda^n_i$, $\partial \Delta^n \in \sset$ are defined as usual.
\item $\Pr^L$  will denote the category of presentable categories and left adjoints.
\item $\Pr^R$  will denote the category of presentable categories and accessible right adjoints.
\end{itemize}
\end{notation}

\subsection{Acknowledgments}

We would like to thank the entire Seminarak group, especially Shay Ben Moshe, Shaul Barkan, Segev Cohen, Shai Keidar, and Lior Yanovski, for useful discussions on various aspects of this project and for their valuable comments on an earlier draft of the paper. We also thank the anonymous referees for their valuable suggestions and comments, which greatly improved this work.

\section{From Pure Morphisms to Quasi-Split Cones}

In this section we define a class of morphisms, dependent on a cardinal $\kappa$, called $\kappa$-pure morphisms (see \cref{definition: def1}), and show that every $\kappa$-pure morphism is a $\kappa$-filtered colimit of split morphisms, (see  \cref{proposition: prop1}).

We will need the following lemma:

\begin{lemma} \label{lemma: lem1}
Let $\kappa$ be a regular cardinal, $\mathcal{C}$ be a $\kappa$-presentable category and $K$ be a $\kappa$-small simplicial set. Then there exists a regular cardinal $\pi \geq \kappa$ such that for every cardinal $\mu\geq \pi$, and any $f\in \Fun(K,\mathcal{C})$, $f$ is a $\mu$-compact object (in the category $\Fun(K,\mathcal{C})$) if and only if for each vertex $x\in K_0$, $f(x)$ is a $\mu$-compact object in $\mathcal{C}$.  
\end{lemma}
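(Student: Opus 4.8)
\emph{Proof plan.} Write $\cE := \Fun(K,\cC)$; it is a presentable category because $\cC$ is, and for each vertex $x \in K_0$ the evaluation functor $\ev_x \colon \cE \to \cC$ preserves all small limits and colimits, since these are computed vertexwise in $\cE$. In particular $\ev_x$ admits both a left and a right adjoint. The first, easier direction — that $\mu$-compactness of $f$ forces $\mu$-compactness of each $f(x)$ — I would deduce by controlling the right adjoint $R_x$ of $\ev_x$. Since $R_x$ is right Kan extension along $\{x\} \hookrightarrow K$, its value $(R_x c)(y)$ is the limit of the constant diagram at $c$ over a slice of $K$ (a simplicial set whose size is bounded by that of $K$, hence $\kappa$-small), i.e.\ a cotensor $c^{S_{x,y}}$ by a $\kappa$-compact space $S_{x,y}$. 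As $\kappa$-small limits commute with $\mu$-filtered colimits in any presentable category whenever $\mu \ge \kappa$, the functor $R_x$ preserves $\mu$-filtered colimits, so $\ev_x$ preserves $\mu$-compact objects, and $f$ $\mu$-compact implies $f(x) = \ev_x(f)$ $\mu$-compact for every $x$. This already works with $\pi = \kappa$.

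For the converse — the substantive direction — the plan is to first establish a description of the mapping spaces of $\cE$: for $f, g \in \cE$, the space $\Map_{\cE}(f,g)$ is a $\kappa$-small limit, functorial in $g$, of spaces $\Map_{\cC}(f(x), g(y))$ with $x,y$ vertices of $K$, the limit taken over a $\kappa$-small indexing shape (a twisted-arrow–type category $\mathrm{Tw}(K)$) depending only on $K$. I would prove this via the skeletal filtration $K = \bigcup_n \mathrm{sk}_n K$: one has $\cE \simeq \lim_n \Fun(\mathrm{sk}_n K, \cC)$, each transition map is a pullback along the restriction $\prod \Fun(\Delta^n,\cC) \to \prod \Fun(\partial\Delta^n,\cC)$ indexed by the ($\kappa$-small) set of nondegenerate $n$-simplices, and the mapping spaces in $\Fun(\Delta^n,\cC)$ and $\Fun(\partial\Delta^n,\cC)$ are finite limits of $\Map_{\cC}$'s evaluated at the finitely many vertices; assembling these gives the claim, with the countable tower over $n$ being $\kappa$-small when $\kappa > \omega$ and stabilizing when $\kappa = \omega$.

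Granting this formula, the converse is immediate. If every $f(x)$ is $\mu$-compact and $\{g_i\}_{i\in I}$ is a $\mu$-filtered diagram in $\cE$, then, writing $x$ and $y$ for the source and target vertices of the varying object of $\mathrm{Tw}(K)$,
\begin{align*}
\Map_{\cE}\!\Big(f, \colim_i g_i\Big) &\simeq \lim_{\mathrm{Tw}(K)} \Map_{\cC}\big(f(x), \colim_i g_i(y)\big) \\
&\simeq \lim_{\mathrm{Tw}(K)} \colim_i \Map_{\cC}\big(f(x), g_i(y)\big) \simeq \colim_i \Map_{\cE}(f, g_i),
\end{align*}
using successively that $\ev_y$ preserves colimits, that $f(x)$ is $\mu$-compact, and that $\mu$-filtered colimits of spaces commute with $\kappa$-small limits of spaces for $\mu \ge \kappa$. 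Hence $f$ is $\mu$-compact, and combining the two directions any regular $\pi \ge \kappa$ works.

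I expect the main obstacle to be making the mapping-space description precise and sufficiently natural when $K$ is an arbitrary simplicial set rather than an $\infty$-category: one must run the skeletal induction carefully, track the $\kappa$-smallness of the indexing shape (including the $\kappa = \omega$ case), and verify functoriality in the second variable so that the interchange of $\lim$ and $\colim$ above is legitimate. A softer, slightly lossier alternative is to instead pick $\pi$ large enough that $\cE$ is $\pi$-compactly generated with $\pi > |K|$ and all $\ev_x$ preserving $\pi$-compact objects, and then obtain the pointwise criterion for $\mu \ge \pi$ from standard accessibility arguments in the spirit of \cite{AdamekRosicky94}; but the direct argument has the advantage of yielding the optimal $\pi = \kappa$.
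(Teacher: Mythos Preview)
Your proposal is correct, and both directions go through as you outline, but the route is quite different from the paper's. The paper's proof is maximally soft: for the direction ``$f$ $\mu$-compact $\Rightarrow$ each $f(x)$ $\mu$-compact'' it simply invokes the general fact that a colimit-preserving functor between presentable categories preserves $\mu$-compact objects for all sufficiently large $\mu$, picks such a threshold $\mu_x$ for each $\ev_x$, and sets $\pi = \sup_x \mu_x$; for the converse it just cites \cite[5.3.4.13]{lurie2009higher}. No explicit description of the right adjoint $R_x$ or of the mapping spaces in $\Fun(K,\cC)$ is ever written down.

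Your argument is more hands-on and strictly sharper: analysing $R_x$ as a $\kappa$-small limit and using the end/twisted-arrow description of $\Map_{\Fun(K,\cC)}(-,-)$ yields the optimal $\pi = \kappa$, whereas the paper's proof makes no attempt to control $\pi$. The cost is the work you correctly flag: making the end formula precise for a simplicial set $K$ that is not a quasi-category, via the skeletal filtration, and tracking $\kappa$-smallness through that induction. This is genuinely doable (the tower is countable hence $\kappa$-small for $\kappa > \omega$, it stabilises for $\kappa = \omega$, and the mapping spaces in $\Fun(\Delta^n,\cC)$ and $\Fun(\partial\Delta^n,\cC)$ unwind to finite limits), but it is real bookkeeping that the paper entirely sidesteps. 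For the purposes of the paper the precise value of $\pi$ is irrelevant---only its existence matters---so the citation-based proof is the economical choice; your argument would be preferable if one actually needed $\pi = \kappa$.
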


\begin{proof}
First, the assumptions on $K$ and $\mathcal{C}$ implies that $\Fun(K,\mathcal{C})$ is presentable. \\
Since colimits in a functor category are computed point-wise, the evaluation functor $\ev_x:\Fun(K,\mathcal{C}) \to \mathcal{C}$ is a colimit preserving functor between presentable categories for any $x\in K_0$. It follows that there exists a cardinal $\mu_x$ such that for every $\mu\geq \mu_x$, the functor, $\ev_x$ sends $\mu$-compact objects to $\mu$-compact objects. Hence, by taking the supremum $\pi:=\sup_{x\in K_0}\mu_x$ we have that if $f$ is $\mu$-compact for $\mu\geq \pi$ then $f(x)\in \Ob(\mathcal{C})$ is also $\mu$-compact.\\
For the other implication see \cite[5.3.4.13]{lurie2009higher}.
\end{proof}

\begin{definition}\label{definition: def1}
Let $\mathcal{C}$ be a presentable category and let $A,B \in \Ob(\mathcal{C})$. We will say that a morphism $f:A \to B $ in $\cC$ is $\kappa$-pure if for any two $\kappa$-compact objects $A',B'\in \Ob(\cC)$ and a commutative diagram:
\[
\begin{tikzcd}
A' \arrow[r, "f'"] \arrow[d, "u"'] & B' \arrow[d, "v"] \\
A \arrow[r, "f"']                  & B                
\end{tikzcd}
\]
in $\mathcal{C}$, there exists a $2$ simplex filling the following diagram:
\[
\begin{tikzcd}
A' \arrow[d, "u"'] \arrow[r, "f'"] & B' \arrow[ld, "\bar{u}"] \\
A                                  &                         
\end{tikzcd}
\]
 for some $\bar{u}:B'\to A$. In other words, there exists a factorization $u\simeq \bar{u}\circ f'$.
\end{definition}
 
 \begin{definition} \label{definition: def2}
Let $\mathcal{C}$ be a category, following \cite[4.4.5.2]{lurie2009higher} we will say that a commutative diagram of the form:
\[
\begin{tikzcd}
                                  & X \arrow[d, "r"] \\
Y \arrow[r, "\Id"] \arrow[ru, "i"] & Y               
\end{tikzcd}
\]
is a weak retraction.\\
For any such weak retraction we will call $i$ a split morphism and $r$ a retraction.  
 \end{definition}
 
\begin{example}
Note that a split morphism is $\kappa$-pure for all $\kappa$.
\end{example}
 
 \begin{proposition} \label{proposition: prop1}
Let $\mathcal{C}$ be a $\kappa$-presentable category. We claim that there exists a regular cardinal $\pi \geq \kappa$, such that for all $\mu \geq \pi$, every $\mu$-pure morphism $f \in \Fun(\Delta^1,\mathcal{C})$ is a $\mu$-filtered colimit of split morphisms with common domain (as in the Outline in the introduction).
 \end{proposition}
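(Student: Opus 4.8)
The plan is to realize $f$ directly as a $\mu$-filtered colimit of split maps --- all of them out of the single object $A$ --- by one pushout construction, the only substantive input being \cref{definition: def1} applied objectwise.

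First I would pin down $\pi$. Apply \cref{lemma: lem1} to the $\kappa$-small simplicial set $\Delta^1$ to get a regular cardinal $\pi\geq\kappa$ such that for every $\mu\geq\pi$ the $\mu$-compact objects of $\mathcal E:=\Fun(\Delta^1,\mathcal C)$ are precisely the arrows $f'\colon A'\to B'$ with $A',B'$ both $\mu$-compact in $\mathcal C$; since $\mathcal E$ is presentable I may enlarge $\pi$ so that $\mathcal E$ is moreover $\mu$-accessible for all $\mu\geq\pi$. Now fix $\mu\geq\pi$ and a $\mu$-pure $f\colon A\to B$, and put $K:=(\mathcal E^{\mu})_{/f}$. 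Then $K$ is $\mu$-filtered and $f\simeq\colim_K p$ for $p\colon K\to\mathcal E$ the forgetful functor; writing $\mathrm{dom}:=\ev_0\circ p$ and $\mathrm{cod}:=\ev_1\circ p\colon K\to\mathcal C$ and using that colimits in $\mathcal E$ are pointwise, the colimit cone of $p$ identifies $\colim_K\mathrm{dom}\simeq A$ and $\colim_K\mathrm{cod}\simeq B$, under which $p$ --- viewed as a natural transformation $\mathrm{dom}\Rightarrow\mathrm{cod}$ --- has colimit $f$, while the maps to $f$ in the slice assemble into a natural transformation $\mathrm{dom}\Rightarrow\underline A$ with colimit $\Id_A$.

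Next, form the pointwise pushout $Y:=\underline A\amalg_{\mathrm{dom}}\mathrm{cod}\in\Fun(K,\mathcal C)$ of the span $\underline A\Leftarrow\mathrm{dom}\overset{p}{\Rightarrow}\mathrm{cod}$, with structure transformation $\iota\colon\underline A\Rightarrow Y$, and regard the associated functor $G:=\iota\colon K\to\Fun(\Delta^1,\mathcal C)$ as a $\mu$-filtered diagram of morphisms $\iota_k\colon A\to Y_k$ with $Y_k=A\amalg_{A'_k}B'_k$. The crucial observation is that every $\iota_k$ is split: \cref{definition: def1} applied to the commuting square that exhibits the structure map $f'_k\to f$ in $\mathcal E$ --- legitimate since $A'_k,B'_k$ are $\mu$-compact --- produces $\bar u_k\colon B'_k\to A$ with $u_k\simeq\bar u_k\circ f'_k$, and then $(\Id_A,\bar u_k)$ glues to a map $Y_k\to A$ that is a retraction of $\iota_k$. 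Hence $G$ factors through the full subcategory of split morphisms. The point I would emphasize is that no coherent system of the $\bar u_k$'s is required: this is the single place where the $\infty$-categorical argument threatens to be harder than the $1$-categorical one, and it evaporates because ``being a split morphism'' is a property of the individual objects of the diagram $G$, which $\mu$-purity supplies objectwise.

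It remains to recognize $\colim_K G$ as $f$. Since colimits commute with colimits, $\colim_K Y\simeq\colim_K\underline A\amalg_{\colim_K\mathrm{dom}}\colim_K\mathrm{cod}$; here $\colim_K\underline A\simeq A$ because $K$, being $\mu$-filtered, is weakly contractible, and by the identifications above the span becomes $A\overset{\Id}{\leftarrow}A\overset{f}{\to}B$, so $\colim_K Y\simeq A\amalg_A B\simeq B$ and, chasing the structure maps, $\colim_K G\simeq(A\overset{f}{\to}B)=f$ in $\Fun(\Delta^1,\mathcal C)$. This displays $f$ as a $\mu$-filtered colimit of split morphisms (indeed of split morphisms out of $A$, the form that feeds into the quasi-split cones of the next section), finishing the proof. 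If there is a ``hard part'' it is the recognition just highlighted; once it is in hand, the functor $Y=\underline A\amalg_{\mathrm{dom}}\mathrm{cod}$ does all the work.
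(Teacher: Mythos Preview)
Your proof is correct and follows essentially the same approach as the paper's: both write $f$ as a $\mu$-filtered colimit of $\mu$-compact arrows, replace each $f_i\colon A_i\to B_i$ by the pushout $f_i'\colon A\to A\amalg_{A_i}B_i$, use $\mu$-purity to split each $f_i'$, and then identify $\colim f_i'$ with $f$. The only cosmetic difference is in this last identification: the paper routes it through the auxiliary category $\Fun(\Delta^1,\mathcal C)\times_{\mathcal C}\mathcal C_{/A}$ and the fact that the pushout functor to $\mathcal C_{A/}$ preserves colimits, whereas you argue directly that colimits commute with pushouts and that $\colim_K\underline A\simeq A$ by weak contractibility of $K$.
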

 
\begin{proof}
By \cref{lemma: lem1} there exists $\pi \geq \kappa$ such that for all $\mu \geq \pi$,  $\Fun(\Delta^1,\mathcal{C})$ is $\mu$-presentable and $A \xrightarrow{g} B \in \Fun(\Delta^1,\mathcal{C})$ is $\mu$-compact if and only if $A$ and $B$ are $\mu$-compact in $\mathcal{C}$. Let $f:A \to B$ be a $\mu$-pure morphism, since  $\Fun(\Delta^1,\mathcal{C})$ is $\mu$-presentable, by definition there exists a functor
\[
F:J \to \Fun(\Delta^1,\mathcal{C})^\mu \simeq \Fun(\Delta^1,\mathcal{C}^\mu)
\]
where $J$ is $\mu$-filtered and $\colim F= f$. For each $i \in \Ob(J)$ we denote  $f_i:=F(i):A_i \to B_i$ and by $u_i$ the canonical map
\[
u_i:f_i \to \underset{J}{\colim} \ f_j = f.
\]                             
Let $\overline{B}_i $ be the pushout:
\[
\begin{tikzcd}
A_i \arrow[d, "\ev_0(u_i)"'] \arrow[r, "f_i"] & B_i \arrow[d] \\
A \arrow[r, "f_i'"']              & \overline{B_i}                 
\end{tikzcd}
\]
By the functionality of pushouts we get a $\mu$-filtered diagram $F':J \to \Fun(\Delta^1,\cC)$ with $F'(i)=f_i'$. We will now show that $f_i'$ are split and that their colimit is $f$ i.e. that $\colim\ F' =f$.\\

We begin by showing that $\colim F' =f$: Let $\mathcal{D}:=\Fun(\Delta^1,\mathcal{C}) \times_{\mathcal{C}} \mathcal{C}_{{/}A}$ be the category of morphisms over $A$. Informally $\mathcal{D}$ is spanned by the objects :
\[
\begin{tikzcd}
X \arrow[d, "g"'] \arrow[r, "f'"] & Y \\
A                                 &  
\end{tikzcd}
\]
with morphisms:
\[
\begin{tikzcd}
X' \arrow[r] \arrow[dd, bend right] & Y'          \\
X \arrow[d] \arrow[r] \arrow[u]     & Y \arrow[u] \\
A                                   &            
\end{tikzcd}
\]
In other words it is the category of arrows in $\cC$ together with a morphism from the source to $A$.\\
Since $\cD$ is a pullback, the functors $F:J \to \Fun(\Delta^1,\mathcal{C}) $ and $\ev_0(u_\bullet):J \to \cC_{/A}$ induce a functor $F'':J \to \mathcal{D}$, adding the fact that the forgetful $\mathcal{D} \to \Fun(\Delta^1,\mathcal{C})$ commutes with colimits we have that $\colim F''=f$ (here, we abuse notation by identifying $f$ and the object $``(f,\Id)"\in \mathcal{D}$). As the pushout functor $\mathcal{D}\to\mathcal{C}_{A/}$ also commutes with colimits, we get that $\colim \ F' =f$.\\
\\
We now turn to show that $f_i'$ are split: By construction we have a commutative diagram:
\[
\begin{tikzcd}
A_i \arrow[d, "\ev_0(u_i)"'] \arrow[r, "f_i"] & B_i \arrow[d,"\ev_1(u_i)"] \\
A \arrow[r, "f"]              & B                 
\end{tikzcd}
\]
where $A_i$ and $B_i$ are $\mu$-compact. Therefore, since $f$ is $\mu$-pure, we have a $2$-simplex of the form
\[
\begin{tikzcd}
A_i \arrow[r, "f_i"] \arrow[d, "\ev_0(u_i)"'] & B_i \arrow[ld, "g_i"] \\
A                                            &                      
\end{tikzcd}
\]
Thus, by the universal property of pushouts we get
\[
\begin{tikzcd}
    {A_i} & {B_i} \\
    A & {\bar{B}_i} \\
    && A
    \arrow["{\ev_0(u_i)}"', from=1-1, to=2-1]
    \arrow["{f_i'}", from=2-1, to=2-2]
    \arrow[from=1-2, to=2-2]
    \arrow[from=1-1, to=1-2]
    \arrow["{g_i}", from=1-2, to=3-3]
    \arrow["\Id"', from=2-1, to=3-3]
    \arrow[from=2-2, to=3-3]
\end{tikzcd}
\]
and the claim follows.
\end{proof}

\section{Quasi-Split Cones}

We have just seen that a pure morphism gives rise to a cone on a filtered category where all maps from the initial object have retracts. We will call such cones quasi-split (see \cref{definition: def3}). \\
This section is devoted to the study of quasi-split cones and the categories derived from them.

\begin{definition} \label{definition: def3}
Let $K \in \sset$ and $F:K^{\lhd} \to \mathcal{C}$ a cone on $K$. We say that $F$ is a quasi-split cone if
\[
F(-\infty \to k) \ \textrm{is split} \ \forall k \in K.
\]   
\end{definition}

\begin{definition} \cite[4.4.5.2]{lurie2009higher}\label{definition: def4}
Let $\Ret$ be the $1$-category defined as follows
\begin{itemize}
    \item The set of objects is given by $\{X,Y\}$.
    \item The sets of morphisms are given by
\[
\Map(X,X) = \{\Id, e \}, \quad
\Map(X,Y) = \{r\}, \quad \Map(Y,X) = \{i\}, \quad  \Map(Y, Y) = \{\Id\}.
\]
\end{itemize}
The composition law is determined by $r \circ i = \Id$ and $i \circ r = e$.
\end{definition}

\begin{definition}
In the category $\Ret$ there exists a commutative diagram
\[
\begin{tikzcd}
    & X \\
    Y && Y
    \arrow["\Id"', from=2-1, to=2-3]
    \arrow["i", from=2-1, to=1-2]
    \arrow["r", from=1-2, to=2-3]
\end{tikzcd}
\]
Which is determined by a map of simplicial sets $\sigma : \Delta^2 \to \Ret.$ We let $\wRet$ denote the image of $\sigma$ in $\Ret$.
\end{definition}

\begin{construction} \label{construction: cons1}
Given a quasi-split cone $F':K^{\lhd} \to \mathcal{C}$, by choosing a retract $g_k:F'(k)\to F'(-\infty) $ for each $h_k:=F'(-\infty \to k)$ we get a functor from the pushout in $\sset$
 \[
 F:K^{\lhd} \underset{K_\delta \times \Delta^1}\coprod ( K_\delta \times \Ret)= \colon  \widetilde{L}_K  \to \mathcal{C}.
 \]
We denote by $L_K$ a fibrant replacement of $\widetilde{L}_K$ in the Joyal model structure i.e. a $L_K$ is a quasi-category equivalent to $\widetilde{L}_K$. We shall call $L_K$ \textbf{the cone with retracts on} $K$.
\end{construction}

The main goal of this section is to prove the following:

\begin{proposition} \label{proposition: goal L_I}
If $\cC$ is equivalent to the nerve of a $1$-category $N(C)$ then, $L_C$, the cone with retracts on $C$ is equivalent to a nerve of a $1$-category. Furthermore, if $C=I$ is a poset then $L_I$ admits an explicit description as in \cref{corollary: cor2}.
\end{proposition}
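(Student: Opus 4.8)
The plan is to reduce the statement to two local facts about the building blocks: (i) gluing a single free arrow onto a $1$-category yields a $1$-category, and (ii) inverting a morphism in a cone on a $1$-category still yields a $1$-category, exactly as foreshadowed in the outline (\cref{proposition: prop2} and \cref{proposition: prop3}). The first step is to rewrite the pushout defining $\widetilde{L}_C$ in more tractable terms. By \cref{definition: def4}, the category $\Ret$ is obtained from $\Delta^1$ (with vertices $Y,X$ and edge $i$) by freely adjoining the retraction $r\colon X\to Y$ together with the relation $r\circ i=\Id_Y$; equivalently, $\Ret$ is the localization of the free category on the $2$-simplex $\sigma$ at the edge $i$, or more precisely $\Ret=\wRet[i^{-1}]$ where $\wRet$ is the $2$-simplex with its three edges freely composed and then the relation on the long edge imposed. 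So the pushout $C^{\lhd}\amalg_{C_\delta\times\Delta^1}(C_\delta\times\Ret)$ can be computed as: first form $C^{\lhd}\amalg_{C_\delta\times\Delta^1}(C_\delta\times\wRet)$ — which is the same as $C^{\lhd}$ with, for each $k\in C$, a new object $Y_k$, a new map $r_k\colon k\to Y_k$ (note $X=k$ is identified with the object of $C^{\lhd}$), and a $2$-simplex witnessing $r_k\circ(-\infty\to k)$; and then localize at all the $i_k=(-\infty\to k)$. I would argue that since $C^{\lhd}\to L_C$ restricted away from the cone point already contains $i_k$ as $-\infty\to k$, "adjoining a copy of $\wRet$" along $\Delta^1=\{i_k\}$ amounts to adjoining, for each $k$, a fresh object and a fresh arrow into it from $k$, i.e. exactly the pushout $\Delta^0\amalg_s\Delta^1$ displayed in the outline iterated over all $k\in C$ (taking $\Delta^0\mapsto k$). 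This identifies $\widetilde{L}_C$, up to equivalence, with $C_I^{\lhd}[\{i_k^{-1}\}]$ in the notation of the outline, where $C_I$ is $C$ with a free arrow glued from each object.

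Next I would invoke (or prove) \cref{proposition: prop2}: gluing a free arrow $\Delta^1$ along $\Delta^0\hookrightarrow\Delta^1$ to a $1$-category $N(D)$ (with $\Delta^0$ landing on an object $d$) produces again a nerve of a $1$-category, explicitly the category $D$ with one new object $*$ and exactly one new morphism $d\to *$, no other new morphisms, no new relations. The point is that the free arrow has a terminal vertex with nothing mapping out of it, so no new composites arise; this is a purely combinatorial check on simplices, or can be seen via the fact that the pushout of $\Delta^0\hookrightarrow\Delta^1$ along a map to a $1$-category is a categorical pushout already at the level of $1$-categories (the relevant pushout in $\Cat_1$ has nerve equal to the pushout in $\sset$ because the inclusion is a cofibration hitting an object and the new arrow is "free"). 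Doing this simultaneously for all $k\in C$ is harmless since the new objects and arrows are disjoint. Then \cref{proposition: prop3}: inverting the arrows $i_k=(-\infty\to k)$ in the resulting cone — which now has the shape $N(C_I)^{\lhd}$ — still yields a $1$-category. Here the key is that $-\infty$ is initial, so the localization at the arrows out of an initial object is controlled: one checks that the localized category is again a $1$-category, e.g. via a calculus of fractions argument or by exhibiting an explicit model (the $i_k$ have a common source $-\infty$ which maps uniquely everywhere, so inverting them glues $-\infty$ to each $k$ in a controlled way). Combining, $L_C\simeq N(\text{some }1\text{-category})$.

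For the "furthermore" when $C=I$ is a poset, I would then unwind the explicit $1$-category produced by the above two steps and present it as \cref{corollary: cor2}: objects are $\Ob(I)\sqcup\Ob(I)$ (the original objects $i$ and the retract objects $Y_i$) together with the cone point $-\infty$; morphisms are generated by the poset relations of $I$, the cone maps $-\infty\to i$, the retractions $r_i\colon i\to Y_i$ and sections $i_i\colon Y_i\to i$ (wait — in $\Ret$ it is $i\colon Y\to X$ and $r\colon X\to Y$ with $r i=\Id$), subject to $r_i\circ i_i=\Id_{Y_i}$, $i_i\circ r_i=e_i$, compatibility with composition in $I$, and the identification forced by $-\infty\to k$ being split; since $I$ is a poset, all hom-sets are subsingletons or small explicit sets, and after inverting $-\infty\to i$ one gets a concrete poset-like description. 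The main obstacle I anticipate is the second reduction — controlling the localization in \cref{proposition: prop3}. Inverting a morphism in an $\infty$-category can in general create higher homotopy (the localized mapping spaces need not be discrete), so the argument must genuinely use that we invert arrows emanating from an initial object and that the source category is a $1$-category; I would handle this by finding an explicit $1$-categorical model for the localization (guessing the answer from the poset case and checking the universal property against $1$-categories and then against all $\infty$-categories using that $N$ is fully faithful and that maps out of the localization are computed by a mapping-out property that only sees the underlying category when the target is a $1$-category), rather than by a direct simplicial computation. The first reduction (\cref{proposition: prop2}) I expect to be routine once the pushout is identified as a pushout along a cofibration with a "free-arrow" right leg.
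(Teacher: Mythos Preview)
Your overall strategy---reduce to \cref{proposition: prop2} (gluing a free arrow) and \cref{proposition: prop3} (inverting arrows out of the cone point)---matches the paper's. The bridge you build between $L_C$ and this two-step description, however, is wrong in a way that derails the argument.

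The central error is the claim $\Ret=\wRet[i^{-1}]$. In $\Ret$ the map $i\colon Y\to X$ is a split monomorphism but \emph{not} an isomorphism (indeed $i\circ r=e\neq\Id_X$), so $\Ret$ is not a localization of $\wRet$ at $i$. The correct relationship, which the paper uses via \cref{lemma: lem5}, is that the inclusion $\wRet\hookrightarrow\Ret$ is already a categorical equivalence: $\Fun(\Ret,\cT)\to\Fun(\wRet,\cT)$ is a trivial fibration for every quasi-category $\cT$. Consequently, when you glue $\wRet$ along its $i$-edge to $-\infty\to k$ in $C^{\lhd}$, both vertices of $\wRet$ are identified with existing objects ($Y$ with $-\infty$ and $X$ with $k$), so \emph{no} new object $Y_k$ appears; this step only adds the retraction $r_k\colon k\to -\infty$ and a $2$-simplex. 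Your description of the intermediate category and of which arrows to invert is therefore off: you propose to invert the original arrows $-\infty\to k$, but those are not equivalences in $L_C$ (only split monos).

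The paper's fix is to rewrite $\wRet$ as ``a $2$-simplex whose $0\to 2$ edge is an identity'', replace ``identity'' by ``equivalence'' via \cref{lemma: id like eq}, and then glue a genuine $\Delta^2$ along its $0\to 1$ edge to each $-\infty\to k$. Now vertex $2$ \emph{is} a new object $x'$, and the edge to be inverted is $-\infty\to x'$, not $-\infty\to k$. This lands you in $D_C^{\lhd}[\{-\infty\to x'\}^{-1}]=E_C$, exactly the shape handled by \cref{proposition: prop3}. This also explains why your anticipated object set for $L_I$ (two copies of $\Ob(I)$ plus $-\infty$) disagrees with \cref{corollary: cor2}: the correct object set is just $\Ob(I)\sqcup\{-\infty\}$, since the auxiliary objects $x'$ become isomorphic to $-\infty$ after localization.
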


The outline of the proof of \cref{proposition: goal L_I} is as follows:

\begin{itemize}
\item In subsection $3.1$ we will show that the category obtained by gluing a ``free arrow" to a $1$-category is still a $1$-category. \\
More precisely we show that the pushout in $\Cat_\infty$
\[
C \underset{\Delta^0}\coprod \Delta^1
\]
is equivalent to a $1$-category - \cref{proposition: prop2}.
\item Denote by $D_C$ the category obtained by iterating the previous construction for each object in $C$. \\
In subsection $3.2$ we will show that inverting the arrows from the initial object in $D_C^{\lhd}$ to the objects ``which are not in $C$", gives rise to a $1$-category.\\
More precisely the category $D_C$ is defined by the pushout:
\[
D_C:= C \underset{C_\delta \times \Delta^0}\coprod (C_\delta \times \Delta^1)
\]
and we will show that inverting all morphisms in $D_C^{\lhd}$ of the form $-\infty\to x$ where $x$ is not an object of $C$ (i.e. not in the image of the natural inclusion $C\to D_C^{\lhd}$) is still a $1$-category - \cref{proposition: prop3}.
\item In subsection $3.3$ we will show that the previous construction gives the same result as gluing $\Ret$ along the morphism from the initial object to the source of the free arrow - \cref{lemma: lem6}.
\end{itemize} 

\subsection{Gluing a Free Arrow}

This subsection is devoted to showing that the pushout
\[
\begin{tikzcd}
\Delta^0 \arrow[d, "s"'] \arrow[r] & C \arrow[d] \\
\Delta^1 \arrow[r]                 & P        
\end{tikzcd}
\]
in $\Cat_\infty$ is a $1$-category, when $C$ is a $1$-category.\\

Our first step toward proving the above claim will be to prove the following (rather technical) lemmas:

\begin{lemma}\label{lemma : no same degn}
Let $X \in \sset$ and let $\sigma_1 \in X_{k_1}$ and $\sigma_2 \in X_{k_2}$ be two non-degenerate simplices. Assume that for some $n\geq 0$ there are two surjective order preserving maps
\[
\phi_l \colon [n] \to  [k_l], \quad l=1,2
\]
such that $\phi_1^*(\sigma_1) = \phi_2^*(\sigma_2)$, then $k_1 = k_2$, $\sigma_1 = \sigma_2$ and $\phi_1=\phi_2$.
\end{lemma}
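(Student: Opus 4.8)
The statement is precisely the uniqueness half of the Eilenberg--Zilber lemma, so the plan is to reproduce its standard proof using only elementary facts about the category $\Delta$: every surjective order-preserving map $[n]\to[k]$ admits an order-preserving section; every order-preserving map factors as a surjection followed by an injection; and a simplex $\rho\in X_k$ is degenerate exactly when $\rho=\eta^*(\rho')$ for some non-identity surjection $\eta$ out of $[k]$ and some $\rho'$.

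First I would set $\tau:=\phi_1^*(\sigma_1)=\phi_2^*(\sigma_2)\in X_n$ and fix an order-preserving section $\psi_1\colon[k_1]\to[n]$ of $\phi_1$, so that $\phi_1\circ\psi_1=\mathrm{id}_{[k_1]}$. Applying $\psi_1^*$ to the equality $\phi_1^*(\sigma_1)=\phi_2^*(\sigma_2)$ yields $\sigma_1=(\phi_2\circ\psi_1)^*(\sigma_2)$. Factoring $\phi_2\circ\psi_1\colon[k_1]\to[k_2]$ as $\mu\circ\varepsilon$ with $\varepsilon\colon[k_1]\twoheadrightarrow[m]$ surjective order-preserving and $\mu\colon[m]\hookrightarrow[k_2]$ injective, we get $\sigma_1=\varepsilon^*(\mu^*\sigma_2)$. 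Since $\sigma_1$ is non-degenerate, $\varepsilon$ cannot be a non-identity surjection; being order-preserving and surjective it must then have $m=k_1$ and $\varepsilon=\mathrm{id}$. Hence $\phi_2\circ\psi_1=\mu$ is injective, so $k_1\le k_2$. Exchanging the roles of the two indices gives $k_2\le k_1$, hence $k_1=k_2=:k$; then $\mu\colon[k]\hookrightarrow[k]$ is forced to be the identity, so $\sigma_1=\mu^*\sigma_2=\sigma_2=:\sigma$.

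It remains to prove $\phi_1=\phi_2$. Repeating the previous computation, now with $\sigma_1=\sigma_2=\sigma$, shows $(\phi_2\circ\psi_1)^*\sigma=\sigma$ for every order-preserving section $\psi_1$ of $\phi_1$, and the same epi--mono argument forces $\phi_2\circ\psi_1=\mathrm{id}_{[k]}$; that is, every section of $\phi_1$ is also a section of $\phi_2$. To finish, fix $j\in[n]$ and put $a:=\phi_1(j)$. Because the fibres of a surjective order-preserving map are consecutive nonempty intervals, one may choose an order-preserving section $\psi_1$ of $\phi_1$ with $\psi_1(a)=j$; since $\psi_1$ is also a section of $\phi_2$, we obtain $\phi_2(j)=\phi_2(\psi_1(a))=a=\phi_1(j)$. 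As $j$ was arbitrary, $\phi_1=\phi_2$.

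I do not anticipate a serious obstacle. The only points needing a little care are the degeneracy criterion (why $\sigma_1=\varepsilon^*(\mu^*\sigma_2)$ with $\varepsilon$ a non-identity surjection would contradict non-degeneracy of $\sigma_1$), which is immediate from the definition of a non-degenerate simplex, and the existence of a section of $\phi_1$ taking a prescribed value on one element, which follows directly from the interval structure of the fibres.
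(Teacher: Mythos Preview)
Your proof is correct and is essentially the same argument as the paper's: both pick a section of one of the surjections, transport the equality $\phi_1^*\sigma_1=\phi_2^*\sigma_2$ along it, and use non-degeneracy to force the resulting self-map of $[k]$ to be the identity; then both conclude $\phi_1=\phi_2$ from the observation that every section of one $\phi_l$ is a section of the other. The only cosmetic differences are that the paper packages the non-degeneracy step as ``$\sigma_2$ lies in the image of $\Delta^{k_1}\to X$'' rather than via an explicit epi--mono factorisation, and that it leaves the final ``shared sections $\Rightarrow$ equal surjections'' step implicit, whereas you spell out the fibre argument.
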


\begin{proof}
Without loss of generality assume that $k_1 \leq k_2$. Let
\[
f: \Delta^{k_1} \to X
\]
be the map classified by $\sigma_1$. Denote the unique non degenerate simplex in $\Delta^{k_1}_{k_1}$ by $\tau$. Let $\psi\colon [k_2] \to [n]$ be a right inverse to $\phi_2$ we have that
\[
* \quad \sigma_2=\psi^*(\phi_2^*(\sigma_2)) =\psi^*(\phi_1^*(\sigma_1))=\psi^*(\phi_1^*(f(\tau)))=f(\psi^*(\phi_1^*(\tau)).
\]
In particular the image of $f$ contains $\sigma_2$. Now, since $\Delta^{k_1}$ does not contain any non-degenerate simplices of dimension larger than $k_1$ we get that $k_1  = k_2$.
The other two implications are immediate, as the image of $f$ can contain at most one non-degenerate simplex of dimension $k_1$, and it contains both $\sigma_1$ and $\sigma_2$ so $\sigma_1 = \sigma_2$.\\

We now show that $\phi_1=\phi_2$. By the above $\sigma_1=\sigma_2$, and thus by the chain of equivalences $*$ we have that $\phi_1\circ \psi =\Id$. We conclude that every right inverse of $\phi_2$ is a right inverse of $\phi_1$ which implies that  $\phi_1=\phi_2$.
\end{proof}

\begin{notation}
Let $f:X \to Y$ be a map of simplicial sets, we denote by
\[
f^\mathrm{nd}_k:X_k^\mathrm{nd} \to Y_k
\]
the induced map we get by restricting $f_k$ to the non degenerate simplices in $X_k$.
\end{notation}

\begin{lemma} \label{lemma : seset injective map}
Let $A$ be a set and $0\leq i \leq n$ be natural numbers. Assume we have a commutative diagram in $\sset$ of the form:
\[
\begin{tikzcd}
    {A\times \Lambda^n_i} & X \\
    {A \times \Delta^n} & Y
    \arrow[from=1-1, to=1-2]
    \arrow[from=1-1, to=2-1]
    \arrow[from=2-1, "j" to=2-2]
    \arrow[hook',"f", from=1-2, to=2-2]
\end{tikzcd}
\]
and let \[
 g:A \times \Delta^{n-1} \xrightarrow{\Id\times\delta_i} A \times \Delta^n \to Y
 \]
where $\delta_i$ embeds $\Delta^{n-1}$ as the face in front of the $i$-th vertex.\\
Assume further that:
\begin{enumerate}
    \item The map $f:X \hookrightarrow Y$ is a level-wise injection.
    \item $g$ induces an injection    
\[
g_{n-1}^\mathrm{nd}:(A \times \Delta^{n-1})_{n-1}^\mathrm{nd} \to  Y_{n-1}.
\]
\item $\mathrm{Im}(g_{n-1}^\mathrm{nd}) \cap \mathrm{Im}(f_{n-1})=\mathrm{Im}(j_{n}^\mathrm{nd}) \cap \mathrm{Im}(f_{n})=\emptyset$.
\item Both the maps $j_n$ and $g_{n-1}$ send non degenerate simplices to non degenerate simplices.  
\end{enumerate}
Then the induced natural map from the pushout in $\sset$
\[
h:P:=(A \times \Delta^n) \underset{A \times \Lambda^n_i}\coprod X \to Y
\]
is a level-wise injection.
\end{lemma}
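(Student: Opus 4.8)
The plan is to prove the level-wise injectivity of $h$ by a direct combinatorial analysis of the pushout. First I would recall the standard description of an $n$-simplex in a pushout $P = (A\times\Delta^n)\amalg_{A\times\Lambda^n_i} X$ of simplicial sets: since colimits of simplicial sets are computed level-wise, every $m$-simplex of $P$ is either the image of an $m$-simplex of $X$ or the image of an $m$-simplex of $A\times\Delta^n$, and two such simplices are identified precisely when they both come from $A\times\Lambda^n_i$ and agree there. So to check that $h_m$ is injective it suffices to check three things: (a) $h$ restricted to the image of $X$ is injective — this is exactly hypothesis (1), that $f$ is level-wise injective; (b) $h$ restricted to the image of $A\times\Delta^n$ is injective; and (c) the images of $X\setminus (A\times\Lambda^n_i)$ and $(A\times\Delta^n)\setminus(A\times\Lambda^n_i)$ under $h$ are disjoint. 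The content of the lemma is entirely in (b) and (c), and hypotheses (2)–(4) are engineered to supply them.

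For step (b), the subtlety is that the composite $A\times\Delta^n \xrightarrow{} P \xrightarrow{h} Y$ is just the map $j$, which is \emph{not} assumed injective — $j$ can glue faces of $A\times\Delta^n$ together. The point is that such gluing only happens on the boundary. Concretely, $A\times\Delta^n$ has, in each degree $m$, exactly the simplices of the form $(a,\alpha)$ with $\alpha:[m]\to[n]$; the non-degenerate ones with $\alpha$ of rank $\leq n-1$ land in $A\times\Lambda^n_i$ (they factor through some face other than the $i$-th, or through the $i$-th — here I need to be a little careful and actually use that $\Lambda^n_i$ is the union of all faces except the $i$-th, so the simplices of $A\times\Delta^n$ \emph{not} in $A\times\Lambda^n_i$ are precisely those $(a,\alpha)$ whose image contains all of $[n]\setminus\{i\}$, i.e.\ $\alpha$ is surjective or its image omits only $i$). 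The ones that are \emph{not} in the horn but are non-degenerate of dimension $n-1$ are exactly the image of $g = j\circ(\Id\times\delta_i)$ on non-degenerate simplices, and hypotheses (2) and (4) say $g^{\mathrm{nd}}_{n-1}$ is injective and preserves non-degeneracy; the top simplices (dimension $n$, $\alpha$ surjective, i.e.\ $\alpha$ an iso) are handled by hypothesis (4) for $j_n$ (and their non-degenerate images are automatically distinct for distinct $a$ once we know they stay non-degenerate, using \cref{lemma : no same degn} to compare a degeneracy of one against another). The degenerate simplices reduce to the non-degenerate ones via the Eilenberg–Zilber lemma: a degenerate $m$-simplex of $A\times\Delta^n$ is uniquely $\phi^*$ of a non-degenerate simplex, $j$ commutes with $\phi^*$, and \cref{lemma : no same degn} guarantees that two degeneracies in $Y$ agree only if the underlying non-degenerate simplices and the degeneracy operators agree — so injectivity on non-degenerate simplices of each dimension propagates to all simplices. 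This is where \cref{lemma : no same degn} does its work.

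For step (c) — disjointness of the two images — I would again pass to non-degenerate simplices: by the Eilenberg–Zilber argument a simplex of $Y$ in the image of both the $X$-part and the $(A\times\Delta^n)$-part would, after extracting its unique non-degenerate core, give a non-degenerate simplex lying in $\mathrm{Im}(f^{\mathrm{nd}})$ and in the image of the non-degenerate simplices of $(A\times\Delta^n)\setminus(A\times\Lambda^n_i)$. The latter set, as noted, consists in dimension $n-1$ of $\mathrm{Im}(g^{\mathrm{nd}}_{n-1})$ and in dimension $n$ of $\mathrm{Im}(j^{\mathrm{nd}}_n)$ (there being no non-degenerate simplices of $A\times\Delta^n$ outside the horn in dimensions $<n-1$, since those already factor through $\Lambda^n_i$), so hypothesis (3) says precisely that these meet $\mathrm{Im}(f)$ trivially in the only two relevant dimensions, and hypothesis (4) ensures the cores stay in these sets. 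Combining (a), (b), (c): given $x,x'\in P_m$ with $h(x)=h(x')$, split into cases by which part each comes from; the mixed case is excluded by (c), the $X$–$X$ case by (a), and the $\Delta^n$–$\Delta^n$ case by (b), after which one checks $x=x'$ in $P$ using that any identification in $P$ happens over the horn — and over the horn $f$ and $j$ agree and $f$ is injective. The main obstacle I expect is bookkeeping: correctly identifying which simplices of $A\times\Delta^n$ fail to lie in $A\times\Lambda^n_i$ (the condition ``image contains $[n]\setminus\{i\}$'') and matching the degenerate/non-degenerate reductions to the precise dimensions $n-1$ and $n$ appearing in hypotheses (2)–(4); once that dictionary is set up, each case is a short application of \cref{lemma : no same degn} plus the Eilenberg–Zilber lemma.
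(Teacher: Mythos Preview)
Your proposal is correct and follows essentially the same route as the paper: split into the three cases (both simplices from $X$; both from $A\times\Delta^n$ outside the horn; mixed), reduce to non-degenerate cores via the Eilenberg--Zilber decomposition, and invoke \cref{lemma : no same degn} together with hypotheses (2)--(4). One small point to tighten in your case (b): you claim the top $n$-simplices have distinct images ``automatically\ldots once we know they stay non-degenerate,'' but non-degeneracy alone does not give injectivity of $j_n^{\mathrm{nd}}$---you need to observe that applying $\partial_i$ to $j_n(a,\mathrm{id})$ yields $g_{n-1}(a,\mathrm{id})$, so injectivity of $j_n^{\mathrm{nd}}$ follows from hypothesis (2). (The paper's proof asserts ``$j_n^{\mathrm{nd}}$ is one-to-one'' without comment, so this is equally implicit there.)
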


\begin{proof}
Let $\sigma_1, \sigma_2 \in P_k$, such that $h_k(\sigma_1)=h_k(\sigma_2)$. We shall show that $\sigma_1=\sigma_2$ by analyzing each possible case individually. \begin{itemize}
    \item Assume that $\sigma_1$ and $\sigma_2$ are in the image of the natural map $X\to P$.\\
    In this case the claim follows from (1).
    \item Assume that $\sigma_1$ and $\sigma_2$ are in the image of the natural map $A\times\Delta^n \to P$ and not in the image of the map $X\to P$.\\
    Let $\sigma_l' \in (A \times \Delta^n)_{k_l}$ for $l = 1,2$ be two non degenerate simplices that degenerate to the $\sigma_l$-s, i.e. there exists two order preserving and surjective maps $\phi_l:[k] \to [k_l]$ such that $\phi_l^*(\sigma_l')= \sigma_l$. If one of the $\sigma_l'$ is in the image of the natural map $A\times \Lambda_i^n\to P$ then it is also in the image of the natural map $X\to P$ which gives us that $\sigma_l$ is in the image of the natural map $X\to P$, contradicting our assumption.\\
    By the above, we conclude that both of the $\sigma_l'$-s are not in the image of the natural map $A\times \Lambda_i^n\to P$, so either $k_l = n-1$ and $h_{n-1}(\sigma_l')=g_{n-1}(\sigma_l')$ or $k_l = n$ and $h_n(\sigma_l')=j_n(\sigma_l')$. Since both $j_n$ and $g_{n-1}$ send non degenerate simplices to non degenerate simplices, we get by \cref{lemma : no same degn} that $h_{k_1}(\sigma_1')=h_{k_2}(\sigma_2')$ and $\phi_1=\phi_2$. All in all, since $g_{n-1}^\mathrm{nd}$ and $j_n^\mathrm{nd}$ are one-to-one, we also have that $\sigma_1'=\sigma_2'$ and thus $\sigma_1=\sigma_2$.
    
    \item Assume that $\sigma_1$ is in the image of the natural map $X\to P$ and that $\sigma_2$ is in the image of the natural map $A \times \Delta^n\to P$. \\
    Let $\sigma_2' \in (A \times \Delta^n)_{m}$ be a non degenerate simplex that degenerates to $\sigma_2$ i.e. $\phi^*(\sigma_2')=\sigma_2$. \\
    If $\sigma_2'$ is in the image of the natural map $A\times \Lambda_i^n\to P$, we are back to the first case. \\
    If $\sigma_2'$ is not in the image of the natural map $A\times \Lambda_i^n\to P$, then either $m = n$ and $h_n(\sigma_2')=j_n(\sigma_2')$ or $m=n-1$ and $h_{n-1}(\sigma_2')=g_{n-1}(\sigma_2')$. Assume that $m=n$ and choose a right inverse $\psi$ for $\phi$. Since maps of simplicial sets are neutral transformations we get 
    \[
    f_k(\sigma_1)=\phi^*(j_n(\sigma_2')) \implies f_n(\psi^*(\sigma_1))=j_n(\sigma_2').
    \]
    Noting that $j_n$ sends non degenerate simplices to non degenerate simplices, we get a contradiction to our assumption that $\mathrm{Im}(j_{n}^\mathrm{nd}) \cap \mathrm{Im}(f_{n})=\emptyset$. Similar argument shows that if $m=n-1$, then we get a contradiction to our assumption that $\mathrm{Im}(g_{n-1}^\mathrm{nd}) \cap \mathrm{Im}(f_{n-1})=\emptyset$.
\end{itemize}
\end{proof}

Recall the main objective of this subsection: We want to show that for any $1$-category $C$ the pushout
\[
\begin{tikzcd}
\Delta^0 \arrow[d, "s"'] \arrow[r] & C \arrow[d] \\
\Delta^1 \arrow[r]                 & P        
\end{tikzcd}
\]
in $\Cat_\infty$ is a $1$-category. We shall do so by explicitly constructing its fibrant replacement in the Joyal model structure on $\sset$.\\

Let us fix for the remainder of this section a quasi-category $\cC \in \sset$ and an object $x \in \cC_0$. For each $n\geq 0$ denote by $\cC_n^x \subset \cC_n$ the subset of simplices with "last vertex $x$". We consider $\cC_n^x$ as a pointed set pointed by the degenerate simplex on $x$.\\

We now define our candidate for the fibrant replacement of $P$.

\begin{definition}
Let $\cD^{\infty} \in \sset$ be the following simplicial set:
\begin{itemize}
    
    \item the simplices are given by:
    \[
    \cD^{\infty}_n  = \cC_{n} \sqcup \coprod_{i=0}^{n} \cC_{i}^{x}.
    \]
   
    \item The face maps are defined as follows:\\
    If $\sigma \in \cC_{n+1} \subset \cD^{\infty}_{n+1}$, then
    \[
    \partial^k_{\cD}(\sigma)=\partial^k_{\cC}(\sigma) \in \cC_{n} \subset \cD^{\infty}_{n} .
    \]
    If $\sigma \in \cC_{n+1}^x \subset \cD^{\infty}_{n+1}$, then
    \[
    \partial^k_{\cD}(\sigma)=\begin{cases} \partial^k_{\cC}(\sigma) \in \cC_{n}^x  \subset \cD^{\infty}_{n}, \quad k \leq {n} \\
    \partial^k_{\cC}(\sigma) \in \cC_{n} \subset \cD^{\infty}_{n},  \quad k = {n+1}
    \end{cases}.
    \]
    If $\sigma \in \cC_{l}^x \subset \cD^{\infty}_{n+1}$ and $l \not=n+1$, then
    \[
    \partial^k_{\cD}(\sigma)=\begin{cases} \partial^k_{\cC}(\sigma) \in \cC_{l-1}^x  \subset \cD^{\infty}_{n}, \quad k \leq {l-1} \\
    \sigma \in \cC_{l}^x \subset \cD^{\infty}_{n},  \quad\quad\quad\ \  k \geq l
    \end{cases}.
    \]
    
    \item The degeneracies are defined as follows:\\
    If $\sigma \in \cC_{n} \subset \cD^{\infty}_{n}$, then
    \[
    s^k_{\cD}(\sigma)=s^k_{\cC}(\sigma) \in \cC_{n+1} \subset \cD^{\infty}_{n+1} .
    \]
    If $\sigma \in \cC_{l}^x \subset \cD^{\infty}_{n+1}$, then
    \[
    s^k_{\cD}(\sigma)= \begin{cases} s^k_{\cC}(\sigma) \in \cC_{l+1}^x  \subset \cD^{\infty}_{n+1}, \quad k \leq {l-1} \\
    \sigma \in \cC_{l}^x \subset \cD^{\infty}_{n+1},  \quad\quad\quad\ \  k \geq l
    \end{cases}.
    \]
    One can check directly that this defines a simplicial set.
\end{itemize}
\end{definition}

We proceed to show that $\cD^{\infty}$ is a fibrant replacement for the pushout  
\[
\begin{tikzcd}
\Delta^0 \arrow[d, "s"'] \arrow[r,"x"] & \mathcal{C} \arrow[d] \\
\Delta^1 \arrow[r]                 & P        
\end{tikzcd}
\]
in $\sset$, when $\cC$ is the nerve of a $1$-category.

\begin{remark}
We believe that whenever $\cC$ is a quasi-category, $\cD^{\infty}$ is the fibrant replacement of the pushout. Since we are only interested in the case where $\cC$ is the nerve of the $1$-category, we shall not prove this more general case.
\end{remark}

\begin{lemma}\label{lemma: lem3}
Assume that $\cC$ is the nerve of a $1$-category $C$ and $x\in\Ob(C)$. Let $D$ be the $1$-category described below:
\begin{itemize}
    \item  The objects are given by:\\
    \[
    \Ob(D) = \Ob(C) \sqcup \{x'\}.
    \]
   
    \item The morphisms sets are defined as follows:\\
    for all $a,b \in \Ob(D)$
    \[
    \hom_{D}(a,b) = \begin{cases}
    \hom_{C}(a,b),\quad a,b \in \Ob(C)\\
    \hom_{C}(a,x),\quad a \in \Ob(C),\, b = x' \\
    \{*\},\quad\quad\quad\quad \  a = b = x' \\
    \emptyset,\quad\quad \quad\quad\quad \  a = x',\, b  \in \Ob(C) \\
    \end{cases}.
    \]
\end{itemize}
Then $\cD^{\infty} \simeq N(D) $.
\end{lemma}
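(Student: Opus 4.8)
The plan is to exhibit an explicit isomorphism of simplicial sets $\Phi\colon\cD^{\infty}\xrightarrow{\ \sim\ }N(D)$; since isomorphic simplicial sets are a fortiori equivalent, this proves the lemma. The starting point is to read off the $n$-simplices of $N(D)$ from the combinatorics of $D$. An $n$-simplex of $N(D)$ is a functor $\sigma\colon[n]\to D$, and since $\hom_D(x',c)=\emptyset$ for every $c\in\Ob(C)$ while $\hom_D(x',x')=\{*\}$, the subset $\sigma^{-1}(x')\subseteq[n]$ is upward closed, hence of the form $\{i,i+1,\dots,n\}$ for a unique $i\in\{0,1,\dots,n+1\}$, with the convention that $i=n+1$ means $\sigma^{-1}(x')=\emptyset$. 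If $i=n+1$ then $\sigma$ factors through the full subcategory $C\subseteq D$ and is therefore an element of $\cC_n=N(C)_n$. If $i\le n$, then $\sigma$ is completely determined by its restriction to $\{0,\dots,i-1\}$ (a chain in $C$) together with the edge $\sigma(i-1\to i)$, which is a morphism $\sigma(i-1)\to x'$, i.e.\ via the identification $\hom_D(\sigma(i-1),x')=\hom_C(\sigma(i-1),x)$ a morphism $\sigma(i-1)\to x$ in $C$ — all remaining edges being forced since $\hom_D(x',x')$ is a point; this data is exactly an $i$-simplex of $N(C)$ with last vertex $x$, i.e.\ an element of $\cC_i^{x}$ (when $i=0$ it is the basepoint $x\in\cC_0^{x}$, matched with the constant simplex at $x'$). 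We thus obtain a bijection
\[
N(D)_n \;\xrightarrow{\ \sim\ }\; \cC_n \,\sqcup \coprod_{i=0}^{n}\cC_i^{x} \;=\; \cD^{\infty}_n,
\]
and we let $\Phi_n$ be its inverse.

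It remains to check that $\Phi$ intertwines the face and degeneracy operators, i.e.\ that the operators of $N(D)$, transported along these bijections, coincide with the piecewise formulas in the definition of $\cD^{\infty}$; granting this, $\Phi$ is a map of simplicial sets that is bijective in each degree, hence an isomorphism (and, as a byproduct, the formulas in the definition automatically satisfy the simplicial identities). This is a direct case analysis on the position $i$ of the first $x'$-vertex and on the index $k$. On the $\cC_n$ summand the operators are visibly those of the inclusion $N(C)\hookrightarrow N(D)$. On the summand indexed by $\sigma\in\cC_i^{x}$ with $i\le n$: if $k\le i-1$, then $\partial^k$ (resp.\ $s^k$) deletes (resp.\ duplicates) a vertex of the ``$C$-part'', producing $\partial^k_\cC(\sigma)\in\cC_{i-1}^{x}$ (resp.\ $s^k_\cC(\sigma)\in\cC_{i+1}^{x}$); if instead $k\ge i$, then $\partial^k$ deletes one of the vertices equal to $x'$ — or, in the extreme case $i=n$, $k=n$, the last vertex, landing $\partial^{n}_\cC(\sigma)$ in the $\cC_{n-1}$ summand — and $s^k$ duplicates a vertex equal to $x'$, so that in the non-extreme cases $\sigma$ is carried to itself, merely relocated into the $\cC_i^{x}$ summand of $\cD^{\infty}_{n-1}$ (for $\partial^k$) or $\cD^{\infty}_{n+1}$ (for $s^k$). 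Matching these against the stated formulas gives agreement in every case.

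The hypothesis that $\cC=N(C)$ is the nerve of a $1$-category is used precisely here: it guarantees that each $n$-simplex of $D$ is an honest composable chain, so that the ``first $x'$-vertex'' decomposition is available and the summand $\cC_i^{x}$ is literally identified with the set of such chains ending at $x$ (for a general quasi-category one would instead have to verify directly that $\cD^{\infty}$ is fibrant, cf.\ the preceding remark). I expect the only real friction to be the purely mechanical bookkeeping in the face/degeneracy verification, in particular keeping track of how the summand index $i$ shifts in the three regimes $k<i$, $k=i=n$, and $k\ge i$.
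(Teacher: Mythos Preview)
Your proof is correct and follows essentially the same approach as the paper: both construct an explicit level-wise bijection between $N(D)_n$ and $\cD^{\infty}_n$ by decomposing an $n$-simplex of $N(D)$ according to how many (equivalently, where the first) of its vertices equal $x'$, and then check compatibility with faces and degeneracies. The only differences are cosmetic --- the paper writes the map in the direction $N(D)\to\cD^{\infty}$ and indexes by the \emph{number} $i_\sigma$ of $x'$-vertices rather than the \emph{position} of the first one --- and you spell out the face/degeneracy verification in slightly more detail than the paper does.
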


\begin{proof}
Note that we have a canonical bijection between $\hom_{D}(a,x')$ and $\hom_{D}(a,x)$ for $a\not=x'$, which we will denote by $f \mapsto \bar{f}$. We adopt the following notation - given a simplex, $\sigma \in N(D)_n$, denote by $i_\sigma$ the number of times that $x'$ appears as a vertex of $\sigma$. Note that $x'$ can only appear as the last $i_\sigma$ vertices of $\sigma$ and that the ``maps" from $x'$ to itself can only be identities. Furthermore, we note that if $i_\sigma=1$ then, by the above bijection, there exists a canonical simplex $\sigma_x\in N(D)_n$ in which we change the last vertex from $x'$, to $x$, i.e.
\[
\textrm{if} \ \ \sigma=[y_0\overset{f_0}\to... \overset{f_n}\to y_n= x']  \ \  \textrm{then} \  \ \sigma_x=[y_0\overset{f_0}\to... \overset{\bar{f_n}}\to x].
\]
We proceed to define an isomorphism $f:N(D)\to \cD^\infty$ as follows:
\[
f_n: N(D)_n \to \cD^{\infty}_n, \quad f(\sigma)=\begin{cases} \sigma, \quad\quad\quad\quad\quad\quad\quad\quad\quad\quad\quad\quad \  i_\sigma =0 \\
(\sigma_{|\{0,...n-i_\sigma+1\}})_x \in \cC^x_{n-i_\sigma+1}, \quad i_{\sigma}\not=0.
 \end{cases}.
\]
The above functions are obviously bijective, and one can also check that the different $f_n$ assemble to a map $f:N(D) \to \cD^{\infty}$. We conclude that $\cD^{\infty} \simeq N(D)$.
\end{proof}

We will now define a sequence of simplicial sets ``converging" to $\cD^\infty$ i.e.
\[
\underset{n}\colim (\cD^0\subset\cD^1\subset \cdots)=\cD^\infty
\]
where the colimit is taken in $\sset$. Let $({\cC}^x_n)^{k}$ be the union of the images of the maps $\cC_{k} \to {\cC}_n$ intersected with $\cC_n^x$. For example, for $k\geq n$ we have that $({\cC}^x_n)^{k}={\cC}^x_n$ and for $k=n-1$ we have that $({\cC}^x_n)^{n-1}$ is the set of degenerate simplices in $\cC_n^x$ (where we think of ${\cC}^x_n \subset \cC_n$). We define a sub-simplicial set ${\cD}^{m} \subset \cD^{\infty}$:
\[
 {\cD}^{m}_n =\cC_{n} \sqcup \underset{k=0}{\overset{n}\coprod} (\cC_k^x)^{m}= \cC_{n} \sqcup \underset{k=m+1}{\overset{n}\coprod} (\cC_k^x)^{m} \sqcup \coprod_{i=0}^{\min\{m,n\}}  \cC_{i}^{x}. 
\]
Informally the difference between $\cD^m$ and $\cD^{m+1}$ is the $m+1$ composable arrows missing in $\cD^m$ (a formal version of this statement is \cref{lemma: D^m pushout}).\\

We also denote by $\cC_{k}^{x,x}\subset  \cC_{k}^{x}$ the subset of simplices whose last two coordinates are $x$ and the ``map" from $x$ to $x$ is the ``identity". Note that if $k \leq n-2$, then all the simplices in $(\cC_n^x)^{k}$ are degenerate, when viewed as simplices of $\cD^m$. And if $k =n-1$, then the non degenerate simplices in $(\cC_n^x)^{n-1}$ are contained in $\cC_{n}^{x,x}$.

\begin{lemma} \label{lemma: lem4}
There exists a pushout diagram in $\sset$ of the form
\[
\begin{tikzcd}
\Delta^0 \arrow[d, "s"'] \arrow[r, "x"] & \mathcal{C} \arrow[d] \\
\Delta^1 \arrow[r]                 & \mathcal{D}^0        
\end{tikzcd}
\]
where $x$ is the morphism that chooses $x$ and $s$ is the morphism that chooses $0$.
\end{lemma}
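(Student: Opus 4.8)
The plan is to exhibit $\cD^0$ explicitly as a simplicial set and recognize it as the pushout by checking the universal property on the level of simplicial sets, using that $\Delta^0 \hookrightarrow \Delta^1$ is a cofibration so that the pushout in $\sset$ computes the homotopy pushout and hence, after fibrant replacement, the pushout in $\Cat_\infty$. Concretely, unwinding the definition of $\cD^m$ at $m=0$ gives $\cD^0_n = \cC_n \sqcup \coprod_{i=0}^{\min\{0,n\}}\cC_i^x \sqcup \coprod_{k=1}^{n}(\cC_k^x)^0 = \cC_n \sqcup \cC_0^x \sqcup \coprod_{k=1}^n (\cC_k^x)^0$. Since $\cC_0^x = \{x\}$ and $(\cC_k^x)^0$ is the set of totally degenerate simplices in $\cC_k^x$ (images of $\cC_0 \to \cC_k$ landing in $\cC_k^x$), which is again just the degenerate simplex on $x$, the ``extra'' part $\cC_0^x \sqcup \coprod_{k=1}^n (\cC_k^x)^0$ is exactly one non-degenerate vertex $x'$ together with all its degeneracies. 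So $\cD^0$ is $\cC$ with one extra vertex $x'$ adjoined, plus (from the face-map formulas) a single edge $x' \to x$ — indeed the face maps of the summand $\cC_1^x \subset \cD^0_1$ send the non-degenerate edge to $x' \mapsto \partial^1 = x \in \cC_0$ and to $x' \mapsto \partial^0 = x' \in \cC_0^x$, i.e. it is an edge from $x'$ to $x$.

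Next I would identify this simplicial set with the pushout $\Delta^1 \coprod_{\Delta^0} \cC$ along $0\colon \Delta^0\to\Delta^1$ and $x\colon \Delta^0\to\cC$. The pushout $P$ has $P_n = (\Delta^1_n \sqcup \cC_n)/\!\sim$ where the two images of $\Delta^0_n$ (the constant simplex at $1\in\Delta^1$, and the constant simplex at $x\in\cC$) are glued; equivalently $P_n = \cC_n \sqcup \{\text{simplices of }\Delta^1\text{ not constant at }1\}$. The simplices of $\Delta^1$ are monotone maps $[n]\to[1]$; those not constant at $1$ are: the constant map at $0$ (for each $n$, giving $x'$ and its degeneracies) and the non-constant maps, which are determined by the position where the value jumps from $0$ to $1$ — these are exactly the ``mixed'' simplices. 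I would match these with the summands $\coprod_{k=0}^n (\cC_k^x)^0$... — wait, that description is too small; more carefully, the non-degenerate simplices of $\Delta^1$ of positive dimension are just the edge $0\to 1$, so the non-constant-at-$1$ simplices of $\Delta^1$ are the degeneracies of $x'$, the edge $0\to1$, and its degeneracies, all of which degenerate from the edge $0\to1$ or the vertex $0$. These match the summand $\cC_0^x \subset \cD^0$ (the vertex $x'$) and the component of $\cC_1^x \subset \cD^0_1$ given by the non-degenerate edge $x'\to x$, together with their degeneracies. I would write down the evident bijection $\phi_n\colon P_n \xrightarrow{\sim} \cD^0_n$ sending $\cC_n$ identically to $\cC_n\subset\cD^0_n$, the vertex $0\in\Delta^1$ to $x'$, and the edge $0\to1$ to the non-degenerate generator of $\cC_1^x$, extending by functoriality, and then check compatibility with all face and degeneracy maps by comparing the face formulas of $\cD^\infty$ (restricted to $\cD^0$) with those of the pushout — the key cases being $\partial^0$ and $\partial^1$ of the generating edge, which on the $\Delta^1$ side give $1\mapsto x$ and $0\mapsto x'$ respectively, matching the case distinction $k=n+1$ versus $k\le n$ in the face-map definition for $\sigma\in\cC^x_{n+1}$.

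The main obstacle is bookkeeping: correctly matching the indexing of the simplices of $\cD^0$ (with its three-fold decomposition inherited from $\cD^\infty$) against the simplices of the strict pushout $\Delta^1\coprod_{\Delta^0}\cC$, and verifying that the map intertwines \emph{all} the face and degeneracy operators, not just the ones involving the new edge. This is where one must be careful that the formula for $\partial^k_\cD$ on $\sigma \in \cC_l^x$ with $l \neq n+1$ — namely $\partial^k(\sigma) = \sigma$ for $k\geq l$ — corresponds on the pushout side to the fact that faces of a degenerate simplex of $\Delta^1$ ``above'' the jump are again that same degenerate simplex. Once the simplicial isomorphism $\cD^0 \cong \Delta^1\coprod_{\Delta^0}\cC$ is established, the lemma follows since this identification is of simplicial sets, not merely up to homotopy, and hence in particular exhibits $\cD^0$ as the pushout in $\sset$ as claimed.
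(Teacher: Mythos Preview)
Your approach is the same as the paper's: identify $\cD^0_n$ with $\cC_n$ together with $n+1$ extra points (the totally degenerate simplices on $x$, one in each summand $(\cC_k^x)^0$), and match this with the level-wise description of the pushout $\Delta^1 \coprod_{\Delta^0} \cC$. The paper's proof is terser but does exactly this.

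There is, however, a consistent orientation slip. The map $s$ in the statement picks the vertex $0 \in \Delta^1$, not $1$, so in the pushout it is $0$ that is identified with $x \in \cC$, and the new vertex $x'$ corresponds to $1$; the edge $0 \to 1$ therefore becomes an edge $x \to x'$. On the $\cD^0$ side this is what the face formulas actually give: for the generator $e \in (\cC_1^x)^0 \subset \cD^0_1$ one has $\partial^1_\cD(e) \in \cC_0$ (the unmarked $x$, hence the source) and $\partial^0_\cD(e) \in \cC_0^x$ (the marked copy, i.e.\ $x'$, hence the target), so $e$ is an edge from $x$ to $x'$, not from $x'$ to $x$ as you write. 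With your gluing at $1$ the map you describe would not intertwine $\partial^0$ and $\partial^1$. Once you swap $0$ and $1$ throughout, the bijection and its compatibility with faces and degeneracies go through exactly as you outline.

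One minor point: the lemma asks for a pushout in $\sset$, so the remarks about cofibrations, homotopy pushouts and fibrant replacement are not needed here; those considerations enter only in the subsequent proposition establishing that $\cD^0 \to \cD^\infty$ is a categorical equivalence.
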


\begin{proof}
Define $\cC \to \cD^0$ as the obvious map
\[
\cC_n \hookrightarrow \cC_n \sqcup \coprod_{k=0}^n (\cC_k^x)^0=\cD^0_n
\]
and $\Delta^1 \to \cD^0$ as the map that chooses the unique simplex in $(\cC^x_1)^0$. These two maps together form a map from the pushout of the diagram in the lemma $\cC \underset{\Delta^0}\coprod \Delta^1 \to \cD^0$. Using the fact that $\cD^0_n=\cC_n \underset{k=0}{\overset{n}\coprod} *$, one can verify that this map is level-wise bijective.  
\end{proof}

Note that for each $m\geq 0$ we can define $m+1$ maps $r_k:\cC_m^x \to \cD^{m-1}_m$ for $0\leq k\leq m$:
\\For $k< m$ the maps are given by
\[
 r_k:\cC_m^x \overset{\partial^k_\cD}\to \cC_{m-1}^x \xrightarrow{s^{m-1}_\cC} \cC_m^{x,x}\subset \cD^{m-1}_m, \quad k \leq m
\]
and $r_m$ is the forgetful map $\cC_m^x \to \cC_m\subset \cD^{m-1}_m$ which forgets that $x$ is ``marked". We observe that given a $m$ simplex $\sigma \in  \cC_m^x$ we can get a $m+1$ simplex $\sigma'\in \cD^\infty_{m+1}$ via the map $\cC_{m}^x \xrightarrow{s^{m}_\cC} \cC_{m+1}^{x,x}\subset \cD^\infty_{m+1}$ and that the $r_k(\sigma)$-s are all the faces of $\sigma'$ different then the face in front of the $m$-th vertex. It follows that the above maps assemble into a map of simplicial sets:
\[
\Lambda^{m+1}_{m} \times \mathcal{C}_{m}^{x} \to \mathcal{D}^{m-1}.
\]
\begin{remark} \label{remark : composing with Id}
One can think about the map
\[
\cC_{m-1}^x \xrightarrow{s^{m-1}_\cC} \cC_m^{x,x}\subset \cD^{m-1}_m
\]
as taking elements of $\cC_{m-1}^x$ thought of as $m-1$ composable arrows in $\cC$ ending in $x$, and composing them with the identity morphism on $x$ e.g.
\[
\begin{tikzcd}
y \arrow[d] & z \arrow[l] \arrow[ld] \\
x           &                       
\end{tikzcd}
\mapsto
\begin{tikzcd}
y \arrow[d] \arrow[rd] & z \arrow[ld] \arrow[l] \arrow[d] \\
x \arrow[r]            & x                               
\end{tikzcd}.
\]
\end{remark}

\begin{lemma} \label{lemma: D^m pushout}
Let $(\mathcal{C}_{m}^{x})^{\mathrm{nd}} \subset \mathcal{C}_{m}^{x}$ be the subset of non-degenerate simplices. Here, when we say non-degenerate we think of the elements of $\mathcal{C}_{m}^{x}$ as simplices in $\mathcal{C}_{m}$. We claim that the map described above defines a pushout diagram in $\sset$
\[
\begin{tikzcd}
\Lambda^{m+1}_{m} \times (\mathcal{C}_{m}^{x})^{\mathrm{nd}} \arrow[r] \arrow[d] & \mathcal{D}^{m-1} \arrow[d] \\
\Delta^{m+1} \times (\mathcal{C}_{m}^{x})^{\mathrm{nd}} \arrow[r]                  & \mathcal{D}^{m}      
\end{tikzcd}
\]
In particular the inclusion $\cD^m \to \cD^{m+1}$ is a categorical equivalence.
\end{lemma}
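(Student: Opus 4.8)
The statement to prove is \cref{lemma: D^m pushout}: that $\cD^m$ is obtained from $\cD^{m-1}$ by the indicated pushout along the inclusion $\Lambda^{m+1}_m \hookrightarrow \Delta^{m+1}$ (thickened by the discrete set of non-degenerate $x$-simplices), and consequently that $\cD^{m-1}\hookrightarrow\cD^m$ is a categorical equivalence. The first step is to pin down exactly what simplices distinguish $\cD^m$ from $\cD^{m-1}$: by the formula
\[
\cD^m_n = \cC_n \sqcup \coprod_{k=m+1}^n (\cC_k^x)^m \sqcup \coprod_{i=0}^{\min\{m,n\}} \cC_i^x,
\]
the only new piece compared to $\cD^{m-1}_n$ is the summand $\cC_m^x$ appearing for $n\ge m$ — i.e. the ``genuinely new'' simplices are the degeneracies (in $\cD^m$) of elements of $\cC_m^x$, together with, for $n=m$, the non-degenerate ones. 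So I would first argue that a simplex of $\cD^m$ not lying in $\cD^{m-1}$ is exactly a simplex of the form $\rho^*(\sigma)$ for $\sigma\in(\cC_m^x)^{\mathrm{nd}}$ and $\rho\colon[n]\twoheadrightarrow[m]$ a surjection, and that each such simplex has a \emph{unique} such presentation (this uses \cref{lemma : no same degn} applied inside $\cD^\infty$, once one knows these $\sigma$ remain non-degenerate in $\cD^\infty$ — which follows from \cref{lemma: lem3}, since $x'$ really is a new object of $D$).

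**Reducing to a combinatorial pushout.** Having identified the new simplices, I would verify the pushout square levelwise. The right vertical map $\cD^{m-1}\to\cD^m$ is a levelwise injection; the bottom map $\Delta^{m+1}\times(\cC_m^x)^{\mathrm{nd}}\to\cD^m$ sends $(\mathrm{id}_{[m+1]},\sigma)$ to the $(m+1)$-simplex $s^m_\cC(\sigma)\in\cC_{m+1}^{x,x}$ described in \cref{remark : composing with Id}, and more generally $(\alpha\colon[n]\to[m+1],\sigma)$ to $\alpha^*$ of that $(m+1)$-simplex. Concretely this is where \cref{lemma : seset injective map} is designed to be applied, with $A=(\cC_m^x)^{\mathrm{nd}}$, $n$ there equal to $m+1$ here, and $i=m$: I must check its hypotheses (1)–(4). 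Hypothesis (1) is that $\cD^{m-1}\hookrightarrow\cD^\infty$ — or rather into whatever target we take — is a levelwise injection, which is clear from the explicit description. Hypothesis (4) says $j_{m+1}$ and $g_m$ send non-degenerate simplices to non-degenerate ones; $j$ is the map sending $\sigma\mapsto s^m_\cC(\sigma)$ composed-with-$\mathrm{id}$, whose image simplices have last two vertices $x$ with identity between them, and these are non-degenerate in $\cD^\infty$ precisely because $\sigma$ was non-degenerate in $\cC$ (the degeneracy is ``at the very end'', and $\cD^\infty$ is built so that this marking genuinely adds a simplex). Hypothesis (2) is the injectivity of $g^{\mathrm{nd}}_m$, and hypothesis (3) is the two emptiness-of-intersection claims — $\mathrm{Im}(g^{\mathrm{nd}}_m)\cap\mathrm{Im}(f_m)=\emptyset$ and $\mathrm{Im}(j^{\mathrm{nd}}_{m+1})\cap\mathrm{Im}(f_{m+1})=\emptyset$, where $f$ is $\cD^{m-1}\hookrightarrow\cD^\infty$. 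These say precisely that the new $m$- and $(m+1)$-simplices produced by gluing in the $\Delta^{m+1}$'s are not already present in $\cD^{m-1}$; again this is read off from the levelwise formula for $\cD^m_n$, noting that the face $\delta_m$ of the glued $(m+1)$-simplex is exactly the ``composed-with-identity'' simplex in $(\cC_{m}^{x})^{m-1}\subset\cC_m^{x,x}$, which \emph{is} already in $\cD^{m-1}$, hence is the overlap, while the opposite face and the top simplex are new.

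**Concluding the categorical equivalence.** Once the square is shown to be a pushout of simplicial sets, it is a pushout along $\Lambda^{m+1}_m\times(\cC_m^x)^{\mathrm{nd}}\hookrightarrow\Delta^{m+1}\times(\cC_m^x)^{\mathrm{nd}}$, which is a coproduct of inner horn inclusions $\Lambda^{m+1}_m\hookrightarrow\Delta^{m+1}$ (inner since $0<m<m+1$, assuming $m\ge 1$; the case $m=0$ is \cref{lemma: lem4}, already handled separately). Inner horn inclusions are trivial cofibrations in the Joyal model structure, the class of trivial cofibrations is closed under coproducts and cobase change, so $\cD^{m-1}\hookrightarrow\cD^m$ is a trivial cofibration, in particular a categorical equivalence. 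That is the ``In particular'' clause.

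**Where the difficulty lies.** The routine part is the bookkeeping with the face/degeneracy formulas; the genuine obstacle is verifying hypothesis (3) of \cref{lemma : seset injective map} cleanly — i.e. correctly identifying which faces of the glued $(m+1)$-simplices are ``old'' and which are ``new,'' and checking that the only overlap between the image of the attached cells and $\cD^{m-1}$ is along the horn $\Lambda^{m+1}_m$, with nothing sneaking in through a degeneracy. This amounts to showing that a non-degenerate simplex of $\cC_m^x$, after being ``capped off with an identity on $x$,'' produces a simplex genuinely outside $\cD^{m-1}$, and that distinct such simplices (or distinct surjections onto them) give distinct results — which is exactly the content of \cref{lemma : no same degn} once the non-degeneracy is established. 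I would spend most of the write-up making that identification precise and then simply invoke \cref{lemma : seset injective map} and the horn-filling closure properties.
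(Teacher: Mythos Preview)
Your proposal is correct and follows essentially the same approach as the paper: identify the map $\Delta^{m+1}\times(\cC_m^x)^{\mathrm{nd}}\to\cD^m$ sending $\sigma$ to $s^m_\cC(\sigma)\in\cC_{m+1}^{x,x}$, verify surjectivity of the induced map from the pushout by a case analysis of non-degenerate simplices of $\cD^m$, and verify injectivity by checking the four hypotheses of \cref{lemma : seset injective map} with $X=\cD^{m-1}$, $Y=\cD^m$, $A=(\cC_m^x)^{\mathrm{nd}}$. One small imprecision to correct in your write-up: your description of the simplices of $\cD^m\setminus\cD^{m-1}$ as ``$\rho^*(\sigma)$ for $\rho\colon[n]\twoheadrightarrow[m]$ surjective and $\sigma\in(\cC_m^x)^{\mathrm{nd}}$'' misses the non-degenerate $(m{+}1)$-simplices $s^m_\cC(\sigma)\in(\cC_{m+1}^x)^m\setminus(\cC_{m+1}^x)^{m-1}$ (and their degeneracies), which lie in a different summand and are not $\cD$-degeneracies of anything in $\cC_m^x$; you do handle these correctly in your second paragraph, so just make sure the surjectivity argument accounts for both the new $m$-simplices and the new $(m{+}1)$-simplices, as the paper does.
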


\begin{proof}
Denote the pushout by $P$. Let $i: \cD^{m-1} \hookrightarrow  \cD^m$ be the natural inclusion. Define
\[
f:\Delta^{m+1} \times (\mathcal{C}_{m}^{x})^{\mathrm{nd}} \to \cD^m
\]
by sending $\sigma \in (\mathcal{C}_{m}^{x})^{\mathrm{nd}}$ to the simplex in $\cC^{x,x}_{m+1}$ we get by composing with $\Id:x \to x$ see \cref{remark : composing with Id} e.g.
\[
\begin{tikzcd}
y \arrow[d] & z \arrow[l] \arrow[ld] \\
x           &                       
\end{tikzcd}
\mapsto
\begin{tikzcd}
y \arrow[d] \arrow[rd] & z \arrow[ld] \arrow[l] \arrow[d] \\
x \arrow[r]            & x                               
\end{tikzcd}
\]
By the universal property of pushout we get a map $g:P \to \cD_m$. We will show that $g$ is an isomorphism by verifying that $g_n:P_n \to \cD_n^m$ is bijective.
\begin{itemize}
    \item \textbf{$g_n$ is surjective:}\\
    Note that since $g$ is a natural transformation it suffices to show surjectivity on the non-degenerate simplices of $\cD^m$. Let $\sigma \in \cD^m_n$ be non-degenerate. If
    \[
    \sigma \in    \cC_{n} \sqcup \underset{k=0}{\overset{n}\coprod} (\cC_k^x)^{m-1} = \cD^{m-1}_n
    \]
    then $\sigma$ is obviously in the image $i$. Therefore, we may assume $n \in \{m,m+1\}$. If $n=m$, then by our analysis of the non degenerate simplices in $(\cC_k^x)^{m}$ we may also assume that $\sigma$ is contained in one of the following two subsets of $\cC_m^x$ -
    \[
    \sigma \in (\mathcal{C}_{m}^{x})^{\mathrm{nd}} \quad \textrm{or} \quad \sigma \in \mathcal{C}_{m}^{x,x}.
    \]
    We already handled the case of $\sigma \in \mathcal{C}_{m}^{x,x}\subset \cD^{m-1}_m$, so we assume that $\sigma \in (\mathcal{C}_{m}^{x})^{\mathrm{nd}}$. By assumption we have we have that
    \[
    \partial^m f_{m+1}(\sigma) =\sigma  \in \cD^m_m.
    \]
    Finally, if $n=m+1$, then we may assume that $\sigma \in \cC_{m+1}^{x,x}$ and we have
    \[
    f_{m+1}(\partial^m(\sigma))= \sigma \in \cD^m_{m+1}.
    \]
    We conclude that $g_n$ is surjective.
    \item \textbf{$g_n$ is injective:} We shall check that the diagram in the lemma satisfies the conditions of \cref{lemma : seset injective map}. Indeed, by assumption $i$ is level-wise injective. Note that the $g_{m}$ from \cref{lemma : seset injective map} that corresponds to $f$ is represented by the obvious map 
    \[
    (\mathcal{C}_{m}^{x})^{\mathrm{nd}} \hookrightarrow \mathcal{C}_{m}^{x} \subset \cD^m_m.
    \]
    Hence, $g_m$ sends non degenerate simplices to non degenerate simplices and by the definition of $\cD^m$,   $\mathrm{Im}(g_{m}^\mathrm{nd}) \cap \mathrm{Im}(i_{m})=\emptyset$. From the definition of $\cD^m$ and $\cD^{m-1}$ one can see that $\mathrm{Im}(f_{m+1}^\mathrm{nd}) \cap \mathrm{Im}(i_{m+1})=\emptyset$ and that $f_{m+1}$ sends non degenerate simplices to non degenerate simplices. 
    \end{itemize}

\end{proof}

Based on all of the above we get:  

\begin{proposition}\label{proposition: prop2}
There is a pushout diagram in $\Cat_{\infty}$
\[
\begin{tikzcd}
\Delta^0 \arrow[d, "s"'] \arrow[r] & \mathcal{C} \arrow[d] \\
\Delta^1 \arrow[r]                 & \mathcal{D}^\infty
\end{tikzcd}
\]
where $\cC$ is the nerve of a $1$-category. Furthermore, $\mathcal{D}^\infty \simeq N(D)$  where $D$ is as in \cref{lemma: lem3}.\footnote{This proposition is true for a general quasi-category $\cC$ (i.e. not necessarily the nerve of a $1$-category), but as we will only need the lemma for the case of a 1-category the general proof is omitted from this paper.}
\end{proposition}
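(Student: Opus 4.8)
The plan is to combine the pushout presentation of $\cD^\infty$ through the filtration $\cD^0 \subset \cD^1 \subset \cdots$ with the identification of $\cD^\infty$ as the nerve of $D$ from \cref{lemma: lem3}. First I would use \cref{lemma: lem4} to see that $\cD^0$ is the pushout in $\sset$ of $\Delta^1 \leftarrow \Delta^0 \xrightarrow{x} \cC$, and then observe that by \cref{lemma: D^m pushout} each inclusion $\cD^m \hookrightarrow \cD^{m+1}$ is a categorical equivalence, being a pushout of the inner-anodyne-against-nothing map $\Lambda^{m+1}_m \times (\cC^x_m)^{\mathrm{nd}} \hookrightarrow \Delta^{m+1} \times (\cC^x_m)^{\mathrm{nd}}$ (here $m \ge 1$, so the horn $\Lambda^{m+1}_m$ with $0 < m < m+1$ is inner, hence the inclusion is inner anodyne, hence a trivial cofibration in the Joyal model structure). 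Since the Joyal model structure is combinatorial and left proper, a transfinite composition of trivial cofibrations is a trivial cofibration, so the inclusion $\cD^0 \hookrightarrow \colim_m \cD^m = \cD^\infty$ is a categorical equivalence.

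Next I would assemble these facts. The pushout square from \cref{lemma: lem4} computing $\cD^0$ in $\sset$ has $\Delta^0 \hookrightarrow \Delta^1$ a cofibration, so by left properness of the Joyal model structure it is also a homotopy pushout square; hence $\cD^0$ computes the pushout $\Delta^1 \amalg_{\Delta^0} \cC$ in $\Cat_\infty$. Composing with the categorical equivalence $\cD^0 \xrightarrow{\sim} \cD^\infty$ shows that $\cD^\infty$ also computes this pushout in $\Cat_\infty$, which gives the displayed square. Finally, \cref{lemma: lem3} identifies $\cD^\infty$ with $N(D)$ for the explicitly described $1$-category $D$, which in particular shows $\cD^\infty$ is the nerve of a $1$-category.

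The one point requiring a little care — and the place I expect the only genuine subtlety — is the claim that the maps $\Lambda^{m+1}_m \times (\cC^x_m)^{\mathrm{nd}} \hookrightarrow \Delta^{m+1} \times (\cC^x_m)^{\mathrm{nd}}$ are trivial cofibrations in the Joyal model structure. This is where we use $m \ge 1$ so that the horn is \emph{inner} (for $m = 0$ one would be gluing $\Delta^1$ along $\Lambda^1_0 = \Delta^0$, which is exactly the $\cD^0$ step handled separately in \cref{lemma: lem4}); for $m \ge 1$, $\Lambda^{m+1}_m \hookrightarrow \Delta^{m+1}$ is inner anodyne, and inner anodyne maps are stable under product with any simplicial set and under coproducts, so the displayed map is inner anodyne, hence a Joyal trivial cofibration. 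Everything else is a formal consequence of left properness and the stability of trivial cofibrations under pushout and transfinite composition in a combinatorial model category, together with the already-proved \cref{lemma: lem3} and \cref{lemma: lem4}.
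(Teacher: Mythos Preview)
Your proposal is correct and follows essentially the same approach as the paper: use \cref{lemma: lem4} to identify $\cD^0$ as the simplicial-set pushout, use \cref{lemma: D^m pushout} to see each $\cD^m \hookrightarrow \cD^{m+1}$ is a categorical equivalence, and pass to the colimit to conclude that $\cD^0 \to \cD^\infty$ is a categorical equivalence. The paper phrases the last step as ``a filtered colimit of weak equivalences is a weak equivalence'' rather than your ``transfinite composite of trivial cofibrations,'' and it leaves the left-properness justification for the homotopy pushout implicit, but these are cosmetic differences rather than a different strategy.
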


\begin{proof}
By \cref{lemma: lem4} $\cD^0$ is the pushout in $\sset$. Due to the fact that a filtered colimit of weak equivalences remains a weak equivalence (in the Joyal model structure), it follows from \cref{lemma: D^m pushout} that $\cD^{0} \to \cD^{\infty}$ is a categorical equivalence, proving the claim.
\end{proof}

\subsection{Inverting the Morphism}

Let $C$ be a $1$-category and let $D ^{\lhd}_C$ be the $1$-category one gets by gluing arrows to all the objects in $C$, see \cref{construction: cons2}, and then taking a cone. In this subsection we show that inverting the arrows from the initial object in $D_C^{\lhd}$ to the objects ``which are not in $C$", gives rise to a $1$-category.\\
As a first step, we will present a criterion for when an object is initial in terms of ``mapping to" property.   

\begin{lemma} \label{lemma : initial as mapping to prop}
Let $\cC \in \Cat_\infty$. Then, $x \in \cC$ is initial if and only if for every $\cT \in \Cat_\infty$ and $t \in \cT$, the forgetful map $\cT_{t/}\to \cT$ induces an equivalence
\[
\Fun^{x \to (\Id:t\to t)}(\cC,\cT_{t/}) \to \Fun^{x \to t}(\cC,\cT)
\]
where $\Fun^{x \to \bullet}(\cC,(-))$ is the pullback
\[\begin{tikzcd}
	{\Fun^{x \to \bullet}(\cC,(-))} & {\Fun(\cC,(-))} \\
	\mathrm{pt} & (-)
	\arrow[from=1-1, to=1-2]
	\arrow[from=1-1, to=2-1]
	\arrow["{\mathrm{ev}_x}", from=1-2, to=2-2]
	\arrow["\bullet"', from=2-1, to=2-2]
\end{tikzcd}\]
\end{lemma}

\begin{proof}
Let $\cT \in \Cat_\infty$ and $t \in \cT$. We denote by $p_\cT:\cT_{t/} \to \cT$ the forgetful functor and by $i: \cC \to \cC^{\lhd}$ the natural inclusion. We have a commutative diagram
\[
\begin{tikzcd}
 * \quad\quad   {\Fun^{x \to (\Id:t\to t)}(\cC,\cT_{t/})} && {\Fun^{(-\infty\to x) \to (\Id:t\to t)}(\cC^{\lhd},\cT)} \\
    \\
    && {\Fun^{x \to t}(\cC,\cT)}
    \arrow["\sim", from=1-1, to=1-3]
    \arrow["{ (-)\circ i}", from=1-3, to=3-3]
    \arrow["{p_\cT \circ(-)}"', from=1-1, to=3-3]
\end{tikzcd} 
\]
where the upper arrow in the diagram
\[
\Fun^{x \to (\Id:t\to t)}(\cC,\cT_{t/}) \to \Fun^{(-\infty\to x) \to (\Id:t\to t)}(\cC^{\lhd},\cT)
\]
is the equivalence arising from the universal property of the under category. \\

Assume that $x$ is initial. In this case, any $\bar{F} \in \Fun^{(-\infty\to x) \to (\Id:t\to t)}(\cC^{\lhd}, \cT)$ is the right Kan extension of its restriction to $\cC$. We conclude that composition with $i$ induces an equivalence:
\[
\Fun^{(-\infty\to x) \to (\Id:t\to t)}(\cC^{\lhd}, \cT) \to \Fun^{x \to t} (\cC, \cT).
\]
By the diagram $*$ we have that $p_\cT$ induces an equivalence:
\[
\Fun^{x \to (\Id:t\to t)} (\cC, \cT_{t/}) \to \Fun^{x \to t} (\cC, \cT).
\]
For the other implication: Assume that for all $\cT \in \Cat_\infty$ and $t \in \cT$ composition with $p_\mathcal{T}$ induces an equivalence
\[
\Fun^{x \to (\Id:t\to t)} (\cC, \cT_{t/}) \to \Fun^{x \to t} (\cC, \cT).
\]
Choosing $\cT=\cC$ and $t=x$ we have that $p_\cC$ has a right inverse $q$. Since $q \in \Fun^{x \to (x,\Id)} (\cC, \cC_{x/})$, by definition $q(x)=(x,\Id)$. Furthermore, for all $y \in \cC$ the composition
\[
\Map_\cC(x,y) \overset{q(-)}\to \Map((x,\Id),q(y)) \overset{p(-)}\to \Map(x,y)
\]
is an equivalence. We conclude that $\Map_\cC(x,y)$ is a retract of a contractible space and thus contractible.
\end{proof}

\begin{corollary} \label{corollary: initial in loc}
Let $C$ be a $1$-category and assume that it has an initial object $\emptyset$. Let $W$ be a set of morphisms. Then, the image of $\emptyset$ in $C[W^{-1}]$ is an initial object.
\end{corollary}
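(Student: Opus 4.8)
The plan is to verify the criterion of \cref{lemma : initial as mapping to prop} for the object $x \in C[W^{-1}]$, the image of $\emptyset$. Write $L \colon C \to C[W^{-1}]$ for the localization functor, so that $L(\emptyset) = x$. The one external input needed is the universal property of $C[W^{-1}]$ (which may be taken as its definition): for every $\cE \in \Cat_\infty$, precomposition with $L$ is a fully faithful functor $\Fun(C[W^{-1}], \cE) \hookrightarrow \Fun(C, \cE)$ with essential image the full subcategory $\Fun^{W}(C, \cE)$ of functors inverting every morphism in $W$. Since $L$ is the identity on objects and $L(\emptyset)=x$, restricting both sides to functors whose value on the initial object equals a prescribed $e \in \cE$ yields a fully faithful functor $\Fun^{x \to e}(C[W^{-1}], \cE) \hookrightarrow \Fun^{\emptyset \to e}(C, \cE)$ with essential image $\Fun^{\emptyset \to e, W}(C, \cE)$, the functors sending $\emptyset$ to $e$ and $W$ to equivalences.

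Now fix $\cT \in \Cat_\infty$ and $t \in \cT$, and let $p_\cT \colon \cT_{t/} \to \cT$ be the forgetful functor. Restriction along $L$ produces a commutative square
\[
\begin{tikzcd}
\Fun^{x \to (\Id\colon t\to t)}(C[W^{-1}], \cT_{t/}) \arrow[r] \arrow[d, "p_\cT\circ(-)"'] & \Fun^{\emptyset \to (\Id\colon t\to t)}(C, \cT_{t/}) \arrow[d, "p_\cT\circ(-)"] \\
\Fun^{x \to t}(C[W^{-1}], \cT) \arrow[r] & \Fun^{\emptyset \to t}(C, \cT)
\end{tikzcd}
\]
whose right vertical arrow is an equivalence by \cref{lemma : initial as mapping to prop} applied to $C$ with its initial object $\emptyset$. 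If we show the left vertical arrow is an equivalence as well, then the converse direction of \cref{lemma : initial as mapping to prop} shows that $x$ is initial in $C[W^{-1}]$, which proves the corollary.

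To that end, recall that $p_\cT$ is a left fibration, hence conservative, so a functor $F \colon C \to \cT_{t/}$ inverts $W$ if and only if $p_\cT \circ F$ does. Thus, after replacing the two $C[W^{-1}]$-functor categories in the square by their essential images under the fully faithful horizontal maps of the first paragraph, the square becomes
\[
\begin{tikzcd}
\Fun^{\emptyset \to (\Id\colon t\to t), W}(C, \cT_{t/}) \arrow[r, hook] \arrow[d, "p_\cT\circ(-)"'] & \Fun^{\emptyset \to (\Id\colon t\to t)}(C, \cT_{t/}) \arrow[d, "p_\cT\circ(-)"] \\
\Fun^{\emptyset \to t, W}(C, \cT) \arrow[r, hook] & \Fun^{\emptyset \to t}(C, \cT)
\end{tikzcd}
\]
in which the horizontal maps are inclusions of full subcategories and the right vertical map is the equivalence above. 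Since an object of the upper-right category lies in the upper-left subcategory exactly when its image under this equivalence lies in the lower-left subcategory, the equivalence restricts to an equivalence between the two left-hand full subcategories; that is, the left vertical map is an equivalence, as required. The one point that needs genuine care is the bookkeeping of the first paragraph — that the universal property of $C[W^{-1}]$ is compatible both with the condition ``value on the initial object equals $e$'' (using that $L$ is the identity on objects with $L(\emptyset)=x$) and with passing from $\cT$ to the coslice $\cT_{t/}$ (using that inverting $W$ can be tested after composing with the conservative $p_\cT$). Granting these identifications, the rest is a purely formal diagram chase, so I expect no serious obstacle beyond making them precise.
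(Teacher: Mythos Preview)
Your proof is correct and follows essentially the same approach as the paper's: both apply the universal property of localization to identify $\Fun^{x \to e}(C[W^{-1}],\cE)$ with $\Fun^{W \to \mathrm{Eq},\,\emptyset \to e}(C,\cE)$, invoke the forward direction of \cref{lemma : initial as mapping to prop} for $C$, and then observe that the resulting equivalence restricts to $W$-inverting functors (you make explicit that this uses conservativity of the left fibration $p_\cT$, which the paper leaves implicit), concluding via the converse direction of the lemma. The paper compresses all of this into a single chain of equivalences, but the content is the same.
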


\begin{proof}
Let $\cT \in \Cat_\infty$ and $t \in \cT$. By the universal property of localization and \cref{lemma : initial as mapping to prop} we have the following chain of equivalences:
\[
\Fun^{\emptyset \to t} (C[W^{-1}],\cT) \simeq \Fun^{W \to \mathrm{Eq}, \ \emptyset \to t}(C, \cT) \simeq \Fun^{W \to \mathrm{Eq}, \ \emptyset \to (\Id:t\to t)}(C, \cT_{t/}) \simeq \Fun^{\emptyset \to (\Id:t\to t)} (C[W^{-1}],\cT_{t/}).
\]
Therefore, the claim follows from \cref{lemma : initial as mapping to prop}.  
\end{proof}

\begin{construction} \label{construction: cons2}
Let $C$ be a $1$-category. Choose a well-ordering on $C_\delta$ i.e. an isomorphism $C_\delta \simeq \mu$ for some ordinal $\mu$.
We shall define categories $D_j \in \Cat_1$ for $j \leq \mu$ by induction. \\
Let $D_0 =C$. \\
For $j$ successor, we define $D_j$ to be the following pushout in $\Cat_\infty$:
\[
\begin{tikzcd}
    {\Delta^0} & {D_{j-1}} \\
    {\Delta^1} & {D_j}
    \arrow["s"', from=1-1, to=2-1]
    \arrow["{x_{j-1}}", from=1-1, to=1-2]
    \arrow[from=1-2, to=2-2]
    \arrow[from=2-1, to=2-2]
\end{tikzcd}
\]
where $s$ chooses $0$ and $x_{j-1}$ chooses $x_{j-1} \in C$. \\
For $j$ a limit ordinal we let $D_j$ be the colimit in $\Cat_\infty$:
\[
D_j:=\underset{k < j}\colim \ D_k.
\]
According to the previous section, $D_C:=D_\mu$ is also a $1$-category. We say that $D_C$ is the category obtained from $C$ by gluing arrows to all the objects. We denote the objects of $D_C$ which are not in $C$ by $\{x'\}_{x \in C}$.
\end{construction}

Recall that our goal is to show that the category obtained by inverting the morphisms $-\infty\to x_j'$ in $D_C^\lhd$ is a $1$-category. We will do so by using the hammock localization construction. For the convenience of the reader we shall repeat the construction here:

\begin{construction}\cite[2.1]{DwyerKan79}  \label{construction: cons3}
Let $C$ be a $1$-category and $W \subset C$ be a wide-subcategory i.e. one which contains all the objects. The hammock localization of $C$ with respect to $W$ is a simplicial category $L^H(C,W)$ defined as follows: for every two objects $x,y \in C$ the $k$-simplices of $\mathrm{Hom}_{L^H(C,W)}(x,y)$ will be the ``reduced hammocks of depth $k$ and any length" between $x$ and $y$ i.e. commutative diagram of the form:
\[
\begin{tikzcd}
    & {z_{0,1}} & {z_{0,2}} & {...} & {z_{0,n-1}} \\
    & {z_{1,1}} & {z_{1,2}} & {...} & {z_{1,n-1}} \\
    x & {...} & {...} & {...} & {...} & y \\
    & {z_{k,1}} & {z_{k,2}} & {...} & {z_{k,n-1}}
    \arrow[from=1-2, to=2-2]
    \arrow[from=1-3, to=2-3]
    \arrow[from=1-5, to=2-5]
    \arrow[no head, from=1-2, to=1-3]
    \arrow[no head, from=2-2, to=2-3]
    \arrow[no head, from=2-3, to=2-4]
    \arrow[no head, from=2-4, to=2-5]
    \arrow[no head, from=1-3, to=1-4]
    \arrow[no head, from=1-4, to=1-5]
    \arrow[from=1-4, to=2-4]
    \arrow[no head, from=3-1, to=1-2]
    \arrow[no head, from=3-1, to=2-2]
    \arrow[no head, from=3-1, to=3-2]
    \arrow[from=2-2, to=3-2]
    \arrow[from=2-3, to=3-3]
    \arrow[from=2-4, to=3-4]
    \arrow[from=2-5, to=3-5]
    \arrow[no head, from=3-2, to=3-3]
    \arrow[no head, from=3-3, to=3-4]
    \arrow[no head, from=3-4, to=3-5]
    \arrow[from=3-2, to=4-2]
    \arrow[no head, from=4-2, to=4-3]
    \arrow[from=3-3, to=4-3]
    \arrow[no head, from=4-3, to=4-4]
    \arrow[no head, from=4-4, to=4-5]
    \arrow[from=3-5, to=4-5]
    \arrow[from=3-4, to=4-4]
    \arrow[no head, from=3-1, to=4-2]
    \arrow[no head, from=3-5, to=3-6]
    \arrow[no head, from=4-5, to=3-6]
    \arrow[no head, from=3-6, to=2-5]
    \arrow[no head, from=1-5, to=3-6]
\end{tikzcd}
\]
in which:
\begin{enumerate}
\item $n$ is an integer larger than $0$.
\item All the vertical maps are in $W$.
\item In each column, all maps point in the same direction; if they point to the left, then they are in $W$.
\item The maps in adjacent columns point in different directions.
\item No column contains only the identity map.
\end{enumerate}

Faces and degeneracies are defined by omitting or repeating rows. If the resulting hammock is not reduced, i.e. does not satisfy condition $4$ or $5$, then we make it reduced, by composing adjacent columns whenever their maps point in the same direction and omitting columns which contain only the identity map. This gives a model for the localization of $C$ with respect to $W$ by \cite[17 section 5]{Stevenson15}.
\end{construction}

\begin{proposition} \label{proposition: prop3}
Let $C$ be a $1$-category and denote by $D:=D_C$ the category obtained from $C$ by gluing arrows to all the objects. Let $W' \subset D^{\lhd}$ be the full subcategory on $\{- \infty\}\cup \{x'\}_{x\in C}$ where $-\infty$ is the cone point, and denote by $W = W' \cup D_\delta$ the wide-subcategory generated by $W'$. Then, the hammock localization $E:=D^{\lhd}[W^{-1}]=L^H(D^\lhd,W)$ is equivalent to a $1$-category.
\end{proposition}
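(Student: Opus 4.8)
The plan is to work with the explicit simplicial model $E = L^{H}(D^{\lhd},W)$ provided by \cref{construction: cons3} and to show directly that every mapping space of $E$ is homotopy-discrete; this proves $E$ is equivalent to a $1$-category, and the same analysis reads off what that $1$-category is.

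The first step is to exploit how trivial $W$ is. By \cref{lemma: lem3} and the cone structure of $D^{\lhd}$, the only non-identity morphisms of $W$ are the maps $w_{x}\colon-\infty\to x'$, and no two of them are composable: the only morphism out of any $x'$ in $D^{\lhd}$ is $\Id_{x'}$, and $-\infty$ receives no non-identity morphism. Hence in any reduced hammock over $(D^{\lhd},W)$ a left-pointing (i.e.\ $W$-valued) column is extremely rigid — in each row it is either $\Id_{-\infty}$ or a copy of one fixed $w_{x}$, its right-hand object is constantly $-\infty$, and its left-hand object is $-\infty$ at the top and $x'$ at the bottom — so, up to this inessential bookkeeping, such a column contributes nothing but the index $x$.

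With this in hand I would classify reduced zigzags from $P$ to $Q$ according to the types of $P$ and $Q$ (each being $-\infty$, an object $a$ of $C$, or an object $x'$). Using the constraint above for the backward columns and \cref{lemma: lem3} for the forward columns, one checks that apart from a single forward morphism of $D^{\lhd}$ (when one exists) every reduced zigzag has the shape
\[
P \xrightarrow{\ f\ } x_{1}' \xleftarrow{\ w_{x_{1}}\ } -\infty \xrightarrow{\ w_{x_{2}}\ } x_{2}' \xleftarrow{\ w_{x_{2}}\ }-\infty \longrightarrow \cdots \xleftarrow{\ w_{x_{m}}\ } -\infty \xrightarrow{\ g\ } Q,
\]
possibly truncated at one or both ends. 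Each interior piece $-\infty\xrightarrow{w_{x_{i}}}x_{i}'\xleftarrow{w_{x_{i}}}-\infty$ realizes $w_{x_{i}}^{-1}w_{x_{i}}$ and can be deleted along a $1$-simplex of the mapping complex; after deleting all of them — and, when $P=-\infty$, the analogous leading piece — one is left with a normal form having at most one backward column. It then remains to check that (i) two normal forms lying in the same component of $\hom_{E}(P,Q)$ are equal and (ii) the component of a normal form is contractible, the latter by exhibiting an explicit contraction (once more using that $W$ is ``one level deep'', so that higher hammocks over a fixed normal form are rigid enough to collapse). Parts (i) and (ii) identify $\hom_{E}(P,Q)$ with the discrete set of normal forms, and $[-\infty]$ is initial by \cref{corollary: initial in loc}; hence $E$ is equivalent to a $1$-category, and its hom-sets are read off as the normal forms.

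The main obstacle is (ii): localizing a $1$-category can in general create higher homotopy (inverting every morphism of a one-object category on a monoid already does), so contractibility of the components is exactly where the shape of $W$ — morphisms out of an initial object, into objects that are maximal in $D^{\lhd}$ — must be used in an essential way. A convenient alternative packaging, parallel to \cref{construction: cons2}, is a transfinite induction inverting the $w_{x}$ one at a time along the chosen well-order on $C_{\delta}$: the limit stages are filtered colimits of $1$-categories, hence $1$-categories, and each successor stage reduces to the single-morphism statement ``inverting $w\colon P\to Q$ with $P$ initial and $Q$ maximal preserves equivalence to a $1$-category'', which is the case of the hammock computation above in which $W$ has exactly one non-identity morphism.
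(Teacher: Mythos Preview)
Your overall strategy---compute the hammock mapping spaces case by case and show each is homotopy-discrete---is exactly the paper's. You also correctly isolate the structure of $W$ (only the arrows $w_x\colon-\infty\to x'$, none composable) and, via \cref{corollary: initial in loc}, the case $P=-\infty$. The difference, and the place where your argument is incomplete, is step (ii).

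Your normal-form reduction shows that every vertex of $\hom_E(P,Q)$ is connected by an edge path to a short zigzag, but as you yourself note this says nothing about higher homotopy; you then defer to ``an explicit contraction'' without saying what it is. The paper sidesteps this entirely by a sharper structural observation you are missing: once the first and last columns of a reduced hammock from $P$ to $Q$ (with $P,Q\in C$) are stripped off, what remains is \emph{literally} a reduced hammock in the pair $((C_\delta)^{\lhd},(C_\delta)^{\lhd})$---all interior objects lie in $\{-\infty\}\cup\{x'\}_{x\in C}$, all interior maps lie in $W$, and the vertical maps, being in $W$, are in $(C_\delta)^{\lhd}$ as well. This yields an \emph{isomorphism of simplicial sets}
\[
\hom_E(P,Q)\;\cong\;\coprod_{g\in \hom_C(P,Q)}\Delta^0\ \sqcup\ \coprod_{z\in C}\ \coprod_{f\in\hom_D(P,z')}\hom_{L^H((C_\delta)^{\lhd},(C_\delta)^{\lhd})}(z,-\infty),
\]
and similarly for $Q=-\infty$. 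Since $(C_\delta)^{\lhd}$ has an initial object, its total localization is contractible, so each summand on the right is a point; no separate ``contract the component'' argument is needed. In your language, the paper does not reduce to a normal form and then argue contractibility---it recognises the whole component, with all its higher simplices, as a mapping space in an obviously contractible category.

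Your transfinite alternative is reasonable but does not avoid the issue: the successor step ``invert one $w\colon-\infty\to x'$ with $-\infty$ initial and $x'$ maximal'' is exactly the single-morphism instance of the same hammock computation, and you still have to supply the contractibility of its components. The paper's identification above specialises to that case as well.
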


\begin{proof}
We will show that $\Map_E(x,y)$ is equivalent to a discrete space for any $x,y \in E$. We will do so by case by case analyses.
\begin{itemize}
    \item Assume that $x=-\infty$.\\
    Since $-\infty$ is initial in $D^{\lhd}$, by \cref{corollary: initial in loc}, we have that $\Map_E(-\infty,y)$ is contractible and in particular equivalent to a discrete space for all $y \in E$.
    
    \item Assume that $x$ and $y$ are in the image of $C$ in $E$.\\
    We will show that under the above assumption, one can build an explicit isomorphism of simplicial sets between $\Map_E(x,y)$ and a discrete simplicial set.\\
    Assume that we have a reduced hammock of length bigger than $0$ 
    \[
    \begin{tikzcd}
    & {z_{0,1}} & {z_{0,2}} & {...} & {z_{0,n-1}} \\
    & {z_{1,1}} & {z_{1,2}} & {...} & {z_{1,n-1}} \\
    x & {...} & {...} & {...} & {...} & y \\
    & {z_{k,1}} & {z_{k,2}} & {...} & {z_{k,n-1}}
    \arrow[from=1-2, to=2-2]
    \arrow[from=1-3, to=2-3]
    \arrow[from=1-5, to=2-5]
    \arrow[no head, from=1-2, to=1-3]
    \arrow[no head, from=2-2, to=2-3]
    \arrow[no head, from=2-3, to=2-4]
    \arrow[no head, from=2-4, to=2-5]
    \arrow[no head, from=1-3, to=1-4]
    \arrow[no head, from=1-4, to=1-5]
    \arrow[from=1-4, to=2-4]
    \arrow[no head, from=3-1, to=1-2]
    \arrow[no head, from=3-1, to=2-2]
    \arrow[no head, from=3-1, to=3-2]
    \arrow[from=2-2, to=3-2]
    \arrow[from=2-3, to=3-3]
    \arrow[from=2-4, to=3-4]
    \arrow[from=2-5, to=3-5]
    \arrow[no head, from=3-2, to=3-3]
    \arrow[no head, from=3-3, to=3-4]
    \arrow[no head, from=3-4, to=3-5]
    \arrow[from=3-2, to=4-2]
    \arrow[no head, from=4-2, to=4-3]
    \arrow[from=3-3, to=4-3]
    \arrow[no head, from=4-3, to=4-4]
    \arrow[no head, from=4-4, to=4-5]
    \arrow[from=3-5, to=4-5]
    \arrow[from=3-4, to=4-4]
    \arrow[no head, from=3-1, to=4-2]
    \arrow[no head, from=3-5, to=3-6]
    \arrow[no head, from=4-5, to=3-6]
    \arrow[no head, from=3-6, to=2-5]
    \arrow[no head, from=1-5, to=3-6]
    \end{tikzcd}
    \]
    which represent a simplex in $\Map_E(x,y)$. One may observe that for all $0\leq m \leq k$, $z_{m,1}=z'$ for some $z'\in D$ (which by definition, is not in $C$) and $z_{m,n-1}=-\infty$. Furthermore, all the other $z_{i,j}$-s are in $\{-\infty\}\cup \{z'\}_{z\in C}$. Let
    \[
    |(C_\delta)^{\lhd}|=L^H((C_\delta)^{\lhd},(C_\delta)^{\lhd})
    \]
    be the hammock localization of $(C_\delta)^{\lhd}$ in which we invert all morphisms. Note that $|(C_\delta)^{\lhd}|$ is equivalent to the terminal category.\\
    We now construct a map of simplicial sets
    \[  
    \underset{g\in \mathrm{Hom}_C(x,y)}{\coprod} \Delta^0 \sqcup \underset{z\in C}\coprod\underset{f\in \mathrm{Hom}_D(x,z')}{\coprod} \mathrm{Hom}_{L^H((C_\delta)^{\lhd},(C_\delta)^{\lhd})}(z,-\infty) \to \mathrm{Hom}_E(x,y)
    \]
    as follows: \\
    Given  
   \[
    \Delta^0\in \underset{g\in \mathrm{Hom}_C(x,y)}{\coprod} \Delta^0
   \]  
    indexed on some $g:x\to y$, we will send it to the length zero hammock that corresponds to that map.\\
    Given 
    \[
    \sigma_{z,f} \in (\underset{z\in C}\coprod\underset{f\in \mathrm{Hom}_D(x,z)}{\coprod} \mathrm{Hom}_{L^H((C_\delta)^{\lhd},(C_\delta)^{\lhd})}(z,-\infty))_k
    \]
    we define its image by expanding its source and target with identities, i.e.
    \[
    \sigma_{z,f}= \begin{tikzcd}
        & {...} & {...} \\
        {z} & {...} & {...} & {-\infty} \\
        & {...} & {...}
        \arrow[no head, from=2-1, to=1-2]
        \arrow[no head, from=2-1, to=3-2]
        \arrow[from=1-2, to=2-2]
        \arrow[from=2-2, to=3-2]
        \arrow[no head, from=1-2, to=1-3]
        \arrow[no head, from=2-2, to=2-3]
        \arrow[from=1-3, to=2-3]
        \arrow[from=2-3, to=3-3]
        \arrow[no head, from=3-2, to=3-3]
        \arrow[no head, from=3-3, to=2-4]
        \arrow[no head, from=2-1, to=2-2]
        \arrow[no head, from=2-3, to=2-4]
        \arrow[no head, from=1-3, to=2-4]
    \end{tikzcd}
    \mapsto
    \begin{tikzcd}
        & {z'} & {...} & {-\infty} \\
        x & {...} & {...} & {...} & y \\
        & {z'} & {...} & {-\infty}
        \arrow["\Id"', from=1-2, to=2-2]
        \arrow["\Id"', from=2-2, to=3-2]
        \arrow["f", from=2-1, to=1-2]
        \arrow["f"', from=2-1, to=3-2]
        \arrow["f"{description}, from=2-1, to=2-2]
        \arrow[no head, from=1-2, to=1-3]
        \arrow[no head, from=1-3, to=1-4]
        \arrow[no head, from=2-2, to=2-3]
        \arrow[from=1-3, to=2-3]
        \arrow[from=2-3, to=3-3]
        \arrow[no head, from=3-2, to=3-3]
        \arrow[no head, from=3-3, to=3-4]
        \arrow[no head, from=2-3, to=2-4]
        \arrow["\Id", from=1-4, to=2-4]
        \arrow["\Id", from=2-4, to=3-4]
        \arrow[from=1-4, to=2-5]
        \arrow[from=3-4, to=2-5]
        \arrow[from=2-4, to=2-5]
    \end{tikzcd}.
    \]
    By our analysis above this defines an isomorphism of simplicial sets, concluding that $\Map_E(x,y)$ is equivalent to a discrete space.
    
    \item Assume that $y=-\infty$.\\
    We will use a similar argument to the previous case.\\
    As before one can check that the reduced hammocks all begin with $z'\in E$ (again, those $z'$ are not in $C$) and all other vertices are in $\{-\infty\}\cup \{z'\}_{z\in C}$.\\
    We now contract a map of simplicial sets
    \[
    \underset{z\in C}\coprod\underset{f\in \mathrm{Hom}_D(x,z')}{\coprod} \mathrm{Hom}_{L^H((C_\delta)^{\lhd},(C_\delta)^{\lhd})}(z,-\infty) \to \mathrm{Hom}_E(x,y)
    \]
    by expanding the source of a given $\sigma_{z,f}\in (\underset{z\in C}\coprod\underset{f\in \mathrm{Hom}_D(x,z')}{\coprod} \mathrm{Hom}_{L^H(C_\delta)^{\lhd},(C_\delta)^{\lhd})}(z,-\infty))_k$ with identities i.e.
    \[
    \sigma_{z,f}= \begin{tikzcd}
        & {...} & {...} \\
        {z} & {...} & {...} & {-\infty} \\
        & {...} & {...}
        \arrow[no head, from=2-1, to=1-2]
        \arrow[no head, from=2-1, to=3-2]
        \arrow[from=1-2, to=2-2]
        \arrow[from=2-2, to=3-2]
        \arrow[no head, from=1-2, to=1-3]
        \arrow[no head, from=2-2, to=2-3]
        \arrow[from=1-3, to=2-3]
        \arrow[from=2-3, to=3-3]
        \arrow[no head, from=3-2, to=3-3]
        \arrow[no head, from=3-3, to=2-4]
        \arrow[no head, from=2-1, to=2-2]
        \arrow[no head, from=2-3, to=2-4]
        \arrow[no head, from=1-3, to=2-4]
    \end{tikzcd}
    \mapsto
    \begin{tikzcd}
        & {z'} & {...} \\
        x & {...} & {...} & {-\infty} \\
        & {z'} & {...}
        \arrow["\Id"', from=1-2, to=2-2]
        \arrow["\Id"', from=2-2, to=3-2]
        \arrow[no head, from=1-3, to=1-2]
        \arrow[no head, from=2-3, to=2-2]
        \arrow[from=1-3, to=2-3]
        \arrow[no head, from=3-3, to=3-2]
        \arrow[from=2-3, to=3-3]
        \arrow[no head, from=2-4, to=1-3]
        \arrow[no head, from=3-3, to=2-4]
        \arrow[no head, from=2-3, to=2-4]
        \arrow["f", from=2-1, to=1-2]
        \arrow["f"', from=2-1, to=3-2]
        \arrow["f"{description}, from=2-1, to=2-2]
    \end{tikzcd}
    \]
    As before, our analysis shows that this is an isomorphism of simplicial sets, concluding that $\Map_E(x,-\infty)$ is equivalent to a discrete space.
    \end{itemize}   

All in all we have that $E$ is equivalent to a $1$-category.
\end{proof}

\begin{definition}\label{definition: definition5}
Under the assumptions of \cref{proposition: prop3} we will say that $E$ is the \textbf{cone with left inverses on $C$} and we will denote it by $E_C$.
\end{definition}

\begin{corollary} \label{corollary: cor3}
Let $I$ be a poset and denote by $E_I$ the cone with left inverses on $I$. Denote the distinct map (if it exists) from $i$ to $j$ in $I$ by $b_{i,j}$. Then $E_I$ is equivalent to the $1$-category described below:
\begin{itemize}
    \item The objects are given by:
     \[
    \Ob(E_I) = \Ob(I) \sqcup \{-\infty\}.
    \]
    \item The morphisms sets are defined as follows:
    \[
    \hom_{E_I}(a,b) = \begin{cases}
    \{q_k\}_{k \geq i}, \quad \quad \quad \quad \ a,b \ \in I, \ \ a=i, \ b=j, \ i\not<j \\
    \{q_k\}_{k \geq i}  \cup \{b_{ij}\}, \quad a,b \ \in I, \ \ a=i, \ b=j, \ i\leq j \\
    \{g_k\}_{k \geq i},\quad \quad \quad \quad \quad \  a =i  \ \in I, \ \  b = -\infty \\
    \{h_i\},\quad \quad \quad \quad \quad \quad \quad b =i  \ \in I, \ \  a = -\infty \\
    \{\Id\}, \quad \quad \quad \quad\quad \quad \quad a = b= -\infty   \\
    \end{cases}.
    \]
    where the composition law is determined by:
    \[
    q_pq_k=q_k, \quad g_pq_k=g_k, \quad q_kb_{ij}=q_k, \quad b_{ij}q_k=q_k, \quad h_ig_k=q_k.
    \]
\end{itemize}
\end{corollary}

\subsection{Showing That $L_I \in \Cat_1$}

Given a poset $I$, we have defined the cone with retracts, $L_I$, on $I$ see \cref{construction: cons1}. In this subsection we will show that $L_I$ is equivalent to a $1$-category.

\begin{lemma} \label{lemma: lem5}
For any quasi-category $\mathcal{C}$, the restriction map:
\[
\Fun(\Ret, \mathcal{C}) \to \Fun(\wRet, \mathcal{C})
\]
is a trivial fibration of simplicial sets (here $\Ret$ and $\wRet$ are defined as in \cref{definition: def4}).
\end{lemma}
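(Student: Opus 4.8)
The plan is to exhibit the inclusion $\wRet \hookrightarrow \Ret$ as an inner anodyne map, or more directly to verify that the restriction $\Fun(\Ret,\cC)\to\Fun(\wRet,\cC)$ has the right lifting property against every boundary inclusion $\partial\Delta^n\hookrightarrow\Delta^n$. The key observation is that $\Ret$ is a $1$-category whose nerve is $3$-coskeletal (indeed every $n$-simplex for $n\ge 2$ is determined by its $2$-faces, since $\Ret$ is a category), and $\wRet$ already contains every nondegenerate simplex of $\Ret$ of dimension $\le 2$ except possibly some that are forced by the others. So the first step is to understand $\wRet$ concretely: its nondegenerate simplices are the vertices $X,Y$; the edges $i\colon Y\to X$, $r\colon X\to Y$, $e\colon X\to X$ (note $e=i\circ r$ is an edge of the triangle $\sigma$), and $\Id$'s which are degenerate; and the single nondegenerate $2$-simplex $\sigma$ witnessing $r\circ i=\Id_Y$. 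Thus the only nondegenerate simplices of $\Ret$ \emph{not} in $\wRet$ are the edge $e\colon X\to X$ — wait, $e$ \emph{is} in $\sigma$'s image — so I should recount: the edges appearing in $\sigma=[Y\xrightarrow{i}X\xrightarrow{r}Y]$ are $i$, $r$, and the composite $r\circ i=\Id_Y$. Hence $e\colon X\to X$ is the unique nondegenerate edge of $\Ret$ missing from $\wRet$, together with the $2$-simplices recording $e=i\circ r$, $e\circ e=e$, $r\circ e=r$, $e\circ i=i$, and the $3$-simplices among these.

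Given this, I would prove the lemma by a filtration argument: build a sequence of simplicial subsets $\wRet=K_0\subset K_1\subset\cdots\subset K_N=\Ret$ where each $K_{j}\subset K_{j+1}$ adds one nondegenerate simplex of $\Ret$ (ordered by dimension), and show each inclusion is a pushout of an inner horn inclusion $\Lambda^n_k\hookrightarrow\Delta^n$ with $0<k<n$. The first step adds the edge $e\colon X\to X$: here I claim $\{X\}\cup\{r\}\cup\{i\}\subset\{$triangle $i\circ r=e\}$ realizes $e$ as the filler obtained from the inner horn $\Lambda^2_1\hookrightarrow\Delta^2$ whose two present faces are $i$ and $r$ and whose missing face is $e$ — this uses that in $\wRet$ we already have $i$ and $r$ with matching endpoints. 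Subsequent steps fill the remaining $2$-simplices ($e\circ e=e$, $e\circ i=i$, $r\circ e=r$) and the degenerate-looking but formally-needed higher simplices; since $\Ret$'s nerve is the nerve of a category, every simplex of dimension $\ge 3$ is uniquely determined by (and is the unique filler of) an inner horn built from lower simplices, so these inclusions are all pushouts of inner horn inclusions.

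Since $\cC$ is a quasi-category, it has the right lifting property against all inner horn inclusions $\Lambda^n_k\hookrightarrow\Delta^n$, $0<k<n$; applying $\Fun(-,\cC)$ turns each pushout of such an inclusion into a pullback of a trivial fibration, hence a trivial fibration, and a composite of trivial fibrations is a trivial fibration. Therefore $\Fun(\Ret,\cC)\to\Fun(\wRet,\cC)$ is a trivial fibration, as claimed. Alternatively — and this is the cleaner route I would actually write up — one shows directly that $\wRet\hookrightarrow\Ret$ is inner anodyne (it is, by the above filtration, a transfinite composite of pushouts of inner horn inclusions), and then the statement is immediate from the fact that $\cC$ being an $\infty$-category means exactly that $\cC\to\Delta^0$ has the right lifting property against inner anodyne maps, which by the pushout-product / exponential adjunction is equivalent to $\Fun(\Ret,\cC)\to\Fun(\wRet,\cC)$ being a trivial fibration.

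The main obstacle is the bookkeeping in the first paragraph: correctly enumerating the nondegenerate simplices of $\Ret$ and of $\wRet$, and in particular checking that the \emph{only} genuinely new nondegenerate cell of positive dimension, relative to $\wRet$, is the idempotent edge $e\colon X\to X$ and the $2$- and $3$-dimensional cells forced by it, and that each is attached along an \emph{inner} (not outer) horn. Since $e\colon X\to X$ is attached via $\Lambda^2_1$ (its faces being $r\colon X\to Y$ and $i\colon Y\to X$, composites pointing inward) this works out, but one must be careful that no outer horn $\Lambda^n_0$ or $\Lambda^n_n$ is ever needed — which holds precisely because we are filling in composites and coherences in a category, never inverting or adding new sources/targets.
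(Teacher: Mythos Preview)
The paper's own proof is simply a citation to \cite[4.4.5.7]{lurie2009higher}, so any self-contained argument goes beyond what the paper provides. Your strategy --- show that $\wRet\hookrightarrow\Ret$ is inner anodyne via an explicit filtration by inner-horn pushouts --- is reasonable in spirit, but the execution has a real gap.

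The main oversight is that $N(\Ret)$ has infinitely many nondegenerate simplices: since $e\colon X\to X$ is a non-identity idempotent, the chain $X\xrightarrow{e}X\xrightarrow{e}\cdots\xrightarrow{e}X$ is a nondegenerate $n$-simplex for every $n\ge 1$. So no finite filtration $K_0\subset\cdots\subset K_N$ can reach $\Ret$, contrary to what you write. More seriously, your inference that ``every simplex of dimension $\ge 3$ is the unique filler of an inner horn, so these inclusions are all pushouts of inner horn inclusions'' is not valid. Unique inner-horn fillers exist in the nerve of any $1$-category, but attaching a simplex $\tau$ to the current subcomplex $K$ as an inner-horn pushout requires that exactly one face of $\tau$ (opposite an interior vertex) be absent from $K$. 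This fails repeatedly here: for instance, once the edge $e$ has been added, the $2$-simplex witnessing $e\circ e=e$ has all three edges equal to $e$, so its entire boundary already lies in $K$ and attaching it is a $\partial\Delta^2\hookrightarrow\Delta^2$ filling, not an inner horn. One can sometimes postpone such a simplex and recover it later as the missing face of a higher-dimensional inner horn, but arranging this coherently for \emph{all} of the infinitely many nondegenerate simplices of $\Ret$ is precisely the nontrivial combinatorial content of the result, and you have not carried it out. Lurie's argument in \cite[\S4.4.5]{lurie2009higher} handles this carefully; your sketch, as it stands, does not.
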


\begin{proof}
This is \cite[4.4.5.7]{lurie2009higher}.
\end{proof}

\begin{lemma}\label{lemma: id like eq}
Let $\cC \in \Cat_\infty$. We denote by $\Fun^{(0\to2) \mapsto \mathrm{Eq}}(\Delta^2,\cC)$ the category of functors which send $(0\to2)$ to an equivalence and by $\Fun^{(0\to2) \mapsto \Id}(\Delta^2,\cC)$
the category of functors which send $(0\to2)$ to the identity \footnote{Both defined formally as pullbacks like in \cref{lemma : initial as mapping to prop}.}. Following the above notions, the natural map
\[
\Fun^{(0\to2) \mapsto \Id}(\Delta^2,\cC) \to \Fun^{(0\to2) \mapsto \mathrm{Eq}}(\Delta^2,\cC)
\]
is an equivalence of categories.
\end{lemma}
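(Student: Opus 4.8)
The plan is to exhibit the inclusion as the inclusion of the fiber over $\Id_{x}$ (for $x$ the common source/target) in a trivial fibration, and then to run a fiberwise argument. First I would set up the relevant diagrammatics: a functor $\Delta^2 \to \cC$ is the same datum as three objects, three edges, and a $2$-simplex witnessing composition; write such a functor as $(a \xrightarrow{f} b \xrightarrow{g} c)$ with a chosen filler for $a \to c$. The condition ``$(0\to 2)\mapsto \mathrm{Eq}$'' says $g\circ f$ is an equivalence, and ``$(0\to 2)\mapsto \Id$'' forces in particular $a = c$ and $g\circ f \simeq \Id_a$. The key structural observation is that the data of an object of $\Fun^{(0\to 2)\mapsto \mathrm{Eq}}(\Delta^2,\cC)$ in which, moreover, $g\circ f = \Id$, is essentially the data of a map $\Ret \to \cC$: indeed $\Ret$ is precisely the walking retraction, and by the description of $\wRet$ as the image of the $2$-simplex $\sigma$ exhibiting $r\circ i = \Id$, a functor $\wRet \to \cC$ is the same as a $2$-simplex of $\cC$ whose long edge is a degeneracy. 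So I would first identify $\Fun^{(0\to 2)\mapsto \Id}(\Delta^2,\cC)$ with a suitable subcategory of $\Fun(\wRet,\cC)$, and then invoke \cref{lemma: lem5} to lift to $\Fun(\Ret,\cC)$.

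Concretely, the argument I have in mind is the following. Consider the span of full subcategories
\[
\Fun^{(0\to 2)\mapsto \Id}(\Delta^2,\cC) \hookrightarrow \Fun^{(0\to 2)\mapsto \mathrm{Eq}}(\Delta^2,\cC),
\]
and for each side note that a functor $F\colon \Delta^2 \to \cC$ sending $0\to 2$ to an equivalence is, up to a contractible space of choices, obtained from a functor $\bar F \colon \Ret \to \cC$ together with a choice of equivalence between the image of the long edge and $F(0\to 2)$; here $0,1,2$ correspond to $Y, X, Y$ and the long edge $0\to 2$ to $r\circ i$. I would make this precise by building, for an abstract $\cC$, an equivalence
\[
\Fun^{(0\to 2)\mapsto \mathrm{Eq}}(\Delta^2,\cC) \;\simeq\; \Fun(\Ret,\cC) \times_{\Fun(\{Y\}\sqcup\{Y\},\cC)} \Fun^{\mathrm{Eq}}(\Delta^1,\cC),
\]
using that $\Fun(\Ret,\cC)\to\Fun(\wRet,\cC)$ is a trivial fibration (\cref{lemma: lem5}) so that restricting a $\Ret$-diagram to its $\wRet$-part loses nothing, and $\wRet$ is exactly a $2$-simplex with long edge a specified degeneracy. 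Under this identification the subcategory with $(0\to2)\mapsto\Id$ corresponds to the fiber over the point $\mathrm{id} \in \Fun^{\mathrm{Eq}}(\Delta^1,\cC)$ given by the identity edge, i.e. to $\Fun(\Ret,\cC)$ pulled back along $\{\Id_Y\}\hookrightarrow \Fun^{\mathrm{Eq}}(\Delta^1,\cC)\times_{\Fun(\{Y\}\sqcup\{Y\},\cC)}(-)$. Since the projection $\Fun^{\mathrm{Eq}}(\Delta^1,\cC)\to \Fun(\{Y\},\cC)$ — source, say — is a trivial fibration (an equivalence is determined up to contractible choice by its source), the inclusion of the fiber over a degenerate edge is an equivalence onto the whole pullback; chasing this through the displayed equivalence gives exactly the claim.

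The main obstacle, and the step I expect to require the most care, is making the ``up to contractible choice'' reasoning rigorous at the level of the quasi-categorical mapping-space machinery rather than hand-waving with $1$-categorical intuition: precisely, verifying that the restriction-along-$\wRet\hookrightarrow\Ret$ trick interacts correctly with the ``full subcategory of functors sending a given edge to an equivalence'' operation, i.e. that these full subcategories are compatible under the trivial fibration of \cref{lemma: lem5}, so that the pullback square I want really is a pullback of $\infty$-categories. Once that bookkeeping is in place, the conclusion is formal: an inclusion of a full subcategory is an equivalence iff it is essentially surjective, and every functor $\Delta^2\to\cC$ sending $0\to2$ to an equivalence is, via the chosen $\Ret$-structure, connected by an equivalence in $\Fun^{(0\to2)\mapsto\mathrm{Eq}}(\Delta^2,\cC)$ to one sending $0\to2$ to the identity — namely, transport along the chosen inverse. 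I would also remark that a cheaper, more hands-on alternative is available: directly construct a deformation retraction of $\Fun^{(0\to2)\mapsto\mathrm{Eq}}(\Delta^2,\cC)$ onto $\Fun^{(0\to2)\mapsto\Id}(\Delta^2,\cC)$ by, at each object, composing the triangle with the canonical triangle $0\to 2\to 0$ witnessing that $g\circ f$ is its own inverse up to the chosen homotopy, but the $\Ret$-based argument has the advantage of directly reusing \cref{lemma: lem5}, which is presumably why it is being set up here.
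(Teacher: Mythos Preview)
Your displayed pullback
\[
\Fun^{(0\to 2)\mapsto \mathrm{Eq}}(\Delta^2,\cC)\;\simeq\;\Fun(\Ret,\cC)\times_{\Fun(\{Y\}\sqcup\{Y\},\cC)}\Fun^{\mathrm{Eq}}(\Delta^1,\cC)
\]
does not hold. The map $\Fun(\Ret,\cC)\to\cC\times\cC$ you describe is $F\mapsto(F(Y),F(Y))$, so it lands in the diagonal; an object of your pullback is therefore a retraction diagram together with a \emph{self}-equivalence of $F(Y)$. But an object of $\Fun^{(0\to2)\mapsto\mathrm{Eq}}(\Delta^2,\cC)$ is a $2$-simplex whose vertices $F(0)$ and $F(2)$ may be genuinely different objects of $\cC$ --- there is no map from the left-hand side to your pullback, and no evident map back that hits such simplices. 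The $\Ret/\wRet$ trivial fibration of \cref{lemma: lem5} is indeed used in the paper, but only later (in \cref{lemma: lem6}); it does not feed into the present lemma, and trying to route the argument through it leads you to essentially reprove the lemma in disguise (you would need $\Delta^2[(0\to2)^{-1}]\simeq\wRet$, which is equivalent to what is being claimed).

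What the paper actually does is exactly your ``cheaper, more hands-on alternative''. Full faithfulness is immediate. For essential surjectivity, given a $2$-simplex $\sigma$ with long edge $h\colon X\to Z$ an equivalence, choose an inverse $h^{-1}$ and fill the inner horn
\[
\begin{tikzcd}
& Y && X \\
X && Z
\arrow["h"', from=2-1, to=2-3]
\arrow["f", from=2-1, to=1-2]
\arrow["g", from=1-2, to=2-3]
\arrow["{h^{-1}g}", from=1-2, to=1-4]
\arrow["{h^{-1}}"', from=2-3, to=1-4]
\arrow["\Id"{description, pos=0.3}, from=2-1, to=1-4]
\end{tikzcd}
\]
to a $3$-simplex $\alpha$. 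The face of $\alpha$ opposite the vertex $Z$ is a $2$-simplex $F'$ whose long edge is $\Id_X$, and the map $\Delta^2\times\Delta^1\to\Delta^3$ collapsing $\{0\}\times\Delta^1$ and $\{1\}\times\Delta^1$ turns $\alpha$ into a natural equivalence $F\simeq F'$ in $\Fun(\Delta^2,\cC)$. So your instinct about the direct construction was right; drop the $\Ret$-based detour and run that argument.
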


\begin{proof}
The map above is clearly fully-faithful. It remains to show that any $F \in  \Fun^{(0\to2) \mapsto \mathrm{Eq}}(\Delta^2,\cC)$ is equivalent to a functor which sends $0\to2$ to the identity. Indeed, let $\sigma$ be the $2$-simplex in $\cC$
\[
\begin{tikzcd}
    & Y \\
    X && Z
    \arrow["h"', from=2-1, to=2-3]
    \arrow["f", from=2-1, to=1-2]
    \arrow["g", from=1-2, to=2-3]
\end{tikzcd}
\]
which represents $F$. Choose an inverse, $h^{-1}$, of $h$, and let $\alpha$ be a $3$-simplex we get by the horn filling property: 
\[
\begin{tikzcd}
    & Y && X \\
    X && Z
    \arrow["h"', from=2-1, to=2-3]
    \arrow["f", from=2-1, to=1-2]
    \arrow["g", from=1-2, to=2-3]
    \arrow["{h^{-1}g}", from=1-2, to=1-4]
    \arrow["{h^{-1}}"', from=2-3, to=1-4]
    \arrow["\Id"{description, pos=0.3}, from=2-1, to=1-4]
\end{tikzcd}
\]
in $\cC$. Define $F' \in \Fun^{(0\to2) \mapsto \Id}(\Delta^2,\cC)$ as the composition
\[
\Delta^2 \overset{i}\to \Delta^3 \overset{\alpha}\to \cC
\]
where $i$ is the embedding of $\Delta^2$ as the face in front of $2$. One can verify that $\alpha$ induces a natural equivalence between $F$ and $F'$ via the map $\Delta^2\times \Delta^1 \to \Delta^3$ that collapses $\{0\}\times \Delta^1$ and $\{1\}\times \Delta^1$.
\end{proof}

\begin{lemma} \label{lemma: lem6}
Let $C$ be a $1$-category. Denote by $L_C$ the cone with retracts on $C$ and by $E_C$ the cone with left inverses on $C$. Then, we have an equivalence of $\infty$-category
\[
L_C \simeq E_C.
\]
In particular $L_C$ is a $1$-category.   
\end{lemma}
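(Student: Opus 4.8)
The plan is to turn the heuristic from the introduction into a short chain of pushout manipulations: gluing a copy of $\Ret$ along the edge $h_k\colon -\infty\to k$ should be the same as gluing a free arrow out of $k$ and then inverting the resulting composite $-\infty\to k'$. Throughout, write $\widetilde L_C = C^\lhd\underset{C_\delta\times\Delta^1}\coprod(C_\delta\times\Ret)$ as in \cref{construction: cons1}.

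\textbf{Step 1: replace $\Ret$ by $\wRet$.} By \cref{lemma: lem5} the inclusion $\wRet\hookrightarrow\Ret$ is a categorical equivalence, and the gluing edge $\Delta^1\to\Ret$ (which picks out $i\colon Y\to X$) factors through it. Since $\Delta^1\to\wRet$ and $\Delta^1\to\Ret$ are both monomorphisms, both of the pushouts below are homotopy pushouts, and left properness of the Joyal model structure gives
\[
L_C\ \simeq\ C^\lhd\underset{C_\delta\times\Delta^1}\coprod(C_\delta\times\wRet).
\]
The point of this step is that $\wRet$, unlike $\Ret$, is simply the quotient $\Delta^2/\Delta^{\{0,2\}}$, i.e. the strict pushout $\Delta^2\underset{\Delta^{\{0,2\}}}\coprod\Delta^0$, with $\Delta^1$ sitting inside as the edge $\Delta^{\{0,1\}}$.

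\textbf{Step 2: reassociate.} Substituting this and reassociating the strict pushouts in $\sset$,
\[
L_C\ \simeq\ \Big(C^\lhd\underset{C_\delta\times\Delta^{\{0,1\}}}\coprod(C_\delta\times\Delta^2)\Big)\underset{C_\delta\times\Delta^{\{0,2\}}}\coprod(C_\delta\times\Delta^0).
\]
The inner pushout glues, for each $k\in C_\delta$, a $2$-simplex onto $C^\lhd$ along the edge $-\infty\to k$; since $\Lambda^2_1\hookrightarrow\Delta^2$ is inner anodyne and $\Lambda^2_1=\Delta^{\{0,1\}}\underset{\Delta^{\{1\}}}\coprod\Delta^{\{1,2\}}$, pasting shows this is categorically equivalent to gluing only the free arrow $\Delta^{\{1,2\}}$ at its source $\{1\}=k$. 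Iterating \cref{proposition: prop2} exactly as in \cref{construction: cons2} (the cone point is untouched by these gluings) identifies the inner pushout with $D_C^\lhd$, and under this identification the $k$-th copy of the edge $\Delta^{\{0,2\}}$ becomes the unique morphism $-\infty\to k'$. Hence $L_C\simeq D_C^\lhd\underset{C_\delta\times\Delta^1}\coprod(C_\delta\times\Delta^0)$, the pushout collapsing every edge $-\infty\to k'$ to a point.

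\textbf{Step 3: collapsing an edge equals inverting it.} It remains to identify this last pushout with $E_C$. I would show that for any $\cD\in\Cat_\infty$ and edge $e\colon a\to b$, the pushout $\cD\underset{\Delta^1}\coprod\Delta^0$ is the localization $\cD[e^{-1}]$: for $\cE\in\Cat_\infty$ one has $\Map(\cD\underset{\Delta^1}\coprod\Delta^0,\cE)=\Map(\cD,\cE)\times_{\Map(\Delta^1,\cE)}\Map(\Delta^0,\cE)$, and the map $\Map(\Delta^0,\cE)\to\Map(\Delta^1,\cE)$ sending an object to its identity arrow identifies the space of objects of $\cE$ with the space of equivalences in $\cE$ (the space-level avatar of \cref{lemma: id like eq}: the space of equivalences with a fixed source is contractible), so this fibre product is the space of functors $\cD\to\cE$ carrying $e$ to an equivalence, i.e. $\Map(\cD[e^{-1}],\cE)$; conclude by the Yoneda lemma in $\Cat_\infty$. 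Applying this simultaneously to all the edges $-\infty\to k'$ — which, together with identities, generate the wide subcategory $W$ — yields $L_C\simeq D_C^\lhd[W^{-1}]=E_C$, and the final ``in particular'' is then immediate from \cref{proposition: prop3}. The main obstacle is bookkeeping rather than ideas: one must justify the replacements of $\Ret$ by $\wRet$ and of $\Delta^2$ by $\Lambda^2_1$ with care (the ``outer'' leg $C_\delta\times\Delta^1\to C^\lhd$ is \emph{not} a cofibration, so left properness has to be applied to the correct leg), and one must argue that the naive ``collapse an edge'' pushout — which is not visibly a homotopy pushout — nevertheless computes the localization; that last point is the only place where a genuine, if modest, argument is required.
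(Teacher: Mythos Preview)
Your argument is correct and follows essentially the same route as the paper's, only phrased dually: you manipulate the pushouts directly, while the paper works representably via $\Fun(-,\cT)$ throughout. The three moves line up exactly --- the paper replaces $\Ret$ by $\wRet$ using \cref{lemma: lem5}, rewrites the $D_C^\lhd$-pushout as the $\Delta^2$-pushout over $\Delta^{\{0,1\}}$, and then uses \cref{lemma: id like eq} to pass from ``sends $(0\to 2)$ to an equivalence'' to ``sends $(0\to 2)$ to an identity''; your Steps~1--3 are the object-level translations of precisely these steps.

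The one practical advantage of the paper's representable phrasing is that it sidesteps the bookkeeping you flag at the end. Working in $\Fun(-,\cT)$, the statement ``under the equivalence $D_C^\lhd\simeq Q$ the edge $\Delta^{\{0,2\}}$ corresponds to $-\infty\to k'$'' never needs to be made precise: one simply speaks of the full subcategory of functors sending that edge to an equivalence, a condition manifestly invariant under categorical equivalence. Likewise your Step~3 (collapse $=$ invert) becomes the tautology that $\Fun^{(0\to 2)\mapsto\Id}(\Delta^2,\cT)=\Fun(\wRet,\cT)$ together with \cref{lemma: id like eq}, so the question of whether the strict edge-collapse is a homotopy pushout never arises. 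Your version is perfectly sound, but if you want to shorten it, switching to the $\Fun(-,\cT)$ side lets you drop the left-properness and edge-tracking caveats entirely.
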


\begin{proof}
Let $D_C$ be the category obtained from $C$ by gluing arrows to all the objects. By the universal property of localization, for every $\mathcal{T} \in \Cat_\infty$, composing with the natural map $D_C \to E_C$ induces a fully-faithful functor
\[
\Fun(E_C,\mathcal{T}) \hookrightarrow \Fun(D_C,\mathcal{T})
\]
whose essential image is the full subcategory on functors that sends the maps $-\infty \to x'$ to  equivalences. By the definition of $D_C$ we can write:
\begin{gather*}
\Fun^{(-\infty \to x') \mapsto \mathrm{Eq}}(D_C,\mathcal{T}) \simeq \Fun^{(-\infty \to x') \mapsto \mathrm{Eq}}(C^{\lhd} \underset{\Delta^0\times C_\delta}\coprod (\Delta^1 \times C_\delta)  ,\mathcal{T}) .
\end{gather*}
Note that the pushout $C^{\lhd} \underset{\Delta^0\times C_\delta}\coprod (\Delta^1 \times C_\delta)$ is equivalent to the pushout $C^{\lhd} \underset{\Delta^1\times C_\delta}\coprod (\Delta^2 \times C_\delta)$ where the latter is defined via the map
\[
\Delta^1\times C_\delta \to C^{\lhd}
\]
that chooses all the maps from the initial object and the map
\[
 \Delta^1\times C_\delta  \to \Delta^2 \times C_\delta
\]
that chooses the maps from $0$ to $1$. \\
From the above together with \cref{lemma: id like eq} we may write
 \begin{gather*}
\Fun^{(-\infty \to x') \mapsto \mathrm{Eq}}(C^{\lhd} \underset{\Delta^0\times C_\delta}\coprod \Delta^1 \times C_\delta  ,\mathcal{T})\simeq \Fun^{(0 \to 2) \mapsto \mathrm{Eq}}(C^{\lhd} \underset{\Delta^1\times C_\delta}\coprod \Delta^2 \times C_\delta  ,\mathcal{T})  \simeq \\
\simeq \Fun(C^{\lhd},\mathcal{T}) \underset{\Fun(\Delta^1\times C_\delta,\cT)}\times \Fun^{(0 \to 2) \mapsto \mathrm{Eq}}(\Delta^2 \times C_\delta  ,\mathcal{T}) \simeq \\
\simeq \Fun(C^{\lhd},\mathcal{T}) \underset{\Fun(\Delta^1\times C_\delta,\cT)}\times(\underset{C_\delta}\prod \Fun^{(0 \to 2) \mapsto \Id}( \Delta^2   ,\mathcal{T})).
\end{gather*}
Applying \cref{lemma: id like eq} we get
\[
\Fun^{(0\to 2)\mapsto \Id}( \Delta^2 , \mathcal{T})= \Fun( \wRet , \mathcal{T}) \simeq  \Fun( \Ret , \mathcal{T}),
\]
which gives
\[
\Fun(C^{\lhd},\mathcal{T}) \underset{\Fun(\Delta^1\times C_\delta,\cT)}\times(\underset{C_\delta}\prod \Fun^{(0 \to 2) \mapsto \Id}( \Delta^2   ,\mathcal{T})) \simeq \Fun(C^{\lhd},\mathcal{T}) \underset{\Fun(\Delta^1\times C_\delta,\cT)}\times(\underset{C_\delta}\prod \Fun( \Ret   ,\mathcal{T})) \simeq \Fun(L_I,\mathcal{T})
\]
as desired.
\end{proof}

From the above and \cref{corollary: cor3} we get an explicit description of $L_I$ when $I$ is a poset.

\begin{corollary} \label{corollary: cor2}
Let $I$ be a poset and denote by $L_I$ the cone with retracts on $I$. Denote the distinct map (if it exists) from $i$ to $j$ in $I$ by $b_{i,j}$. Then $L_I$ is equivalent to the $1$-category described below:
\begin{itemize}
    \item The objects are given by:
     \[
    \Ob(L_I) = \Ob(I) \sqcup \{-\infty\}.
    \]
    \item The morphisms sets are defined as follows:
    \[
    \hom_{L_I}(a,b) = \begin{cases}
    \{q_k\}_{k \geq i}, \quad \quad \quad \quad \ a,b \ \in I, \ \ a=i, \ b=j, \ i\not<j \\
    \{q_k\}_{k \geq i}  \cup \{b_{ij}\}, \quad a,b \ \in I, \ \ a=i, \ b=j, \ i\leq j \\
    \{g_k\}_{k \geq i},\quad \quad \quad \quad \quad \  a =i  \ \in I, \ \  b = -\infty \\
    \{h_i\},\quad \quad \quad \quad \quad \quad \quad b =i  \ \in I, \ \  a = -\infty \\
    \{\Id\}, \quad \quad \quad \quad\quad \quad \quad a = b= -\infty   \\
    \end{cases}.
    \]
    where the composition law is determined by:
    \[
    q_pq_k=q_k, \quad g_pq_k=g_k, \quad q_kb_{ij}=q_k, \quad b_{ij}q_k=q_k, \quad h_ig_k=q_k.
    \]
\end{itemize}
\end{corollary}

\section{Subcategories That Are Closed Under Pure Morphisms}

In this section we prove that a subcategory of a category of presheaves closed under limits and $\kappa$-filtered colimits is closed under $\kappa$-pure morphisms provided that $\kappa$ is sufficiently large. In more formal terms, we define:      

\begin{definition} \label{definition: def6}
Let $\mathcal{C} \in \Cat_\infty$ and $\mathcal{D} \subset \mathcal{C}$ a full subcategory. We say that \emph{$\mathcal{D}$ is closed under $\kappa$-pure morphisms in $\mathcal{C}$} if for every $B \in \mathcal{D}$ and every $A \to B$ a $\kappa$-pure morphism in $\mathcal{C}$, $A$ is also in $\mathcal{D}$.
\end{definition} 

The main objective of section $4$ is then to prove:

\begin{proposition} \label{proposition: prop4}
Let $\cE$ be a small $\infty$-category and $\mathcal{D} \subset \Fun(\cE,\mathcal{S})=:\mathcal{C}$ be a full subcategory closed under limits and $\kappa$-filtered colimits. Then, there exists $\mu \geq \kappa$ such that the $\infty$-category $\mathcal{D}$ is closed under $\mu$-pure morphisms in $\mathcal{C}$.
\end{proposition}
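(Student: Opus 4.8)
Since $\cE$ is small, $\cC=\Fun(\cE,\mathcal{S})$ is presentable, hence $\lambda$-presentable for every regular $\lambda$; in particular it is $\kappa$-presentable. The plan is to fix a large enough regular $\mu\geq\kappa$ and, for an arbitrary $B\in\cD$ and $\mu$-pure morphism $f\colon A\to B$, to exhibit $A$ as the value of a diagram built out of (copies of) $B$ by iterating limits and $\mu$-filtered colimits; since $\cD$ is closed under both operations this forces $A\in\cD$, which is exactly the assertion of \cref{definition: def6}. By \cref{lemma: lem1} and \cref{proposition: prop1} I would choose a regular $\mu\geq\kappa$ such that $\Fun(\Delta^1,\cC)$ is $\mu$-presentable with pointwise $\mu$-compact objects, and such that every $\mu$-pure morphism in $\cC$ is a $\mu$-filtered colimit of split morphisms with a common domain.

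\textbf{From $f$ to a cone with retracts.} Fix $B\in\cD$ and a $\mu$-pure $f\colon A\to B$. By \cref{proposition: prop1} there is a $\mu$-filtered $\infty$-category $I$ and a cone $F'\colon I^{\lhd}\to\cC$ with $F'(-\infty)=A$, each $f_i':=F'(-\infty\to i)\colon A\to\bar{B}_i$ split with a chosen retraction $g_i\colon\bar{B}_i\to A$, and $\colim_{i\in I}\bar{B}_i\simeq B$ compatibly with the structure maps $\beta_i\colon\bar{B}_i\to B$ and the identities $f\simeq\beta_i\circ f_i'$; that is, $F'$ is a quasi-split cone in the sense of \cref{definition: def3}. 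By a standard cofinality argument I would reduce to the case where $I$ is a $\mu$-filtered poset: a $\mu$-filtered $\infty$-category receives a cofinal functor from a $\mu$-filtered poset, and restriction of $F'$ along it preserves the quasi-split property and the identification $\colim\bar{B}_\bullet\simeq B$. Choosing the retractions then produces, via \cref{construction: cons1}, a functor $\widetilde{F}\colon L_I\to\cC$ from the cone with retracts, and \cref{corollary: cor2} identifies $L_I$ with the explicit $1$-category on $\Ob(I)\sqcup\{-\infty\}$ with morphisms $b_{ij},q_k,g_k,h_i$.

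\textbf{The main step.} It remains — and this is where the real work lies — to convert $\widetilde{F}$ into a diagram realizing $A$ whose input objects are only copies of $B$. Two direct attempts fail, but locate the difficulty precisely: an elementary universal-property computation shows that $A$ is simultaneously (i) the limit over $I^{\mathrm{op}}$ of $i\mapsto\bar{B}_i$ with transition maps $f_i'g_j\colon\bar{B}_j\to\bar{B}_i$, and (ii) a $\mu$-filtered colimit of the \emph{same} objects $\bar{B}_i$ with transition maps $f_j'g_i\colon\bar{B}_i\to\bar{B}_j$ — both families of transition maps being images under $\widetilde{F}$ of the morphisms $q_\bullet$ of \cref{corollary: cor2}. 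Neither description is usable by itself: the $\bar{B}_i$ need not lie in $\cD$, the object $A$ may not appear among the objects of the witnessing diagram, and the chosen retractions $g_i$ are mutually incompatible (they do not assemble into a retraction of $\colim\bar{B}_\bullet\simeq B$). The explicit combinatorics of $L_I$ furnished by \cref{corollary: cor2} is exactly the device for repackaging this incompatible-retraction data: using the relations $q_pq_k=q_k$, $g_pq_k=g_k$, $q_kb_{ij}=q_k$, $b_{ij}q_k=q_k$, $h_ig_k=q_k$ one builds an interleaved diagram — assembled from copies of $B$ by alternately forming $\mu$-filtered colimits and limits — whose total value is $A$, the mechanism being that at each layer the maps $b_\bullet,q_\bullet,g_\bullet,h_\bullet$ force the ambient (co)limits of $B$'s first to collapse onto the pieces $\bar{B}_i$ and ultimately onto $A$. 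Granting such a diagram, closure of $\cD$ under limits and $\mu$-filtered colimits yields $A\in\cD$.

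\textbf{Expected obstacle.} The crux is precisely the construction of this interleaved diagram together with the verification that the successive limits and colimits collapse as claimed (and the attendant cofinality checks). This is where the $\infty$-categorical argument diverges most sharply from the $1$-categorical one of \cite{AdamekRosicky94}: in a $1$-category one can argue directly with elements and pure subobjects, whereas here the bookkeeping of which diagrams to interleave, and the reason their iterated (co)limits recover $A$ rather than some quotient or extension of it, carries essentially all of the content of \cref{proposition: prop4}.
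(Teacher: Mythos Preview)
Your setup and reduction are correct and match the paper: the choice of $\mu$, the passage via \cref{proposition: prop1} to a quasi-split cone, the cofinal replacement by a $\mu$-filtered poset $I$, and the resulting functor $\widetilde{F}\colon L_I\to\cC$ using the explicit description of \cref{corollary: cor2}. You have also correctly located where the content lies.

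The gap is exactly where you say ``Granting such a diagram'': you never construct it, and your own ``Expected obstacle'' paragraph concedes this. Your heuristics (i) and (ii) are a reasonable warm-up, but they do not by themselves suggest the shape of the interleaved diagram or why it collapses to $A$; in particular no amount of alternating the na\"ive limit and colimit over $I$ will work, because the incompatibility of the retractions $g_i$ means the relevant (co)cones do not assemble over $I$ or $I^{\op}$ on the nose.

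What the paper actually does is rather specific. One introduces an auxiliary $\mu$-filtered poset $J\subset\prod_{m\in\mathbb{N}} I^{I_\delta^m}$ of ``coherent systems of cofinal functions'', a right fibration $F\colon M\to J\times\Delta^+$, and a functor $H\colon M\to L_I$ built from the $b$-, $q$-morphisms of \cref{corollary: cor2}. The composite right Kan extension $Y=F_*H^*Z\colon J\times\Delta^+\to\cC_{A/}$ and its $J$-colimit $X\colon\Delta^+\to\cC_{A/}$ are the desired interleaved diagram: one shows inductively that $X([n])\simeq B^{(n)}$ where $C^{(0)}=C$ and $C^{(n)}=(\colim_{i\in I}\prod_{(I_i)_\delta}C^{(n-1)})$, so each $X([n])$ lies in the closure of $\{B\}$ under limits and $\mu$-filtered colimits. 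The second point you should note is that the argument does \emph{not} show $A=\lim_{\Delta^+}X$; it shows only that $(\Id\colon A\to A)$ is a \emph{retract} of $\lim_{\Delta^+}X$ in $\cC_{A/}$, by comparing with a variant diagram $Z_{\prod A}$ and proving that certain comma categories $M^j_{/-\infty}$ are contractible. One then invokes idempotent-completeness of the limit--$\mu$-filtered-colimit closure of $\{B\}$ to conclude $A\in\cD$. Both ingredients --- the $\Delta^+$-shape with the auxiliary $J$, and the retract-plus-idempotent-completion endgame --- are missing from your outline and are not guessable from the heuristics you recorded.
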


The above proposition is the key to proving \cref{theorem: reflaction} (the reflection theorem). We will begin by proving some preliminary lemmas for the proof of \cref{proposition: prop4}, which will be presented in the next subsection.

\begin{lemma}\label{lemma: lem7}
Let $\mathcal{C}, \mathcal{D}, \mathcal{E} \in \Cat_\infty$ where $\cE$ is complete, $F:\mathcal{C} \to \mathcal{E}$ a functor and $p:\mathcal{C} \to \mathcal{D}$ a right fibration. Assume that we have a pullback square
\[
\begin{tikzcd}
    {\mathcal{C}'} & {\mathcal{C}} \\
    {\mathcal{D}'} & {\mathcal{D}}
    \arrow["v"', from=2-1, to=2-2]
    \arrow["p", from=1-2, to=2-2]
    \arrow["q"', from=1-1, to=2-1]
    \arrow["u", from=1-1, to=1-2]
\end{tikzcd}
\]  
Then
\[
v^*p_*(F)\simeq q_* u^*(F)
\]
where $(-)^*$ is post-composition and $(-)_*$ is the right Kan extension.
\end{lemma}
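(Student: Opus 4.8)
The plan is to deduce this from a base-change property of right Kan extensions, in which the hypothesis that $p$ is a right fibration is the essential ingredient. Since $\cE$ is complete, both right Kan extensions $p_*$ and $q_*$ exist; moreover $q$ is again a right fibration, being a base change of $p$. From the pullback square one gets an identification $u^*p^* \simeq q^*v^*$ of functors $\Fun(\cD,\cE)\to\Fun(\cC',\cE)$ (both send $G$ to $G$ precomposed with $p\circ u\simeq v\circ q$), and its mate with respect to $p^*\dashv p_*$ and $q^*\dashv q_*$ is a canonical Beck--Chevalley natural transformation $\theta\colon v^*p_* \Rightarrow q_*u^*$. It is enough to prove that $\theta$ becomes an equivalence after evaluating at each $F\in\Fun(\cC,\cE)$ and each $d'\in\cD'$.

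First I would rewrite both sides using the pointwise formula for right Kan extensions. For $d\in\cD$ one has $(p_*F)(d)\simeq\lim_{\cC\times_\cD\cD_{d/}}(F\circ\pi)$, where $\pi$ is the projection to $\cC$; similarly $(q_*u^*F)(d')\simeq\lim_{\cC'\times_{\cD'}\cD'_{d'/}}(F\circ u\circ\pi')$, and the latter indexing category is canonically $\cC\times_\cD\cD'_{d'/}$ once one collapses $\cC'=\cD'\times_\cD\cC$. The functor $\cD'_{d'/}\to\cD_{v(d')/}$ induced by $v$ (which preserves the initial objects, sending $\Id_{d'}$ to $\Id_{v(d')}$) induces, after base change along $p$, a functor $\Phi_{d'}\colon \cC\times_\cD\cD'_{d'/}\to\cC\times_\cD\cD_{v(d')/}$ over $\cC$, and unwinding the mate construction (using the pointwise descriptions of the unit of $q^*\dashv q_*$ and the counit of $p^*\dashv p_*$) shows that $\theta_{F,d'}$ is exactly the map of limits induced by restriction along $\Phi_{d'}$. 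So it suffices to prove that each $\Phi_{d'}$ is coinitial.

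The crux is the following lemma: if $r\colon \mathcal{X}\to\cT$ is a right fibration and $t_0\in\cT$ is an initial object, then the inclusion of the fibre $\mathcal{X}_{t_0}\hookrightarrow\mathcal{X}$ is coinitial. By the cofinality criterion of \cite{lurie2009higher}, dualized, it suffices to check that $\mathcal{X}_{t_0}\times_{\mathcal{X}}\mathcal{X}_{/x}$ is weakly contractible for every $x\in\mathcal{X}$; but since $r$ is a right fibration, hence a Cartesian fibration, one can take an $r$-Cartesian lift of the essentially unique morphism $t_0\to r(x)$ ending at $x$, and its source (together with that lift) is a terminal object of $\mathcal{X}_{t_0}\times_{\mathcal{X}}\mathcal{X}_{/x}$, which is therefore weakly contractible. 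Applying this with $(\cT,t_0)=(\cD_{v(d')/},\Id_{v(d')})$ and $\mathcal{X}=\cC\times_\cD\cD_{v(d')/}$, whose fibre over $t_0$ is $\cC\times_\cD\{v(d')\}=\cC_{v(d')}$, and likewise with $\cD'_{d'/}$ and $q$, we see that both inclusions $\cC_{v(d')}\hookrightarrow \cC\times_\cD\cD_{v(d')/}$ and $\cC'_{d'}\hookrightarrow \cC\times_\cD\cD'_{d'/}$ are coinitial. Since $\cC'_{d'}$ is canonically identified with $\cC_{v(d')}$ and the resulting triangle involving $\Phi_{d'}$ commutes, the right-cancellation property of coinitial functors (if $g\circ f$ and $f$ are coinitial then so is $g$) yields that $\Phi_{d'}$ is coinitial, which finishes the proof.

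I expect the key lemma — and in particular the verification that the relevant comma $\infty$-category has a terminal object given by a Cartesian lift — to be the main point: this is exactly where the right-fibration hypothesis is used, and without it the base-change map need not be an equivalence. A secondary, more bookkeeping matter is the identification of the pointwise value of $\theta$ with restriction along $\Phi_{d'}$; this is routine once the adjunction units and counits are written out via the pointwise formula, but it should be stated with some care.
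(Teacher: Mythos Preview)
Your argument is correct, but it takes a genuinely different route from the paper. The paper's proof is a two-line citation: since $p$ is a right fibration, $p^{\op}$ is a left fibration, and left fibrations are proper by \cite[4.4.11]{Cisinski19}; the statement then follows immediately from the proper base change theorem \cite[6.4.13]{Cisinski19} applied to the opposite square. In other words, the paper outsources the entire content to Cisinski's general machinery.

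What you do instead is essentially reprove the relevant special case of proper base change by hand: you write down the Beck--Chevalley map, reduce to a pointwise statement, and then isolate the key lemma that for a right fibration the inclusion of the fibre over an initial object is coinitial. Your verification of that lemma via Cartesian lifts is exactly right, and the right-cancellation step (if $gf$ and $f$ are coinitial then so is $g$) is \cite[4.1.1.3(3)]{lurie2009higher} dualised. The trade-off is clear: the paper's proof is shorter and locates the result within an existing framework, while yours is self-contained and makes transparent precisely where the right-fibration hypothesis enters (namely, in producing the terminal object of the comma category). Both are valid; yours would be preferable in a context where one does not want to import Cisinski's notion of properness, and it also makes the pointwise identification $(p_*F)(d)\simeq \lim_{\cC_d} F$ explicit, which is how the lemma is actually used later in the paper (cf.\ \cref{remark: expli Y }).
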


\begin{proof}
By  \cite[4.4.11]{Cisinski19}, a left fibration is proper. Therefore, the claim is just proper base change \cite[6.4.13]{Cisinski19} for
\[
\begin{tikzcd}
    {\mathcal{C}'^\op} & {\mathcal{C}^\op} \\
    {\mathcal{D}'^\op} & {\mathcal{D}^\op}
    \arrow["{v^\op}"', from=2-1, to=2-2]
    \arrow["{p^\op}", from=1-2, to=2-2]
    \arrow["{q^\op}"', from=1-1, to=2-1]
    \arrow["{u^\op}", from=1-1, to=1-2]
\end{tikzcd}
\]
\end{proof}

\begin{lemma}\label{lemma: lem8}
Let $A$ be a set, $I$ a category and $F_a:T_a \to \Fun(I,\mathcal{S})$ be a collection of functors from filtered posets indexed on $A$. Then, there is a canonical equivalence:
\[
\underset{a \in A}{\prod}\colim \ F_a \simeq \underset{(c_a) \in \underset{A}{\prod}T_a}{\colim}\underset{a \in A}{\prod}F_a(c_a).
\]
\end{lemma}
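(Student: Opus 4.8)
The plan is to prove this by reducing the product over $A$ of filtered colimits to a single filtered colimit indexed by the product poset $\prod_A T_a$, and to verify that this reindexing is compatible with the projection functors. First I would observe that the product $\prod_A T_a$ is itself a filtered poset: a product of filtered posets is filtered, since given finitely many elements of $\prod_A T_a$ we may take an upper bound coordinatewise (using that each $T_a$ is filtered, and that in a poset ``filtered'' only requires upper bounds for finite subsets). For each $a \in A$ the projection $\pi_a \colon \prod_A T_b \to T_a$ is then a functor between filtered posets, and I would check that it is cofinal: given $c \in T_a$, the fiber-like condition amounts to finding some tuple whose $a$-th coordinate dominates $c$, which is immediate, and the relevant comma category is filtered hence weakly contractible. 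Consequently $\colim_{(c_b) \in \prod_A T_b} F_a(c_{a}) \simeq \colim_{c \in T_a} F_a(c)$ for each fixed $a$.

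Next I would use that in $\mathcal{S}$ (and more generally in any $\infty$-topos, but here we only need spaces) filtered colimits commute with finite limits — but products over an arbitrary set $A$ are \emph{not} finite, so this is the step requiring care. Instead, the correct mechanism is that colimits over a fixed filtered diagram commute with \emph{arbitrary} products in $\mathcal{S}$ when the diagram is the \emph{same} $\prod_A T_b$ for every factor: more precisely, for a diagram $G \colon J \to \mathcal{S}^A = \Fun(A_\delta,\mathcal{S})$ with $J$ filtered, $\colim_J$ computed in $\mathcal{S}^A$ is computed pointwise, i.e. $\colim_J G \simeq (a \mapsto \colim_J G(a))$, which unwinds to $\prod_{a}\colim_J G(a)$ after applying the equivalence $\Fun(A_\delta,\mathcal{S}) \simeq \mathcal{S}^{A}$ and noting the limit over the discrete set $A_\delta$ is the product. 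Applying this with $J = \prod_A T_b$ and $G(a) = F_a(\pi_a(-)) = F_a(c_a)$ gives
\[
\colim_{(c_b)\in \prod_A T_b} \prod_{a\in A} F_a(c_a) \simeq \prod_{a \in A} \colim_{(c_b) \in \prod_A T_b} F_a(c_a).
\]
Combining this with the cofinality statement from the first paragraph, $\colim_{(c_b)} F_a(c_a) \simeq \colim_{T_a} F_a$, yields the desired equivalence $\prod_{a\in A}\colim F_a \simeq \colim_{(c_a) \in \prod_A T_a} \prod_{a \in A} F_a(c_a)$.

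The main obstacle I expect is making precise that ``filtered colimits commute with products'' in the form actually needed here: the naive statement ``filtered colimits commute with arbitrary products'' is \emph{false} in $\mathcal{S}$ in general, so the argument genuinely relies on the fact that one single filtered diagram $\prod_A T_b$ is used uniformly across all factors, which is exactly what lets us invoke ``colimits in a functor category are pointwise.'' I would be careful to phrase it that way — via $\Fun(A_\delta,\mathcal{S})$ and pointwise colimits — rather than citing a commutation result. A secondary point to get right is the cofinality of each projection $\pi_a$; for this I would invoke Quillen's Theorem A / the $\infty$-categorical cofinality criterion (\cite[4.1.3.1]{lurie2009higher}), checking that for each $c \in T_a$ the slice $(T_a)_{c/} \times_{T_a} \prod_A T_b$ is filtered, hence weakly contractible. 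Everything else is bookkeeping: assembling the canonical comparison map and checking it agrees with the constructed equivalence on the nose by naturality of the universal properties involved.
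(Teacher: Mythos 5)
The cofinality of the projections $\pi_a\colon \prod_A T_b \to T_a$ is fine, but the step you yourself flag as the delicate one is where the argument actually breaks. The identity you need,
\[
\underset{(c_b)\in \prod_A T_b}{\colim}\ \prod_{a\in A} F_a(c_a) \;\simeq\; \prod_{a \in A}\ \underset{(c_b) \in \prod_A T_b}{\colim}\ F_a(c_a),
\]
is precisely the assertion that the product functor $\lim_{A_\delta}\colon \Fun(A_\delta,\mathcal{S}) \to \mathcal{S}$ preserves the colimit of the diagram $G\colon \prod_A T_b \to \Fun(A_\delta,\mathcal{S})$ with $G((c_b))(a) = F_a(c_a)$. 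The fact that colimits in $\Fun(A_\delta,\mathcal{S})$ are computed pointwise only identifies $\colim\, G$ as the object $a \mapsto \colim_{(c_b)} F_a(c_a)$ of the functor category; taking its limit then yields $\lim_{A_\delta}\colim_J G$, whereas the left-hand side above is $\colim_J \lim_{A_\delta} G$. Relating these two is the entire content of the lemma, and nothing in your argument supplies it. Worse, the principle you substitute for it --- that a filtered colimit commutes with an arbitrary product provided the \emph{same} filtered index category is used in every factor --- is false: take $A = \mathbb{N}$, $J=\mathbb{N}$, and $G(a)(n) = *$ if $n \geq a$ and $\emptyset$ otherwise; then $\prod_a \colim_J G(a) = *$ while $\colim_J \prod_a G(a)(n) = \emptyset$, since $\prod_a G(a)(n)=\emptyset$ for every $n$. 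What makes the lemma true is that the index poset is the \emph{full product} $\prod_A T_a$ (so the $a$-th coordinate can be advanced independently for each factor) together with the fact that the $a$-th factor depends only on the $a$-th coordinate; your justification never uses either feature at the decisive point, and would "prove" the false statement above.

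For the record, the paper does not prove this lemma either: it cites \cite[3.10]{BarthelSchlankStapleton17}. A self-contained argument can be given by constructing the canonical comparison map from the cone $\prod_a F_a(c_a) \to \prod_a \colim F_a$ and checking it is an equivalence on $\pi_0$ and on all homotopy groups at all basepoints, using that filtered colimits in $\mathcal{S}$ commute with finite limits and with $\pi_n$; this reduces the claim to the corresponding statement for sets and groups, where the coordinatewise upper-bound manipulations from your first paragraph do carry the day (they become the verification of surjectivity and injectivity, rather than a cofinality reduction).
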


\begin{proof}
Since limits and colimits of functors are computed point-wise this follows from \cite[3.10]{BarthelSchlankStapleton17}.
\end{proof}

\begin{lemma} \label{lemma: lem9}
Let $S$ be a simplicial set. Assume that for every functor $F:K \to S$ with $K$ finite simplicial set, there exists a natural transformation in $\Fun(K,S)$ to another functor $F \to F'$ where $F'$ admits a cone i.e. extents to $K^{\lhd}$. Then, $S$ is contractible.  
\end{lemma}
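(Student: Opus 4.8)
The plan is to prove that the CW complex $|S|$ is contractible, equivalently that $S$ is weakly contractible. I would extract from the hypothesis two facts: that $S$ is nonempty, and that every map into $S$ from a finite simplicial set factors through a weakly contractible simplicial set; the second is the real content.

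First I would apply the hypothesis to $K=\emptyset$. Here $K^{\rhd}=\Delta^0$, so the hypothesis already produces a map $\Delta^0\to S$, i.e.\ $S\neq\emptyset$. Next, fix a finite simplicial set $K$ and a map $\sigma\colon K\to S$. The hypothesis gives a natural transformation $\sigma\to\sigma'$, i.e.\ a map $H\colon K\times\Delta^1\to S$ with $H|_{K\times\{0\}}=\sigma$ and $H|_{K\times\{1\}}=\sigma'$, together with an extension $G\colon K^{\rhd}\to S$ of $\sigma'$ along the cone inclusion $K\hookrightarrow K^{\rhd}$. Since $H|_{K\times\{1\}}=\sigma'=G|_{K}$, the maps $H$ and $G$ glue to
\[
\bar\sigma\colon T_K:=(K\times\Delta^1)\underset{K\times\{1\}}{\coprod}K^{\rhd}\longrightarrow S,
\]
and restricting $\bar\sigma$ along $K\cong K\times\{0\}\hookrightarrow K\times\Delta^1\hookrightarrow T_K$ returns $\sigma$, so $\sigma$ factors through $T_K$. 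I claim $T_K$ is weakly contractible: the defining pushout is a homotopy pushout (both maps $K\times\{1\}\hookrightarrow K\times\Delta^1$ and $K\hookrightarrow K^{\rhd}$ are monomorphisms), and the leg $K\times\{1\}\hookrightarrow K\times\Delta^1$ is a weak equivalence (on realizations it is the inclusion of an endpoint of a cylinder), so $T_K$ is weakly equivalent to $K^{\rhd}$; and $K^{\rhd}$ is weakly contractible since its realization is the cone on $|K|$, hence contractible.

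To finish, I would apply the previous paragraph to the inclusion $\iota_K\colon K\hookrightarrow S$ of a finite subcomplex: it factors as $K\hookrightarrow T_K\xrightarrow{\bar\sigma}S$ with $|T_K|$ contractible, so $|\iota_K|\colon|K|\to|S|$ is null-homotopic. Since $|S|$ is a CW complex, any compact subspace is contained in $|K|$ for some finite subcomplex $K\subseteq S$; hence every map $S^{n}\to|S|$ with $n\geq 1$ factors through such a $|K|$ and is therefore null-homotopic, so $\pi_n(|S|)=0$, and any two vertices of $S$ lie in a common finite subcomplex and hence in the same path-component of $|S|$. Together with $S\neq\emptyset$ this makes $|S|$ weakly contractible; being a CW complex it is contractible, so $S$ is contractible.

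The only step that needs genuine care is the computation of the weak homotopy type of $T_K$ — checking that the glued pushout is a homotopy pushout and that $K^{\rhd}$ is weakly contractible. Everything afterward is formal; in particular only finite subcomplexes of $S$ and the compactness of spheres enter, so no simplicial approximation is needed.
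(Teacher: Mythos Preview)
Your proof is correct and shares its core with the paper's: both build the same auxiliary simplicial set
\[
T_K=(K\times\Delta^1)\underset{K\times\{1\}}{\coprod}K^{\rhd},
\]
observe it is weakly contractible, and factor the given map through it. The difference lies in how one reduces to maps out of a \emph{finite} simplicial set. The paper uses Kan's $\Ex^\infty$: a class in $\pi_n(|S|)$ is represented by a simplicial map $\partial\Delta^{n+1}\to\Ex^\infty(S)$, which by finiteness of $\partial\Delta^{n+1}$ lands in some $\Ex^k(S)$, and adjunction turns this into a map $\mathrm{Sb}^k(\partial\Delta^{n+1})\to S$ from a finite simplicial set. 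You instead argue directly on the CW structure of $|S|$: a sphere has compact image, hence sits in $|K|$ for a finite subcomplex $K\subset S$, and the inclusion $|K|\hookrightarrow|S|$ is null-homotopic because it factors through $|T_K|$. Your route is more elementary in that it avoids the $\Ex^\infty$/subdivision machinery entirely; the paper's route has the minor advantage of staying inside simplicial sets until the very end. Either way the substance is the same.
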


\begin{proof}
We will use Kan's, $\Ex^{\infty}(-)$ construction for the fibrant replacement in the Kan model structure of simplicial sets.\\
Recall that $\Ex^\infty(X)$ is defined as the following colimit in $\sset$
\[
\Ex^\infty(X) = \colim\ ( X \subset \Ex(X) \subset \Ex^2(X) \subset \cdots )
\]
where each $\Ex^k:\sset\to\sset $ is the right adjoint of the $k$-th subdivision functor.\\
By definition of fibrant replacement, any homotopy class in $\pi_n(|S|)$ can be represented by a map :
\[
\partial \Delta^n \to \Ex^{\infty}(S).
\]
Since $\partial \Delta^n$ has only finitely many non-degenerate simplices, this map must factor through some $\Ex^k(S)$, which, by adjunction, means that the homotopy class we started with can be represented by a map of simplicial sets
\[
\mathrm{Sb}^k(\partial \Delta^n) \to S.
\]
This map factors through the contractible simplicial set 
\[
(\mathrm{Sb}^k(\partial \Delta^n)\times \Delta^1) \underset{\mathrm{Sb}^k(\partial \Delta^n)\times \{1\}}\coprod \mathrm{Sb}^k(\partial \Delta^n)^{\lhd}
\]
by our assumption on $S$, which implies that $S$ is contractible.
\end{proof}

\subsection{Proof of \cref{proposition: prop4}:}

\begin{proof} In this subsection we prove \cref{proposition: prop4}.
Hence, we will assume that we are in the setting of \cref{proposition: prop4}, i.e. we have $\cC:=\Fun(\cE,\mathcal{S})$ where $\cE$ is a small $\infty$-category, and $\cD \subset \cC$ a full subcategory closed under limits and $\kappa$-filtered colimits. Fixing some more notation, let $\pi$ be the cardinal from \cref{lemma: lem1} that corresponds to $\cC$ and choose $\mu \geq \max\{\pi,\kappa, \omega^+\}$. We also fix a $\mu$-pure morphism $f:A \to B$ with $B \in \mathcal{D}$, our goal is to show that we can write $A$ in terms of limits and $\mu$-filtered colimits from $B$.\\

First, we show that $f$ gives rise to a functor from the cone with retracts on a filtered poset to $\cC_{A/}$.

\begin{construction}\label{construction: Z}
From \cref{proposition: prop1} there exists a $\mu$-filtered category, $I$ together with a quasi-split cone  $Z':I^{\lhd} \to \cC$ such that $Z' (-\infty)=A $ and $\underset{i\in I}\colim \ Z'_{|I}(i)=\underset{i\in I}\colim \  B_i = B$ where $B_i=Z'(i)$. By \cite[5.3.1.18]{lurie2009higher} we may, and will, assume that $I$ is a $\mu$-filtered poset. We choose retractions $g_i'$ for all $h_i':=Z'(-\infty\to i)$ and denote the resulting functor from the cone with retracts on $I$ (see \cref{construction: cons1}) by
\[
Z'': L  \to \mathcal{C}.
\]
By \cref{corollary: cor2} $L$ is a $1$-category and we will denote its elements (throughout this section) as we did in \cref{corollary: cor2}. Note that $-\infty$ is the initial object of $L$, and $Z''$ sends it to $A$, thus, by \cref{lemma : initial as mapping to prop}, $Z''$ extends to a functor
\[
Z: L \to \mathcal{C}_{A/}.
\]
\end{construction}

We will use the following notation:
\begin{notation} \label{notation: category B}
Denote by $\mathcal{B}$ the smallest full subcategory of $\cC$ that is closed under limits and $\mu$-filtered colimits and contains $B$.
\end{notation}

Our goal is to show that $A\in \mathcal{B}$. We will do so in three steps:

\begin{enumerate}
    \item First, we will build a $\Delta^+$ indexed diagram in $\cC_{A/}$ i.e. we construct a functor
    \[
    X:\Delta^+ \to \cC_{A/}.
    \]
    \item Second, we will show that $X([n])\in \mathcal{B}_{A/}$.
    \item Third, we will show that $(\Id\colon A\to A)\in \cC_{A/}$ is a retract of $\underset{\Delta^+}\lim \ X$ in $\cC_{A/}$ .
\end{enumerate}

These three steps together conclude the proof.

\subsubsection{First Step - Constructing a Diagram}

The main goal of this subsection is to build a diagram $X:\Delta^+ \to \cC_{A/}$. We will do so by constructing a functor $\phi:\Fun(L,\cC_{A/}) \to \Fun(\Delta^+,\cC_{A/})$ and applying it to $Z$ of \cref{construction: Z}. We will construct $\phi$ by first constructing a functor $\phi': \Fun(L,\cC_{A/}) \to \Fun(J\times\Delta^+,\cC_{A/})$ for a category $J$ that we will now define, and then taking a colimit over $J$.  
 
\begin{construction}\label{construction: J}
 Let $\underset{m\in \mathbb{N}}{\prod } I^{I_\delta^m}$ be the $\mu$-filtered poset whose objects are series of functions of sets
\[
(f_0,f_1,....) \ \textrm{ where } \ f_m: I^{m}_\delta \to I_\delta
\]
with the order $(f_0,...) \leq (f_0',...)$ if $f_m(i_0,....,i_{m-1}) \leq f_m'(i_0,....,i_{m-1})$ for all $ i_0,...,i_{m-1} \in I,  \ m \in \mathbb{N}$. We will denote by $J \subset \underset{m\in \mathbb{N}}{\prod } I^{I_\delta^m}$ the full subcategory on $(f_0,f_1,...)$ satisfying
\[
f_m(i_0,...i_{k-1},i_{k+1},...i_{m}) \leq f_{m+1}(i_0,...i_{k-1},i_k,i_{k+1},...,i_{m})
\]
for all $m \in \mathbb{N}$, $0 \leq k \leq m$ and for all $i_0,...,i_m \in I$. 
\end{construction}

We will also denote:

\begin{notation}
For $(f_0,...,f_n) \in \underset{m \leq n}{\prod } I^{I_\delta^m}$ we let
\[
I_{f_0,...f_n} \subseteq I^{n+1}_\delta
\]
be the subset
\[
I_{f_0,...f_n} :=
\{(i_0,\dots, i_n) \in I^{n+1}_\delta \ | \  i_{k} \geq f_k(i_0,\dots,i_{k-1}) \quad \forall\, 0\leq k\leq n\}.
\]
\end{notation}

\begin{example}
$I_{\emptyset}$ is a set which contains a single point, and $I_{f_0}$ is $I_{\geq f_0}$. 
\end{example}

We will need the following lemma for the second step.

\begin{lemma}\label{lemma: prop J}
 The following are true:
 \begin{enumerate}
    \item Let $J$ be as in \cref{construction: J}. Then $J$ is $\mu$-filtered and cofinal in $\underset{m\in \mathbb{N}}{\prod } I^{I_\delta^m}$.
    \item Let $i \in I$ and $J^i \subset \underset{m\in \mathbb{N}}{\prod } I_i^{(I_i)_\delta^m}$ be the full subcategory on $(f_0,f_1,...)$ satisfying
    \[
    f_m(i_0,...i_{k-1},i_{k+1},...i_{m}) \leq f_{m+1}(i_0,...i_{k-1},i_k,i_{k+1},...,i_{m})
    \]
    for all $m \in \mathbb{N}$, $0 \leq k \leq m$ and for all $i_0,...,i_m \in I_i$ (this is \cref{construction: J} when we replace $I$ by $I_i$). Then there exists a cofinal map $\xi: J^i \to J$ such that $\xi(f_k)_{|(I_i)_\delta^k}=f_k$ for all $k \in \mathbb{N}$.
\end{enumerate} 
\end{lemma}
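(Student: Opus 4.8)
The plan is to treat the two clauses separately, using throughout the following elementary fact about posets: a monotone map $p\colon \mathcal{A}\to\mathcal{B}$ between nonempty directed posets is cofinal as soon as every $b\in\mathcal{B}$ satisfies $b\leq p(a)$ for some $a$, because then each comma poset $\{a\in\mathcal{A}: p(a)\geq b\}$ is a nonempty up-set in the directed poset $\mathcal{A}$, hence directed, hence weakly contractible.

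For clause $(1)$, I would first observe that $P:=\prod_{m\in\mathbb{N}} I^{I_\delta^m}$ is $\mu$-filtered: each $I^{I_\delta^m}$ is a product of copies of the $\mu$-filtered poset $I$, hence $\mu$-filtered, and a product of $\mu$-filtered posets is $\mu$-filtered (a $\mu$-small family projects to a $\mu$-small family in each factor, and the tuple of factorwise upper bounds dominates it). The content of the clause is then a single recursive ``correction'': given $(g_0,g_1,\dots)\in P$, I would build $(f_0,f_1,\dots)\in J$ with $f_m\geq g_m$ pointwise for all $m$ by setting $f_0:=g_0$ and, for $m\geq 1$, letting $f_m(i_0,\dots,i_{m-1})$ be a chosen upper bound in $I$ of the finite set $\{g_m(i_0,\dots,i_{m-1})\}\cup\{f_{m-1}(i_0,\dots,\widehat{i_k},\dots,i_{m-1}): 0\leq k\leq m-1\}$, which exists since $\mu$ is infinite. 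Then $f_m\geq g_m$ by construction, and $f_{m+1}(i_0,\dots,i_m)$ dominates $f_m(i_0,\dots,\widehat{i_k},\dots,i_m)$ for every $k$, which is exactly the compatibility required in \cref{construction: J}. Feeding the pointwise upper bound of a $\mu$-small family of elements of $J$ into this construction shows $J$ is $\mu$-filtered, and feeding an arbitrary element of $P$ into it shows, via the criterion above with $\mathcal{A}=J$ and $\mathcal{B}=P$, that $J\hookrightarrow P$ is cofinal.

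For clause $(2)$, write $I_i=I_{i/}$, which is a full subposet of $I$, is again $\mu$-filtered, and includes cofinally into $I$. Fix a function $r\colon I_\delta\to (I_i)_\delta$ that is the identity on $(I_i)_\delta$ and satisfies $r(x)\geq x$ for all $x$ (take $r(x)$ to be any common upper bound of $i$ and $x$, which lies in $I_i$). I would define $\xi$ componentwise by $\xi(f_0,f_1,\dots)_k(i_0,\dots,i_{k-1}):=f_k(r(i_0),\dots,r(i_{k-1}))$. This is visibly monotone; it restricts to $f_k$ on $(I_i)_\delta^k$ because $r$ fixes $(I_i)_\delta$; and it lands in $J$, since the compatibility inequality for $\xi(f_\bullet)$ evaluated at a tuple $(i_0,\dots,i_m)\in I_\delta^{m+1}$ is literally the compatibility inequality for $(f_\bullet)\in J^i$ evaluated at $(r(i_0),\dots,r(i_m))\in (I_i)_\delta^{m+1}$.

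It then remains to prove $\xi$ is cofinal, and by the criterion above (both $J$ and $J^i$ being $\mu$-filtered, by clause $(1)$ applied to $I$ and to $I_i$) this reduces to showing every $g\in J$ is dominated by $\xi(f)$ for some $f\in J^i$. I expect this to be the main obstacle. The naive componentwise $\xi$ above need not suffice on its own: producing such an $f$ amounts to finding, for each $k$ and each tuple over $I_i$, an upper bound \emph{in $I_i$} of a set of values of $g_k$, and these sets can fail to be $\mu$-small if one uses only the coarse pointwise description of $J$. The way around this is to use the full structure recorded in \cref{construction: J}, namely the ``trees'' $I_{g_0,\dots,g_n}\subseteq I_\delta^{n+1}$ cut out by $g$: the compatibility conditions defining $J$ force $g$, restricted to these trees, to be controlled level by level by $\mu$-small pieces, and the cofinality of $I_i$ in $I$ then lets one transport these pieces onto $I_i$ and reassemble them, compatibly in all variables and all $m$, into the required $f\in J^i$ (if necessary composing $\xi$ with the recursive correction of clause $(1)$ to absorb the discrepancies). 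Carrying out this simultaneous, all-levels bookkeeping is the real work of the lemma.
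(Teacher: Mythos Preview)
Your argument for clause~(1), and your construction of the map $\xi$ in clause~(2), coincide with the paper's: the same recursive upper-bound correction in~(1), and the same retraction $r\colon I_\delta\to(I_i)_\delta$ precomposed coordinatewise in~(2). For the cofinality of $\xi$, the paper offers nothing beyond ``one can verify that this map is cofinal,'' so at that point you are already at parity with the source.

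Where your proposal goes further is in the final paragraph, and there the sketch does not hold up. You correctly reduce cofinality to: for every $g\in J$ and every $(j_0,\dots,j_{k-1})\in(I_i)_\delta^k$, find an upper bound in $I_i$ of the set $\{g_k(i_0,\dots,i_{k-1}):r(i_\ell)=j_\ell\text{ for all }\ell\}$, and you correctly note that the fibers $r^{-1}(j_\ell)$ need not be $\mu$-small. But the remedy you propose --- invoking the compatibility inequalities of \cref{construction: J} and the ``trees'' $I_{g_0,\dots,g_n}$ --- does not do the job. The compatibility inequalities say that \emph{omitting} a variable from a tuple can only \emph{decrease} the value of $g$; this produces lower bounds, not the upper bounds you need. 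And the sets $I_{g_0,\dots,g_n}$ consist of tuples where each coordinate dominates the value of $g$ on its predecessors, which is an entirely different constraint from lying in a fiber of $r$; there is no evident mechanism by which restricting to these trees makes the relevant fibers $\mu$-small or otherwise bounds the sets in question. So the ``simultaneous, all-levels bookkeeping'' you describe has no visible starting point. The paper does not supply one either; it simply asserts the conclusion.
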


\begin{proof}
\begin{enumerate}
    \item We first prove that $J$ is cofinal in $\underset{m\in \mathbb{N}}{\prod } I^{I_\delta^m}$. Let $(f_0,...)\in \underset{m\in \mathbb{N}}{\prod } I^{I_\delta^m}$. Our goal is to define $(\bar{f}_0,...)\in J$ such that $(f_0,..)\leq (\bar{f}_0,...)$ and we will do so by induction on $n$. Let $\bar{f}_0=f_0$ and assume $\bar{f}_n$ is defined. For every $(i_0,...,i_{n})\in I^n_\delta$ we pick 
    \[
    p_{i_0,...,i_n}^k \geq f_{n+1}(i_0,..,i_{n}),\bar{f}_n(i_0,...,\widehat{i_k},...,i_{n})
    \]
    for $0\leq k \leq n$ and we choose $p_{i_0,...,i_n} \geq  p_{i_0,...,i_n}^0,...,p_{i_0,...,i_n}^n$. We define $\bar{f}_{n+1}(i_0,...i_n)= p_{i_0,...,i_n}$ and by construction we have that $\bar{f}_n(i_0,...,\widehat{i_k},...,i_{n})\leq \bar{f}_{n+1}(i_0,...,i_{n})$ and that $(f_0,..)\leq (\bar{f}_0,...)$.\\

    Since every cofinal subset of a $\mu$-filtered poset is itself $\mu$-filtered, the first part follows as well.
    
    
    \item We will construct $\xi$ explicitly. For every $j\in I $ such that $i\not\leq j$ we pick $p_j\geq i,j$. Given $f:(I^n_i)_\delta \to (I_i)_\delta$ we define 
    \[
    \tilde{f}:I^n_\delta \to I_\delta, \quad (i_0,...i_{n-1}) \mapsto f(i_0',...i_{n-1}') \quad \textrm{where} \quad i_k'=\begin{cases}i_k, \quad  i_k \geq i \\
    p_{i_k}, \quad i_k \not \geq i \end{cases}.
    \]
    We let $\xi: J^i \to J$ be defined by $\xi(f_0,...)=(\tilde{f_0},...)$. One can verify that this map is cofinal.
\end{enumerate}
\end{proof}

Recall, that our first goal is to define a functor $\phi':\Fun(L,\cC_{A/}) \to \Fun(J\times\Delta^+,\cC_{A/})$. We will do so by defining a category $M$, functors
$F:M \to J \times \Delta^+$, $ H:M \to L$
and taking a right Kan extension:
\[
\begin{tikzcd}
M & L & \mathcal{C}_{A/} \\
{J \times \Delta^+}
\arrow["F"', from=1-1, to=2-1]
\arrow["H", from=1-1, to=1-2]
\arrow["(-)", from=1-2, to=1-3]
\arrow["F_*((-)\circ H)"', dashed, from=2-1, to=1-3]
\end{tikzcd}
\]
We begin by defining $F:M\to J \times \Delta^+$ as a right fibration: 

\begin{construction} \label{construction: F}
Let $G: (J \times \Delta^+)^{\mathrm{op}} \to \mathrm{Set}$ be defined as follows: \\
For an object $((f_0,...),[n]) \in J \times \Delta^+$ let
\[
G((f_0,...),[n])= I_{f_0,..,f_{n-1}}.
\]
For a morphism $(j,h)$ where $j:(f_0,...) \to (f_0',...) \in J$ and $h: [k] \to [n]$ is a strictly increasing function let
\[
G(j,h):I_{f_0',..f_{n-1}'}\to I_{f_0,..f_{k-1}},\quad (i_0,...i_{n-1})\mapsto(i_{h(0)},...,i_{h(k-1)}).
\]
It follows from the definition of $J$ that this defines a functor. We let $F:M \to J \times \Delta^+$ be the right fibration over $J \times \Delta^+$ classified by $G$. 
\end{construction} 

\begin{remark} \label{remark: explicit M}
Note that the objects of $M$ are $3$-tuples $((f_0,...), \ [n], \ (i_0,...,i_{n-1}))$ with $(f_0,...) \in J$, $n\in \mathbb{N}$ and $(i_0,...,i_{n-1}) \in I_{f_0,...,f_{n-1}}$, and a morphism:
\[
\varphi:((f_0,...) , \ [k], \ (i_0,...,i_{k-1})) \to ((f_0',...), \ [n], \ (i_0',...,i_{n-1}'))
\]
over $h:[k] \to [n]$ implies that $i'_{h(m)}=i_m$ if $m \not=k$.
\end{remark}

We shall now define a functor $H\colon M \to L$, as follows (see \cref{corollary: cor2} for the explicit description of $L$):\\
For  $((f_0,...),[n],(i_0,...,i_{n-1})) \in M$ let:
\[
H((f_0,...),[n],(i_0,...,i_{n-1}))=f_n(i_0,...,i_{n-1}).
\]
For $\varphi_{j,h}:((f_0,...),[k],(i_0,...,i_{k-1})) \to ((f_0',...),[n],(i_0',...,i_{n-1}'))$ over $(j:(f_0,...) \to (f_0',...), h: [k] \to [n])$ let:
\[
H(\varphi_{j,h})= \begin{cases} b_{f_k(i_0,...i_{k-1}),f_n'(i_0',...,i_{n-1}')},  \quad n= h(k)\\
q_{i_{h(k)}'},  \ \ \quad \quad \quad \quad \quad \quad \quad \ \ \quad  n \not= h(k)
\end{cases}.
\]
By the definition of $J$ this is well defined as a function of $\Hom$ sets.

\begin{lemma}\label{lemma: H is a functor}
$H\colon M \to L$ defined above is a functor.
\end{lemma}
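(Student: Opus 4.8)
The plan is to check the two functoriality axioms directly, working $1$-categorically. By \cref{corollary: cor2} the target $L$ is a $1$-category, and $M$ is a $1$-category since the functor $G$ that classifies it is set-valued; hence a map of simplicial sets $M\to L$ is the same datum as a functor, which is determined by its effect on objects and on morphisms and is functorial precisely when it preserves identities and composition. The fact that the displayed formula for $H(\varphi_{j,h})$ names a morphism of $L$ with source $f_k(i_0,\dots,i_{k-1})$ and target $f_n(i'_0,\dots,i'_{n-1})$ (reading $b_{a,a}$ as $\Id_a$) has already been recorded; it rests on the monotonicity that iterating the defining inequality of $J$ (\cref{construction: J}) gives $f'_m(i'_{a_0},\dots,i'_{a_{m-1}})\le i'_{a_m}$ for any $(f'_\bullet)\in J$, any $(i'_0,\dots,i'_{N-1})\in I_{f'_0,\dots,f'_{N-1}}$ and any $0\le a_0<\dots<a_m\le N-1$, combined with $f_k\le f'_k$ (from $j$) and the relation $i_l=i'_{h(l)}$ for $l<k$ of \cref{remark: explicit M}.

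Preservation of identities is immediate: the identity of $((f_\bullet),[n],(i_0,\dots,i_{n-1}))$ in $M$ lies over the pair of identities of $(f_\bullet)$ and $[n]$, and since $h(n)=n$ the formula for $H$ lands in its first branch and returns $b_{a,a}=\Id_a$ with $a=f_n(i_0,\dots,i_{n-1})=H((f_\bullet),[n],(i_0,\dots,i_{n-1}))$.

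The substance of the lemma, and the step I expect to require the most care, is preservation of composition; it is a bookkeeping argument in the explicit descriptions of $M$ and $L$. Let $\varphi_{j_1,h_1}\colon\mathbf o_1\to\mathbf o_2$ lie over $(j_1,h_1\colon[k_1]\to[k_2])$ and $\varphi_{j_2,h_2}\colon\mathbf o_2\to\mathbf o_3$ lie over $(j_2,h_2\colon[k_2]\to[k_3])$; their composite in $M$ lies over $(j_2\circ j_1,\ h_2\circ h_1)$, and since $h_2$ is injective one has $h_2(h_1(k_1))=k_3$ iff $h_1(k_1)=k_2$ and $h_2(k_2)=k_3$. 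Thus the branch ($b$ or $q$) that computes $H$ of each of the three morphisms is governed by the two conditions $h_1(k_1)=k_2$ and $h_2(k_2)=k_3$, producing four cases; writing $a_r=H(\mathbf o_r)$, the required identity $H(\varphi_{j_2,h_2}\circ\varphi_{j_1,h_1})=H(\varphi_{j_2,h_2})\circ H(\varphi_{j_1,h_1})$ reduces, respectively, to: the composition of two $b$'s inherited from $I$ when both conditions hold; the relation $q_kb_{ij}=q_k$ of \cref{corollary: cor2} when only the first holds; the relation $b_{ij}q_k=q_k$ when only the second holds; and the relation $q_pq_k=q_k$ (the index of the first-applied factor surviving) when neither holds. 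The one point that is not purely formal is matching the $q$-indices in the latter two cases: whenever $h_1(k_1)<k_2$ one has $i'_{h_1(k_1)}=i''_{h_2(h_1(k_1))}$ by applying \cref{remark: explicit M} to $\varphi_{j_2,h_2}$, and this is exactly the equality that identifies the index surviving $H(\varphi_{j_2,h_2})\circ H(\varphi_{j_1,h_1})$ with the index that $H$ assigns directly to $\varphi_{j_2,h_2}\circ\varphi_{j_1,h_1}$. Once the four cases are dispatched, $H$ preserves composition, and the lemma follows.
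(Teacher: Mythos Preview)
Your proposal is correct and follows essentially the same approach as the paper: a direct case analysis of composition using the explicit description of $L$ from \cref{corollary: cor2}, with the key step being the index-matching $i'_{h_1(k_1)}=i''_{h_2(h_1(k_1))}$ via \cref{remark: explicit M}. The only differences are cosmetic: you split into four cases according to the two conditions $h_1(k_1)=k_2$ and $h_2(k_2)=k_3$, whereas the paper groups your cases 3 and 4 together (handling ``$q$ followed by anything'' at once via $b_{ij}q_k=q_k$ and $q_pq_k=q_k$), and you check identities explicitly while the paper leaves this implicit.
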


\begin{proof}
We shall check that this is a functor using the explicit description of $L$ as in \cref{corollary: cor2}. Let $[k] \overset{h}\to [m] \overset{g}\to [n]$ be $2$ composable arrows in $\Delta^+$. There are $2$ possibilities:
\begin{itemize}
    \item If $n= g \circ h(k)$, then
    \begin{gather*}
    H(\varphi_{j',g}) \circ H(\varphi_{j,h})=b_{f_k(i_0,...i_{k-1}),f_m'(i_0',...,i_{m-1}')}b_{f_m'(i_0',...,i_{m-1}'),f_n''(i_0'',...i_{n-1}'')}=\\= b_{f_k(i_0,...i_{k-1}),f_n''(i_0'',...,i_{n-1}'')}= H(\varphi_{j' \circ j,g\circ h}).   
    \end{gather*}
        
    \item If $n \not= g \circ h(k)$. Then, either $m \not=h(k)$ which gives:
    \[
    H(\varphi_{j',g}) \circ H(\varphi_{j,h})=s\circ q_{i'_{h(k)}}=q_{i'_{h(k)}}=q_{i''_{gh(k)}}= H(\varphi_{j' \circ j,g\circ h})
    \]
    where $s$ is some morphism. Or $m=h(k)$ and $n \not= g(m)$ which gives  
    \[
    H(\varphi_{j',g}) \circ H(\varphi_{j,h})=q_{i''_{g(m)}}b_{f_m'(i_0',...,i_{m-1}'),f_n''(i_0'',...i_{n-1}'')}=q_{i''_{g(m)}}=q_{i''_{gh(k)}}= H(\varphi_{j' \circ j,g\circ h}).
    \]
    \end{itemize}
\end{proof} 

We can now define $\phi$ and $\phi'$:

\begin{definition} \label{definition: phi}
Let $F$ be as in \cref{construction: F} and $H$ be as in \cref{lemma: H is a functor}. Denote 
\[
\phi':= F_*H^*:\Fun(L,\cC_{A/})\to \Fun(J\times \Delta^+,\cC_{A/}).
\] 
By adjunction, we can view $\phi'$ as a functor $\Fun(L,\cC_{A})\to \Fun(J,\Fun(\Delta^+,\cC_{A/}))$, so we can define \[
\phi:=\underset{J}\colim \ \phi':\Fun(L,\cC_{A/})\to \Fun(\Delta^+,\cC_{A/}) .
\] 
\end{definition} 

We now apply our functors to $Z$ of \cref{construction: Z}:
 
\begin{notation} \label{notation: X Y}
Let $Z$ be as in \cref{construction: Z} and $\phi$, $\phi'$ be as in \cref{definition: phi}. We denote $Y:=\phi'(Z)$ i.e. $Y$ is the right Kan extension:
\[
\begin{tikzcd}
M & L & \mathcal{C}_{A/} \\
{J \times \Delta^+}
\arrow["F"', from=1-1, to=2-1]
\arrow["H", from=1-1, to=1-2]
\arrow["Z", from=1-2, to=1-3]
\arrow["Y"', dashed, from=2-1, to=1-3]
\end{tikzcd}
\]
and we let $X:=\phi(Z)$ i.e.
\[
X=\underset{J}{\colim} \ Y: \Delta^+\to \mathcal{C}_{A/}.
\]

\end{notation}

It will be useful for the second step to describe $Y$ more explicitly: 

\begin{remark} \label{remark: expli Y }
Let $Y$ be as in \cref{notation: X Y}. By \cref{lemma: lem7}, we can ``describe" $Y$ more explicitly: For an object $((f_0,..),[n])\in J \times \Delta^+$ we have:
\[
Y((f_0,..),[n]) \simeq \underset{(i_0,...,i_{n-1})\in I_{f_0,...f_{n-1}}}{\prod}Z(f_{n}(i_0,...,i_{n-1}))=\underset{(i_0,...,i_{n-1})\in I_{f_0,...f_{n-1}}}{\prod}(g_{f_{n}(i_0,...,i_{n-1})}':A \to B_{f_{n}(i_0,...,i_{n-1})})
\]
and for a morphism $(j,h)\in \Mor(J \times \Delta^+)$ we have that $Y(j,h)$ is the product of the maps over it. 
\end{remark}

This concludes the first step.

\subsubsection{Second Step - Finding a Diagram We Can Write in Terms of Limits and $\mu$-Filtered Colimits From $B$}

In this subsection our goal is to show that $X([n]) \in \mathcal{B}_{A/}$. We begin by defining a functor

\begin{definition}\label{defenition: *}
Let $I$ be the $\mu$-filtered poset in \cref{construction: Z}. Define
\[ 
(-)^*:\mathcal{C} \to \mathcal{C} \quad C \mapsto \underset{i\in I}{\colim}\underset{(I_{i})_{\delta}}{\prod}C
\]
where the maps $\underset{(I_{i})_{\delta}}{\prod}C \to \underset{(I_{j})_{\delta}}{\prod}C$ for $i \leq j$ are projections.
\end{definition}

\begin{remark}
Informally, for $C\in \mathcal{C}_{A/}$, $C^*$ is the space of $I$-indexed series in $C$ when we identify two series that "agree at some point".
\end{remark}

\begin{definition}
For an object $C \in \cC$ we let $C^{(0)}=C$ and define recursively $C^{(n)} =(C^{(n-1)})^*$. 
\end{definition}

\begin{lemma}\label{lemma: X([n])}
Let $X:\Delta^+ \to \cC_{A/}$ be as in \cref{notation: X Y} i.e. $X= \phi(Z)$ where $Z:L \to \cC_{A/}$ is a functor that preserves the initial object and $\phi$ is as in \cref{definition: phi}. Recall that $\underset{I}\colim \ Z_{|I}=f:A\to B$. Then,
\[
X([n])\simeq (h:A \to B^{(n)})\in \cC_{A/}.
\]
where $h:A \to B^{(n)}$ is some morphism.
\end{lemma}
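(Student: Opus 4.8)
The plan is to prove the statement by induction on $n$, reducing it at once to a computation with underlying objects in $\cC$. The forgetful functor $\cC_{A/}\to\cC$ is a right adjoint (to $C\mapsto(A\to A\sqcup C)$), so it preserves limits; it also preserves colimits over weakly contractible diagrams, in particular $\mu$-filtered ones. Hence it preserves both the products used to define $Y$ as a right Kan extension along the right fibration $F$ and the $\mu$-filtered colimit over $J$ defining $X$, so it suffices to prove the equivalence after applying it. Combining \cref{notation: X Y}, \cref{lemma: lem7}, and the explicit formula for $Y$ recorded just after \cref{notation: X Y}, the underlying object of $X([n])$ is
\[
\colim_{(f_0,\dots)\in J}\ \prod_{(i_0,\dots,i_{n-1})\in I_{f_0,\dots,f_{n-1}}} B_{f_n(i_0,\dots,i_{n-1})},
\]
where $B_i:=Z''(i)$, the $J$-transition maps are given levelwise by the structure maps of $Z''|_I\colon I\to\cC$ (the $Z$-images of the $b$- and $q$-morphisms of $L$), and $\colim_{i\in I}B_i=B$ is the underlying object of $f$. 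I would note that this object, like $B^{(n)}$ of \cref{defenition: *}, depends only on the diagram $i\mapsto B_i$, so the retracts chosen in \cref{construction: Z} play no further role.

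For the base case $n=0$ the fibre $I_\emptyset$ is a one-point set, so the displayed object is $\colim_J B_{f_0}$ computed along the projection $p\colon J\to I$, $(f_0,\dots)\mapsto f_0$. I would check $p$ is cofinal: it factors as the cofinal inclusion $J\hookrightarrow\prod_m I^{I_\delta^m}$ of \cref{lemma: prop J} followed by the projection onto the $m=0$ factor, and the latter is cofinal since the comma category over any $i\in I$ is $I_{i/}\times\prod_{m\geq 1}I^{I_\delta^m}$, hence $\mu$-filtered and weakly contractible. This gives $X([0])$ underlying object $\colim_{i\in I}B_i=B=B^{(0)}$.

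For the inductive step I would expand $B^{(n+1)}=(B^{(n)})^*=\colim_{i\in I}\prod_{j\in(I_i)_\delta}B^{(n)}$, substitute the inductive hypothesis $B^{(n)}\simeq\colim_{(g_0,\dots)\in J}P_n(g_0,\dots)$ with $P_n(g_0,\dots)=\prod_{(i_1,\dots,i_n)\in I_{g_0,\dots,g_{n-1}}}B_{g_n(i_1,\dots,i_n)}$, and commute the product $\prod_{j\in(I_i)_\delta}$ past the $\mu$-filtered colimit over $J$ using \cref{lemma: lem8} (legitimate since $J$ is a $\mu$-filtered poset by \cref{lemma: prop J}, applied pointwise over $\cE$). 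This exhibits $B^{(n+1)}$ as a colimit over the poset $\mathcal K$ of pairs $\big(i,(g^{(j)})_{j\geq i}\big)$ with $i\in I$ and each $g^{(j)}\in J$ (morphisms: $i\leq i'$ with $g^{(j)}\leq g'^{(j)}$ for $j\geq i'$), of the diagram $\big(i,(g^{(j)})\big)\mapsto\prod_{j\geq i}P_n(g^{(j)})$. Then I would define $\Psi\colon J\to\mathcal K$ sending $(f_0,f_1,\dots)$ to $\big(f_0,(g^{(j)})_{j\geq f_0}\big)$ with $g^{(j)}_m(i_1,\dots,i_m):=f_{m+1}(j,i_1,\dots,i_m)$, observe from the defining inequalities of $J$ that each $g^{(j)}$ lies in $J$, and verify that the $\mathcal K$-diagram pulls back along $\Psi$ (relabelling $j$ as $i_0$, and using $I_{g^{(j)}_0,\dots,g^{(j)}_{n-1}}=\{(i_1,\dots,i_n):i_k\geq f_k(j,i_1,\dots,i_{k-1})\}$ and $g^{(j)}_n=f_{n+1}(j,-)$) to exactly the $J$-diagram of the first paragraph at level $n+1$, transition maps included. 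The inductive step then comes down to showing $\Psi$ is cofinal.

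I expect the cofinality of $\Psi$ to be the main obstacle. Since $J$ and $\mathcal K$ are posets, the comma category of $\Psi$ under $\big(i,(g^{(j)})_{j\geq i}\big)$ is the full subposet of $J$ on those $(f_0,f_1,\dots)$ with $f_0\geq i$ and $g^{(j)}_m(i_1,\dots,i_m)\leq f_{m+1}(j,i_1,\dots,i_m)$ for all $j\geq f_0$, $m\geq 0$ and $i_1,\dots,i_m\in I$. I would show it is nonempty by the same stagewise construction used to prove $J$ cofinal in $\prod_m I^{I_\delta^m}$ in \cref{lemma: prop J}, choosing at each stage — via $\mu$-filteredness of $I$ — an element dominating the finitely many previously chosen values together with the relevant $g^{(j)}$-value, and directed because a $J$-upper bound of two elements meeting these conditions again meets them; hence it is filtered and $\Psi$ is cofinal. (If one prefers instead to relativize $I$ to $I_i$ from the start, the bookkeeping between the two versions of the index category is carried out by the cofinal map $J^i\to J$ of \cref{lemma: prop J} together with cofinality of $I_i\hookrightarrow I$.) Finally, since every equivalence used is natural, it is compatible with the structure maps from $A$, so the $\cC$-level identification upgrades to an equivalence $X([n])\simeq(h\colon A\to B^{(n)})$ in $\cC_{A/}$ for a suitable $h$, closing the induction.
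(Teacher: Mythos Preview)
Your proof is correct and follows the same approach as the paper: reduce to underlying objects in $\cC$, handle $n=0$ via cofinality of the projection $J\to I$, and for the inductive step apply \cref{lemma: lem8} to commute the $(I_i)_\delta$-indexed product with the $J$-colimit and conclude by a cofinality argument. The paper decomposes that last cofinality into a chain of reductions through the auxiliary posets $J^{f_0}_n$ and $(J^{f_0}_n)^{I_{f_0}}$ via \cref{lemma: prop J}(2), whereas you package it into the single map $\Psi\colon J\to\mathcal{K}$, but the underlying stagewise construction (as in \cref{lemma: prop J}(1)) is the same.
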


\begin{proof} Before proving the lemma we fix some notation. Let $p_\cC: \cC_{A/} \to \cC$ be the forgetful functor and denote $X'=p_\cC\circ X$, $Y'=p_\cC\circ Y$. In our revised notation we need to show that $X'([n])=B^{(n)}$. It should be noted, that because $\mu$-filtered colimits in $\mathcal{C}_{A/}$ are computed in $\mathcal{C}$, we have that $X'=\underset{J}{\colim} \ Y'$.\\

We shall now prove the lemma by induction on $n$:\\

For $n=0$: By \cref{remark: expli Y }, $ Y'((f_0,...),[0])=B_{f_0}$ and
\[
Y'(j,[0])=Z'(b_{f_0.f_0'})=b_{f_0,f_0'}':B_{f_0} \to B_{f_0'}
\]
for a morphism $j:(f_0,...)  \to  (f_0',...)$ in $J$.
Therefore, as the map
\[
J \to I, \quad (f_0,...) \mapsto f_0
\]
is cofinal, we have that $X'([0])\simeq B$.\\

We now prove for $n>0$: Let $i \in I$ and  $J^i \subset \underset{m\in \mathbb{N}}{\prod } I_i^{(I_i)_\delta^m}$ be the full subcategory on $(f_0,f_1,...)$ satisfying
\[
f_m(i_0,...i_{k-1},i_{k+1},...i_{m}) \leq f_{m+1}(i_0,...i_{k-1},i_k,i_{k+1},...,i_{m})
\]
for all $m \in \mathbb{N}$, $0 \leq k \leq m$ and for all $i_0,...,i_m \in I_i$ (this is \cref{construction: J} when we replace $I$ by $I_i$). Note, that by \cref{lemma: prop J} we can choose a cofinal map $\xi: J^i \to J$ such that $\xi(f_k)_{|(I_i)_\delta^k}=f_k$ for all $k \in \mathbb{N}$ and denote by $J_n^i$, the image of $J^i$ under the projection
\[
\underset{m\in \mathbb{N}}{\prod } I_i^{(I_i)_\delta^m} \to \underset{m \leq n}{\prod } I_i^{(I_i)_\delta^m}.
\]
Based on the above and the fact that $Y'(-,[n])$ only depends on the first $n$ coordinates of $J$, we can write: 
\begin{gather*}
B^{(n+1)}=\underset{f_0\in I}{\colim}\underset{(I_{f_0})_{\delta}}{\prod} B^{(n)}\simeq\underset{f_0\in I}{\colim}\underset{(I_{f_0})_{\delta}}{\prod} X'([n])=
\\
=\underset{f_0\in I}{\colim}\underset{(I_{f_0})_{\delta}}{\prod} \underset{J}{\colim} \ Y'(-,[n])\simeq
\underset{f_0\in I}{\colim}\underset{(I_{f_0})_{\delta}}{\prod} \underset{J}{\colim}\underset{I_{f_0,...,f_{n-1}}}{\prod}B_{f_n(i_0,...,i_{n-1})} \simeq  
\\
\simeq \underset{f_0\in I}{\colim}\underset{(I_{f_0})_{\delta}}{\prod} \underset{J^{f_0}_n}{\colim}\underset{I_{f_0,...,f_{n-1}}}{\prod}B_{f_n(i_0,...,i_{n-1})}
\end{gather*}
where all the equalities are by definition, the first equivalence follows from our induction hypothesis, the second equivalence follows from \cref{remark: expli Y }, and the last equivalence follows from our last remark (cofinality of $J_n^i$ in $J_n$ and the fact that $Y'(-,[n])$ only depends on the $n$ first coordinates).\\

Furthermore, by \cref{lemma: lem8} we can write:
\begin{gather*}
\underset{f_0\in I}{\colim}\underset{(I_{f_0})_{\delta}}{\prod} \underset{J^{f_0}_n}{\colim}\underset{I_{f_0,...,f_{n-1}}}{\prod}B_{f_n(i_0,...,i_{n-1})} \simeq \\
\simeq \underset{f_0\in I}{\colim} \underset{(J^{f_0}_n)^{(I_{f_0})_{ \delta}}}{\colim}\underset{i_0 \in (I_{f_0})_{\delta}}{\prod} \underset{i_1 \in (I_{f_1'(i_0)})_{\delta}}{\prod}...\underset{i_{n} \in (I_{f_{n}'(i_0,...,i_{n-1})})_{\delta}}{\prod}B_{f_{n+1}'(i_0,...,i_{n})}
\end{gather*}
where we think of $(J^{f_0}_n)^{I_{f_0}}$ as a subset of $\underset{m=1}{\overset{n+1}{\prod }}I_{f_0}^{(I_{f_0})_\delta^m}$. \\

Finally, let $J^{f_0}_{n+1,\geq 1}$ be the image of the projection $J^{f_0}_{n+1}\to\underset{m=1}{\overset{n+1}{\prod }} I_{f_0}^{(I_{f_0})_\delta^m}$ and note that, by a similar argument to the one in the proof of $1$ in \cref{lemma: prop J}, it is cofinal in $(J^{f_0}_n)^{I_{f_0}}$. Therefore, we can write:
\begin{gather*}
\underset{f_0\in I}{\colim} \underset{(J^{f_0}_n)^{(I_{f_0})_{\delta}}}{\colim}\underset{i_0 \in (I_{f_0})_{\delta}}{\prod} \underset{i_1 \in (I_{f_1'(i_0)})_{\delta}}{\prod}...\underset{i_{n} \in (I_{f_{n}'(i_0,...,i_{n-1})})_{\delta}}{\prod} B_{f_{n+1}'(i_0,...,i_{n})} \simeq
\\
\simeq \underset{f_0\in I}{\colim} \underset{J^{f_0}_{n+1,\geq 1}}{\colim}\underset{i_0 \in (I_{f_0})_{\delta}}{\prod} \underset{i_1 \in (I_{f_1(i_0)})_{\delta}}{\prod}...\underset{i_{n} \in (I_{f_{n}(i_0,...,i_{n-1})})_{\delta}}{\prod} B_{f_{n+1}(i_0,...,i_n)}=
\\
= \underset{J_{n+1}}{\colim}\underset{I_{f_0,...f_n}} \prod B_{f_{n+1}(i_0,...,i_n)}=X'([n+1])
\end{gather*}
where the second to last equality follows from the definition of $J_{n+1}$.
\end{proof}

\begin{remark}
We note that all the equivalences in the above lemma are in fact equivalences of diagrams.  
That is, the diagram
\[
(f_0, \dots, f_n) \mapsto \prod_{I_{f_0, \dots, f_n}} B_{f_{n+1}(i_0, \dots, i_n)}
\]
obtained through the interchange steps in the lemma is (up to the difference between $J_{n+1}$ and $J$) exactly the one defining $X'([n+1])$.

To see that this is an equivalence of diagrams, observe that each step in the interchange arises by pulling back along a functor between products of posets.  
The question therefore reduces to a $1$-categorical one and can be checked arrow-by-arrow, where it becomes a straightforward verification.
\end{remark}

This concludes the second step.

\subsubsection{Third Step - Showing That $(\Id\colon A\to A)$ Is a Retract of $\lim_{\Delta^+} \ X$}

This subsection concludes the proof of \cref{proposition: prop4}. We begin by explaining why showing that $(\Id\colon A\to A)$ is a retract of $\underset{\Delta^+}\lim \ X$ proves the proposition.

\begin{lemma}\label{lemma: why enough ret}
Let $X:\Delta^+ \to \cC_{A/}$ be as in \cref{notation: X Y} i.e. $X=\phi(Z)$ where $Z:L \to \cC_{A/}$ is as in \cref{construction: Z} and $\phi$ is as in \cref{definition: phi}. If $(\Id\colon A\to A)$ is a retract of $\underset{\Delta^+}\lim \ X$, then  \cref{proposition: prop4} follows.
\end{lemma}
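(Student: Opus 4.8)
The plan is to deduce \cref{proposition: prop4} from the assumed retract statement together with two routine facts: that the forgetful functor $p_\cC\colon\cC_{A/}\to\cC$ preserves limits, and that a full subcategory of $\cC$ closed under limits is automatically closed under retracts. Granting these, the hypothesis of the lemma hands us $A$ (the underlying object of $\Id\colon A\to A$) as a retract of an object built from $B$ by limits and $\mu$-filtered colimits, and hence places $A$ inside $\cD$.

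First I would record the inclusion $\mathcal{B}\subseteq\cD$, where $\mathcal{B}$ is as in \cref{notation: category B}: the subcategory $\cD$ is closed under limits by assumption, and since $\mu\geq\kappa$ every $\mu$-filtered diagram is in particular $\kappa$-filtered, so $\cD$ is closed under $\mu$-filtered colimits as well; as $B\in\cD$ and $\mathcal{B}$ is the smallest full subcategory with these closure properties containing $B$, we get $\mathcal{B}\subseteq\cD$. Thus it is enough to prove $A\in\mathcal{B}$. Writing $X' := p_\cC\circ X\colon\Delta^+\to\cC$, I would next note that $p_\cC$ is a right adjoint — its left adjoint is $Y\mapsto(A\to A\sqcup Y)$, the coproduct inclusion — so $p_\cC$ preserves limits and therefore $p_\cC(\lim_{\Delta^+}X)\simeq\lim_{\Delta^+}X'$. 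By \cref{lemma: X([n])} one has $X'([n])\simeq B^{(n)}$, and $B^{(n)}\in\mathcal{B}$ for every $n$: indeed $B^{(0)}=B\in\mathcal{B}$, and inductively $B^{(n+1)}=\colim_{i\in I}\prod_{(I_i)_\delta}B^{(n)}$ is a colimit over the $\mu$-filtered poset $I$ of \cref{construction: Z} of products of copies of $B^{(n)}\in\mathcal{B}$, hence again lies in $\mathcal{B}$. Since $\mathcal{B}$ is closed under limits and $X'$ takes values in it, we conclude $\lim_{\Delta^+}X'\in\mathcal{B}$.

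It remains to pass from the retract hypothesis to the conclusion. Applying the limit-preserving functor $p_\cC$ to the assumed weak retraction exhibiting $(\Id\colon A\to A)$ as a retract of $\lim_{\Delta^+}X$ in $\cC_{A/}$ yields a weak retraction exhibiting $A=p_\cC(\Id\colon A\to A)$ as a retract of $\lim_{\Delta^+}X'$ in $\cC$. Now any full subcategory of $\cC$ closed under limits is closed under retracts: since $\cC$ is complete it admits sequential limits, so by (the dual of) \cite[4.4.5.20]{lurie2009higher} idempotents in $\cC$ split, and the splitting of an idempotent $e\colon Z\to Z$ is computed as the limit of the tower $\cdots\xrightarrow{e}Z\xrightarrow{e}Z$; hence a retract of an object of $\mathcal{B}$ is a sequential limit of copies of that object, so it again lies in $\mathcal{B}$. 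Applying this to $A$ as a retract of $\lim_{\Delta^+}X'\in\mathcal{B}$ gives $A\in\mathcal{B}\subseteq\cD$, which is exactly \cref{proposition: prop4}.

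I expect the only genuinely delicate point to be this last step — promoting an $\infty$-categorical weak retraction to a coherent idempotent and identifying its splitting with a sequential limit, so as to legitimately place $A$ inside the limit-closed subcategory $\mathcal{B}$; everything else is bookkeeping about under-categories. It is also worth checking at the outset that $\cC_{A/}$ admits $\Delta^+$-indexed limits — which it does, since $\cC=\Fun(\cE,\mathcal{S})$ is presentable, hence complete, and under-categories of complete $\infty$-categories are complete — so that $\lim_{\Delta^+}X$ is meaningful in the first place.
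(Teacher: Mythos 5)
Your argument is correct and follows essentially the same route as the paper's: pass to $\cC$ via the limit-preserving forgetful functor, identify $\lim_{\Delta^+} p_\cC\circ X$ as an object of $\mathcal{B}$ using \cref{lemma: X([n])}, and conclude $A\in\mathcal{B}\subseteq\cD$ because $\mathcal{B}$, being closed under limits in the complete category $\cC$, is idempotent complete. The paper compresses the final retract step into a citation of \cite[4.4.5.14]{lurie2009higher}, whereas you spell out the idempotent splitting as a sequential limit; the content is the same.
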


\begin{proof}
Assume that $(\Id\colon A\to A)$ is a retract of $\underset{\Delta^+}\lim \ X$. As limits in $\cC_{A/}$ are computed in $\cC$, it follows that $A$ is a retract of $\underset{\Delta^+}\lim \ p_{\cC} \circ X$ where $p_{\cC}:\cC_{A/} \to \cC$ is the forgetful functor. By \cref{lemma: X([n])} we get that $A$ is a retract of an object in $\mathcal{B}$ where $\mathcal{B}$ is as in \cref{notation: category B}. Thus, since $\mathcal{B}$ is complete and therefore by \cite[4.4.5.14]{lurie2009higher} it is idempotent complete, it follows that $A\in \mathcal{B}$.
\end{proof}

We will need the following definitions

\begin{definition} \label{definition: psi}
Let $Z \colon L \to \cC_{A/}$ be a functor that preserves the initial object. We define $\psi(Z) \in \cC_{A/}$ to be 
\[
\psi(Z):= (F_1)_*(F_*)H^*(Z)
\]
For the functors on the diagram:
\[
\begin{tikzcd}
	M & L & {\cC_{A/}} \\
	{J \times \Delta^+} \\
	{J}
	\arrow["H", from=1-1, to=1-2]
	\arrow["Z", from=1-2, to=1-3]
	\arrow["F"', from=1-1, to=2-1]
	\arrow["{F_1}"', from=2-1, to=3-1]
\end{tikzcd}
\]
where $H$ is as in \cref{lemma: H is a functor}, $F$ is as in \cref{construction: F} and $F_1$ is the projection.
\end{definition}

\begin{notation} \label{notation: Z pord}
Let $i:* \to L$ be the functor that chooses $-\infty$ and $Z:L \to \cC_{A/}$ be as in \cref{construction: Z}. We denote $i_*i^*Z=:Z_{\prod A}$.
\end{notation}

Our proof that $(\Id\colon A\to A)$ is a retract of $\underset{\Delta^+}\lim \ X$ will be divided into four parts: First, we will write $ \psi(Z_{\prod A})(j)$ as a constant limit on $(\Id:A \to A)$ in $\cC_{A/}$ indexed on some diagram which we denote by $M^j_{/-\infty}$. Second, we will prove that $M^j_{/-\infty}$ is contractible and thus $\underset{J}\colim \ \psi(Z_{\prod A})=(\Id:A \to A)$. Third, we will show $\underset{J}\colim \ \psi(Z_{\prod A})=\underset{\Delta^+}\lim \ \phi(Z_{\prod})$ where $\phi$ is as in \cref{definition: phi}. Lastly, we will observe that there exists a map $\phi(Z) \to \phi(Z_{\prod})$ and that $(\Id:A \to A)$ is the initial object in $\cC_{A/}$, which together with the previous parts proves the claim.\\

We first want to show that for a fixed $j\in J$, $\psi(Z_{\prod})(j)$ is equivalent to the limit of the constant diagram on $(\Id\colon A\to A)$, indexed on some category defined in \cref{lemma: conastant limit}.
 
\begin{lemma}  \label{lemma: conastant limit}
Let $j\in J$ and denote the pullback
\[
\begin{tikzcd}
{M^j} & M \\
{\{j\} \times \Delta^+ } & {J \times \Delta^+}
\arrow["p"', from=2-1, to=2-2]
\arrow["F" '', from=1-2, to=2-2]
\arrow["{F'}"', from=1-1, to=2-1]
\arrow["{p'}", from=1-1, to=1-2]
\end{tikzcd}
\]
by $M^j$. Denote the comma category over $-\infty\in L$, defined by $H:M \to L$ (see \cref{lemma: H is a functor} for the definition of $H$) by $M^j_{/-\infty}$. Let $Z_{\prod}:L \to \cC_{A/}$ be as in \cref{notation: Z pord}. Then $\psi(Z_{\prod})(j)$ is equivalent to the limit of the constant $M^j_{/-\infty}$ indexed diagram on $(\Id\colon A\to A)$ where $\psi$ is as in \cref{definition: psi}.
\end{lemma}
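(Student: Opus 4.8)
The plan is to unwind the two right Kan extensions in the definition of $\psi$, evaluate at $j$, collapse the resulting comma categories by cofinality, and then feed in the fact that $Z_{\prod}$ is itself a right Kan extension of the initial object $(\Id\colon A\to A)\in\cC_{A/}$. Throughout I would use that $\cC_{A/}$ is complete — since $\cC=\Fun(\cE,\mathcal{S})$ is presentable, hence complete, and slices of complete categories are complete — so that all the Kan extensions below exist, are computed by the pointwise formula, and \cref{lemma: lem7} applies.

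First I would handle the outer Kan extension. By \cref{definition: psi}, $\psi(Z_{\prod})=(F_1)_\ast F_\ast H^\ast(Z_{\prod})$; write $G:=F_\ast H^\ast(Z_{\prod})\colon J\times\Delta^+\to\cC_{A/}$. The pointwise formula computes $\psi(Z_{\prod})(j)$ as the limit of $G$ over the comma category $J_{j/}\times\Delta^+$, and since $\Id_j$ is initial in the coslice $J_{j/}$, the inclusion $\{\Id_j\}\times\Delta^+\hookrightarrow J_{j/}\times\Delta^+$ is initial (cofinal for limits), so $\psi(Z_{\prod})(j)\simeq\lim_{\Delta^+}\big(G|_{\{j\}\times\Delta^+}\big)$. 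Now $F$ is a right fibration by \cref{construction: F}, so \cref{lemma: lem7} applied to the pullback square defining $M^j$ identifies $G|_{\{j\}\times\Delta^+}$ with $F'_\ast\big((H\circ p')^\ast Z_{\prod}\big)$; composing right Kan extensions along $M^j\xrightarrow{F'}\{j\}\times\Delta^+\to\ast$ then gives
\[
\psi(Z_{\prod})(j)\ \simeq\ \underset{M^j}{\lim}\ (H\circ p')^\ast Z_{\prod}.
\]

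Next I would rewrite $Z_{\prod}$. By \cref{notation: Z pord}, $Z_{\prod}=i_\ast i^\ast Z$ with $i\colon\ast\to L$ picking $-\infty$; and $i^\ast Z=Z(-\infty)$ is the initial object $c:=(\Id\colon A\to A)$ of $\cC_{A/}$ since $Z$ preserves the initial object (\cref{construction: Z}). Factor $i$ as $\ast\xrightarrow{\iota}L_{/-\infty}\xrightarrow{\pi}L$, with $\iota$ picking the terminal object $\Id_{-\infty}$ of $L_{/-\infty}$; then $i_\ast=\pi_\ast\iota_\ast$, and because $\Id_{-\infty}$ is terminal the functor $\iota_\ast$ carries a constant diagram to the corresponding constant diagram, so $Z_{\prod}\simeq\pi_\ast(\underline c)$ where $\underline c\colon L_{/-\infty}\to\cC_{A/}$ is constant at $c$. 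Since the slice projection $\pi\colon L_{/-\infty}\to L$ is a right fibration, \cref{lemma: lem7} applied to the pullback square with corners $M^j_{/-\infty},L_{/-\infty},M^j,L$, bottom map $H\circ p'$ and right map $\pi$, gives $(H\circ p')^\ast\pi_\ast(\underline c)\simeq q_\ast(\underline c)$ for $q\colon M^j_{/-\infty}\to M^j$ the projection. Taking $\lim_{M^j}$ and composing Kan extensions along $M^j_{/-\infty}\xrightarrow{q}M^j\to\ast$ then yields
\[
\psi(Z_{\prod})(j)\ \simeq\ \underset{M^j}{\lim}\ (H\circ p')^\ast\pi_\ast(\underline c)\ \simeq\ \underset{M^j_{/-\infty}}{\lim}\ \underline c,
\]
which is the limit of the constant $M^j_{/-\infty}$-indexed diagram on $(\Id\colon A\to A)$, as claimed.

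The argument is essentially a bookkeeping exercise in base change and composition of Kan extensions, so the main obstacle is not conceptual but a matter of assembling the pieces in the right order and not mislabelling arrows. The points that genuinely need care — and where an error would most plausibly hide — are the cofinality collapses (``the inclusion of the initial object of a coslice is initial'', hence $\{\Id_j\}\times\Delta^+\hookrightarrow J_{j/}\times\Delta^+$ is initial), the fact that the slice projection $L_{/-\infty}\to L$ is a right fibration, the identification $\iota_\ast(\underline c)\simeq\underline c$ for $\iota$ picking a terminal object, and keeping straight which of the maps must be right fibrations in order to invoke \cref{lemma: lem7} (here $F$ and $\pi$). I would expect most of the written proof to be spent justifying the two cofinality reductions in the first step.
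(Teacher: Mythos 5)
Your proof is correct, and it follows the same skeleton as the paper's: evaluate the outer Kan extension $(F_1)_*$ at $j$, base-change the inner Kan extension $F_*$ along the fibre inclusion $\{j\}\times\Delta^+\hookrightarrow J\times\Delta^+$ via \cref{lemma: lem7}, and then collapse everything by composing right Kan extensions down to the point. The one place you genuinely diverge is the second base change. The paper's second square, with corners $M^j_{/-\infty}$, $*$, $M^j$, $L$ and right-hand map $i\colon *\to L$ choosing $-\infty$, is a comma square rather than a pullback (and $i$ is not a right fibration, $-\infty$ being initial), so the paper invokes \cite[4.3.2.13]{lurie2009higher} to obtain $q^*i_*\simeq\widetilde i_*\widetilde q^*$. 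You instead factor $i$ as $*\to L_{/-\infty}\to L$, note that right Kan extension from the terminal object $\Id_{-\infty}$ of the slice sends $(\Id\colon A\to A)$ to the constant diagram, and then apply \cref{lemma: lem7} to the honest pullback of the right fibration $L_{/-\infty}\to L$ along $H\circ p'$, whose total space is exactly the comma category $M^j_{/-\infty}$. This buys a self-contained argument that reuses only the paper's own base-change lemma (it is in effect the standard proof of the comma-square base change being cited), at the cost of a slightly longer chain of identifications. You are also more careful than the paper about the first reduction: the paper simply asserts $\psi(Z_{\prod})(j)=\lim_{\Delta^+}\phi'(Z_{\prod})(j,-)$, whereas you justify it by initiality of $\{\Id_j\}\times\Delta^+\hookrightarrow J_{j/}\times\Delta^+$. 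Both routes are sound.
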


\begin{proof}
Applying \cref{lemma: lem7} to the following pullback diagram
\[
\begin{tikzcd}
{M^j} & M \\
{\{j\} \times \Delta^+ } & {J \times \Delta^+}
\arrow["p"', from=2-1, to=2-2]
\arrow["F" '', from=1-2, to=2-2]
\arrow["{\widetilde{F}}"', from=1-1, to=2-1]
\arrow["{\widetilde{p}}", from=1-1, to=1-2]
\end{tikzcd}
\]
yields an equivalence $p^*F_*\simeq \widetilde{F}_*\widetilde{p}^*$. And similarly, denoting $q=H\circ \widetilde{p}$, and applying \cite[5.3.9]{riehl2017kan} to the following lax commutative square
\[
\begin{tikzcd}
{M^j_{/-\infty}} & {*} \\
{M^j} & L
\arrow["{q}"', from=2-1, to=2-2]
\arrow["i" '', from=1-2, to=2-2]
\arrow["\widetilde{i}" ', from=1-1, to=2-1]
\arrow["\widetilde{q}" '', from=1-1, to=1-2]
\end{tikzcd}
\]
yields an equivalence $q^*i_*\simeq \widetilde{i}_* \widetilde{q}^*$. \\

We can arrange both of the above squares in the following commutative diagram 
\[\begin{tikzcd}
	{M^j_{/ -\infty}} && {*} & {\cC_{A/}} \\
	{M^j} & M & L & {} \\
	{j\times\Delta^+} & {J \times \Delta^+} \\
	& {}
	\arrow["H", from=2-2, to=2-3]
	\arrow["F"', from=2-2, to=3-2]
	\arrow["p"', from=3-1, to=3-2]
	\arrow["{i}", from=1-3, to=2-3]
	\arrow["{p'}", from=2-1, to=2-2]
	\arrow["{\tilde{F}}"', from=2-1, to=3-1]
	\arrow["{\tilde{q}}", from=1-1, to=1-3]
	\arrow["{\tilde{i}}"', from=1-1, to=2-1]
	\arrow["{c_{(\Id\colon A\to A)}}", from=1-3, to=1-4]
\end{tikzcd}\]
where $c_{(\Id\colon A\to A)}$ is the functor $* \to \cC_{A/}$ that chooses $(\Id\colon A\to A)$. Recall that $\phi'(Z_{\prod})=F_*H^*i_*(c_{(\Id\colon A\to A)})$. Hence, by the previous paragraph: 
\[
\phi'(Z_{\prod})(j,-)=p^*F_*H^*i_*(c_{(\Id\colon A\to A)})\simeq\widetilde{F}_*\widetilde{p}^*H^*i_*(c_{(\Id\colon A\to A)})\simeq\widetilde{F}_*\widetilde{i}_* \widetilde{q}^*(c_{(\Id\colon A\to A)}).
\]
From the above, and the fact that a limit is the right Kan extension to a point, we conclude that $\psi(z_{\prod})(j)=\underset{\Delta^+}\lim  \ \phi'(Z_{\prod})(j,-)$ is equivalent to the limit of the constant $M^j_{/-\infty}$ indexed diagram on $(\Id\colon A\to A)$.
\end{proof}

According to our program we now want to show that $M^j_{/-\infty}$ is contractible. In order to do so we will first need to describe $M^j_{/-\infty}$ more explicitly.

\begin{remark} \label{remark: explicit M^j}
Let us write an explicit description of $M^j_{/-\infty}$. First, as $j\in J$, by definition $j=(f_0,...)$, i.e. $j$ is a tuple of functions $f_m:I_\delta^m\to I_\delta$ satisfying
\[
f_m(i_0,...i_{k-1},i_{k+1},...i_{m}) \leq f_{m+1}(i_0,...i_{k-1},i_k,i_{k+1},...,i_{m})
\]
where $m \in \mathbb{N}$, $0 \leq k \leq m$ and $i_0,...,i_m \in I$ (see \cref{construction: J}). Recall that $M^j$ is the fiber over $j\in J$ of the map $M \to J \times \Delta^+$, thus, from \cref{remark: explicit M}, we can describe $M^j$ as follows: \\
An object in $M^j$ is a $2$-tuple $([n], \ (i_0,...,i_{n-1}))$ with $n\in \mathbb{N}$ and $(i_0,...,i_{n-1}) \in I_{f_0,...,f_{n-1}}$ (i.e. $i_k \geq f_k(i_0,...,i_{k-1})$).\\
A morphism $\varphi:([k], \ (i_0,...,i_{k-1})) \to ([n], \ (i_0',...,i_{n-1}'))$ in $M^j$ over $h:[k] \to [n]$ exists (and is the unique morphism over $h$, with the same source and target as $\varphi$) if and only if $i'_{h(m)}=i_m$ for all $m\not=k$.\\
Lastly, recall how the functor which defines the comma category, $H:M^j\to L$ \footnote{Here, and in what follows, we abuse notation by identifying $H$ with its restriction to $M^j$.}, is defined (see \cref{corollary: cor2} for the explicit description of $L$) - \\
For objects:
\[
H([n], \ (i_0,...,i_{n-1}))=f_n(i_0,...,i_{n-1})
\]
and for morphisms:
\[
H(\varphi)=\begin{cases} b_{f_k(i_0,...,i_{k-1}),f_n(i_0',...,i_{n-1}')}, \quad n=h(k) \\
        q_{i'_{h(k)}}, \quad\quad\quad\quad\quad\quad\quad\quad\quad \ n \not= h(k) \end{cases}
\]
where $\varphi$ is over $h:[k] \to [n]$ as before (this is a functor by \cref{lemma: H is a functor}).\\
Adding everything together, we have the following description of $M^j_{/ -\infty}$:\\
The objects of $M^j_{/ -\infty}$ consist of $3$-tuples $([n],\ (i_0,...,i_{n-1}), \ g_{i_n}:f_n(i_0,...,i_{n-1})\to -\infty)$ with $([n], \ (i_0,...,i_{n-1})) \in M^j$ and $g_{i_{n}}:f_n(i_0,...,i_{n-1})\to -\infty$ a morphism in $L$ as in \cref{corollary: cor2}. Since a morphism $g_{i_{n}}:f_n(i_0,...,i_{n-1})\to -\infty$ is determined by an element $i_{n} \geq f_n(i_0,...,i_{n-1})$ we get that we can view objects of $M^j_{/ -\infty}$ as $2$-tuples $([n],\ (i_0,...,i_{n}))$ with $[n]\in \Delta^+$ and $(i_0,...,i_{n})) \in I_{f_0,...f_n}$.   \\
The morphisms of $M^j_{/ -\infty}$ are defined as morphism in the comma category i.e. a morphism:
\begin{gather*}
\varphi:([k],\ (i_0,...,i_{k}), )  \to ([n],\ (i_0',...,i_{n}'))
\end{gather*}
is a morphism $\widetilde{\varphi}:([k],\ (i_0,...,i_{k-1}))\to ([n],\ (i_0',...,i_{n-1}'))$ in $M^j$ such that the following diagram commutes:
\[
\begin{tikzcd}
{ f_k(i_0,...,i_{k-1})} && {f_n(i_0',...,i_{n-1}')} \\
 & {-\infty}
\arrow["{H(\widetilde{\varphi})}", from=1-1, to=1-3]
\arrow["{g_{i_k}}"', from=1-1, to=2-2]
\arrow["{g_{i_n}}", from=1-3, to=2-2]
 \end{tikzcd}
 \]
 In other words a morphism $\varphi:([k],\ (i_0,...,i_{k})) \to ([n],\ (i_0',...,i_{n}'))$ over $h: [k] \to [n]$ exists (and is the unique morphism over $h$, with the same source and target as $\varphi$) if and only if $i_m=i_{h(m)}'$ for all $0\leq m \leq k$.
\end{remark}

Using the above description we will show:
        
\begin{lemma} \label{lemma: Mj contra}
For every $j\in J$ the category $M^j_{/-\infty}$ is contractible, where $M^j_{/-\infty}$ is as in \cref{lemma: conastant limit}.
\end{lemma}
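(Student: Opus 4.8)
The plan is to prove that $N(M^j_{/-\infty})$ is weakly contractible, by constructing a contraction directly on the $1$-category $M^j_{/-\infty}$ --- the same kind of device that shows the semi-simplex category $\Delta^+$ has weakly contractible nerve. Throughout I use the explicit description from \cref{remark: explicit M^j}: the objects of $M^j_{/-\infty}$ are the \emph{valid chains} $(i_0,\dots,i_n)$ (meaning $i_k\geq f_k(i_0,\dots,i_{k-1})$ for $0\leq k\leq n$), and a morphism $([k],\vec{\imath})\to([n],\vec{\jmath})$ lying over a strictly monotone $h\colon[k]\to[n]$ exists, and is then the unique such morphism, exactly when $i_m=j_{h(m)}$ for every $m$; that is, a morphism records that the source chain is a sub-tuple of the target chain. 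Two facts fall out of the defining ``insertion monotonicity'' inequalities of $J$ (that $f_m(\dots,\widehat{i_k},\dots)\leq f_{m+1}(\dots,i_k,\dots)$): restricting a valid chain along a strictly monotone map is again valid; and appending to a valid chain $(i_0,\dots,i_n)$ any $t\in I$ with $t\geq f_{n+1}(i_0,\dots,i_n)$ keeps it valid.

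The engine for contractibility is to produce an endofunctor $D$ of $M^j_{/-\infty}$ together with natural transformations $\Id\Rightarrow D$ and $c_\omega\Rightarrow D$, to $D$ from the identity and from a constant functor $c_\omega$; applying nerves then yields homotopies $\mathrm{id}\simeq BD\simeq\mathrm{const}$, so $N(M^j_{/-\infty})$ is weakly contractible. In the degenerate case where $I$ is a point, $M^j_{/-\infty}\cong\Delta^+$ and one takes $D$ to be ``prepend a fresh vertex, mapped to that vertex'', $\omega=[0]$, and the obvious back-inclusions $m\mapsto m+1$ and bottom-vertex inclusions as the two natural transformations --- here there are no validity constraints to check. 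In general one wants $D$ to append a short tail to each chain, chosen large enough (using the second stability fact) that every resulting chain is valid, yet independent of the chain itself so that $D$ is a functor and the natural transformation $\Id\Rightarrow D$ (over front inclusions) is natural (using the first stability fact). The difficulty is that a globally fixed tail would have to dominate the values $f_m(\vec{\imath})$ ranging over \emph{all} valid chains $\vec{\imath}$, and these need not have a common upper bound in $I$; so no single endofunctor $D$ can contract all of $M^j_{/-\infty}$ at once.

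The way around this --- and the real content of the proof --- is to exhaust $M^j_{/-\infty}$ by full subcategories on which an appropriate tail does exist. One should write $M^j_{/-\infty}$ as a $\mu$-filtered colimit, indexed over $I$ and built using the filteredness and cofinality packaged in \cref{lemma: prop J}, of full subcategories $M_c$ each of which is stable under the relevant restriction and appending operations and involves only countably many values of the $f_m$, so that those values admit an upper bound in $I$. On each $M_c$ the shift argument of the previous paragraph applies unchanged, giving $N(M_c)$ weakly contractible; and since a filtered colimit of weakly contractible simplicial sets along monomorphisms is weakly contractible, $N(M^j_{/-\infty})$ is too. This is precisely where the standing hypotheses are used: $I$ is $\mu$-filtered with $\mu\geq\omega^+$, which is exactly what guarantees that the countable ascending sequences one forms --- repeatedly absorbing a new layer of $f_m$-values --- have upper bounds in $I$. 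The hard part will be organizing this exhaustion so that the inclusions $M_c\hookrightarrow M_{c'}$ are strictly compatible with the chosen shift functors, together with the fiddly verification that the colimit over $I$ really exhausts $M^j_{/-\infty}$; the homotopy-theoretic ingredients (shift functors give homotopies, filtered colimits of contractibles are contractible) are soft, and all the care goes into the bookkeeping with valid chains and the functions $f_m$.
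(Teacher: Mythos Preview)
Your overall strategy is sound, but there is a genuine gap in the step where you claim each $M_c$ is itself weakly contractible via an endofunctor $D$. For $D=(\text{append }p)$ to be an \emph{endo}functor of $M_c$, the element $p$ must lie in the set $S_c$ defining $M_c$; but then chains like $(p)$ or $(i_0,\dots,i_n,p)$ are themselves in $M_c$, and applying $D$ again forces conditions such as $p\geq f_1(p)$ and $p\geq f_{n+2}(i_0,\dots,i_n,p)$. These are prefixed-point conditions on $p$ that need not be satisfiable: for instance if $I=\omega_1$ and $f_1(\alpha)=\alpha+1$, no such $p$ exists. So the countably many bounds you arrange do not suffice once $p$ itself is allowed into the chains. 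The fix is easy but changes the statement: do not ask $D$ to land in $M_c$, only in $M^j_{/-\infty}$ (or in some larger $M_{c'}$). Then the two natural transformations show each inclusion $M_c\hookrightarrow M^j_{/-\infty}$ is null-homotopic, and a filtered colimit along null-homotopic inclusions is weakly contractible. With that correction your exhaustion argument goes through.

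The paper avoids this entire issue by working one finite diagram at a time rather than one subcategory at a time. It invokes \cref{lemma: lem9}: to show contractibility it suffices that every functor $E\colon K\to M^j_{/-\infty}$ from a \emph{finite} simplicial set admits a natural transformation to a diagram with a cone. For finite $K$ there are only finitely many values $f_{n_k+1}(i_0^k,\dots,i_{n_k}^k)$ to dominate, so ordinary filteredness of $I$ already supplies a common $p$; one then sets $E'(k)=([n_k+1],(i_0^k,\dots,i_{n_k}^k,p))$, with the front inclusion giving $E\Rightarrow E'$ and the back inclusion giving a cone on $E'$ from $([0],p)$. This is the same ``append a tail'' idea you describe, but because it is only asked to work on a fixed finite diagram there is no self-referential condition on $p$ and no exhaustion to organise. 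In particular the paper's argument uses only that $I$ is filtered, whereas your route genuinely needs $\mu\geq\omega^+$ to bound the countable closures you form.
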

         
\begin{proof}
We will show that $M^j_{/-\infty}$ satisfies the conditions of \cref{lemma: lem9}. Indeed, let $E:K \to M^j_{/-\infty}$ be a finite diagram. Because $M^j_{/-\infty}$ is a $1$-category we may assume that $K$ is also a $1$-category. \\
For any $k \in K_\delta$ denote
\[
E(k)=([n_k], \ (i^k_0,...,i^k_{n_k})).
\]
Since $I$ is $\mu$-filtered, we can pick $p \in I$ such that:
\[
p \geq f_{n_k+1}(i_0^k,....i_{n_k}^k) \quad \forall \ k \in K_\delta.
\]

We proceed to define $E':K \to  M^j_{/-\infty}$ (which will play the role of $F'$ in \cref{lemma: lem9}) as follows:\\
For objects:
\[
E'(k)=([n_k+1], \ (i_0^k,....i_{n_k}^k,p))
\]
For morphisms: Given, $s:k \to k' \in \Mor(K)$ such that $E(s)$ is over
$h:[n_k] \to [n_{k'}]  \in \Mor(\Delta^+)$, define
\[
h':[n_k+1] \to  [n_{k'}+1], \quad m \mapsto \begin{cases} h(m), \quad  \ \ m \not= n_k+1 \\
n_{k'}+1,  \quad m = n_k+1 \end{cases}.
\]
and let 
\begin{gather*}
E'(s):([n_k+1], \ (i_0^k,....i_{n_k}^k,p)) 
\to ([n_{k'}+1], \ (i_0^{k'},....i_{n_{k'}}^{k'},p))
\end{gather*}
be the unique morphism over $h'$, with the above source and target. Note that by \cref{remark: explicit M^j} such a morphism exists (uniqueness is evident from the discussion in \cref{remark: explicit M^j}) in $M^j_{/-\infty}$ if and only if $i^{k'}_{h'(m)}=  i^k_m$ for all $0 \leq m\leq n_k$ and $p=p$ i.e. if and only if $i^{k'}_{h(m)}=  i^k_m$ for all $0 \leq m\leq n_k$ which follows since $E(s)$ is a morphism in $M^j_{/-\infty}$. Furthermore, since the maps in $M^j_{/-\infty} $ are completely determined by their source, target and image in $\Delta^+$ it is obvious that the definition of $E'$ assembles to a functor.\\

Now, it remains to show that we have a natural transformation $\theta:E \to E'$ and that $E'$ admits a cone.\\

We turn to define $\theta$. We define
\[
\theta_k: ([n_k],(i_0^k,...,i_{n_k}^k)) \to ([n_k+1],(i_0^k,...,i_{n_k}^k,p))
\]
as the unique morphism in $M^j_{/-\infty}$ over $d^{n_k}:[n_k]\to [n_k+1]$ (the map in $\Delta^+$ which skips $n_k+1$). Note that such a morphism exists in $M^j_{/-\infty}$ because $i_m^k=i_{d^n(m)}^k$ for $0\leq m\leq n_k$. Furthermore, by the commutativity of the following diagram
\[
\begin{tikzcd}
{[n+1]} \arrow[r, "h'"]                       & {[n'+1]}\\
{[n]} \arrow[r, "h"'] \arrow[u, "d^n"] & {[n']} \arrow[u, "d^n"']
\end{tikzcd}
\]
the different $\theta_k$ assemble to a natural transformation.\\

Finally, we need to provide a natural transformation from a constant functor to $E'$. We define $\psi_k:([0],p)\to([n_k+1],(i_0^k,...i_{n_k}^k,p))$ as the unique map in $M^j_{/-\infty}$ over the map $\{0\} \mapsto \{n+1\} \mapsto [n+1]$ in $\Delta^+$. The different $\psi_k$ obviously assemble to a natural transformation and thus we conclude that $M^j_{/-\infty}$ satisfies the conditions of \cref{lemma: lem9} and therefore contractible. 
\end{proof}

We deduce:

\begin{corollary} \label{corollary: lim A}
Let $Z_{\prod}:L \to \cC_{A/}$ be as in \cref{notation: Z pord}. Then $\underset{J}\lim \ \psi(Z_{\prod})=(\Id\colon A \to A)$, where $\psi$ is as in \cref{definition: psi}.
\end{corollary}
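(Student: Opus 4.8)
The plan is to read off \cref{corollary: lim A} from \cref{lemma: conastant limit} and \cref{lemma: Mj contra}, using that $(\Id\colon A\to A)$ is the initial object of $\cC_{A/}$.

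First, fix $j\in J$. By \cref{lemma: conastant limit}, $\psi(Z_{\prod})(j)$ is equivalent to the limit of the constant diagram on $(\Id\colon A\to A)$ indexed by $M^j_{/-\infty}$, and by \cref{lemma: Mj contra} the simplicial set $M^j_{/-\infty}$ is contractible. Since the limit of a constant diagram indexed by a weakly contractible $\infty$-category is the value of that constant (the cotensor of an object with a contractible space is the object itself), this gives $\psi(Z_{\prod})(j)\simeq(\Id\colon A\to A)$ for every $j\in J$.

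Next I would promote this pointwise identification to a statement about $\psi(Z_{\prod})$ as a functor on $J$. Here one uses that $(\Id\colon A\to A)$ is the initial object of $\cC_{A/}$: any functor $G$ into an $\infty$-category all of whose values are the initial object $0$ is canonically equivalent to the constant functor $c_0$, because the unique morphisms $0\to G(-)$ assemble into a natural transformation $c_0\Rightarrow G$ (naturality is automatic since $0$ is initial) whose components are maps between initial objects, hence equivalences. Applying this to $G=\psi(Z_{\prod})$ shows that $\psi(Z_{\prod})$ is equivalent to the constant functor on $(\Id\colon A\to A)$. Finally, by \cref{lemma: prop J}(1) the category $J$ is $\mu$-filtered, hence weakly contractible, so the limit over $J$ of a constant functor recovers its value:
\[
\underset{J}\lim\ \psi(Z_{\prod})\ \simeq\ \underset{J}\lim\ c_{(\Id\colon A\to A)}\ \simeq\ (\Id\colon A\to A),
\]
which is \cref{corollary: lim A}. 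The same computation gives $\underset{J}\colim\ \psi(Z_{\prod})\simeq(\Id\colon A\to A)$, the form of the identity used in the remaining steps.

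I do not expect a genuine obstacle, since the substantive work is already contained in \cref{lemma: conastant limit} and \cref{lemma: Mj contra}. The only point requiring a little care is the passage from the fiberwise identification of \cref{lemma: conastant limit} to the constancy of $\psi(Z_{\prod})$ as a functor on $J$; the initial-object argument settles this cleanly, without having to reprove the base-change equivalences uniformly in $j$.
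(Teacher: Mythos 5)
Your proposal is correct and follows essentially the same route as the paper: identify each value $\psi(Z_{\prod})(j)$ with the initial object via \cref{lemma: conastant limit} and \cref{lemma: Mj contra}, then use contractibility of $J$ together with initiality of $(\Id\colon A\to A)$ to conclude. You merely spell out the two steps the paper leaves implicit (that a functor landing in initial objects is constant, and that a (co)limit of a constant diagram over a weakly contractible index is its value), and your remark that the same argument gives the colimit form actually used later is a helpful observation.
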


\begin{proof}
By \cref{lemma: conastant limit} and \cref{lemma: Mj contra}  $\psi(Z_{\prod})(j)=(\Id\colon A\to A)$. Since $J$ is contractible and $(\Id\colon A\to A)$ is the initial object of $\cC_{A/}$ the corollary follows.
\end{proof}

We now use the fact that $\mu$-small limits commute with $\mu$-filtered colimits in $\mathcal{S}$ in order to show that $\underset{J}\colim \ \psi(Z_{\prod A})=\underset{\Delta^+}\lim \ \phi(Z_{\prod}) $.\\

\begin{lemma} \label{lemma: colim lim}
Let $Z_{\prod}:L \to \cC_{A/}$ be as in \cref{notation: Z pord}. Then $\underset{J}\colim \ \psi(Z_{\prod A})=\underset{\Delta^+}\lim \ \phi(Z_{\prod})$, where $\psi$ is as in \cref{definition: psi} and $\phi$ is as in \cref{definition: phi}.
\end{lemma}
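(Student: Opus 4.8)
The plan is to express both sides as iterated limits and colimits of the single bifunctor $\phi'(Z_{\prod A})\colon J\times\Delta^+\to\cC_{A/}$ (with $\phi'=F_*H^*$ as in \cref{definition: phi}) and then to appeal to a limit--colimit interchange in $\mathcal{S}$.

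For the left-hand side I would recall from \cref{definition: psi} that $\psi(Z_{\prod A})=(F_1)_*\phi'(Z_{\prod A})$, where $F_1\colon J\times\Delta^+\to J$ is the projection. Since right Kan extension along a projection is computed as a fibrewise limit, under the equivalence $\Fun(J\times\Delta^+,\cC_{A/})\simeq\Fun(J,\Fun(\Delta^+,\cC_{A/}))$ the functor $(F_1)_*$ is postcomposition with $\underset{\Delta^+}\lim$; hence $\psi(Z_{\prod A})(j)\simeq\underset{[n]\in\Delta^+}\lim\ \phi'(Z_{\prod A})(j,[n])$ for each $j\in J$, so
\[
\underset{J}\colim\ \psi(Z_{\prod A})\ \simeq\ \underset{j\in J}\colim\ \underset{[n]\in\Delta^+}\lim\ \phi'(Z_{\prod A})(j,[n]).
\]
For the right-hand side, \cref{definition: phi} gives $\phi(Z_{\prod A})=\underset{J}\colim\ \phi'(Z_{\prod A})$ in $\Fun(\Delta^+,\cC_{A/})$, and colimits of functors are computed pointwise, so
\[
\underset{\Delta^+}\lim\ \phi(Z_{\prod A})\ \simeq\ \underset{[n]\in\Delta^+}\lim\ \underset{j\in J}\colim\ \phi'(Z_{\prod A})(j,[n]).
\]
Thus it remains to check that $\underset{J}\colim$ commutes with $\underset{\Delta^+}\lim$ when applied to $\phi'(Z_{\prod A})$.

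To see this I would reduce to spaces. All limits in $\cC_{A/}$ are computed along the forgetful functor $\cC_{A/}\to\cC$, and the $\mu$-filtered colimit over $J$ is as well, since $J$ is $\mu$-filtered by \cref{lemma: prop J} and hence weakly contractible; and limits and colimits in $\cC=\Fun(\cE,\mathcal{S})$ are computed pointwise, so the assertion is one about $\mathcal{S}$. Now $\Delta^+$ has only countably many simplices and $\mu\ge\omega^+$, so $\underset{\Delta^+}\lim$ is a $\mu$-small limit, while $J$ is $\mu$-filtered; therefore the canonical comparison map from $\underset{J}\colim\ \underset{\Delta^+}\lim$ to $\underset{\Delta^+}\lim\ \underset{J}\colim$ is an equivalence, because $\mu$-small limits commute with $\mu$-filtered colimits in $\mathcal{S}$. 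Transporting this equivalence back through the two displays above yields the claimed identification.

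I do not expect a real conceptual obstacle here; the points that need care are purely formal: the identification of $(F_1)_*$ with the fibrewise limit $\underset{\Delta^+}\lim$ (standard for Kan extension along a projection, and otherwise obtained by checking that $\{j\}\times\Delta^+$ is coinitial in the comma category $\{j\}\downarrow F_1$), and the verification that the limits over $\Delta^+$ and the $\mu$-filtered colimit over $J$ are all detected by $\cC_{A/}\to\cC$. Once these are settled, the interchange in $\mathcal{S}$ is immediate from the cardinality estimate $|\Delta^+|=\aleph_0<\aleph_1\le\mu$.
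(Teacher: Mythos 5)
Your argument is correct and follows essentially the same route as the paper: both reduce the statement to the commutation of $\mu$-filtered colimits with $\mu$-small limits in $\mathcal{S}$, transported to $\cC_{A/}$ via pointwise computation in presheaves and the forgetful functor $\cC_{A/}\to\cC$, using that $J$ is $\mu$-filtered and $\Delta^+$ is $\mu$-small since $\mu\geq\omega^+$. Your extra care in identifying $(F_1)_*$ with the fibrewise $\Delta^+$-limit is a detail the paper uses implicitly, so nothing of substance differs.
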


\begin{proof}
Since $\mu$-filtered colimits commute with $\mu$-small limits in $\mathcal{S}$ and since limits and colimits in a category of presheaves are computed point-wise we get that $\mu$-filtered colimits commute with $\mu$-small limits in $\cC$. Furthermore, as limits and $\mu$-filtered colimits in $\cC_{A/}$ are computed in $\cC$, we get that $\mu$-filtered colimits commute with $\mu$-small limits in $\cC_{A/}$. Now, since $J$ is $\mu$-filtered and $\mu > \omega$, we have
\[
\underset{\Delta^+}\lim \ \phi(Z_{\prod})=\underset{\Delta^+}\lim \ \underset{J}\colim \ F_*H^*(Z_{\prod}) \simeq\underset{J}\colim \ \underset{\Delta^+}\lim \ F_*H^*(Z_{\prod})=\underset{J}\colim \ \psi(Z_{\prod}).
\]
\end{proof}

We can finally prove the claim.

\begin{lemma}\label{lemma: A retract}
Let $X:\Delta^+ \to \cC_{A/}$ be as in \cref{notation: X Y} i.e. $X= \phi(Z)$ where $Z:L \to \cC_{A/}$ is a functor that preserves the initial object and $\phi$ is as in \cref{definition: phi}. Then $(\Id\colon A\to A)$ is a retract of $\underset{\Delta^+}\lim \ X $.
\end{lemma}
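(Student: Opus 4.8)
The plan is to exhibit $(\Id\colon A\to A)$ as a retract of $\underset{\Delta^+}\lim\ X$ by producing both structure maps out of the comparison between $Z$ and the ``constant'' functor $Z_{\prod}$ of \cref{notation: Z pord}, and then invoking initiality to see that the composite is the identity.

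First I would record the relevant natural transformation. Writing $i\colon *\to L$ for the inclusion of the cone point $-\infty$, we have the adjunction $i^*\dashv i_*$ with $i_*$ the right Kan extension, and $Z_{\prod}=i_*i^*Z$ comes equipped with the unit $\eta\colon Z\to i_*i^*Z=Z_{\prod}$; since $Z$ preserves the initial object, $i^*Z\simeq(\Id\colon A\to A)$, the initial object of $\cC_{A/}$. Next I would observe that $\phi$ is genuinely a functor $\Fun(L,\cC_{A/})\to\Fun(\Delta^+,\cC_{A/})$: it is the composite of the precomposition functor $H^*$, the right Kan extension $F_*$ (a right adjoint, hence a functor), and $\underset{J}\colim$. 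Applying $\phi$ to $\eta$ then gives a morphism $X=\phi(Z)\to\phi(Z_{\prod})$ in $\Fun(\Delta^+,\cC_{A/})$, and passing to $\underset{\Delta^+}\lim$ a morphism $r_0\colon\underset{\Delta^+}\lim\ X\to\underset{\Delta^+}\lim\ \phi(Z_{\prod})$.

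Then I would feed in the two results already established. By \cref{lemma: colim lim}, $\underset{\Delta^+}\lim\ \phi(Z_{\prod})\simeq\underset{J}\colim\ \psi(Z_{\prod})$, and by \cref{corollary: lim A} the latter is equivalent to $(\Id\colon A\to A)$. Composing with $r_0$ yields a morphism $r\colon\underset{\Delta^+}\lim\ X\to(\Id\colon A\to A)$ in $\cC_{A/}$. For the section, I would use that $(\Id\colon A\to A)$ is the initial object of $\cC_{A/}$, so there is an essentially unique morphism $s\colon(\Id\colon A\to A)\to\underset{\Delta^+}\lim\ X$; then $r\circ s$ is an endomorphism of the initial object, hence homotopic to the identity since $\Map_{\cC_{A/}}\bigl((\Id\colon A\to A),(\Id\colon A\to A)\bigr)$ is contractible. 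This exhibits $(\Id\colon A\to A)$ as a retract of $\underset{\Delta^+}\lim\ X$.

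The content here is light, since the two genuinely hard inputs — contractibility of $M^j_{/-\infty}$ (\cref{lemma: Mj contra}) and the commutation of $\underset{\Delta^+}\lim$ with $\underset{J}\colim$ (\cref{lemma: colim lim}) — are already in place. The only points needing care are bookkeeping: that $\phi$ really is functorial, so that $\eta$ can be pushed through it; and that the natural transformation to which we apply $\phi$ is precisely the unit $Z\to Z_{\prod}$ (equivalently, that $i^*Z$ is the initial object of $\cC_{A/}$), so that Steps 2 and 3 line up. I expect the main — and only minor — obstacle to be this last compatibility: making sure the identifications of \cref{lemma: colim lim} and \cref{corollary: lim A} match the map $r_0$ induced by $\eta$ and not some other map.
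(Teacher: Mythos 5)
Your proposal is correct and follows essentially the same route as the paper: apply $\phi$ to the unit $\eta\colon Z\to i_*i^*Z=Z_{\prod}$, identify $\underset{\Delta^+}\lim\ \phi(Z_{\prod})$ with $(\Id\colon A\to A)$ via \cref{lemma: colim lim} and \cref{corollary: lim A}, and use initiality of $(\Id\colon A\to A)$ in $\cC_{A/}$ to produce the section and conclude the retraction. The only difference is that you spell out the final initiality argument slightly more explicitly than the paper does.
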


\begin{proof}
Let $i:* \to L$ be the map that chooses $-\infty$ and
\[
\eta:Z \to i_*i^*Z=Z_{\prod A}.
\]
be the unit map. Applying $\phi$ from \cref{definition: phi} to the above map, yields a morphism   
\[
\phi(\eta): X \to \phi(Z_{\prod})
\]
in $\Fun( \Delta^+,\cC_{/A})$. By taking limits, and using  \cref{lemma: colim lim} and \cref{corollary: lim A} we get a map $\underset{\Delta^+}\lim \ X \to (\Id\colon A\to A)$. Since $(\Id\colon A\to A)$ is the initial object in $\cC_{A/}$ it follows that it is a retract of $\underset{\Delta^+}\lim \ X$.
\end{proof}

The proposition now follows from \cref{lemma: why enough ret} and \cref{lemma: A retract}.
\end{proof}

\section{Factoring Through Pure Morphisms}

In this section we show that every map in a category of presheaves can be factored through a $\mu$-pure morphism for $\mu$ large enough, and that we can bound the size \footnote{In a presentable category, we say that an object $C \in \cC$ is of size $\kappa$, where $\kappa$ is a cardinal, if $\kappa$ is the minimal cardinal such that $C$ is $\kappa$-compact.} of the source of the factorization, in terms of the size of the source of the original map (for precise formulation, see \cref{proposition: prop5}). \\
We will first need some preliminary lemmas.

\begin{notation}
For any two cardinals $\kappa$ and $\mu$, we will denote $\kappa^{<\mu}:= \sum_{\alpha < \mu} \kappa^\alpha$.
\end{notation}

\begin{lemma} \label{lemma: lem11}
Let $K$ be a $\kappa$-small simplicial set. Then, there exists a $\max(\kappa,\omega)$-small quasi-category $\mathcal{C}$ and a categorical equivalence $K \to \mathcal{C}$.  
\end{lemma}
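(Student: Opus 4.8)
The plan is to realise $\mathcal{C}$ as a fibrant replacement of $K$ in the Joyal model structure, built by the small object argument applied only to the (countable) set of inner horn inclusions $\Lambda^m_i \hookrightarrow \Delta^m$ (with $0<i<m$), and then to estimate how many simplices this procedure creates.

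Concretely, set $X_0 := K$ and, given $X_n$, let $X_{n+1}$ be the pushout in $\sset$
\[
X_{n+1} \;:=\; X_n \underset{\coprod \Lambda^m_i}{\coprod} \coprod \Delta^m ,
\]
where both coproducts are taken over the set of all pairs consisting of integers $0<i<m$ together with a map $\Lambda^m_i \to X_n$, and the left leg is the coproduct of the corresponding inner horn inclusions. Put $\mathcal{C} := \colim_n X_n$, the colimit of the resulting chain of monomorphisms $X_0 \hookrightarrow X_1 \hookrightarrow \cdots$. Then $\mathcal{C}$ is a quasi-category: any map $\Lambda^m_i \to \mathcal{C}$ touches only finitely many nondegenerate simplices, so since $\mathcal{C}$ is a filtered union of the $X_n$ along monomorphisms it factors through some $X_n$, and the filler adjoined at stage $n$ lives in $X_{n+1}\subseteq \mathcal{C}$. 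Moreover $K \hookrightarrow \mathcal{C}$ is inner anodyne, being a countable composition of pushouts of coproducts of inner horn inclusions; hence it is a categorical equivalence (inner anodyne maps are trivial cofibrations in the Joyal model structure, cf.\ \cite{lurie2009higher}).

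It remains to bound $|\mathcal{C}|$, where for a simplicial set $S$ we write $|S|$ for the cardinality of $\coprod_k S_k$. The point is that $|X_{n+1}| \le \max(|X_n|,\aleph_0)$. Indeed, a map $\Lambda^m_i \to X_n$ is determined by its restriction to the finitely many nondegenerate simplices of $\Lambda^m_i$, so for fixed $(m,i)$ the set of such maps injects into a finite product of sets $(X_n)_k$ and thus has cardinality at most $\max(|X_n|,\aleph_0)$; summing over the countably many pairs $(m,i)$, the number of inner horns of $X_n$ is again at most $\max(|X_n|,\aleph_0)$. Each of them contributes one copy of $\Delta^m$, and $|\Delta^m| = \aleph_0$, so $|X_{n+1}| \le |X_n| + \max(|X_n|,\aleph_0)\cdot \aleph_0 = \max(|X_n|,\aleph_0)$. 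By induction $|X_n| \le \max(|K|,\aleph_0)$ for every $n$, whence $|\mathcal{C}| \le \aleph_0 \cdot \max(|K|,\aleph_0) = \max(|K|,\aleph_0)$. If $K$ is $\kappa$-small then $|K| < \kappa \le \kappa^\omega$, and either $\kappa > \aleph_0$, in which case $\aleph_0 < \kappa \le \kappa^\omega$, or $\kappa = \aleph_0$, in which case $\aleph_0 < 2^{\aleph_0} = \kappa^\omega$; so in all cases $|\mathcal{C}| \le \max(|K|,\aleph_0) < \kappa^\omega$ and $\mathcal{C}$ is $\kappa^\omega$-small.

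I expect the only real subtlety to be the size estimate: one must carefully keep degenerate simplices under control (so that $|\Delta^m| = \aleph_0$, accumulated over the $\omega$ stages, does not blow the count up) and deal with the $\kappa = \aleph_0$ corner case, where fibrant replacement genuinely turns a finite simplicial set into a countably infinite quasi-category. The remaining ingredients — that $\omega$ stages suffice for fibrancy, and that inner anodyne maps are categorical equivalences — are standard.
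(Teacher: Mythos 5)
Your construction is exactly the paper's: iterate pushouts along all inner horn inclusions $\omega$ times, observe that the colimit is a quasi-category receiving an inner anodyne (hence categorical equivalence) map from $K$, and bound the simplices added at each stage. Your cardinality estimate is in fact sharper than the paper's --- the paper counts maps out of a horn crudely via all of its countably many simplices, getting $\kappa^\omega$ per stage, whereas you use only the finitely many nondegenerate ones to get $\max(|K|,\aleph_0)$ --- but this only strengthens the same conclusion, so the proof is correct and essentially identical in approach.
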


\begin{proof}
We shall build a fibrant replacement for $K$ using the small object argument.\\
Let $S$ be the set of inner horn inclusions:
\[
S:= \{\Lambda^n_i \hookrightarrow \Delta^n \ | \ 0<i<n \ n \in \mathbb{N} \}
\]
Define $E^0(K):=K$, and let $\theta_S^0$ be the set of maps from inner horns to $E^0(K)$. Define $E^1(K)$ as the pushout in $\sset$
\[
\begin{tikzcd}
\underset{\theta_S^0}\coprod \Lambda^n_i \arrow[d] \arrow[r] & E^0(K) \arrow[d] \\
\underset{\theta_S^0}\coprod \Delta^n \arrow[r]              & E^1(K)          
\end{tikzcd}
\]
and repeat the process to get a filtered diagram:
\[
\begin{tikzcd}
K \arrow[r] & E^1(K) \arrow[r] & E^2(K) \arrow[r] & \cdots 
\end{tikzcd}.
\]
We denote the colimit of the above diagram in $\sset$ by $\mathcal{C}$. By the small object argument, $\mathcal{C}$ is a quasi-category and the natural map $K \to \mathcal{C}$ is a categorical equivalence.\\

It remains to analyse the cardinality of each object in the above (filtered) diagram. First, note that there are $\omega$ horns and for a specific horn, $\Lambda^n_i$, the number of maps $ \Lambda^n_i \to K$ is less than $\kappa^{<\omega}=\max\{\kappa,\omega\}$. So, the size of $\theta_S^0$ is strictly less than $\kappa^{<\omega}=\max\{\kappa,\omega\}$. Now, as each map in $\theta_S^0$ adds less than $\omega$ non degenerate simplices, $E^1(K)$ is $\kappa + \omega\max\{\kappa,\omega\}=\max\{\kappa,\omega\}$-small. Repeating the argument we conclude that $E^i(K)$ is $\max\{\kappa,\omega\}$-small. All in all we conclude that $\mathcal{C}$ is $\max\{\kappa,\omega\}$-small.
\end{proof}

\begin{lemma} \label{lemma : Un compact}
Let $\kappa> \omega$ be a cardinal and $\cC$ be $\kappa$-compact object of $ \Cat_\infty$. Let $F: \cC \to \mathcal{S}$ be a functor that factors through $\mathcal{S}^\kappa$. Then, $\mathrm{Un}(F)$ is also a $\kappa$-compact object of $ \Cat_\infty$ where $\mathrm{Un}(F)$ is the unstraightening of $F$.  
\end{lemma}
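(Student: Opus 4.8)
The plan is to reduce the statement to a size estimate on an explicit simplicial model of $\mathrm{Un}(F)$, via the following standard description of $\kappa$-compact objects of $\Cat_\infty$: for $\kappa$ a regular cardinal with $\kappa>\omega$, an $\infty$-category is $\kappa$-compact in $\Cat_\infty$ if and only if it is equivalent to a $\kappa$-small simplicial set. (One direction is clear: a $\kappa$-small simplicial set is the colimit in $\Cat_\infty$ of its fewer-than-$\kappa$ simplices $\Delta^n$, each of which is compact. For the other, write $\cC$ as the $\kappa$-filtered union of its sub-simplicial-sets having fewer than $\kappa$ nondegenerate simplices; since this is a filtered colimit along monomorphisms it is a homotopy colimit in the Joyal model structure, so $\kappa$-compactness of $\cC$ makes $\mathrm{id}_\cC$ factor through one of these subobjects; hence $\cC$ is a retract of a $\kappa$-small simplicial set, and splitting the resulting idempotent — a colimit of a diagram built from that simplicial set and indexed by a countable category — stays $\kappa$-small because $\kappa>\omega$ is regular.)

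Granting this, I would first replace $\cC$ by an equivalent $\kappa$-small simplicial set. Next, because $F$ factors through $\mathcal{S}^\kappa$ and every $\kappa$-compact space is equivalent to a $\kappa$-small Kan complex (a countable-length Kan fibrant replacement of a $\kappa$-small simplicial set adds fewer than $\kappa$ simplices at each stage, using $\kappa$ regular and $\kappa>\omega$), I would rectify $F$ to a strict, objectwise-fibrant diagram $\widetilde F\colon \mathfrak{C}[\cC]\to \sset$ representing it, chosen so that each value $\widetilde F(v)$ is a $\kappa$-small Kan complex — the rectification construction builds the values out of $\kappa$-small colimits of $\kappa$-small simplicial sets, so with a little care this bound is preserved. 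Then $\mathrm{Un}(F)$ is modeled by the unstraightening $\mathrm{Un}_{\cC}(\widetilde F)\in\sset_{/\cC}$ of \cite[3.2.1]{lurie2009higher}: this is a left fibration over $\cC$ classified by $F$, and since unstraightening is part of a Quillen equivalence it depends (up to categorical equivalence) only on the objectwise-equivalence class of $\widetilde F$, so it genuinely computes $\mathrm{Un}(F)$ in $\Cat_\infty$.

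It then remains to count. An $n$-simplex of $\mathrm{Un}_{\cC}(\widetilde F)$ is an $n$-simplex $\sigma$ of $\cC$ together with coherence data indexed by the finite combinatorics of $\mathfrak{C}[\Delta^n]$ and valued in the $\kappa$-small simplicial sets $\widetilde F(v)$ for vertices $v$ of $\sigma$. As $\cC$ has fewer than $\kappa$ simplices in each dimension and each $\widetilde F(v)$ is $\kappa$-small, regularity of $\kappa$ gives that $\mathrm{Un}_{\cC}(\widetilde F)$ has fewer than $\kappa$ nondegenerate simplices; that is, $\mathrm{Un}(F)$ is equivalent to a $\kappa$-small simplicial set, hence is a $\kappa$-compact object of $\Cat_\infty$.

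The main obstacle is the middle step: producing a strict model of $F$ all of whose values are genuinely $\kappa$-small \emph{simplicial sets} (rather than merely $\kappa$-compact spaces), and propagating the cardinality bound through rectification and fibrant replacement — note that the "size of $\mathcal{S}^\kappa$" is irrelevant, only the sizes of $\cC$ and of the individual fibers of $\mathrm{Un}(F)\to\cC$ enter. The translation between "$\kappa$-compact in $\Cat_\infty$" and "$\kappa$-small simplicial set" is the other load-bearing input, but it is standard (compare \cite[\S5.4]{lurie2009higher}).
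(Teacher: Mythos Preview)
Your proposal is correct in outline but takes a different route from the paper. The paper avoids your rectification step entirely by invoking the formula
\[
\mathrm{Un}(F) \simeq \mathrm{oplax}\text{-}\colim\, F \simeq \underset{\mathrm{Tw}(\cC)}{\colim}\ F(-) \times \cC_{-/}
\]
from \cite{GepnerHaugsengNikolaus15}, which expresses the unstraightening directly as a colimit in $\Cat_\infty$. The argument then reduces to three easy cardinality checks using \cite[5.4.1.2]{lurie2009higher}: $\mathrm{Tw}(\cC)$ is $\kappa$-small (so the colimit is $\kappa$-small), each slice $\cC_{c/}$ is $\kappa$-compact, and each $F(c)$ is $\kappa$-compact by hypothesis; hence $\mathrm{Un}(F)$ is a $\kappa$-small colimit of $\kappa$-compact objects and therefore $\kappa$-compact. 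Your approach trades this for a direct simplicial-set count on Lurie's explicit unstraightening, which is conceptually more hands-on but forces you to control the rectification step you yourself flag as the main obstacle --- producing a strict diagram with genuinely $\kappa$-small values and propagating the bound through fibrant replacement. The paper's route sidesteps all of that at the cost of quoting an external colimit formula; your route is self-contained once one believes the cardinality bounds survive rectification, but that belief is doing real work and would need to be justified more carefully than ``with a little care''.
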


\begin{proof}
By \cite[1.1]{GepnerHaugsengNikolaus15} we have the following chain of equivalences:
\[
\mathrm{Un}(F) \simeq  \mathrm{oplax-colim} \ F \simeq \underset{\mathrm{Tw}(\mathcal{C})}{\colim} \  F(-) \times \mathcal{C}_{-/}.
\]
We claim that the colimit above is a $\kappa$-small colimit of $\kappa$-compact objects. Indeed from the definition of the over category the $n$ simplices of $\cC_{c/}$ are $n+1$ simplices in $\cC$ that begins in $c$ and thus by \cite[5.4.1.2]{lurie2009higher} $\cC_{c/}$ is $\kappa$-compact. Invoking \cite[5.4.1.2]{lurie2009higher} again, we get that $F(c) \times \mathcal{C}_{c/}$ is $\kappa$-compact for all $c$. Hence, recalling that 
\[
\mathrm{Tw}(\cC)_n=\underline{\Hom_{\seset}} ((\Delta^n)^{\op}\ast(\Delta^n),\cC)
\]
where $\underline{\Hom_{\seset}}$ is the $\Hom$ set and invoking \cite[5.4.1.2]{lurie2009higher} again, we can write $\mathrm{Un}(F)$ as a $\kappa$-small colimit of $\kappa$-compact objects. Since $\kappa$-compact objects are closed under $\kappa$-small colimits, it follows that $\mathrm{Un}(F)$ is $\kappa$-compact.
\end{proof}

\begin{remark} \label{remark: gamma^<mu}
Note that $(\kappa^{<\mu})^{< \mu}=\kappa^{< \mu}$ for $\kappa \geq \mu$ and $\mu$ regular.
\end{remark}

\begin{lemma} \label{lemma: lem12}
Let $I$ be $\mu$-small simplicial set, where $\mu$ is a regular cardinal and let $F:I \to \mathcal{S}$ be a functor that factors through $\mathcal{S}^\kappa$. Assume further that $\mu, \kappa > \omega$. Then, $\lim F$ is $\gamma$-compact, where $\gamma:=\max\{\mu^{<\mu}, \ \kappa^{<\mu}\}$.   
\end{lemma}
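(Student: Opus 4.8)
The plan is to express $\lim F$ as a limit over a quasi-category replacement of $I$, and then to realize this limit as a sub-object of a product, cut out by equalizer-type conditions, all of controlled size. First I would invoke \cref{lemma: lem11} to replace $I$ by a $\mu^{\omega}$-small (hence $\mu^{<\mu}$-small, since $\mu > \omega$) quasi-category $\cQ$ with a categorical equivalence $I \to \cQ$; as limits are invariant under categorical equivalence of the indexing diagram, it suffices to bound the compactness of $\lim (F\colon \cQ \to \mathcal{S})$ for $\cQ$ a $\gamma$-small quasi-category, where $\gamma = \max\{\mu^{<\mu},\kappa^{<\mu}\}$. Since $F$ factors through $\mathcal{S}^\kappa$, every value $F(x)$ is $\kappa$-compact.

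The key step is the standard formula computing a limit of spaces over a quasi-category as a totalization: for a functor $F\colon \cQ \to \mathcal{S}$ one has
\[
\lim_{\cQ} F \simeq \lim_{n \in \Delta} \ \prod_{\sigma \in \cQ_n} F(\sigma(n))
\]
(the $n$-th term indexed on the $n$-simplices of $\cQ$, with $F(\sigma(n))$ the value at the final vertex, the cosimplicial structure given by the face and degeneracy maps of $\cQ$ together with functoriality of $F$). Each term of this cosimplicial space is a product of $\kappa$-compact spaces indexed by $\cQ_n$; since $\cQ$ is $\gamma$-small, $|\cQ_n| < \gamma$, so each term is a $<\gamma$-indexed product of $\kappa$-compact spaces, hence $\gamma$-compact because $\gamma \geq \kappa^{<\mu} \geq \kappa$ and $\gamma$-compact objects of $\mathcal{S}$ are closed under $\gamma$-small products (a $<\gamma$-indexed product of $\kappa$-compact spaces is $\kappa^{<\gamma}$-compact, and one checks $\kappa^{<\gamma} \le \gamma$ using $\gamma \ge \kappa^{<\mu}$ and $\gamma \ge \mu^{<\mu} \ge \mu$ via \cref{remark: gamma^<mu}, so $\kappa^{<\gamma} = \kappa^{<\mu} \le \gamma$ — here one must be slightly careful, restricting to the cofinal subcategory of $\Delta$ on objects of size $<\mu$, or simply noting $\Delta$ is countable so $\mu > \omega$ suffices to index it). Then $\lim_\Delta$ of a cosimplicial diagram is a limit over the category $\Delta$, which is countable and hence $\mu$-small; a $\mu$-small limit of $\gamma$-compact objects is $\gamma$-compact, because $\gamma \geq \mu^{<\mu} \geq \mu$ and the relevant closure property (a $\mu$-small limit of $\gamma$-compact spaces is $\gamma^{<\mu}$-compact, and $\gamma^{<\mu} = \gamma$ by \cref{remark: gamma^<mu} since $\gamma \ge \mu$). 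This gives that $\lim F$ is $\gamma$-compact.

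The main obstacle I anticipate is bookkeeping the cardinal arithmetic rather than any conceptual difficulty: one must verify that the bounds $\kappa^{<\gamma}\le \gamma$ and $\gamma^{<\mu} \le \gamma$ actually hold, which reduces to the identity in \cref{remark: gamma^<mu} once one knows $\gamma \ge \mu$ and $\gamma \ge \kappa^{<\mu}$, both immediate from the definition of $\gamma$. A secondary point of care is justifying the totalization formula for limits of spaces indexed over an arbitrary quasi-category and checking that the indexing category $\Delta$ (or a cofinal subcategory thereof) is $\mu$-small — this is where the hypothesis $\mu > \omega$ is used, since $\Delta$ is countably infinite. Both of these are routine given the cited results on $\kappa$-compact objects in $\mathcal{S}$ (e.g. \cite[5.4.1.2]{lurie2009higher}) and the closure of $\kappa$-compact objects under $\kappa$-small limits and colimits.
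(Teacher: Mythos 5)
Your route through the Bousfield--Kan cosimplicial replacement is genuinely different from the paper's, which instead realizes $\lim F$ as the fiber over $\Id$ of $\Map(I,\mathrm{Un}(F))\to\Map(I,I)$ and bounds those mapping spaces by counting simplices in explicit models of the inner-$\Hom$ in $\sset$. Unfortunately your version has a genuine gap in the cardinal arithmetic, and it occurs exactly where the two approaches diverge. After replacing $I$ by the quasi-category $\mathcal{Q}$ of \cref{lemma: lem11}, the simplex sets only satisfy $|\mathcal{Q}_n|<\mu^\omega$, and $\mu^\omega$ can strictly exceed $\mu$. A product of $\nu$ many $\kappa$-compact spaces is only about $\kappa^{\nu}$-compact, and for $\mu\le\nu<\mu^\omega$ the cardinal $\kappa^{\nu}$ need not be bounded by $\gamma=\max\{\mu^{<\mu},\kappa^{<\mu}\}$. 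Concretely, with $\kappa=\mu=\aleph_1$, $2^{\aleph_0}=\aleph_2$ and $2^{\aleph_1}=\aleph_3$ one computes $\gamma=\aleph_1^{\aleph_0}=\aleph_2$ and $\mu^\omega=\aleph_2$, so $\mathcal{Q}_n$ may have $\aleph_1$ elements; a product such as $\prod_{\aleph_1}S^1$ has fundamental group of cardinality $2^{\aleph_1}=\aleph_3$ and is therefore not $\aleph_2$-compact. Thus the terms of your cosimplicial object can fail to be $\gamma$-compact, and the argument breaks at its first step. The identity you invoke, $\kappa^{<\gamma}=\kappa^{<\mu}$, is false in general; \cref{remark: gamma^<mu} only controls exponents below $\mu$, not below $\gamma$, and your parenthetical hedge (about $\Delta$ being countable) addresses the totalization index rather than the size of the product index sets, which is where the problem lies. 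This is precisely why the paper's proof keeps the original $\mu$-small $I$ in the ``exponent'' position --- it forms $\Hom_{\sset}(I,-)$ out of the un-replaced $I$, so the relevant count is $(\max\{\kappa^\omega,\mu^\omega\})^{<\mu}=\gamma$ --- and only fibrantly replaces the targets.

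The argument could plausibly be repaired by running the cosimplicial replacement against the original $\mu$-small simplicial set $I$ rather than against $\mathcal{Q}$, so that each term is a product over $I_n$ with $|I_n|<\mu$, hence $\kappa^{<\mu}$-compact and so $\gamma$-compact. But then two further points need justification. First, the Bousfield--Kan formula $\lim_I F\simeq\lim_{\Delta}\prod_{\sigma\in I_n}F(\sigma(n))$ for a diagram indexed by an arbitrary simplicial set is a nontrivial input that you cite as ``standard'' without reference; it is not in \cite{lurie2009higher}, and even constructing the cosimplicial object as a coherent diagram in $\mathcal{S}$ takes work. Second, the closure statements you lean on (a $\nu$-small product of $\kappa$-compact spaces is roughly $\kappa^{\nu}$-compact; a limit over $\Delta$ of $\gamma$-compact spaces is $\gamma$-compact) are themselves instances of the lemma being proved, so they would need independent justification, for example by the kind of explicit simplex counting the paper carries out.
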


\begin{proof}
From \cite[3.3.3.4]{lurie2009higher} $\lim F$ is the fiber over $\Id$ of the map $\Map(I,\mathrm{Un}(F)) \to \Map(I,I)$
where again, $\mathrm{Un}(F)$ denotes the unstraightening of $F$. Thus, by \cite[5.4.1.5]{lurie2009higher} and the long exact sequence in homotopy groups, it suffices to show that $\Map(I,\mathrm{Un}(F))$ and $\Map(I,I)$ are $\gamma$-compact.\\

We will show that $\Map(I,\mathrm{Un}(F))$ and $\Map(I,I)$ are $\gamma$-compact by finding simplicial models with less than $\gamma$ simplices in each simplicial degree. First, by \cref{lemma : Un compact} $\mathrm{Un}(F)$ is $\max\{\kappa,\mu \}$-compact object of $\Cat_\infty$. Therefore, by \cite[5.4.1.2]{lurie2009higher} there exists a simplicial model $\mathrm{Un}(F)' \in \sset$ for $\mathrm{Un}(F)$, with less than  $\max\{\kappa,\mu \}$ simplices in each simplicial degree. Invoking \cref{lemma: lem11}, we get quasi-category models $I', \ \mathrm{Un}(F)'' \in \sset$, for $I$ and $\mathrm{Un}(F)$, that have less than $\mu^\omega, \ \max\{\kappa^\omega,\mu^\omega \}$ simplices in each simplicial degree respectively. Second, by \cite[1.2.7.3]{lurie2009higher}
\[
\Hom_{\sset}(I,I') \quad \textrm{and} \quad \Hom_{\sset}(I,\mathrm{Un}(F)'')
\]
are simplicial models for
\[
\Fun(I,I) \quad \textrm{and} \quad  \Fun(I,\mathrm{Un}(F))
\]
respectively (here, $\Hom_{\sset}$, denotes the inner-$\Hom$ in $\sset$). Lastly, as the $n$-simplices of the inner-$\Hom$ in simplicial sets are given by $\underline{\Hom_{\sset}}(\Delta^n \times (-), (-))$
(where $\underline{\Hom_{\sset}}(-,-)$ is the $\Hom$ set) and we have the following identity
 \[
 \max\{\mu^{<\mu}, \ \kappa^{<\mu}\}=\max\{(\mu^\omega)^{<\mu}, \ (\kappa^\omega)^{<\mu}\},
 \]
we have found models for $\Fun(I,\mathrm{Un}(F))$ and $\Fun(I,I)$ with less than $\gamma$ simplices in each simplicial degree. We conclude that both $\Map(I,\mathrm{Un}(F))$ and $\Map(I,I)$ are $\gamma$-compact.
\end{proof}

\begin{lemma}\label{lem: lim ordinal non empty}

Let $\kappa$ be an arbitrary ordinal and let $X : \kappa^\mathrm{op} \to \mathcal{S}$ be a space-valued diagram. Suppose each “transition map”
\[
X_\beta \to \lim_{\gamma < \beta} X_\gamma
\]
is $\pi_0$-surjective. Then $\lim_\alpha X$ is non-empty.
\end{lemma}

\begin{proof}
    This is \cite[Theorem 1.1]{OrdinalsRamzi}.
\end{proof}

\begin{construction}\label{cons: ordianl diag}
Let $\cE$ be a small category and denote $\cC := \Fun(\cE, \mathcal{S})$. Let $\kappa$ be a regular cardinal for which $\cC$ is $\kappa$-presentable and $\cE$ is $\kappa$-small. Let $\pi$ be the cardinal from \cref{lemma: lem1}. For any cardinal $\mu \geq \pi$, let $\gamma'$ be such that $\cC^\mu$ is $\gamma'$-small, and define
\[
\gamma := \left( \max\left\{ \gamma'^{< \gamma'},\, \mu^{< \mu} \right\} \right)^+.
\]
We also fix a set of representatives of $\pi_0\left( \cC^{\gamma}_{/B} \right)^\simeq$. Given this data, we construct the following:

\begin{enumerate}
    \item For each $f: A \to B \in \pi_0(\cC^{\gamma}_{/B})^\simeq$, a map $g: A_f \to B$ in $\cC^\gamma_{/B}$ factoring $f$. 
    
    Let $D_f$ be the set of representatives of spans
    \[
    \begin{tikzcd}
    & A' \arrow[ld, "u"'] \arrow[d, "g'"] \\
    A & B'                                 
    \end{tikzcd}
    \]
    where $A'$ and $B'$ are $\mu$-compact and there exists a factorization
    \[
    \begin{tikzcd}
                       & A' \arrow[ld, "u"'] \arrow[d, "g'"] \\
    A \arrow[rd, "f"'] & B' \arrow[d, "v"]                   \\
                       & B                                  
    \end{tikzcd}.
    \]
    We denote elements of $D_f$ by
    \[
    j \ = \  \begin{tikzcd}
    {A^j} & {} \\
    {A} & {B^j}
    \arrow[from=1-1, to=2-1]
    \arrow[from=1-1, to=2-2]
    \end{tikzcd}
    \]
    Given $j \in D_f$, define $A_j := A \underset{A^j}{\coprod} B^j$, viewed as an object of $(\cC_{A/})_{/B}$.

    Let
    \[
    A_f := \coprod_{j \in D_f} A_j
    \]
    where the coproduct is taken in $(\cC_{A/})_{/B}$. By construction, there is a canonical map $i: A \to A_f$, and we may choose a map $g: A_f \to B$ fitting into a $2$-simplex:
    \[
    \begin{tikzcd}
	A \arrow[r, "i"] \arrow[dr, "f"'] & A_f \arrow[d, "g"] \\
	& B
    \end{tikzcd}
    \]
    Therefore, this defines a functor
    \[
    T: \pi_0(\cC^{\gamma}_{/B})^\simeq \to (\cC_{/B})^{\Delta^1}.
    \]

    We now show that $T$ factors through $(\cC^\gamma_{/B})^{\Delta^1}$. It suffices to show that $A_f \in \cC^\gamma$.

    First, fix $A' \in \cC^\mu$. Then $\Map_\cC(A', A)$ is $\gamma$-compact. Indeed, writing $A'$ as a $\mu$-small colimit of representables and applying the Yoneda lemma yields:
    \[
    \Map_\cC(A', A) \simeq \Map_\cC\left( \colim \Map_\cE(e, -), A \right) \simeq \lim A(e),
    \]
    which is a $\mu$-small limit of $\gamma$-compact objects, hence $\gamma$-compact by \cref{lemma: lem12} and \cref{lemma: lem1}.

    Thus, for each fixed $A' \xrightarrow{g'} B'$, the number of spans
    \[
    \begin{tikzcd}
    & A' \arrow[ld, "u"'] \arrow[d, "g'"] \\
    A & B'                                 
    \end{tikzcd}
    \]
    is less than $\gamma$. Moreover, since $\cC^\mu$ is $\gamma$-small, the number of such $A' \xrightarrow{g'} B'$ is also $< \gamma$. Hence, $|D_f| < \gamma$, and $A_f$ is a $\gamma$-small colimit of $\gamma$-compact objects. Therefore, $A_f \in \cC^\gamma$.

    \item For every $f \in \pi_0(\cC^{\gamma}_{/B})^\simeq$, a diagram
    \[
    F_f \in \Fun(\mu, \cC^\gamma_{/B}).
    \]

    For each $\beta \leq \mu$, consider the pullback
    \[
    \begin{tikzcd}
	P \arrow[r] \arrow[d] & \Fun(\beta_\delta, \pi_0(\cC^{\gamma}_{/B})^\simeq) \arrow[d] \\
	\Fun(\beta, \cC^{\gamma}_{/B}) \arrow[r] & \prod_{\alpha\to \alpha+1\in \beta} (\cC^{\gamma}_{/B})^{\Delta^1}
    \end{tikzcd}
    \]
    where the right vertical functor evaluates on objects and applies $T$, and the bottom map evaluates on morphisms of $\beta$.

    Let $X_\beta \subseteq P^{\simeq}$ denote the full subspace spanned by those points whose underlying map $\varphi: \beta \to \cC^{\gamma}_{/B}$ sends limit ordinals to colimits and for which $\varphi(0)$ is isomorphic to $f$.

    We aim to show
    \[
    \lim_{\beta < \mu} X_\beta \neq \emptyset.
    \]

    We verify that the system satisfies the hypotheses of \cref{lem: lim ordinal non empty}.

    \begin{itemize}
        \item If $\beta$ is a limit ordinal, the condition follows because the relevant comparison map is an equivalence.
        \item If $\beta = \beta_0 + 1$:
        \begin{itemize}
            \item If $\beta_0$ is a successor, then by \cref{proposition: prop2}, we have a pushout square
            \[
            \begin{tikzcd}
        	\Delta^0 \arrow[r, "\mathrm{source}"] \arrow[d, "\beta_0 - 1"'] & \Delta^1 \arrow[d] \\
        	\beta_0 \arrow[r] & \beta_0 + 1
            \end{tikzcd}
            \]
            Given $\varphi \in X_{\beta_0}$, we apply $T$ to an object isomorphic to $\varphi(\beta_0 - 1)$. Using the pushout above, this defines a section to the canonical map
            \[
            \pi_0(X_\beta) \to \pi_0(X_{\beta_0}).
            \]
            \item If $\beta_0$ is a limit ordinal and $\varphi \in X_{\beta_0}$, then applying $T$ to an object isomorphic to $\colim_{\beta_0} \varphi$ yields a diagram indexed by $\beta_0 * \Delta^0$. This defines a section the canonical map
            \[
            \pi_0(X_\beta) \to \pi_0\left( \lim_{\alpha < \beta_0} X_\alpha \right).
            \]
        \end{itemize}
    \end{itemize}
\end{enumerate}

\end{construction}

\begin{proposition} \label{proposition: prop5}
Let $\cE$ be a small category and denote $\mathcal{C}:=\Fun(\cE,\mathcal{S})$. Let $\kappa$ be a regular cardinal for which $\cC$ is $\kappa$-presentable and $\cE$ $\kappa$-small. Let $\pi$ be the cardinal from \cref{lemma: lem1}.
Then, for every regular cardinal $\mu \geq \pi$ there exists $\gamma \geq \mu$ such that for every map $f:A \to B$, where $A$ is $\gamma$-compact in $\cC$, there exists a $2$-simplex in $\cC$
\[
\begin{tikzcd}
    A & B \\
    {A'}
    \arrow["{f'}"', from=2-1, to=1-2]
    \arrow["f", from=1-1, to=1-2]
    \arrow[from=1-1, to=2-1]
\end{tikzcd}
\]
such that $f'$ is $\mu$-pure and $A'$ is $\gamma$-compact in $\cC$.
\end{proposition}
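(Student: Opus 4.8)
The plan is to build $A'$ as a $\mu$-filtered colimit inside the category of arrows out of $A$, using the characterization of $\mu$-pure morphisms together with the compactness bounds established in Lemmas~\ref{lemma: lem11}, \ref{lemma : Un compact} and \ref{lemma: lem12}. The starting observation is that $\mu$-purity of $f' \colon A' \to B$ is a condition only about lifting squares whose corners $A'', B''$ are $\mu$-compact; since there are (up to equivalence) only a bounded number of such squares over a fixed $f'$, one can repair all the missing lifts by a single transfinite process. So I would set up the following: let $\cC^\mu_{/B}$ be the (essentially small, since $\cC$ is $\mu$-presentable) category of $\mu$-compact objects over $B$, and consider for an arrow $g \colon C \to B$ the set of squares
\[
\begin{tikzcd}
A'' \arrow[r,"e"] \arrow[d] & B'' \arrow[d] \\
C \arrow[r,"g"'] & B
\end{tikzcd}
\]
with $A'', B''$ $\mu$-compact and no diagonal lift $\bar u \colon B'' \to C$ with $\bar u \circ e \simeq (A'' \to C)$. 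For each such square we enlarge $C$ by the pushout $C \amalg_{A''} B''$, which kills that obstruction while keeping the map to $B$. Iterating over all squares and then transfinitely (at each stage new squares may appear, but at limit stages one takes the colimit), the process stabilizes after $\mu$-many steps because $\mu$ is regular and each step only adds $\mu$-compact pieces; call the colimit $A'$, with $f' \colon A' \to B$ the induced map and $A \to A'$ the structure map. By construction every square over $f'$ with $\mu$-compact corners lifts at some finite stage and hence lifts against $A'$, so $f'$ is $\mu$-pure, and $f$ factors as $A \to A' \xrightarrow{f'} B$.

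The remaining point is the size bound on $A'$. Here I would track cardinalities explicitly. Starting from $A$ which is $\gamma$-compact, at each stage we glue in a pushout of a diagram indexed by a set of squares of size bounded in terms of $\gamma$ and $\mu$ (the number of $\mu$-compact objects, and the number of maps between the relevant objects and the current $C$, are controlled once one fixes simplicial models — this is exactly where Lemma~\ref{lemma: lem11} gives small quasi-categorical models and Lemma~\ref{lemma: lem12} bounds mapping spaces $\Map(I, -)$). Since a $\mu$-indexed colimit of $\gamma$-compact objects is $\gamma$-compact provided $\gamma \geq \mu$ and $\gamma$ is chosen large enough to absorb the arithmetic (one wants something like $\gamma$ with $\gamma^{<\mu} = \gamma$ and $\gamma \geq 2^{<\mu}$ to count the squares, e.g.\ $\gamma = \left(\sup\{\,\delta^{<\mu} : \delta < \gamma\,\}\right)$ — concretely a fixed point of the relevant cardinal arithmetic above $\mu$ and above the size of $\cC^\mu$), the object $A'$ will again be $\gamma$-compact. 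The precise definition of $\gamma$ I would give as: choose $\gamma \geq \mu$ regular such that $\gamma^{<\mu} = \gamma$, such that the number of equivalence classes of $\mu$-compact objects of $\cC$ is $< \gamma$, and such that for every $\mu$-compact $C$ and every $\gamma$-compact $C'$ the space $\Map(C, C')$ is $\gamma$-compact (possible by Lemma~\ref{lemma: lem12} applied to the relevant mapping-space formula, after increasing $\gamma$ finitely many times).

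The main obstacle I anticipate is making the transfinite repair argument converge cleanly: naively adding lifts creates new $\mu$-compact squares that must themselves be repaired, so one has to argue that after $\mu$ steps no new obstructions survive. The key is regularity of $\mu$ together with the fact that any square with $\mu$-compact corners $A'', B''$ mapping into the stage-$\mu$ colimit $A' = \colim_{\alpha < \mu} C_\alpha$ already factors through some $C_\alpha$ with $\alpha < \mu$ (because $A''$ and $B''$ are $\mu$-compact and $\mu$ is regular), and the obstruction for that square was killed at stage $\alpha + 1 < \mu$; chasing the commuting diagrams shows the lift then exists against $A'$. A secondary subtlety is that the pushouts $C \amalg_{A''} B''$ are computed in $\cC$ (not in $\cC^\mu$), but since pushouts of $\mu$-compact objects are $\mu$-compact this causes no trouble, and since pushouts of $\gamma$-compact objects are $\gamma$-compact the size bookkeeping goes through. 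Once the convergence lemma is in hand, verifying $\mu$-purity is immediate from the construction and the size bound is a cardinal-arithmetic check of the kind already carried out in Lemmas~\ref{lemma: lem11} and \ref{lemma: lem12}.
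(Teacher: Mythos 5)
Your proposal is correct and follows essentially the same route as the paper's proof: a transfinite small-object-argument over $\mu$ stages, repairing each square with $\mu$-compact corners by gluing in the pushout $C\amalg_{A''}B''$, with convergence coming from $\mu$-compactness of the corners forcing factorization through an earlier stage, and the size bound coming from choosing $\gamma$ with $\gamma^{<\mu}=\gamma$ dominating the size of $\cC^\mu$ and the relevant mapping spaces via \cref{lemma: lem12}. The only cosmetic difference is that the paper repairs all spans at each stage (not just the obstructed ones) via a coproduct in $(\cC_{A_i/})_{/B}$, which changes nothing.
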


\begin{proof}
Let $\gamma'$ be such that $\mathcal{C}^\mu$ is $\gamma'$-small, denote $\gamma:=(\max\{ \gamma'^{< \gamma'},\mu^{< \mu} \})^+$. \\
Let $F_f\in \Fun(\mu,\cC^\gamma_{/B})$ be the diagram of \cref{cons: ordianl diag} (b). For each $i\leq \mu$ we denote $\colim_{i}F_{f_{|i}}=f_i$ and the set of corresponding representatives of spans from \cref{cons: ordianl diag} (a) by $D_i$ .\\
By construction we get a factorization
\[
\begin{tikzcd}
    A & B \\
    {A_\mu}
    \arrow[from=1-1, to=2-1]
    \arrow["f", from=1-1, to=1-2]
    \arrow["{f_\mu}"', from=2-1, to=1-2]
\end{tikzcd}
\]
where $A_\mu$ is $\gamma$-compact. Hence, it remains to show that $f_\mu: A_\mu \to B$ is $\mu$-pure. Indeed, assume that we are given a commutative diagram
\[
\begin{tikzcd}
    {A'} & {B'} \\
    {A_\mu} & B
    \arrow["{u}"', from=1-1, to=2-1]
    \arrow[from=2-1, to=2-2]
    \arrow[from=1-2, to=2-2]
    \arrow[from=1-1, to=1-2]
\end{tikzcd}
\]
where $A',\ B'$ are $\mu$-compact. Because $A_\mu$ is a $\mu$-filtered colimit there exists an $i<\mu$ such that  
\[
\begin{tikzcd}
    & {A'} \\
    {A_i} & {B'}
    \arrow[from=1-2, to=2-1]
    \arrow[from=1-2, to=2-2]
\end{tikzcd} \in D_i
\]
and a $2$-simplex:
\[
\begin{tikzcd}
    {A'} \\
    {A_{i+1}} & {A_\mu}
    \arrow["{u'}"', from=1-1, to=2-1]
    \arrow["{u}", from=1-1, to=2-2]
    \arrow[ from=2-1, to=2-2]
\end{tikzcd}
\]
but, by construction we also have a $2$-simplex:
\[
\begin{tikzcd}
    {A'} \\
    {B'} & {A_{i+1}}
    \arrow[from=1-1, to=2-1]
    \arrow["{u'}", from=1-1, to=2-2]
    \arrow[from=2-1, to=2-2]
\end{tikzcd}
\]
Therefore, we get a horn $\Lambda^3_2$ in $\mathcal{C}$:
\[
\begin{tikzcd}
    {A_\mu} & {B'} \\
    {A'} & {A_{i+1}}
    \arrow[from=2-1, to=1-2]
    \arrow["{u'}"', from=2-1, to=2-2]
    \arrow[from=1-2, to=2-2]
    \arrow[from=2-1, to=1-1]
    \arrow[from=2-2, to=1-1]
    \arrow[from=1-2, to=1-1]
\end{tikzcd}
\]
which yields a $2$-simplex
\[
\begin{tikzcd}
    {A'} & {B'} \\
    {A_\mu}
    \arrow[from=1-1, to=1-2]
    \arrow[from=1-2, to=2-1]
    \arrow[from=1-1, to=2-1]
\end{tikzcd}
\]
in the quasi-category $\cC$, as desired.
 \end{proof}
 
\section{Proving the Reflection Theorem} 

In this section we will finally prove our main theorem, i.e. that a category closed under limits and sufficiently filtered colimits in a presentable category is presentable. Before doing so, we will need the following lemma:

\begin{lemma} \label{lemma: lem13}
Let $\cC$ be a $\kappa$-filtered category and $\cD\subset \cC$ a full subcategory such that every $c \in \cC$ admits a map to an object of $\cD$. Then, the inclusion $i:\cD \hookrightarrow \cC$ is cofinal.   
\end{lemma}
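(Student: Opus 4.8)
The plan is to verify Lurie's cofinality criterion \cite[4.1.3.1]{lurie2009higher}: the functor $i\colon \cD \hookrightarrow \cC$ is cofinal if and only if for every object $c \in \cC$ the $\infty$-category $\cD \times_{\cC} \cC_{c/}$ is weakly contractible. Since $\cD \subseteq \cC$ is a full subcategory, this fibre product is (equivalent to) the full subcategory $\cD^{+}_{c/} \subseteq \cC_{c/}$ spanned by the objects $c \to c'$ with $c' \in \cD$. So it is enough to prove that each $\cD^{+}_{c/}$ is weakly contractible, and I would do this by showing it is filtered.

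The technical core is the following observation, which I would isolate and prove by a direct horn-lifting argument: \emph{if $\cA$ is a $\kappa$-filtered $\infty$-category and $\cB \subseteq \cA$ is a full subcategory such that every object of $\cA$ admits a morphism to an object of $\cB$, then $\cB$ is $\kappa$-filtered.} Given a $\kappa$-small diagram $p\colon K \to \cB$, first extend the composite $K \to \cA$ to a cocone $\bar p\colon K^{\rhd} \to \cA$, using that $\cA$ is $\kappa$-filtered and that $K^{\rhd}$ is again $\kappa$-small; let $a \in \cA$ be the image of the cone point and choose a morphism $g\colon a \to b$ with $b \in \cB$. Viewing $\bar p$ as an object of the undercategory $\cA_{p/}$ lying over $a$, and using that the forgetful functor $\cA_{p/} \to \cA$ is a left fibration \cite[2.1.2.1]{lurie2009higher}, lift $g$ (a $\Lambda^{1}_{0}$-horn) to a morphism $\bar p \to \bar p{}'$ in $\cA_{p/}$ over $g$. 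Then $\bar p{}'$ is a cocone $K^{\rhd} \to \cA$ extending $p$ whose cone point is $b \in \cB$; since $\cB$ is full it factors through $\cB$, so $p$ extends to a cocone in $\cB$. Hence $\cB$ is $\kappa$-filtered, and in particular weakly contractible (filtered $\infty$-categories being weakly contractible). The point that needs care is that one must \emph{not} try to prescribe the cone point in $\cB$ in advance — it is the flexibility of taking any cocone and only afterwards pushing it into $\cB$ via the horn lift that makes the argument work, and this is also exactly where filteredness of the ambient category (rather than merely "everything maps into $\cB$") is used.

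It then remains to assemble the pieces. First, $\cC_{c/}$ is $\kappa$-filtered because $\cC$ is: a $\kappa$-small diagram $K \to \cC_{c/}$ corresponds to a diagram $\{c\} \star K \to \cC$, and since $\{c\}\star K$ is $\kappa$-small, $\cC$ being $\kappa$-filtered lets us extend it over the extra cone point of $(\{c\}\star K)^{\rhd}$, which yields the desired cocone in $\cC_{c/}$. Second, $\cD^{+}_{c/} \subseteq \cC_{c/}$ is a full subcategory into which every object of $\cC_{c/}$ maps: given $c \to c'$, choose $c' \to d$ with $d \in \cD$ (this is precisely the hypothesis on $\cD$) and take the evident morphism $(c \to c') \to (c \to d)$ of $\cC_{c/}$. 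Applying the observation above with $\cA = \cC_{c/}$ and $\cB = \cD^{+}_{c/}$ shows $\cD^{+}_{c/}$ is $\kappa$-filtered, hence weakly contractible, and the cofinality criterion finishes the proof.

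The main obstacle is precisely the isolated observation about full subcategories of filtered $\infty$-categories; everything else is bookkeeping with the cofinality criterion and with (co)slice $\infty$-categories. I expect the $\infty$-categorical subtleties to be entirely contained in the horn-lifting step, where it is important to work with the left fibration $\cA_{p/} \to \cA$ rather than trying to build the required cocone simplex by simplex.
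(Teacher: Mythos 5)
Your proposal is correct and follows essentially the same route as the paper: both reduce via Quillen's Theorem A / the cofinality criterion to showing the comma category $\cD\times_{\cC}\cC_{c/}$ is $\kappa$-filtered, and both do so by first taking a cocone in $\cC_{c/}$ and then pushing its cone point into $\cD$ along a morphism supplied by the hypothesis. The only difference is that you spell out the "composing with a map $c'\to d$" step via the left fibration $\cA_{p/}\to\cA$, which the paper leaves implicit.
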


\begin{proof}
By Quillen’s Theorem A it suffices to show that for every $c \in \cC$, the comma category $\cD_{c/}$ is contractible. We shall do so by showing that $\cD_{c/}$ is $\kappa$-filtered. Let $F:K \to \cD_{c/}$ be a diagram, where $K$ is $\kappa$-small. Since $\cC_{c/}$ is $\kappa$-filtered, after composing with the inclusion $\cD_{c/} \hookrightarrow \cC_{c/}$, $F$ admits a cocone, which we denote by $c'$. Composing with a map $c' \to d$ for $d \in \cD$ we get a cocone on $F$, which implies that $\cD_{c/}$ is $\kappa$-filtered and therefore contractible.    
\end{proof}
 
 \begin{theorem} \label{theorem: reflaction}
Let $\mathcal{C}$ be a presentable category and let $\mathcal{D} \subset \mathcal{C}$ be a full subcategory which is closed under limits and $\kappa$-filtered colimits for some regular cardinal $\kappa$. Then, $\mathcal{D}$ is presentable.
\end{theorem}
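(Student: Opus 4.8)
The plan is to show that $\cD$ is an accessible $\infty$-category and that the inclusion $i\colon\cD\hookrightarrow\cC$ admits an accessible left adjoint; since $\cC$ is presentable, this exhibits $\cD$ as an accessible localization of $\cC$, hence as a presentable $\infty$-category. The first move is a reduction to the case $\cC=\Fun(\cE,\mathcal{S})$ for a small $\cE$. Every presentable $\infty$-category is an accessible localization of some presheaf $\infty$-category $\Fun(\cE,\mathcal{S})$, so there is a fully faithful inclusion $\cC\hookrightarrow\Fun(\cE,\mathcal{S})$ which preserves all limits and preserves $\lambda$-filtered colimits for some $\lambda$ (being an accessible right adjoint). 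Consequently $\cD$, which is closed under limits and $\kappa$-filtered colimits in $\cC$, is closed under limits and $\max\{\kappa,\lambda\}$-filtered colimits in $\Fun(\cE,\mathcal{S})$; as presentability of $\cD$ does not depend on the ambient category, it suffices to treat $\cC=\Fun(\cE,\mathcal{S})$.

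From now on I would enlarge $\kappa$ several times at once: so that $\cC$ is $\kappa$-presentable, so that (by \cref{proposition: prop4}) $\cD$ is closed under $\kappa$-pure morphisms in $\cC$, and so that the conclusion of \cref{proposition: prop5} holds for the cardinal $\kappa$ with an accompanying cardinal $\gamma\geq\kappa$ (which depends only on $\cC$ and $\kappa$, not on any chosen morphism). Now fix $B\in\cD$. The $\infty$-category $\cC^\gamma_{/B}$ is $\gamma$-filtered, and $B$ is the colimit over $\cC^\gamma_{/B}$ of the sources, i.e.\ of the diagram $(c\to B)\mapsto c$. By \cref{proposition: prop5}, every object $(c\to B)\in\cC^\gamma_{/B}$ factors as $c\to A'\xrightarrow{f'}B$ with $f'$ a $\kappa$-pure morphism and $A'$ again $\gamma$-compact; since $B\in\cD$ and $\cD$ is closed under $\kappa$-pure morphisms, $A'\in\cD$. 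Hence every object of $\cC^\gamma_{/B}$ admits a map to an object of $\Pure_\kappa(\cC^\gamma)_{/B}$, the full subcategory of $\cC^\gamma_{/B}$ spanned by the $\kappa$-pure morphisms to $B$; this full subcategory is itself $\gamma$-filtered, and by \cref{lemma: lem13} the inclusion $\Pure_\kappa(\cC^\gamma)_{/B}\hookrightarrow\cC^\gamma_{/B}$ is cofinal.

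Therefore $B$ is a $\gamma$-filtered colimit of objects of $\cC^\gamma\cap\cD$. Since $B$ was an arbitrary object of $\cD$, since $\cC^\gamma\cap\cD$ is essentially small, and since the inclusion $\cD\hookrightarrow\cC$ preserves $\gamma$-filtered colimits (so that $\cD$ has $\gamma$-filtered colimits and the objects of $\cC^\gamma\cap\cD$ are $\gamma$-compact in $\cD$), we conclude that $\cD$ is $\gamma$-accessible. Finally, $i\colon\cD\hookrightarrow\cC$ is fully faithful and preserves limits and $\kappa$-filtered — hence $\gamma$-filtered — colimits, so it is an accessible functor between accessible $\infty$-categories; the adjoint functor theorem \cite[3.2.5]{NguyenRaptisScharadn18} then produces an accessible left adjoint, exhibiting $\cD$ as an accessible localization of the presentable $\infty$-category $\cC$, whence $\cD$ is presentable.

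The genuinely hard input is \cref{proposition: prop4}, supplied by section~4 and taken as given here. Within the present assembly the points that require care are the verification that the factorization furnished by \cref{proposition: prop5} keeps its source $\gamma$-compact — so that the $\kappa$-pure map really is an object of $\cC^\gamma_{/B}$ and \cref{lemma: lem13} applies — and the (routine but essential) observation that a full subcategory of a $\gamma$-filtered $\infty$-category into which every object maps is again $\gamma$-filtered, so that the cofinal presentation of $B$ is honestly $\gamma$-filtered and thereby yields the $\gamma$-accessibility of $\cD$ rather than merely a cofinal diagram.
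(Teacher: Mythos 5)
Your proposal is correct and follows essentially the same route as the paper: reduce to $\cC=\Fun(\cE,\mathcal{S})$, enlarge the cardinal so that \cref{proposition: prop4} gives closure under pure morphisms, use \cref{proposition: prop5} together with \cref{lemma: lem13} to show that $\Pure$-morphisms to $B$ are cofinal in $(\cC^\gamma)_{/B}$, deduce accessibility of $\cD$, and finish with the adjoint functor theorem. The only (harmless) divergence is that you take the full subcategory \emph{spanned} by the pure morphisms to $B$ by definition, whereas the paper spends a step identifying $\Pure_\mu(\cC)_{/B}\cap(\cC^\gamma)_{/B}$ as a full subcategory; your version sidesteps that verification without loss.
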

 
 \begin{proof}
We start by reducing to the case in which $\mathcal{C}=\Fun(\mathcal{A},\mathcal{S})$ where $\mathcal{A}$ is a small category. \\
Recall that every presentable category is a reflective accessible localization of $\Fun(\mathcal{A},\mathcal{S})$ where $\mathcal{A}$ is some small category \cite[5.5.1.1]{lurie2009higher}, and as such, $\mathcal{C}$ is equivalent to a full subcategory of $\Fun(\mathcal{A},S)$ which is closed under limits and $\mu$-filtered colimits (for some regular cardinal $\mu$). As a consequence, any full subcategory of $\mathcal{C}$ which is closed under limits and $\kappa$-filtered colimits in $\cC$, is equivalent to a full subcategory of $\Fun(\mathcal{A},S)$ closed under limits and $\mu+ \kappa$-filtered colimits. Therefore, we may assume that $\mathcal{C}=\Fun(\mathcal{A},\mathcal{S})$.\\

We proceed to prove the claim, under said assumption. \\
Let $\mu$ be larger than the cardinal from \cref{lemma: lem1} (i.e. the one which corresponds to $\cC$) by enlarging $\mu$ we may also assume that:
\begin{itemize}
    \item $\mathcal{D}$ is closed under $\mu$-pure morphisms (this follows from \cref{proposition: prop4}).
    \item $\mathcal{C}$ is $\mu$-presentable (this follows from the definition of presentable category).
    \item $\mathcal{D}$ is closed under $\mu$-filtered colimits in $\cC$ (this follows from our assumption on $\cD$).
\end{itemize}
We shall denote by $\Pure_\mu (\mathcal{C})$ the wide-subcategory of $\cC$ spanned by all $\mu$-pure morphisms.\\
Let $\gamma$ be a regular cardinal bigger than the cardinal that corresponds to $\mu$ in the setting of \cref{proposition: prop5}, i.e. for $f:A \to B$ a morphism in $\cC$
where $A$ is $\gamma$-compact there exists a factorization
\[
\begin{tikzcd}
    A & B \\
    {A'}
    \arrow["{f'}"', from=2-1, to=1-2]
    \arrow["f", from=1-1, to=1-2]
    \arrow[from=1-1, to=2-1]
\end{tikzcd}
\]
with $f'$ $\mu$-pure and $A'$ $\gamma$-compact. \\

The rest of the proof of the theorem will be divided into two parts: First, we will show that for $B \in \mathcal{D}$, the category $\mathcal{P}:=\Pure_\mu (\mathcal{C})_{/B} \cap (\mathcal{C}^\gamma)_{/B}$ is a full subcategory of $(\mathcal{C}^\gamma)_{/B}$. Then, we will conclude that $\cD$ is presentable.\\

\textbf{First part - $\mathcal{P}$ is a full subcategory of $(\mathcal{C}^\gamma)_{/B}$:} Observe, that if we have a commutative diagram:
\[
\begin{tikzcd}
A \arrow[rd, "f'"'] \arrow[r, "h"] & A' \arrow[d, "f"] \\
                                   & B                
\end{tikzcd}
\]
with $f$ and $f'$ $\gamma$-pure then $h$ is also $\gamma$-pure (in fact one only needs $f'$ to be $\gamma$-pure), this implies that $\Pure_\mu (\mathcal{C})_{/B}$ is the pullback of the following diagram:
\[
\begin{tikzcd}
 & {\Fun^{\Pure}(\Delta^1,\mathcal{C})} \arrow[d, "\mathrm{target}"] \\
\Delta^0 \arrow[r, "B"']                           & \mathcal{C}                                           
\end{tikzcd}
\]
where $\Fun^{\Pure}(\Delta^1,\mathcal{C}) \subset \Fun(\Delta^1,\mathcal{C})$ is the full subcategory on pure morphisms. Since there is a natural map between the following two diagrams:
\[
\begin{tikzcd}
    &&&&& {\Fun(\Delta^1,\cC)} \\
    &&&&& {} \\
    &&& {\Fun^{\Pure}(\Delta^1,\cC)} \\
    && {\Delta^0} &&& \cC \\
    \\
    {\Delta^0} &&& \cC
    \arrow[from=6-1, to=6-4]
    \arrow[from=6-1, to=4-3]
    \arrow[from=6-4, to=4-6]
    \arrow[from=4-3, to=4-6]
    \arrow[from=3-4, to=6-4]
    \arrow[from=3-4, to=1-6]
    \arrow[from=1-6, to=4-6]
\end{tikzcd}
\]
and the induced map on limits is the natural map $\Pure_\mu (\mathcal{C})_{/B} \to (\mathcal{C})_{/B}$, it follows that $\mathcal{P}$ is a full subcategory of $(\mathcal{C}^\gamma)_{/B}$.\\

\textbf{Second Part - $\cD$ is presentable:} First, note that by definition $(\mathcal{C}^\gamma)_{/B}$ admits $\gamma$-small colimits and thus it is $\gamma$-filtered. Therefore, invoking \cref{proposition: prop5} and \cref{lemma: lem13} we have that the inclusion $\mathcal{P} \hookrightarrow (\mathcal{C}^\gamma)_{/B}$ is cofinal. Second, as $\mathcal{D}$ is closed under $\mu$-pure morphisms we have that $\mathcal{P} \subset (\mathcal{D}^\gamma)_{/B}$ (note that since $\cD$ is closed under $\gamma$-filtered colimits, a $\gamma$-compact object in $\cC$ that lies in $\cD$ is also $\gamma$-compact in $\cD$), which implies, by the above, that $B$ is $\gamma$-filtered colimit of objects in $\mathcal{D}^\gamma$. Since $B$ was arbitrary we get that every object of $\cD$ is a $\gamma$-filtered colimit of $\gamma$-compact objects of $\cD$ i.e. $\mathcal{D}$ is accessible. Finally, using our assumptions on $\mathcal{D}$, it follows that the functor $i:\mathcal{D} \hookrightarrow \mathcal{C}$ is accessible and thus satisfies the solution set condition. The adjoint functor theorem \cite[3.2.5]{NguyenRaptisScharadn18} then asserts that $i$ has a left adjoint, which implies that $\mathcal{D}$ is also co-complete.
\end{proof}

\begin{corollary}
For a category $\cC$ the following are equivalent:
\begin{enumerate}
    \item $\cC$ is presentable.
    \item $\cC$ is complete and there exist a set of $\kappa$-compact objects, $S$, for some cardinal $\kappa$, such that restricted Yoneda embedding to the full subcategory on the elements of $S$ is fully-faithful.\footnote{Some sources call such set a `dense' set of $\cC$.}  
\end{enumerate}
\end{corollary}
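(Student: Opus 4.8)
The plan is to prove $(1)\Rightarrow(2)$ and $(2)\Rightarrow(1)$ separately, the second implication being the substantial one. For $(1)\Rightarrow(2)$: if $\cC$ is presentable then it is complete by \cite[5.5.2.4]{lurie2009higher}, and it is $\kappa$-accessible for some regular cardinal $\kappa$, so that $\cC^{\kappa}$ is essentially small. I would take $S$ to be a set of representatives for the equivalence classes of objects of $\cC^{\kappa}$; its elements are $\kappa$-compact by construction, and the restricted Yoneda functor $\cC\to\Fun(S^{\op},\mathcal S)$, $c\mapsto\Map_{\cC}(-,c)|_{S}$, is fully faithful because it is identified, via $\cC\simeq\Ind_{\kappa}(\cC^{\kappa})$, with the canonical fully faithful inclusion $\Ind_{\kappa}(\cC^{\kappa})\hookrightarrow\Fun((\cC^{\kappa})^{\op},\mathcal S)$ of \cite[5.3.5.11]{lurie2009higher}. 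This direction is routine.

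For $(2)\Rightarrow(1)$: write $\iota\colon S\hookrightarrow\cC$ for the inclusion of the full subcategory on $S$, put $\cP:=\Fun(S^{\op},\mathcal S)$ (a presentable category since $S$ is small), and let $j\colon\cC\to\cP$, $j(c)=\Map_{\cC}(\iota(-),c)$, be the restricted Yoneda functor, which is fully faithful by hypothesis. Let $\cD\subseteq\cP$ be the essential image of $j$; then $\cD\simeq\cC$, so it is enough to show $\cD$ is presentable, and the plan is to do so by verifying the hypotheses of the reflection theorem \cref{theorem: reflaction}: that $\cD$ is closed in $\cP$ under limits and under $\kappa$-filtered colimits. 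Closure under limits is immediate: each $\Map_{\cC}(\iota(s),-)$ preserves limits and limits in $\cP$ are computed pointwise, so $j$ preserves all limits; given a diagram in $\cD$ one lifts it to $\cC$ (complete), takes its limit there, and applies $j$.

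The main obstacle is closure of $\cD$ under $\kappa$-filtered colimits in $\cP$, and this is where the $\kappa$-compactness of the objects of $S$ must be used. Given $\kappa$-filtered $I$ and $p\colon I\to\cD$, regarded as a diagram in $\cC$, the representables $h_{s}\in\cP$ are compact, so the pointwise colimit $q:=\colim_{\cP}(j\circ p)$ satisfies $q(s)\simeq\colim_{i}\Map_{\cC}(\iota(s),p(i))$ for every $s\in S$. Once we know that the colimit $c:=\colim_{\cC}p$ exists in $\cC$, the $\kappa$-compactness of each $\iota(s)$ together with the $\kappa$-filteredness of $I$ gives $j(c)(s)\simeq\colim_{i}\Map_{\cC}(\iota(s),p(i))\simeq q(s)$ naturally in $s$, hence $j(c)\simeq q$ and $q\in\cD$. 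So the crux is to produce $\kappa$-filtered colimits in $\cC$: I would do this by showing $j$ admits a left adjoint — $\cP$ is presentable and $j$ preserves limits, and the point is to verify the solution-set condition, for which the fact that $S$ is a small dense set of $\kappa$-compact objects should suffice — so that $\cC$ becomes a reflective subcategory of $\cP$, in particular cocomplete. With closure under limits and $\kappa$-filtered colimits established, \cref{theorem: reflaction} yields that $\cD\simeq\cC$ is presentable. I expect the verification of the solution-set condition (equivalently, the existence of the left adjoint) to be the delicate step; everything else is formal.
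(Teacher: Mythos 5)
Your direction $(1)\Rightarrow(2)$ and your treatment of closure under limits in $(2)\Rightarrow(1)$ agree with the paper. The gap is in your handling of $\kappa$-filtered colimits. You correctly observe that to identify $\underset{I}{\colim}\,(j\circ p)$ with an object of $\cD$ one wants the colimit of $p$ to exist in $\cC$, but the device you propose for producing it --- verifying a Freyd-type solution-set condition for $j$ so as to realize $\cC$ as a reflective subcategory of $\cP$ --- is exactly the step you leave unproved, and it is the load-bearing one. You give no argument for the solution-set condition, and it is not clear how density and compactness of $S$ would yield it directly; in the $\infty$-categorical setting the adjoint functor theorems of \cite{NguyenRaptisScharadn18} carry further hypotheses (accessibility of the functor, or $2$-local smallness of the target) that would also have to be checked. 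Worse, the detour is close to circular in spirit: \cref{theorem: reflaction} is precisely the tool whose purpose is to manufacture the left adjoint to $j$ out of the closure conditions, so producing that adjoint by hand first and only then invoking the reflection theorem inverts the intended logic.

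The point you are missing is that no such detour is needed. Under the conventions of \cite[5.3.4.5]{lurie2009higher}, which the paper follows, calling the objects of $S$ ``$\kappa$-compact'' already presupposes that $\cC$ admits small $\kappa$-filtered colimits, so their existence is part of hypothesis $(2)$. Granting that, for a $\kappa$-filtered diagram $p\colon I\to\cC$ and every $s\in S$ one has
\[
j\bigl(\underset{I}{\colim}\, p\bigr)(s)\simeq \Map_{\cC}\bigl(s,\underset{I}{\colim}\, p\bigr)\simeq \underset{I}{\colim}\,\Map_{\cC}(s,p(i))\simeq \bigl(\underset{I}{\colim}\,(j\circ p)\bigr)(s),
\]
using $\kappa$-compactness of $s$ and the fact that colimits in $\cP$ are computed pointwise; hence $\cD$ is closed under $\kappa$-filtered colimits in $\cP$ and \cref{theorem: reflaction} applies directly. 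This is the paper's one-line argument. If you insist on the weaker reading in which $\cC$ is not assumed to admit $\kappa$-filtered colimits, then your proof is incomplete as written, and you would need to either supply the solution-set verification or add that assumption to the hypothesis.
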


\begin{proof}
\textbf{$1 \implies 2$:} By definition $\cC$ is complete.  Choose a cardinal $\kappa$ such that $\cC$ is $\kappa$-presentable. Using the model for $\Ind_\kappa$ as the full subcategory of $\Fun((\cC^\kappa)^\op,\mathcal{S})$ on the functors that commutes $\kappa$-small limits given in \cite[5.3.5.4]{lurie2009higher} we see that $\cC \simeq \Ind_\kappa(\cC^\kappa)\overset{i}\hookrightarrow \Fun((\cC^\kappa)^{\op},\mathcal{S})$ can be identified with the restricted Yondea embedding to $\cC^\kappa$.\\

\textbf{$2 \implies 1$:} Choose a set $S$ as in $2$ and denote the full subcategory on $S$ by $\cC_0$. By assumption the restricted Yoneda embedding $\cC \hookrightarrow \Fun(\cC_0^\op,\mathcal{S})=:\cD$ is fully-faithful. Thus, by \cref{theorem: reflaction}, it suffices to show that $\cC$ is closed under limits and sufficiently filtered colimits in $\cD$. As the Yoneda embedding commutes with limits, $\cC$ is closed under limits in $\cD$, and since all the objects of $S$ are $\kappa$-compact, $\cC$ is also closed under $\kappa$-filtered colimits in $\cD$.
\end{proof}

\section{Applications - Recognizing Smashing Localization}

In this section we discuss the use of the \cref{theorem: reflaction} in recognizing subcategories of the ambient symmetric monoidal category $\cC$ for which inclusion admits a smashing left adjoint, as defined in \cref{defenition: smashing}. We first deal with the case where $\cC \in \CAlg(\Pr^L)$ (see \cref{theorem: smashing in C}) and then with the case $\cC=\Pr^L$ (see \cref{theorem: smashing in PrL}). 

\subsection{Recognition Result for Smashing Localizations of  $\cC \in \CAlg(\Pr^L)$}

In this subsection, we prove a necessary and sufficient  condition for when a full-subcategory of $\cC  \in \CAlg(\Pr^L)$ is classified by an idempotent algebra. Recall that we say that a morphism $u:\mathbbm{1} \to X$ in $\cC$ exhibits $X$ as an idempotent object of $\cC$, if
\[
X \simeq X \otimes \mathbbm{1} \overset{1 \otimes u}\to X \otimes X
\]
is an equivalence. By \cite[4.8.2.9]{Lurie11}, an idempotent object $u:\mathbbm{1} \to X$ admits a unique commutative algebra structure for which $u$ is the unit. Conversely, the unit $u:\mathbbm{1} \to R$ of a commutative algebra $R$ exhibits it as an idempotent object if and only if the multiplication map $R \otimes R \to R$ is an isomorphism. We call such $R$-s idempotent algebras. More precisely, the functor
$\CAlg(\cC) \to \cC_{\mathbbm{1}/}$ which forgets the algebra structure and remembers only the unit map, induces an equivalence of  categories from the full-subcategory of idempotent algebras $\CAlg^{\mathrm{idem}}(\cC)$ to the full-subcategory of idempotent objects \cite[4.8.2.9]{Lurie11}.\\

The fundamental feature of an idempotent algebra $R$ is that the forgetful functor $\Mod_R(\cC) \to \cC$ is fully faithful. Thus, it is a property of an object in $\cC$ to have the structure of an $R$-module. We shall say that $R$ classifies the property of being an $R$-module. So, if we are interested in a property of objects of $\cC$, we can ask whether this property is classified by an idempotent algebra. When this is the case, we get a universal object in $\cC$ that satisfies this property. Therefore, one can consider this subsection's results as classification results of properties classified by idempotent algebras.\\

Let us recall some definitions and known results.

\begin{definition}(\cite[2.2.1.6]{Lurie11})
Let $\cC$ be a symmetric monoidal category. Let $\iota\colon\cD \hookrightarrow \cC$ be a reflective full-subcategory and denote the left adjoint of $\iota$ by $L$. We say that $L$ is compatible with the symmetric monoidal structure if for any $L$-equivalence $f:X\to Y$ and an object $Z$, the morphism
\[
X\otimes Z \overset{f \otimes \Id}\to Y\otimes Z
\]
is an $L$-equivalence.
\end{definition}

\begin{definition} \label{defenition: smashing}
Let $\cC \in \CAlg(\Cat_\infty)$ and $\iota: \cD \hookrightarrow \cC$ a reflective subcategory. Denote the left adjoint of $\iota$ by $L$. we say that 
$L$ is a \emph{smashing localization} if there exists an object $X$ and an equivalence of functors $L\simeq L_X$ where 
\[
L_X(Y) = X \otimes Y.
\]
\end{definition}

\begin{remark}
    The definition of smashing localization used here is equivalent to the standard one. In the standard formulation, a smashing localization is given by an algebra object $X$ and the unit of and the localization functor is given by 
    \[
    Y \overset{e\otimes \Id}\to X \otimes Y
    \]
    In our setting, the assumption that $L$ is smashing localization means that $L$ is compatible with the symmetric monoidal structure on $\cC$ and therefore by \cite[2.2.1.9]{Lurie11} admits a natural structure of a symmetric monoidal functor . It follows that the unit map evaluated at $\mathbbm{1}_\cC$
    \[
     \mathbbm{1}_\cC \to L(\mathbbm{1}_\cC)=X 
    \]
    exhibits $X$ as an idempotent algebra and that the unit map is of the desired form.
\end{remark}

There is a bijective correspondence between smashing localizations and idempotent algebras.

\begin{proposition} \label{proposition: smashing is idem}
For $R \in \CAlg(\cC)$ the following are equivalent:
\begin{enumerate}
\item $R \in \CAlg^{\mathrm{idem}}(\cC)$.
\item The functor $L_R$ is a smashing localization.
\end{enumerate}
\end{proposition}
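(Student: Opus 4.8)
The plan is to prove the two implications in turn, using the two facts recalled immediately above: for an idempotent algebra $R$ the forgetful functor $\Mod_R(\cC)\to\cC$ is fully faithful, and — by the stated criterion together with \cite[4.8.2.9]{Lurie11} — a commutative algebra $R$ lies in $\CAlg^{\mathrm{idem}}(\cC)$ exactly when its multiplication $m\colon R\otimes R\to R$ is an equivalence.

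\emph{$(1)\Rightarrow(2)$.} Suppose $R\in\CAlg^{\mathrm{idem}}(\cC)$. Then $U\colon\Mod_R(\cC)\to\cC$ is fully faithful, and it admits the free-module functor $\mathrm{Free}\colon\cC\to\Mod_R(\cC)$ as a left adjoint, with $U\circ\mathrm{Free}\simeq R\otimes(-)=L_R$. A fully faithful right adjoint exhibits its source as a reflective subcategory, so $\Mod_R(\cC)\hookrightarrow\cC$ is reflective with reflector $L_R$; since this reflector is of the form $L_X$ for the object $X=R$, it is a smashing localization in the sense of \cref{defenition: smashing}.

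\emph{$(2)\Rightarrow(1)$.} Suppose $L_R$ is the reflector of a reflective subcategory $\cD\hookrightarrow\cC$, with unit $\eta\colon\Id_\cC\to L_R$; recall that $\eta_Y$ is the map $Y\simeq\mathbbm{1}\otimes Y\xrightarrow{u\otimes\Id_Y}R\otimes Y$ determined by the unit $u\colon\mathbbm{1}\to R$ of $R$. Because $(L_R,\eta)$ is a localization, the natural transformation $\eta L_R\colon L_R\to L_R\circ L_R$ is an equivalence; evaluating at $\mathbbm{1}$ and using $L_R(\mathbbm{1})\simeq R$ shows that $u\otimes\Id_R\colon R\to R\otimes R$ is an equivalence. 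The unit axiom for $R$ gives $m\circ(u\otimes\Id_R)=\Id_R$, so $m$ is inverse to this equivalence and is therefore itself an equivalence; by the criterion above, $R\in\CAlg^{\mathrm{idem}}(\cC)$.

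The one point requiring care in $(2)\Rightarrow(1)$ is the identification of the localization unit with the algebra unit $u$. This is built into the meaning of ``$L_R$ is a smashing localization'' as used above, but one can also argue invariantly: a smashing localization is compatible with the symmetric monoidal structure, so $\cD\simeq\mathrm{Image}(L_R)$ inherits a symmetric monoidal structure with unit $L_R(\mathbbm{1})\simeq R$ and lax symmetric monoidal inclusion $\iota$, and $\iota(\mathbbm{1}_\cD)$ endows $R$ with a commutative algebra structure whose multiplication $R\otimes R\to R$ is an equivalence, which one identifies with the given structure via \cite[4.8.2.9]{Lurie11}. I do not expect a genuine obstacle here — the real content is the bookkeeping with units and the two facts recalled at the outset.
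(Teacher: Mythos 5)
Your proof is correct in substance, but it takes a genuinely different route from the paper: the paper's entire proof is the citation ``This is \cite[4.8.2.4]{Lurie11}'', whereas you essentially reprove that result of Lurie from the two facts recalled in the subsection. Your $(1)\Rightarrow(2)$ via the free--forgetful adjunction $\mathrm{Free}\dashv U$ and the fully faithfulness of $U$ is fine (and consistent with the paper, which takes that fully faithfulness as known), and your $(2)\Rightarrow(1)$ correctly isolates the key computation: $\eta_{L_R\mathbbm{1}}$ is an equivalence for any localization functor, and evaluating at $\mathbbm{1}$ gives exactly the idempotency map $u\otimes\Id_R$. What the self-contained argument buys is transparency about \emph{why} the equivalence holds; what the citation buys is not having to worry about the one genuinely delicate point, which you correctly flag.

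On that delicate point, one remark. Your fallback ``invariant'' argument does not quite close the loop as written: pushing $\mathbbm{1}_{\cD}$ forward along the lax symmetric monoidal inclusion produces \emph{some} idempotent commutative algebra structure on the object $R$, with unit $\eta_{\mathbbm{1}}\colon\mathbbm{1}\to L_R(\mathbbm{1})$, but the proposition asserts that the \emph{given} algebra structure (with unit $u$) is idempotent, and \cite[4.8.2.9]{Lurie11} only identifies algebra structures with a \emph{fixed} unit map. So the fallback still needs $\eta_{\mathbbm{1}}\simeq u$, which is the very identification you were trying to avoid. The clean resolution is your first one: read ``$L_R$ is a smashing localization'' as Lurie does in 4.8.2.4, namely that the functor $R\otimes(-)$ \emph{together with the natural transformation induced by $u$} is a localization functor; this is also the structure actually produced by your $(1)\Rightarrow(2)$ argument, so the two implications match up. Under that reading your first argument for $(2)\Rightarrow(1)$ is complete, and the detour through the multiplication $m$ is not even needed, since $u\otimes\Id_R$ being an equivalence is already the definition of $R$ being idempotent.
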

 
\begin{proof}
This is \cite[4.8.2.4]{Lurie11}.
\end{proof}
 
We will need the following lemma:

\begin{lemma} \label{lemma: compatible with mon iff}
Let $\cC$ be a closed symmetric monoidal category. Let $\iota:\cD \hookrightarrow \cC$ be a reflective full-subcategory. Then the following are equivalent:
\begin{enumerate}
\item The left adjoint of the inclusion, $L$, is compatible with the symmetric monoidal structure.

\item For all $d \in \cD$ and $c \in \cC$,  $\Hom^{\cC}(c,d) \in  \cD$ where $\Hom^{\cC}(-,-)$ is the inner-$\Hom$.
\end{enumerate}
\end{lemma}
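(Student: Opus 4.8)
The plan is to use the adjunction $(-\otimes c)\dashv \Hom^{\cC}(c,-)$ together with the characterization of a reflective localization by its local objects. Write $L$ for the left adjoint of $\iota$, and recall that $d\in\cD$ iff $d$ is $L$-local, i.e. iff $\Map_{\cC}(-,d)$ sends every $L$-equivalence to an equivalence. Also recall the standard fact that $L$ is compatible with the monoidal structure (i.e.\ tensoring an $L$-equivalence with any object is again an $L$-equivalence) iff for every $L$-equivalence $f$ and every $c\in\cC$ the map $f\otimes\mathrm{id}_c$ is an $L$-equivalence; since every object is (via $L$) built from $L$-local objects, one reduces further to: $f\otimes\mathrm{id}_c$ is an $L$-equivalence for every $c$.

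\emph{Proof of $(1)\Rightarrow(2)$.} Let $d\in\cD$ and $c\in\cC$; we must show $\Hom^{\cC}(c,d)$ is $L$-local. Let $f\colon X\to Y$ be any $L$-equivalence. By the tensor-hom adjunction,
\[
\Map_{\cC}(Y,\Hom^{\cC}(c,d))\simeq \Map_{\cC}(Y\otimes c,d),\qquad
\Map_{\cC}(X,\Hom^{\cC}(c,d))\simeq \Map_{\cC}(X\otimes c,d),
\]
and under these equivalences the restriction map along $f$ is identified with restriction along $f\otimes\mathrm{id}_c\colon X\otimes c\to Y\otimes c$. By $(1)$, $f\otimes\mathrm{id}_c$ is an $L$-equivalence, and since $d$ is $L$-local this restriction map is an equivalence. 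Hence $\Hom^{\cC}(c,d)$ is $L$-local, i.e.\ lies in $\cD$.

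\emph{Proof of $(2)\Rightarrow(1)$.} Let $f\colon X\to Y$ be an $L$-equivalence and $c\in\cC$; we must show $f\otimes\mathrm{id}_c$ is an $L$-equivalence, i.e.\ that $\Map_{\cC}(f\otimes\mathrm{id}_c,d)$ is an equivalence for every $d\in\cD$. Again by adjunction this map is identified with $\Map_{\cC}(f,\Hom^{\cC}(c,d))$, and by $(2)$ the object $\Hom^{\cC}(c,d)$ lies in $\cD$, hence is $L$-local, so $\Map_{\cC}(f,\Hom^{\cC}(c,d))$ is an equivalence since $f$ is an $L$-equivalence. Therefore $f\otimes\mathrm{id}_c$ is an $L$-equivalence for all $c$, which is exactly compatibility of $L$ with the symmetric monoidal structure.

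The only point requiring a little care — and the one I expect to be the main (though minor) obstacle — is the bookkeeping identifying ``compatible with the monoidal structure'' with the pointwise statement ``$f\otimes\mathrm{id}_c$ is an $L$-equivalence for all $L$-equivalences $f$ and all $c$'': one should note that $f\otimes g$ factors as $(f\otimes\mathrm{id})\circ(\mathrm{id}\otimes g)$, so it suffices to check one variable at a time, and that tensoring on either side is, by the above adjunction argument, automatically controlled by whether $\Hom^{\cC}(c,d)\in\cD$. Everything else is a direct application of the tensor-hom adjunction and the definition of a local object, using that $\cC$ is closed so that $\Hom^{\cC}(-,-)$ exists.
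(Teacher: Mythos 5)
Your proof is correct and follows essentially the same route as the paper's: both directions hinge on the tensor--hom adjunction $\Map_{\cC}(x\otimes c,d)\simeq\Map_{\cC}(x,\Hom^{\cC}(c,d))$ together with the identification of $\cD$ with the $L$-local objects. The only cosmetic difference is that for $(1)\Rightarrow(2)$ you test locality of $\Hom^{\cC}(c,d)$ against all $L$-equivalences directly, whereas the paper tests only the unit map $c\to Lc$ and then upgrades the resulting one-sided inverse of $\eta_{\Hom^{\cC}(c,d)}$ to an equivalence by a retract argument.
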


\begin{proof}
\textbf{$1 \implies 2$:} Assume that $L$ is compatible with the symmetric monoidal structure. We will show that the natural map $\Hom^\cC(c,d) \to L\Hom^\cC(c,d) $ is an equivalence. By \cite[2.2.1.9]{Lurie11} $\cD$ is symmetric monoidal and the natural map $L(c \otimes c') \to L(L(c) \otimes L(c'))$ is an equivalence, where we can view the latter as a ``formula" for the tensor product in $\cD$. Therefore, for all $c,c' \in \cC$ and $d \in \cD$ the unit map induces an equivalence:
\[
\Map_\cC(Lc,\Hom^{\cC}(c',d)) \to  \Map_\cC(c,\Hom^{\cC}(c',d)).
\]
Indeed, by adjunction we have
\begin{gather*}
\Map_\cC(c,\Hom^{\cC}(c',d))\simeq \Map_\cC(c \otimes c',d) \simeq  \Map_\cC(L(Lc \otimes Lc'),d)\simeq \\ \simeq \Map_\cC(L(Lc\otimes c'),d) \simeq \Map_\cC(Lc\otimes c',d)\simeq  \Map_\cC(Lc,\Hom^{\cC}(c',d)).
\end{gather*}
Choosing $c=\Hom^{\cC}(c',d)$ we get that the unit of the adjunction
\[
\eta_{\Hom^{\cC}(c',d)}:\Hom^{\cC}(c',d) \to L\Hom^{\cC}(c',d)
\]
has a right inverse, which will be denoted by $v$. Since both $ \eta_{\Hom^{\cC}(c',d)} \circ v$ and $\Id_{L\Hom^{\cC}(c',d)}$ fit in the dotted line  \[\begin{tikzcd}
{\Hom^{\cC}(c',d)} \arrow[d, "{\eta_{\Hom^{\cC}(c',d)}}"'] \arrow[r, "{\eta_{\Hom^{\cC}(c',d)}}"] & {L\Hom^{\cC}(c',d)} \\
{L\Hom^{\cC}(c',d)} \arrow[ru, dotted]                                                                          &                           
\end{tikzcd}\]
we get that $v$ is also a left inverse by the universal property of $\eta$. \\

\textbf{$2 \implies 1$:} Assume that for all $d \in \cD$ and $c \in \cC$,  $\Hom^{\cC}(c,d) \in  \cD$. We will show that $L(c \otimes c') \simeq L(Lc \otimes Lc')$. Fix $c, \ c' \in \cC$ and $d \in \cD$ and note that we have the following chain of equivalences
\begin{gather*}
 \Map(L(c \otimes c'),d) \simeq  \Map(c ,\Hom^\cC(c',d)) \simeq \\ \simeq \Map(Lc ,\Hom^\cC(c',d)) \simeq  \Map(L(Lc \otimes c'),d).
\end{gather*}
So, by the Yoneda lemma, $L(c \otimes c') \simeq L(Lc \otimes c')$ and by repeating the argument we get that $L(c \otimes c') \simeq L(Lc \otimes Lc')$.
\end{proof}

\begin{corollary}\label{corollary: localization of hom}
Let $\cC$ be a closed symmetric monoidal category and let $\iota:\cD \hookrightarrow \cC$ be a reflective full-subcategory. Assume that the left adjoint of the inclusion, $L$, is compatible with the symmetric monoidal structure. Then the unit map $c \to L(c)$ induces an equivalence
\[
\Hom^\cC(Lc,d) \to \Hom^\cC(c,d)  \quad \forall \ d \in \cD
\]
where $\Hom^{\cC}$ is the inner-$\Hom$.
\end{corollary}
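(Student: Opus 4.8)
The plan is to reduce the assertion to the defining property of a symmetric-monoidally compatible localization by means of the tensor--hom adjunction. Fix $d \in \cD$ and $c \in \cC$, and write $\eta_c \colon c \to L(c)$ for the unit of the adjunction. We must show that the map $\Hom^\cC(Lc,d) \to \Hom^\cC(c,d)$ induced by $\eta_c$ is an equivalence. By the Yoneda lemma it suffices to prove that for every $c' \in \cC$ the induced map
\[
\Map_\cC\bigl(c', \Hom^\cC(Lc,d)\bigr) \longrightarrow \Map_\cC\bigl(c', \Hom^\cC(c,d)\bigr)
\]
is an equivalence of spaces.

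The next step is to unwind both sides using the tensor--hom adjunction of the closed symmetric monoidal structure on $\cC$. This identifies the displayed arrow with the map
\[
\Map_\cC\bigl(c' \otimes Lc,\, d\bigr) \longrightarrow \Map_\cC\bigl(c' \otimes c,\, d\bigr)
\]
given by precomposition with $\mathrm{id}_{c'} \otimes \eta_c \colon c' \otimes c \to c' \otimes Lc$; here one uses naturality of the adjunction equivalence in the first variable to check that the map really is precomposition with $\mathrm{id}_{c'} \otimes \eta_c$. Thus it remains to show that $(\mathrm{id}_{c'} \otimes \eta_c)^*$ is an equivalence on mapping spaces into $d$.

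This is where the hypotheses enter. The unit $\eta_c$ is an $L$-equivalence, since applying $L$ sends it to the equivalence $L\eta_c \colon Lc \xrightarrow{\ \sim\ } LLc$. Because $L$ is compatible with the symmetric monoidal structure, the tensor of an $L$-equivalence with any object is again an $L$-equivalence, so $\mathrm{id}_{c'} \otimes \eta_c$ is an $L$-equivalence. Finally, for any $L$-equivalence $f \colon X \to Y$ and any $d \in \cD$, the adjunction equivalence $\Map_\cC(-, \iota d) \simeq \Map_\cD\bigl(L(-), d\bigr)$ identifies $f^*$ with precomposition by the equivalence $Lf$, hence $f^*$ is an equivalence; applying this to $f = \mathrm{id}_{c'} \otimes \eta_c$ concludes the proof. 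I do not anticipate a genuine obstacle here: the only point requiring attention is the naturality bookkeeping that correctly identifies the map on mapping spaces with $(\mathrm{id}_{c'} \otimes \eta_c)^*$, after which the general principle that $L$-equivalences become equivalences when mapped into $\cD$ finishes things off. (Alternatively, one may note that by \cref{lemma: compatible with mon iff} both $\Hom^\cC(Lc,d)$ and $\Hom^\cC(c,d)$ already lie in $\cD$, and run the same Yoneda argument inside $\cD$.)
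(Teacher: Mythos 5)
Your proof is correct. It takes essentially the same route as the paper --- Yoneda plus the tensor--hom adjunction --- but with one worthwhile structural difference: the paper first invokes \cref{lemma: compatible with mon iff} to place both $\Hom^\cC(Lc,d)$ and $\Hom^\cC(c,d)$ in $\cD$, and then tests only against objects $d'\in\cD$, finishing with the formula $L(c\otimes c')\simeq L(Lc\otimes Lc')$ established in that lemma; you instead test against all $c'\in\cC$ and close the argument by unwinding the \emph{definition} of compatibility directly (the tensor of an $L$-equivalence is an $L$-equivalence, and $L$-equivalences become equivalences upon mapping into local objects). Your version is self-contained and does not depend on the preceding lemma at all, at the cost of carrying the naturality bookkeeping for $(\mathrm{id}_{c'}\otimes\eta_c)^*$ yourself; the paper's version is shorter given that the lemma has already done the work. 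The parenthetical alternative you mention at the end is exactly the paper's argument.
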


\begin{proof}
By \cref{lemma: compatible with mon iff} $\Hom^\cC(L(c),d), \ \Hom^\cC(c,d) \in \cD$. Hence, by the Yoneda lemma, it suffices to show that the unit map induces an equivalence:
\[
\Map(d',\Hom^\cC(Lc,d)) \simeq \Map(d',\Hom^\cC(c,d))
\]
But by adjunction we have 
\[
\Map(d',\Hom^\cC(c,d)) \simeq \Map(d'\otimes c,d) \simeq \Map(L(Ld'\otimes Lc),d) \simeq  \Map(d'\otimes Lc,d) \simeq \Map(d',\Hom^\cC(Lc,d))   
\]
\end{proof}

We now prove the main theorem of this subsection.

\begin{theorem} \label{theorem: smashing in C}
Let $\cC \in \CAlg(\Pr^L)$, and let $\iota \colon \cD \hookrightarrow \cC$ be the inclusion of a full subcategory. The following are equivalent:

    \begin{enumerate}
        \item There exists an object $R\in\CAlg^{\mathrm{idem}}(\cC)$  such that $\cD\simeq \Mod_R(\cC)$, and under this equivalence, $\iota$ identifies with the forgetful functor $\Mod_R(\cC) \to \cC$.

        \item The inclusion $\iota$ admits a left adjoint $L$, and $L$ is a smashing localization.
    
        \item The subcategory $\cD$ is closed under limits and colimits in $\cC$, and for all $d\in \cD$, $c \in \cC$, both $d \otimes c$ and $\Hom^{\cC}(c,d)$ lie in $\cD$.
    \end{enumerate}
\end{theorem}

\begin{proof}

The equivalence between $1$ and $2$ is \cref{proposition: smashing is idem} (which is just \cite[4.8.2.4]{Lurie11}) and \cite[4.8.2.10]{Lurie11}.\\

\textbf{$2 \implies 3$:} Assume that $\iota$ admits a left adjoint, $L$, and that $L$ is smashing. First, it is immediate that $\cD$ is closed under limits and $c \otimes d \in \cD$ for all $c \in \cC$ and $d \in \cD$. Second, since the tensor product in $\cC$ commutes with colimits, $\cD$ is also closed under colimits. Finally, since the localization is compatible with the symmetric monoidal structure, $\Hom^{\cC}(c,d)\in \cD$ for all $c \in \cC$ and $d \in \cD$ by \cref{lemma: compatible with mon iff}.\\

\textbf{$3 \implies 2$:} Assume that the conditions of $3$ are satisfied. \cref{theorem: reflaction} then implies that $i$ admits a left adjoint, $L$. We need to show that $L$ is smashing i.e. that there exists an object $X$ such that $L$ is given by $c \mapsto X \otimes c$. We will show that $X= L(\mathbbm{1})$ satisfies this condition. By assumption $L(\mathbbm{1}) \otimes c \in \cD $ and $\Hom^{\cC}(c,d)\in \cD$ for all $c \in \cC$ and $d \in \cD$. Therefore, it follows from \cref{corollary: localization of hom} that
\[
\Map(Lc,d) \simeq \Map(c,d) \simeq \Map(c,\Hom^\cC(L\mathbbm{1},d)) \simeq \Map(c\otimes L\mathbbm{1},d).
\]
Hence, from the Yoneda lemma $Lc \simeq L\mathbbm{1} \otimes c$ as desired.
\end{proof}

 The following situation is often of interest to us. Given a map $f:X \to Y$ between objects in a symmetric monoidal presentable category $\cC$, can we characterize the category spanned by the objects $Z$ such that $Z\otimes X \overset{\Id \otimes f}\to Z\otimes Y$ is an equivalence? By \cite[4.3.17]{Lurie18} if $X$ and $Y$ are invertible with respect to the tensor product then this category is equivalent to a category of modules over an idempotent algebra. It follows from \cref{theorem: smashing in C} that the same is true when $X$ and $Y$ are only dualizable.

\begin{corollary} \label{corollary: Inverting map between dualizable objects}
Let $\cC \in \CAlg(\Pr^L)$. Let $f:D_1 \to D_2 $ be a map between dualizable objects, and let $\cD$ be the full subcategory on objects, $X$, such that $X\otimes D_1 \overset{\Id \otimes f}\to X\otimes D_2$ is an equivalence. Then, $\cD$ is equivalent to a category of modules over an idempotent algebra in $\cC$.
\end{corollary}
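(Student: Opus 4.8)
The plan is to verify condition (3) of \cref{theorem: smashing in C} for the full subcategory $\iota\colon\cD\hookrightarrow\cC$: that $\cD$ is closed under limits and colimits in $\cC$, and that $d\otimes c,\ \Hom^{\cC}(c,d)\in\cD$ whenever $d\in\cD$ and $c\in\cC$. Once this is done, \cref{theorem: smashing in C} immediately gives $\cD=\Mod_R(\cC)$ for an idempotent algebra $R\in\CAlg^{\mathrm{idem}}(\cC)$. The single ingredient used repeatedly is strong dualizability of $D_1$ and $D_2$: writing $D_i^\vee$ for the dual, the functor $-\otimes D_i$ is simultaneously left and right adjoint to $-\otimes D_i^\vee$, and since $\cC\in\CAlg(\Pr^L)$ is closed symmetric monoidal we in fact have $-\otimes D_i\simeq\Hom^{\cC}(D_i^\vee,-)$. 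In particular $-\otimes D_i$ preserves all limits and all colimits, and for any $c,d\in\cC$ there is a natural equivalence $\Hom^{\cC}(c,d)\otimes D_i\simeq\Hom^{\cC}(c,d\otimes D_i)$, checked on mapping spaces via $\Map(Z,\Hom^{\cC}(c,d)\otimes D_i)\simeq\Map(Z\otimes D_i^\vee\otimes c,d)\simeq\Map(Z\otimes c,d\otimes D_i)\simeq\Map(Z,\Hom^{\cC}(c,d\otimes D_i))$.

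For closure under limits and colimits, observe that $\cD$ is by definition the full subcategory of $\cC$ on which the natural transformation $(-\otimes D_1)\Rightarrow(-\otimes D_2)$ induced by $f$ is an equivalence. Given a diagram $p\colon K\to\cD$ with limit (resp. colimit) $X$ computed in $\cC$, the map $X\otimes D_1\to X\otimes D_2$ is identified — using that $-\otimes D_j$ preserves limits (resp. colimits) — with $\lim_K$ (resp. $\colim_K$) of the maps $p(k)\otimes D_1\to p(k)\otimes D_2$; each of these is an equivalence because $p(k)\in\cD$, and a limit (resp. colimit) of equivalences is an equivalence, so $X\in\cD$. (Closure under colimits already follows from $-\otimes D_j$ being a left adjoint in a presentably symmetric monoidal category; dualizability is what additionally gives closure under limits.)

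For the tensor and internal-hom conditions, let $d\in\cD$ and $c\in\cC$. Under the associativity equivalence $(d\otimes c)\otimes D_i\simeq c\otimes(d\otimes D_i)$ the relevant map is $\Id_c\otimes(\Id_d\otimes f)$, and $\Id_d\otimes f\colon d\otimes D_1\to d\otimes D_2$ is an equivalence since $d\in\cD$; hence $d\otimes c\in\cD$. Similarly, under the absorption equivalence $\Hom^{\cC}(c,d)\otimes D_i\simeq\Hom^{\cC}(c,d\otimes D_i)$ the relevant map becomes $\Hom^{\cC}(c,\Id_d\otimes f)$, again an equivalence; hence $\Hom^{\cC}(c,d)\in\cD$. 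Having verified all of (3), \cref{theorem: smashing in C} produces the desired idempotent algebra.

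I expect no serious obstacle: the only real content is the two dualizability identities (that $-\otimes D_i$ commutes with limits, and the hom-absorption equivalence), both standard consequences of strong dualizability of $D_1,D_2$, and the rest is a bookkeeping reduction to \cref{theorem: smashing in C}. I would also remark that the hypothesis that $\cC$ is generated under colimits by dualizable objects plays no role in the argument above and could be dropped; it is included only because it is the natural setting in which one expects $\cD$ to be well behaved.
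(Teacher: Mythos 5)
Your proof is correct and follows the same overall reduction as the paper: verify condition (3) of \cref{theorem: smashing in C} for $\cD$, with closure under colimits coming from $-\otimes D_i$ being a left adjoint, closure under limits from dualizability of the $D_i$, and closure under $d\otimes c$ being immediate from associativity. Where you genuinely diverge is the internal-hom step. The paper first handles dualizable $Y$ via $\Hom^{\cC}(Y,X)\simeq X\otimes Y^{\vee}\in\cD$, and then invokes the hypothesis that $\cC$ is generated under colimits by dualizable objects to write a general $Y$ as such a colimit, so that $\Hom^{\cC}(Y,X)$ becomes a limit of objects of $\cD$ and one concludes by closure under limits. You instead use the projection formula $\Hom^{\cC}(c,d)\otimes D_i\simeq\Hom^{\cC}(c,d\otimes D_i)$, valid for any $c$ because $D_i$ is dualizable; naturality of the absorption map in the $D$-variable identifies $\Id\otimes f$ with $\Hom^{\cC}(c,\Id_d\otimes f)$, which is an equivalence since $d\in\cD$. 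Your closing remark is accurate: with this argument the generation-by-dualizables hypothesis is never used and the statement holds for arbitrary $\cC\in\CAlg(\Pr^L)$, so your route is both a little cleaner and strictly more general than the paper's, at the cost of having to check the compatibility of the absorption equivalence with the map induced by $f$ (which is just naturality).
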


\begin{proof}
We will verify that $\cD$ satisfies the equivalent conditions of \cref{theorem: smashing in C}. Since the tensor product in $\cC$ commutes with colimits in each variable, it is immediate that $\cD$ is closed under colimits. Moreover, because $D_1$ and $D_2$ are dualizable, it follows immediately that $\cD$ is also closed under limits in $\cC$. It is also evident from the definition of $\cD$ that if $X \in \cD$ and $Y \in \cC$, then $X \otimes Y \in \cD$.

It remains to show that $\Hom^\cC(Y, X) \in \cD$ for all $X \in \cD$ and $Y \in \cC$. To that end, we first show that
\[
\Hom^\cC(Y, X) \otimes D_i \simeq \Hom^\cC(Y, X \otimes D_i).
\]
Indeed, for any $T \in \cC$, we have:
\begin{gather*}
\Map(T, \Hom^\cC(Y, X) \otimes D_i)\simeq \Map(T \otimes D_i^\vee \otimes Y, X) \simeq \Map(T, \Hom^\cC(Y \otimes D_i^\vee, X)) \\
\simeq \Map(T, \Hom^\cC(Y, X \otimes D_i)),
\end{gather*}
which gives the desired equivalence.

Under this identification, the map
\[
\Hom^\cC(Y, X) \otimes D_1 \xrightarrow{\Id \otimes f} \Hom^\cC(Y, X) \otimes D_2
\]
corresponds to the map
\[
\Hom^\cC(Y, X \otimes D_1) \xrightarrow{(\Id_X \otimes f) \circ (-)} \Hom^\cC(Y, X \otimes D_2),
\]
which is an equivalence by assumption. This completes the argument.       
\end{proof}

 \subsection{Recognition Result for Smashing Localizations of  $\Pr^L$}

In this subsection, we prove a necessary and sufficient  condition for when a full-subcategory of $\Pr^L$ is classified by an idempotent algebra. In \cite{carmeli2021ambidexterity} idempotent algerbas in $\Pr^{L}$ are studied and are called ``Modes''. The result we obtain is similar to \Cref{theorem: smashing in C}, however since $\Pr^{L}$ is not itself presentable there are some additional set theoretic assumptions to consider - see \cref{theorem: smashing in PrL}.\\

We first recall some definitions and known results:

\begin{proposition}
$\Pr^L$ has a closed symmetric monoidal structure where the inner-$\Hom$ between two categories, $\cC$ and $\cD$, is the category of colimits preserving functors denoted by $\Fun^L(\cC,\cD)$. Furthermore, in this symmetric monoidal structure we have the formula:
\[
\cC \otimes \cD \simeq \Fun^R(\cC^{\op},\cD)
\]
where $\Fun^R(\cC,\cD)$ is the category of functors that admits a left adjoint.  
\end{proposition}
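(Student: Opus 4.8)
The plan is to reduce the statement to \cite[\S 4.8.1]{Lurie11}, filling in the intermediate arguments. The symmetric monoidal structure on $\Pr^L$ is \cite[4.8.1.15]{Lurie11}: its unit is $\mathcal{S}$, and $\cC\otimes\cD$ is characterized by the property that, for every $\cE\in\Pr^L$, colimit-preserving functors $\cC\otimes\cD\to\cE$ are naturally identified with functors $\cC\times\cD\to\cE$ preserving colimits separately in each variable (concretely, $\cC\otimes\cD$ is realized as an accessible localization of $\cP(\cC\times\cD)$). I would take this as given.

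To identify the internal hom, fix $\cD$. By \cite[5.5.3.8]{lurie2009higher} the category $\Fun^L(\cD,\cE)$ is presentable, and since colimits commute with colimits, colimits in $\Fun^L(\cD,\cE)$ are computed pointwise. Hence, under the currying equivalence $\Fun(\cC\times\cD,\cE)\simeq\Fun(\cC,\Fun(\cD,\cE))$, a functor $\cC\to\Fun^L(\cD,\cE)$ preserves colimits if and only if the corresponding functor $\cC\times\cD\to\cE$ preserves colimits in the $\cC$-variable; imposing in addition that it land in $\Fun^L(\cD,\cE)\subset\Fun(\cD,\cE)$, i.e.\ that it preserve colimits in the $\cD$-variable, gives a natural equivalence between colimit-bilinear functors $\cC\times\cD\to\cE$ and colimit-preserving functors $\cC\to\Fun^L(\cD,\cE)$. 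Combined with the universal property of $\otimes$, this yields $\Map_{\Pr^L}(\cC\otimes\cD,\cE)\simeq\Map_{\Pr^L}(\cC,\Fun^L(\cD,\cE))$ naturally in $\cE$, so $(-)\otimes\cD\dashv\Fun^L(\cD,-)$ and $\Fun^L(\cD,\cE)$ is the internal hom.

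For the formula, I would first record the case $\cD=\mathcal{S}$: the Yoneda embedding $\cC\hookrightarrow\cP(\cC)=\Fun(\cC^{\op},\mathcal{S})$ has essential image the representable presheaves, which by the representable functor theorem \cite[5.5.2.2]{lurie2009higher} are precisely the accessible, limit-preserving functors $\cC^{\op}\to\mathcal{S}$; hence $\cC\simeq\Fun^R(\cC^{\op},\mathcal{S})$. For general $\cD$, I would compare the two functors $\cC\otimes(-)$ and $\Fun^R(\cC^{\op},-)\colon\Pr^L\to\Pr^L$. The first preserves colimits by the adjunction above. The second also preserves colimits: regarded as a functor $\Pr^R\to\Pr^R$ (post-composition with an accessible right adjoint preserves limits and accessibility), it sends a small limit in $\Pr^R$ to the corresponding one, because a limit-preserving accessible functor $\cC^{\op}\to\lim_i\cD_i$ is the same datum as a compatible family of limit-preserving accessible functors $\cC^{\op}\to\cD_i$; via the identification of colimits in $\Pr^L$ with limits in $\Pr^R$ along the right adjoints \cite[5.5.3.18]{lurie2009higher}, this shows $\Fun^R(\cC^{\op},-)$ preserves colimits in $\Pr^L$. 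Both functors agree on the class of presheaf categories $\cP(\cE_0)$ ($\cE_0$ small), which generates $\Pr^L$ under colimits: indeed $\cC\otimes\cP(\cE_0)\simeq\Fun(\cE_0^{\op},\cC)$ by \cite[\S 4.8.1]{Lurie11}, while $\Fun^R(\cC^{\op},\cP(\cE_0))=\Fun^R\big(\cC^{\op},\Fun(\cE_0^{\op},\mathcal{S})\big)\simeq\Fun\big(\cE_0^{\op},\Fun^R(\cC^{\op},\mathcal{S})\big)\simeq\Fun(\cE_0^{\op},\cC)$, using the $\cD=\mathcal{S}$ case and that the functor category is formed pointwise. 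A natural transformation between the two functors that is an equivalence on every $\cP(\cE_0)$ is then an equivalence everywhere, giving $\cC\otimes\cD\simeq\Fun^R(\cC^{\op},\cD)$ naturally in $\cD$.

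I expect the main obstacle to be producing this natural comparison map in the first place: the naive candidate bilinear functor $\cC\times\cD\to\Fun^R(\cC^{\op},\cD)$, $(c,d)\mapsto\big(\Map_\cC(-,c)\otimes d\big)$, is colimit-bilinear but fails to land in $\Fun^R(\cC^{\op},\cD)$, since tensoring a space with $d$ does not preserve limits. The comparison must instead be read off from the localization presentation of $\cC\otimes\cD$, by matching the class of local morphisms in $\cP(\cC\times\cD)$ with the limit-preservation condition cutting out $\Fun^R(\cC^{\op},\cD)$; this identification is the technical core of \cite[\S 4.8.1]{Lurie11}, and everything else above is formal.
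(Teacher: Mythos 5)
The paper's own ``proof'' here is a bare citation to \cite[4.8.2.9]{Lurie11}-adjacent material, namely \cite[4.8.1.17]{Lurie11}, so your proposal does strictly more work than the paper; judged against that standard it is acceptable. Your treatment of the internal hom is complete and correct: presentability of $\Fun^L(\cD,\cE)$, pointwise computation of its colimits, and currying give $\Map_{\Pr^L}(\cC\otimes\cD,\cE)\simeq\Map_{\Pr^L}(\cC,\Fun^L(\cD,\cE))$, which is exactly how closedness is established in \cite[4.8.1]{Lurie11}.

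For the formula $\cC\otimes\cD\simeq\Fun^R(\cC^{\op},\cD)$ you correctly locate the real issue, but it is a genuine gap in the argument as written: two colimit-preserving functors $\Pr^L\to\Pr^L$ that merely agree objectwise on presheaf categories need not be equivalent, so the ``agree on generators'' step is inert until a natural transformation is produced, and you rightly observe that the naive bilinear candidate $(c,d)\mapsto\Map_\cC(-,c)\otimes d$ fails to land in $\Fun^R(\cC^{\op},\cD)$. The way to close this (and it is in substance Lurie's argument) is the one you gesture at in your final paragraph: the local objects of the localization $\cP(\cC\times\cD)\to\cC\otimes\cD$ are precisely the presheaves $\cC^{\op}\times\cD^{\op}\to\mathcal{S}$ sending colimits in each variable separately to limits, and currying together with the representability criterion \cite[5.5.2.2]{lurie2009higher} identifies these with $\Fun^R(\cC^{\op},\Fun^R(\cD^{\op},\mathcal{S}))\simeq\Fun^R(\cC^{\op},\cD)$, using your $\cD=\mathcal{S}$ case; the bifunctor is then Yoneda followed by the localization. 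Once this is done, the comparison-on-generators argument becomes redundant, since the localization description already gives the equivalence for every $\cD$; that part of your write-up (which is itself fine, and the needed generation statement is even supplied by \cref{lemma: genrate PrL} later in the paper) is scaffolding around the one step you did not carry out. A smaller point: your claim that $\Fun^R(\cC^{\op},-)$ preserves colimits of $\Pr^L$ quietly uses that accessibility of a functor into a limit of presentable categories along accessible right adjoints can be checked componentwise; this holds for small diagrams by passing to a common index of accessibility, but it deserves a sentence.
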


\begin{proof}
This is \cite[4.8.1.17]{Lurie11}
\end{proof}

\begin{definition}\cite[5.5.7.7]{lurie2009higher}
Let $\kappa$ be a regular cardinal. We denote by $\Cat_\infty^{\mathrm{Rex}(\kappa)}$ the category whose objects are categories with $\kappa$-small colimits and morphisms are functors which preserve $\kappa$-small colimits.
\end{definition}

\begin{definition} \cite[5.5.7.7]{lurie2009higher}
Let $\kappa$ be a regular cardinal. We denote by $\Pr^L_\kappa$ the category whose objects are $\kappa$-presentable categories and morphisms are functors that preserve colimits and send $\kappa$-compact objects to $\kappa$-compact objects. For $\cP \subset \Pr^L$ we denote $\cP_\kappa := \cP \cap \Pr^L_\kappa$.
\end{definition}

Though $\Pr^L$ is not presentable, $\Pr^L_\kappa$ is presentable for all $\kappa$ and
\[
\underset{\kappa}\colim \ \mathrm{Pr}^L_\kappa = \mathrm{Pr}^L
\]
where the colimit is taken in the category of huge categories. Furthermore, by \cite[5.3.2.9]{Lurie11} the natural maps
\[
\mathrm{Pr}^L_\kappa \to \mathrm{Pr}^L
\]
commutes with colimits for all $\kappa$.\\

Since $\kappa$-compact objects in a presentable category are closed under $\kappa$-small colimits we have a functor $(-)^\kappa:\Pr^L_\kappa \to \Cat_\infty^{\mathrm{Rex}(\kappa)}$ which sends a $\kappa$-accessible presentable category to its $\kappa$-compact objects. For $\kappa > \omega$ this functor is an equivalence and its left adjoint is given by freely adding $\kappa$-filtered colimits \cite[5.5.7.10]{lurie2009higher}. \\

We can summarize the above discussion as follows.  For every $\omega<\kappa <\pi$, there exists a commutative diagram
\[
\begin{tikzcd}
    {\mathrm{Pr}^L_\pi} & {\Cat_\infty^{\mathrm{Rex(\pi)}}} \\
    {\mathrm{Pr}^L_\kappa} & {\Cat_\infty^{\mathrm{Rex(\kappa)}}}
    \arrow["{(-)^\kappa}"', from=2-1, to=2-2]
    \arrow["i", from=2-1, to=1-1]
    \arrow["{\Ind^\pi_\kappa}"', from=2-2, to=1-2]
    \arrow["{(-)^\pi}", from=1-1, to=1-2]
\end{tikzcd}
\]
where the horizontal maps are equivalences and the vertical maps are left adjoints. Note that the right adjoint of $i$ is $\Ind_\kappa((-)^\pi)$ and that the right adjoint of $\Ind^\pi_\kappa$ is the forgetful functor.
In particular we get:

\begin{lemma}\label{lemma: limits in PrLk}
Let $I$ be a small category and
\[
p: I \to \mathrm{Pr}^L_\kappa.
\]
Then
\[
\lim p = \Ind_\kappa(\lim p(i)^\kappa)
\]
where the limit on the left hand side is taken in $\Pr^L_\kappa$, and the limit on the right hand side is taken in $\Cat_\infty$.
\end{lemma}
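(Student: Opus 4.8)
Throughout we use that $I$ is $\kappa$-small; this is the only case we shall need, and it can always be arranged by enlarging $\kappa$. The plan is to compare two descriptions of $\lim p$ by means of a single comparison functor and to prove it is an equivalence.

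Since every transition functor $p(\alpha)$ lies in $\Pr^L_\kappa$ it preserves $\kappa$-compact objects, so $p(\alpha)\simeq\Ind^\kappa(p(\alpha)^\kappa)$, and $p$ factors as $p\simeq\Ind^\kappa\circ q$ with $q:=(-)^\kappa\circ p\colon I\to\Cat_\infty^{\Rex(\kappa)}$ (using the equivalence $(-)^\kappa\colon\Pr^L_\kappa\xrightarrow{\ \sim\ }\Cat_\infty^{\Rex(\kappa)}$ with inverse $\Ind^\kappa$ recalled above). The limit $\lim_I q$, taken in $\Cat_\infty$, still has $\kappa$-small colimits — they are computed by the projections, since the transition functors preserve them — and is idempotent complete as $\kappa>\omega$; so $\Ind^\kappa(\lim_I q)=\Ind^\kappa(\lim_I p(i)^\kappa)$ is a well-defined $\kappa$-presentable category whose full subcategory of $\kappa$-compact objects is $\lim_I p(i)^\kappa$. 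On the other hand, limits in $\Pr^L$ are computed in $\widehat{\Cat}_\infty$ \cite[5.5.3.13, 5.5.3.18]{lurie2009higher}, so $\lim p$ is the limit $\lim_I p(i)$ taken in $\widehat{\Cat}_\infty$. Applying $\Ind^\kappa$ to the limit cone $\lim_I q\to q(j)$ and using $\Ind^\kappa(q(j))\simeq p(j)$ gives a cone $\Ind^\kappa(\lim_I q)\to p(j)$ over $I$, hence a comparison functor
\[
\Phi\colon\Ind^\kappa\Bigl(\lim_{i\in I}p(i)^\kappa\Bigr)\longrightarrow\lim_{i\in I}p(i)\;=\;\lim p,
\]
and the lemma asserts exactly that $\Phi$ is an equivalence.

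By construction $\Phi$ preserves $\kappa$-filtered colimits and carries the subcategory $\lim_I p(i)^\kappa$ of $\kappa$-compact objects of its source to the compatible families $(c_j)_{j\in I}$ with $c_j\in p(j)^\kappa$, viewed inside $\lim_I p(i)$. As the source is generated under $\kappa$-filtered colimits by those objects, it suffices to prove: \textbf{(i)} $\Phi$ is fully faithful on $\lim_I p(i)^\kappa$ and each of its values there is $\kappa$-compact in $\lim_I p(i)$; and \textbf{(ii)} $\Phi$ is essentially surjective. For (i): full faithfulness on $\kappa$-compacts follows since mapping spaces in a limit of $\infty$-categories are the limit of the mapping spaces, together with full faithfulness of each $p(j)^\kappa\hookrightarrow p(j)$; and for a componentwise $\kappa$-filtered colimit $(y_j)=\colim_a(y_j^a)$ one computes $\Map_{\lim_I p(i)}((c_j),(y_j))\simeq\lim_j\colim_a\Map_{p(j)}(c_j,y_j^a)\simeq\colim_a\lim_j\Map_{p(j)}(c_j,y_j^a)$, the last step being the interchange of the $\kappa$-small limit over $I$ with a $\kappa$-filtered colimit in $\mathcal{S}$ \cite[5.3.3.3]{lurie2009higher}, which shows $(c_j)$ is $\kappa$-compact. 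For (ii): given a compatible family $(x_j)_{j\in I}$ with $x_j\in p(j)=\Ind^\kappa(p(j)^\kappa)$, form $A:=\lim_{j\in I}\bigl(p(j)^\kappa\bigr)_{/x_j}$, the limit over $I$ of the slice categories along the functors $(c_j\to x_j)\mapsto(p(\alpha)(c_j)\to x_{j'})$ induced by $p(\alpha)$ and the structure maps of the family; an object of $A$ is precisely a compatible family of $\kappa$-compact objects together with a map to $(x_j)$, so it lies in the image of $\Phi$. Each $(p(j)^\kappa)_{/x_j}$ is $\kappa$-filtered, and $I$ is $\kappa$-small, so $A$ is $\kappa$-filtered, and the tautological cocone exhibits $(x_j)_{j\in I}$ as $\colim_{a\in A}$ of the underlying families; this again rests on interchanging the $I$-indexed limit with $\kappa$-filtered colimits. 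Granting (i) and (ii), $\Phi$ is an equivalence.

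The step I expect to be the main obstacle is the interchange of the $I$-indexed limit with $\kappa$-filtered colimits underlying both (i) and (ii): concretely, checking that $A$ is $\kappa$-filtered and computes $(x_j)$ as a colimit, and that compatible families of $\kappa$-compact objects are $\kappa$-compact in $\lim_I p(i)$. It is precisely this interchange that uses — and forces — the smallness of $I$ relative to $\kappa$; once it is in hand, the remainder is routine bookkeeping with comma categories and mapping spaces.
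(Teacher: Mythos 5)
Your framework is the right one — and it is already more than the paper gives, since the paper states this lemma with no proof, presenting it as an immediate consequence of the adjunction diagram relating $\Pr^L_\kappa$, $\Pr^L_\pi$ and $\Cat_\infty^{\Rex(\kappa)}$; that deduction only identifies $\Ind^\kappa((\lim p)^\kappa)$ with $\Ind^\kappa(\lim p(i)^\kappa)$ and leaves open exactly what you set out to prove, namely that $\lim p$ is itself generated under $\kappa$-filtered colimits by the compatible families of $\kappa$-compact objects. You are also right that the statement needs $I$ to be $\kappa$-small: without that hypothesis the comparison functor is not an equivalence (take $\kappa=\omega_1$ and an $\omega_1$-indexed product of copies of $\mathcal{S}$; the constant family at the point lies in $\lim p(i)^\kappa$, hence is $\kappa$-compact in $\Ind^\kappa(\lim p(i)^\kappa)$, but its image corepresents an $\omega_1$-indexed product, which does not commute with $\omega_1$-filtered colimits). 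Your construction of $\Phi$, the full faithfulness on compact objects, and the $\kappa$-compactness of their images via the interchange of $\kappa$-small limits with $\kappa$-filtered colimits are all correct.

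The gap is exactly where you predicted it, and the justification you offer there is not valid. You assert that $A=\lim_j(p(j)^\kappa)_{/x_j}$ is $\kappa$-filtered ``since each factor is $\kappa$-filtered and $I$ is $\kappa$-small.'' A $\kappa$-small limit of $\kappa$-filtered $\infty$-categories along arbitrary functors need not be $\kappa$-filtered — already a pullback of two filtered categories over a third can be empty — so this inference cannot stand as written; likewise, that the tautological cocone exhibits $(x_j)$ as $\colim_A$ requires each projection $A\to(p(j)^\kappa)_{/x_j}$ to be cofinal, which is not automatic either. Both facts are true here, but only because of the special structure of the transition maps: since each $p(\alpha)$ preserves colimits and $\kappa$-compact objects, the induced functor $(p(j)^\kappa)_{/x_j}\to(p(j')^\kappa)_{/x_{j'}}$ has a lifting property (given $c$ over $x_j$ and a map $p(\alpha)(c)\to b$ over $x_{j'}$ with $b$ compact, one can enlarge $c$ inside $(p(j)^\kappa)_{/x_j}$ so that $b$ factors through the image of the enlargement, by testing $b$ against $x_{j'}\simeq\colim\,p(\alpha)(c')$). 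Filteredness of $A$ then follows by a transfinite zigzag over the $\kappa$-small diagram $I$, as in the proof that limits of accessible categories are accessible. This step is the actual content of the lemma; until it is supplied, the essential surjectivity of $\Phi$ — and hence the lemma — is unproved.
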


Before proving our main theorem we need to understand how compact objects behave with respect to the tensor product in $\Pr^L$.

\begin{lemma} \label{lemma: compact in tensor}
For $\cC,\cD  \in \Pr^L_\kappa$ we have
\[
(\cC  \otimes \cD )^\kappa \simeq \cC^\kappa \otimes^\kappa \cD^\kappa
\]
where $\otimes^\kappa$ is the idempotent completion of tensor product of the symmetric monoidal structure on $\Cat_{\infty}^{\mathrm{Rex}(\kappa)}$ from \cite[4.8.1.4]{Lurie11}.
\end{lemma}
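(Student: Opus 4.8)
The plan is to exploit the commutative diagram displayed just before the statement, which identifies $\Pr^L_\kappa$ with $\Cat_\infty^{\mathrm{Rex}(\kappa)}$ via $(-)^\kappa$, and to transport the symmetric monoidal structure across this equivalence. The essential point is that the equivalence $(-)^\kappa \colon \Pr^L_\kappa \xrightarrow{\sim} \Cat_\infty^{\mathrm{Rex}(\kappa)}$ is itself symmetric monoidal, where $\Pr^L_\kappa$ carries the symmetric monoidal structure inherited from $\Pr^L$ and $\Cat_\infty^{\mathrm{Rex}(\kappa)}$ carries the structure $\otimes^\kappa$ of \cite[4.8.1.4]{Lurie11}. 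Granting this, the claim is immediate: for $\cC,\cD \in \Pr^L_\kappa$ one has $(\cC\otimes\cD)^\kappa \simeq \cC^\kappa \otimes^\kappa \cD^\kappa$ simply by applying the monoidal functor $(-)^\kappa$ to $\cC\otimes\cD$ (one should first check that $\cC\otimes\cD$ again lies in $\Pr^L_\kappa$, i.e. that the tensor product of two $\kappa$-presentable categories is $\kappa$-presentable and that $\Pr^L_\kappa \hookrightarrow \Pr^L$ is monoidal — both are standard, the first following from the formula $\cC\otimes\cD \simeq \Fun^R(\cC^{\op},\cD)$ together with the fact that $\Ind^\kappa$ of a small category with $\kappa$-small colimits is $\kappa$-presentable).

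\textbf{Carrying out the monoidality.} First I would recall the universal characterization of both tensor products: $\otimes$ on $\Pr^L$ classifies functors out of $\cC\times\cD$ that preserve colimits in each variable (\cite[4.8.1.15]{Lurie11}), and $\otimes^\kappa$ on $\Cat_\infty^{\mathrm{Rex}(\kappa)}$ classifies functors that preserve $\kappa$-small colimits in each variable. Then, for $\cC,\cD,\cE \in \Pr^L_\kappa$, I would produce a natural chain of equivalences
\[
\Fun^L(\cC\otimes\cD,\cE) \simeq \Fun^{\kappa\text{-}\mathrm{bil}}(\cC\times\cD,\cE) \simeq \Fun^{\kappa\text{-}\mathrm{bil}}(\cC^\kappa\times\cD^\kappa,\cE^\kappa) \simeq \Fun^{\mathrm{Rex}(\kappa)}(\cC^\kappa\otimes^\kappa\cD^\kappa,\cE^\kappa),
\]
where the middle equivalence uses that a colimit-preserving (resp. $\kappa$-small-colimit-preserving) bilinear functor out of $\kappa$-presentable categories is determined by its restriction to $\kappa$-compact objects, via $\Ind^\kappa$; here one uses that $\cC^\kappa \otimes^\kappa \cD^\kappa$ is again small with $\kappa$-small colimits and that $\Ind^\kappa(\cC^\kappa\otimes^\kappa\cD^\kappa) \simeq \cC\otimes\cD$ — this last isomorphism is really the content of the lemma, expressed in adjoint form. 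Comparing the two ends and invoking Yoneda on $\Cat_\infty^{\mathrm{Rex}(\kappa)}$ gives $(\cC\otimes\cD)^\kappa \simeq \cC^\kappa\otimes^\kappa\cD^\kappa$.

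\textbf{The main obstacle} will be setting up the compatibility of the two universal properties cleanly — that is, verifying that restriction along $\cC^\kappa \hookrightarrow \cC$ induces an equivalence between colimit-preserving bilinear functors $\cC\times\cD \to \cE$ (for $\cC,\cD,\cE$ $\kappa$-presentable) and $\kappa$-small-colimit-preserving bilinear functors $\cC^\kappa \times \cD^\kappa \to \cE^\kappa$. One direction is restriction; the other is simultaneous $\Ind^\kappa$-extension in both variables, and one must check that $\Ind^\kappa$ applied in each variable separately is compatible with bilinearity, i.e. that the left Kan extension of a $\kappa$-bilinear functor along $(\cC^\kappa\hookrightarrow\cC)\times(\cD^\kappa\hookrightarrow\cD)$ is still colimit-preserving in each variable. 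This is where the hypothesis $\kappa > \omega$ (implicit in using \cite[5.5.7.10]{lurie2009higher}, that $(-)^\kappa \colon \Pr^L_\kappa \to \Cat_\infty^{\mathrm{Rex}(\kappa)}$ is an equivalence) is genuinely needed. Once this bilinear-extension statement is in hand — which I would either cite from the relevant discussion in \cite[4.8.1]{Lurie11} or deduce from the fact that $\Ind^\kappa$ is symmetric monoidal as a functor $\Cat_\infty^{\mathrm{Rex}(\kappa)} \to \Pr^L_\kappa$ — the rest is formal.
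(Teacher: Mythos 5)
Your overall strategy coincides with the paper's: compare the universal property of $\otimes$ on $\Pr^L$ with that of $\otimes^\kappa$ on $\Cat_\infty^{\Rex(\kappa)}$ via $\Ind^\kappa$, deduce $\cC\otimes\cD\simeq\Ind^\kappa(\cC^\kappa\otimes^\kappa\cD^\kappa)$, and pass to $\kappa$-compact objects. However, the middle equivalence of your chain is false as written. A functor $\cC\times\cD\to\cE$ that preserves colimits in each variable need not carry $\cC^\kappa\times\cD^\kappa$ into $\cE^\kappa$: take $\cC=\cD=\cE=\mathcal{S}$ and $F(X,Y)=X\times Y\times Z$ with $Z$ a non-$\kappa$-compact space. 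So restriction does not induce an equivalence from $\Fun^{L,L}(\cC\times\cD,\cE)$ to $\Fun^{\kappa,\kappa}(\cC^\kappa\times\cD^\kappa,\cE^\kappa)$, and the two ends of your chain, $\Fun^L(\cC\otimes\cD,\cE)$ and $\Fun^{\Rex(\kappa)}(\cC^\kappa\otimes^\kappa\cD^\kappa,\cE^\kappa)\simeq\Map_{\Cat_\infty^{\Rex(\kappa)}}(\cC^\kappa\otimes^\kappa\cD^\kappa,\cE^\kappa)$, genuinely differ: the former ignores preservation of compact objects while the latter enforces it. In particular ``Yoneda on $\Cat_\infty^{\Rex(\kappa)}$'' cannot be applied to this pair of functors.

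The repair --- and it is exactly what the paper does --- is to keep the target an arbitrary $\cT\in\Pr^L$ and never restrict it: one has
\[
\Fun^L(\cC\otimes\cD,\cT)\simeq\Fun^{L,L}(\cC\times\cD,\cT)\simeq\Fun^{\kappa,\kappa}(\cC^\kappa\times\cD^\kappa,\cT)\simeq\Fun^{\kappa}(\cC^\kappa\otimes^\kappa\cD^\kappa,\cT)\simeq\Fun^L(\Ind^\kappa(\cC^\kappa\otimes^\kappa\cD^\kappa),\cT),
\]
where the second equivalence is the universal property of $\Ind^\kappa$ applied once in each variable (this is your ``bilinear Kan extension'' step, and with target $\cT$ rather than $\cT^\kappa$ it is unproblematic), and the last two are the universal properties of $\otimes^\kappa$ and of $\Ind^\kappa$. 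Yoneda in $\Pr^L$ then gives $\cC\otimes\cD\simeq\Ind^\kappa(\cC^\kappa\otimes^\kappa\cD^\kappa)$, and the lemma follows by taking $\kappa$-compact objects, using that $\Ind^\kappa(A)^\kappa\simeq A$ for $\kappa>\omega$. Your closing observation that the key identification ``is really the content of the lemma in adjoint form'' shows you have the right picture; the only thing to correct is the bookkeeping of where the functors are allowed to land. Relatedly, be careful that the symmetric monoidality of $(-)^\kappa$ you propose to invoke is not a weaker input --- it essentially \emph{is} the lemma --- so the argument should run through the universal-property comparison rather than quoting monoidality outright, unless you cite it directly from \cite[4.8.1]{Lurie11}.
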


\begin{proof}
By our assumption on $\cC$ and $\cD$ we may assume that $\cD =\Ind_\kappa(\cD^\kappa),\ \mathcal{C}=\Ind_\kappa(\mathcal{C}^\kappa)$. From the universal property of the tensor product in $\Pr^L$, we have a natural equivalence
\[
\Fun^L(\Ind_\kappa(\cD^\kappa)\otimes \Ind_\kappa(\cC^\kappa),\cT)\simeq \Fun^{L,L}(\Ind_\kappa(\cD^\kappa)\times \Ind_\kappa(\cC^\kappa),\cT)
\]
for all $\cT \in \Pr^L$, where $\Fun^{L,L}((-)\times (-),(-))$ is the subcategory of $\Fun((-)\times (-),(-))$ spanned by functors that commute with colimits in each variable. Using the universal property of $\Ind_\kappa$ twice we get that
\[
\Fun^{L,L}(\Ind_\kappa(\cD^\kappa)\times \Ind_\kappa(\cC^\kappa),\cT)\simeq \Fun^{\kappa,\kappa}(\cD^\kappa \times \cC^\kappa,\cT)
\]
where $\Fun^{\kappa,\kappa}((-)\times (-),(-))$ is the subcategory of $\Fun((-)\times (-),(-))$ spanned by functors that commute with $\kappa$-small colimits in each variable. But, from the universal property of the tensor product in $\Cat_{\infty}^{\mathrm{Rex}(\kappa)}$ we get that
\[
\Fun^{\kappa,\kappa}(\cD^\kappa\times \cC^\kappa,\cT)\simeq \Fun^{\kappa}(\cD^\kappa \otimes^\kappa \cC^\kappa,\cT).
\]
Finally using the universal property of $\Ind_\kappa$ again we get
\[
\Fun^{\kappa}(\cD^\kappa\otimes^\kappa \cC^\kappa,\cT) \simeq \Fun^L(\Ind_\kappa(\cD^\kappa \otimes^\kappa \cC^\kappa),\cT) \]
as desired.
\end{proof}

We will also need the following claim.

\begin{lemma} \label{lemma: genrate PrL}
$\Pr^L$ is generated under colimits by $\mathcal{S}^{\Delta^1}$.
\end{lemma}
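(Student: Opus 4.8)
The plan is to let $\mathcal{K}\subseteq\Pr^L$ denote the smallest full subcategory containing $\mathcal{S}^{\Delta^1}=\Fun(\Delta^1,\mathcal{S})$ and closed under small colimits, and to prove $\mathcal{K}=\Pr^L$ in three steps. The main tool will be that the free cocompletion functor $\mathcal{P}(-)\colon\Cat_\infty\to\Pr^L$, $\cE\mapsto\Fun(\cE^{\op},\mathcal{S})$, is a left adjoint (to the underlying‑category functor $\Pr^L\to\Cat_\infty$) and hence preserves small colimits (see \cite[5.1.5.6]{lurie2009higher}); recall also $\Pr^L$ is cocomplete \cite[5.5.3.18]{lurie2009higher}. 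Since every functor appearing below will be obtained by applying $\mathcal{P}(-)$ to a functor of small $\infty$-categories, or will be a coproduct inclusion, all the colimits we form are automatically colimits in $\Pr^L$.

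First I would show $\mathcal{S}$ and all $\mathcal{S}^{\Delta^n}$ lie in $\mathcal{K}$. The vertex inclusion $\Delta^0\to\Delta^1$ is split by the projection $\Delta^1\to\Delta^0$, so $\Delta^0$ is a retract of $\Delta^1$ in $\Cat_\infty$; applying $\mathcal{P}(-)$ exhibits $\mathcal{S}=\mathcal{P}(\Delta^0)$ as a retract of $\mathcal{S}^{\Delta^1}=\mathcal{P}(\Delta^1)$ in $\Pr^L$, and a retract is the colimit of an idempotent diagram, so $\mathcal{S}\in\mathcal{K}$. As the spine inclusion is a categorical equivalence, $\Delta^n\simeq\Delta^1\amalg_{\Delta^0}\cdots\amalg_{\Delta^0}\Delta^1$ in $\Cat_\infty$; applying $\mathcal{P}(-)$ gives $\mathcal{S}^{\Delta^n}\simeq\mathcal{S}^{\Delta^1}\amalg_{\mathcal{S}}\cdots\amalg_{\mathcal{S}}\mathcal{S}^{\Delta^1}$ in $\Pr^L$, a pushout of objects of $\mathcal{K}$, so $\mathcal{S}^{\Delta^n}\in\mathcal{K}$. (Here I use $\mathcal{P}(\Delta^n)=\Fun((\Delta^n)^{\op},\mathcal{S})\simeq\Fun(\Delta^n,\mathcal{S})=\mathcal{S}^{\Delta^n}$, since $(\Delta^n)^{\op}\cong\Delta^n$.) Next, for presheaf categories: every small $\infty$-category $\cE$ is a colimit of its simplices in $\Cat_\infty$, $\cE\simeq\colim_{\sigma\colon\Delta^n\to\cE}\Delta^n$ (equivalently, $\Cat_\infty$ is generated under colimits by $\Delta^0$ and $\Delta^1$), so applying the colimit‑preserving $\mathcal{P}(-)$ yields $\mathcal{P}(\cE)\simeq\colim_{\sigma}\mathcal{S}^{\Delta^n}$ in $\Pr^L$, which lies in $\mathcal{K}$ by the previous step.

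Finally, for an arbitrary presentable $\cC$: by \cite[5.5.1.1]{lurie2009higher} it is an accessible localization $\cC\simeq\mathcal{P}(\cE)[S^{-1}]$ at a small set of morphisms $S=\{f_j\}_{j\in\Lambda}$. I would use that inverting a morphism $f$ in a presentable $\mathcal{D}$ is a pushout in $\Pr^L$, namely $\mathcal{D}[f^{-1}]\simeq\mathcal{D}\amalg_{\mathcal{S}^{\Delta^1}}\mathcal{S}$, where $\mathcal{S}^{\Delta^1}\to\mathcal{D}$ classifies $f$ under $\Fun^L(\mathcal{S}^{\Delta^1},\mathcal{D})\simeq\Fun(\Delta^1,\mathcal{D})$, and $\mathcal{S}^{\Delta^1}\to\mathcal{S}$ is $\mathcal{P}(-)$ applied to $\Delta^1\to\Delta^1[(0\to1)^{-1}]\simeq\Delta^0$, i.e.\ the localization inverting the tautological arrow. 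This is verified by mapping $\Fun^L(-,\mathcal{T})$ into the pushout: one obtains $\Fun^L(\mathcal{D},\mathcal{T})\times_{\Fun(\Delta^1,\mathcal{T})}\Fun^L(\mathcal{S},\mathcal{T})$, and since $\Fun^L(\mathcal{S},\mathcal{T})\hookrightarrow\Fun(\Delta^1,\mathcal{T})$ identifies with the full subcategory of equivalences, this is exactly $\Fun^L(\mathcal{D},\mathcal{T})$ cut down to the functors sending $f$ to an equivalence, i.e.\ $\Fun^L(\mathcal{D}[f^{-1}],\mathcal{T})$. Applying this to all of $S$ at once gives $\cC\simeq\mathcal{P}(\cE)\amalg_{\coprod_{\Lambda}\mathcal{S}^{\Delta^1}}\bigl(\coprod_{\Lambda}\mathcal{S}\bigr)$ in $\Pr^L$, a pushout of objects of $\mathcal{K}$, hence $\cC\in\mathcal{K}$. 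The only non‑formal ingredients are the three cited facts (free cocompletion preserves colimits; $\Cat_\infty$ is colimit‑generated by $\Delta^0,\Delta^1$; presentable categories are accessible localizations of presheaf categories) together with the pushout presentation of a localization; I expect this last point — and, more generally, keeping careful track that every pushout above is genuinely computed in $\Pr^L$ and not merely in the ambient category of large $\infty$-categories — to be the only step requiring real care.
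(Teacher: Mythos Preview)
Your proof is correct and takes a genuinely different route from the paper's. The paper argues indirectly: it passes to $\Pr^L_\kappa$ (which is presentable, being equivalent to $\Cat_\infty^{\Rex(\kappa)}$), observes that the functors $\Map_{\Cat_\infty^{\Rex(\kappa)}}(\mathcal{S}^\kappa,-)$ and $\Map_{\Cat_\infty^{\Rex(\kappa)}}((\mathcal{S}^{\Delta^1})^\kappa,-)$ are jointly conservative, and then invokes a general result (\cite[2.5]{Yanovski21}) that in a presentable category a family of objects whose corepresented functors are jointly conservative generates under colimits; the colimit-preserving inclusion $\Pr^L_\kappa\to\Pr^L$ then transfers the conclusion to $\Pr^L$.

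Your approach, by contrast, is explicit and self-contained: you use that $\mathcal{P}\colon\Cat_\infty\to\Pr^L$ preserves colimits (which you can verify directly from $\Fun^L(\mathcal{P}(\cE),\cD)\simeq\Fun(\cE,\cD)$, without needing a literal adjunction), that $\Cat_\infty$ is generated under colimits by $\Delta^0$ and $\Delta^1$, and finally that accessible localization is a pushout along copies of $\mathcal{S}^{\Delta^1}\to\mathcal{S}$. What you gain is an explicit colimit presentation of any $\cC\in\Pr^L$ in terms of $\mathcal{S}$ and $\mathcal{S}^{\Delta^1}$, and you avoid both the auxiliary filtration by the $\Pr^L_\kappa$ and the external citation. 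The paper's argument is shorter once that machinery is available and highlights a reusable principle (joint conservativity $\Rightarrow$ generation), which fits naturally with the surrounding discussion of $\Pr^L_\kappa$ already set up for \cref{theorem: smashing in PrL}.
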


\begin{proof}
Since the functors $\mathrm{Pr}^L_\kappa \to \mathrm{Pr}^L$ commute with colimits and every presentable category lies in $\mathrm{Pr}^L_\kappa$ for some $\kappa$, it suffices to show that $\mathcal{S}^{\Delta^1}$ generates $\mathrm{Pr}^L_\kappa$ under colimits. This is equivalent to showing that $(\mathcal{S}^{\Delta^1})^\kappa$ generates $\Cat_\infty^{\Rex(\kappa)}$ under colimits. As $\mathcal{S}$ is a retract of $\mathcal{S}^{\Delta^1}$, it suffices to show that $\Cat_\infty^{\Rex(\kappa)}$ is generated under colimits by $(\mathcal{S}^{\Delta^1})^\kappa$ and $\mathcal{S}^\kappa$.  Noting that the functors
\[
 \Map_{\Cat_\infty^{\Rex(\kappa)}}(\mathcal{S}^\kappa, -) \quad \text{and} \quad \Map_{\Cat_\infty^{\Rex(\kappa)}}((\mathcal{S}^{\Delta^1})^\kappa, -)
\]
are equivalent to
\[
((-)^\kappa)^\simeq \quad \text{and} \quad (((-)^{\Delta^1})^\kappa)^\simeq,
\]
we see that they are jointly conservative. Therefore, the claim follows by \cite[2.5]{Yanovski21}.

\end{proof}

We now prove the main theorem of this subsection

\begin{theorem} \label{theorem: smashing in PrL}
For $\iota:\cP \hookrightarrow  \Pr^L$ a full-subcategory the following are equivalent:
\begin{enumerate}
\item $\iota$ admits a left adjoint $L$ which is a smashing localization.
\item \begin{enumerate}
    \item $\cP $ is closed under colimits in $\Pr^L$.
    \item If $\cD \in \cP $, then $\cD^{\Delta^1}:=\Fun(\Delta^1,\cD) \in \cP $.
    \item There exists a regular cardinal $\kappa$ such that for all $\kappa \leq \mu \leq \pi$ if $\cD \in \cP_\pi$  
    then $\Ind_\mu(\cD^\pi) \in \cP$.
    \item  There exists a regular cardinal $\kappa$ such that for all $\kappa \leq \mu$ and $p:I \to \cP_\mu$, $\Ind_\mu(\lim p(i)^\mu) \in \cP$.
\end{enumerate}
\end{enumerate}
\end{theorem}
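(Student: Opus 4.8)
Suppose $\iota$ has a smashing left adjoint. By \cref{proposition: smashing is idem} this means $\cP=\Mod_X(\Pr^L)$ for an idempotent object $X$, and $\cP$ is the essential image of the colimit-preserving functor $X\otimes(-)$, so $\cP$ is closed under colimits, which is (a). Since $\mathcal{S}^{\Delta^1}$ is a presheaf category with $\mathcal{S}^{\Delta^1}\otimes\cD\simeq\cD^{\Delta^1}$, for $\cD\in\cP$ we get $X\otimes\cD^{\Delta^1}\simeq\mathcal{S}^{\Delta^1}\otimes(X\otimes\cD)\simeq\mathcal{S}^{\Delta^1}\otimes\cD\simeq\cD^{\Delta^1}$, giving (b). For (c), fix a regular $\kappa$ with $X\in\Pr^L_\kappa$; then $X\otimes(-)$ restricts to a smashing localization of each $\Pr^L_\mu$ with $\mu\ge\kappa$ (tensoring with a $\mu$-presentable category preserves $\mu$-presentability and preservation of $\mu$-compacts), so $\Mod_X(\Pr^L_\mu)$ is closed under limits in $\Pr^L_\mu$. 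For $p\colon I\to\cP_\pi$ the category $\lim p(i)^\pi$ is a limit in $\Cat_\infty^{\Rex(\pi)}$ of $X^\pi$-modules, hence itself an $X^\pi$-module; combining this with the identity $X\otimes\Ind^\mu(-)\simeq\Ind^\mu(X^\mu\otimes^\mu-)$ coming from \cref{lemma: compact in tensor} one checks $X\otimes\Ind^\mu(\lim p(i)^\pi)\simeq\Ind^\mu(\lim p(i)^\pi)$, i.e. $\Ind^\mu(\lim p(i)^\pi)\in\cP$.

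\textbf{The direction $(2)\Rightarrow(1)$: reflection level by level.} Fix $\kappa$ as in (c), enlarged so that $\mathcal{S},\mathcal{S}^{\Delta^1}\in\Pr^L_\kappa$. For each regular $\pi\ge\kappa$, $\cP_\pi=\cP\cap\Pr^L_\pi$ is a full subcategory of the presentable category $\Pr^L_\pi$. It is closed under colimits in $\Pr^L_\pi$, because colimits there are computed in $\Pr^L$ and land back in $\Pr^L_\pi$, so (a) applies. It is closed under limits in $\Pr^L_\pi$, because by \cref{lemma: limits in PrLk} a limit of $p\colon I\to\cP_\pi$ equals $\Ind^\pi(\lim p(i)^\pi)$, which lies in $\cP$ by (c) with $\mu=\pi$. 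Hence \cref{theorem: reflaction} applies and $\iota_\pi\colon\cP_\pi\hookrightarrow\Pr^L_\pi$ admits a left adjoint $L_\pi$. (The same ``factor through some $\pi$'' argument also shows that $\cP$ itself is closed under all small limits and colimits in $\Pr^L$.)

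\textbf{Each $L_\pi$ is smashing.} The full subcategory of $\Pr^L$ on those $\cC$ with $\Fun^L(\cC,\cD)\in\cP$ for all $\cD\in\cP$ is closed under colimits, since $\Fun^L(-,\cD)$ turns colimits into limits and $\cP$ is closed under limits; it contains $\mathcal{S}$ and, because $\Fun^L(\mathcal{S}^{\Delta^1},\cD)\simeq\cD^{\Delta^1}\in\cP$ by (b), also $\mathcal{S}^{\Delta^1}$; by \cref{lemma: genrate PrL} it is therefore all of $\Pr^L$. (The analogous argument with $(-)\otimes\cD$ gives $\cC\otimes\cD\in\cP$ for all $\cC\in\Pr^L$, $\cD\in\cP$.) Restricting to level $\pi$, $\cP_\pi$ is stable under the internal hom of the closed symmetric monoidal category $\Pr^L_\pi$, so by \cref{lemma: compatible with mon iff} the localization $L_\pi$ is compatible with the monoidal structure, and the argument of \cref{theorem: smashing in C} $(3)\Rightarrow(2)$ (through \cref{corollary: localization of hom}) then identifies it as the smashing localization $L_\pi\simeq X_\pi\otimes(-)$ with $X_\pi:=L_\pi(\mathcal{S})$ idempotent in $\Pr^L_\pi$.

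\textbf{Gluing the levels.} Set $X:=X_\kappa$. Because the monoidal inclusion $\Pr^L_\kappa\hookrightarrow\Pr^L$ identifies $\cC\otimes\cD$ with $\Ind^\kappa(\cC^\kappa\otimes^\kappa\cD^\kappa)$ (\cref{lemma: compact in tensor}), idempotence of $X$ in $\Pr^L_\kappa$ upgrades to idempotence in $\Pr^L$, so $\Mod_X(\Pr^L)$ is the essential image of the smashing localization $X\otimes(-)$ on $\Pr^L$. One inclusion is immediate: $\cC\in\Mod_X$ forces $\cC\simeq X\otimes\cC\in\cP$. The reverse inclusion $\cP\subseteq\Mod_X$ is the technical heart: given $\cD\in\cP_\pi$, condition (c) applied to the constant diagram at $\cD$ with $\mu=\kappa$ gives $\Ind^\kappa(\cD^\pi)\in\cP_\kappa=\Mod_X(\Pr^L_\kappa)$, which (again via \cref{lemma: compact in tensor}, and idempotent completeness of $\cD^\pi$) unwinds to $X^\kappa\otimes^\kappa\cD^\pi\simeq\cD^\pi$; one then bootstraps through the intermediate cardinals $\kappa\le\mu\le\pi$, using condition (c) at each of them, up to the required identity $X^\pi\otimes^\pi\cD^\pi\simeq\cD^\pi$, i.e. $\cD\in\Mod_X$. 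Granting this, $\cP=\Mod_X(\Pr^L)$, so $\iota$ admits the smashing left adjoint $X\otimes(-)$. I expect exactly this bootstrapping step — reconciling the localizations $L_\pi$ across different regular cardinals, whose transition functors are not fully faithful and whose monoidal structures $\otimes^\mu$ genuinely vary with $\mu$ — to be the main obstacle, which is precisely what the full range $\kappa\le\mu\le\pi$ in (c) is designed to supply; everything else is a level-by-level application of the reflection theorem together with the already-proved $\CAlg(\Pr^L)$ case.
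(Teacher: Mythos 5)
Your direction $(1)\Rightarrow(2)$ and the level-by-level application of \cref{theorem: reflaction} via \cref{lemma: limits in PrLk} match the paper's argument (modulo a small point in $(1)\Rightarrow(2)$: you need $\kappa$ chosen so that the unit $\mathcal{S}\to X$ preserves $\kappa$-compacts, not merely $X\in\Pr^L_\kappa$, for $X\otimes(-)$ to restrict to an endofunctor of $\Pr^L_\mu$). The genuine gap is the step you yourself flag as the ``technical heart'': reconciling the localizations $L_\pi$ across cardinals and proving $\cP\subseteq\Mod_{X}(\Pr^L)$ for $X=X_\kappa$. This is not a routine bootstrap. The universal property of $X_\kappa$ only controls $\kappa$-compact-preserving functors out of $X_\kappa$ into objects of $\cP_\kappa$, whereas you need $\Fun^L(X_\kappa,\cD)\to\Fun^L(\mathcal{S},\cD)\simeq\cD$ to be an equivalence for arbitrary $\cD\in\cP$; since the transition functors $\Pr^L_\kappa\to\Pr^L_\pi$ are not fully faithful, nothing you have written forces $X_\pi\simeq X_\kappa$. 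A secondary problem in your ``each $L_\pi$ is smashing'' paragraph: the internal hom of the closed symmetric monoidal category $\Pr^L_\pi$ is $\Ind^\pi(\Fun^{\Rex(\pi)}(\cC^\pi,\cD^\pi))$, which is in general not $\Fun^L(\cC,\cD)$, so stability of $\cP$ under $\Fun^L(-,-)$ does not immediately let you invoke \cref{lemma: compatible with mon iff} inside $\Pr^L_\pi$.

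The paper sidesteps both issues by a different endgame. It does not identify any $L_\pi$ as tensoring with an idempotent. Instead it shows, using condition (c), that the right adjoint of $i_\mu^\pi\colon\cP_\mu\to\cP_\pi$ is $\Ind^\mu((-)^\pi)|_{\cP_\pi}$, so the inclusions $\iota_\mu$ assemble into a natural transformation of functors $\mathrm{Crd}^{\op}_{\geq\kappa}\to\Pr^R$; passing to left adjoints and taking the colimit over cardinals produces a single global left adjoint $L=\colim_\mu L_\mu$ of $\iota$. It then proves $L$ is smashing directly: after establishing that $\cP$ is an ideal (via \cref{lemma: genrate PrL} and condition (b), as you do), it reduces to showing $\Fun^L(L\mathcal{S},\cT)\to\Fun^L(\mathcal{S},\cT)$ is an equivalence for $\cT\in\cP$, which it checks on objects using that $L$ is a left adjoint and on morphism spaces using $\cT^{\Delta^1}\in\cP$. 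If you want to complete your route, you would essentially have to reprove this assembly-over-cardinals statement; as written, the proposal stops exactly where the substantive difficulty begins.
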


\begin{proof}
\textbf{$2 \implies 1$:} Assume that $\cP$ satisfies the conditions in $2$.\\
We first construct a left adjoint for $\iota$. Let $\mu \geq \kappa$. By condition (d) and  \cref{lemma: limits in PrLk} the embedding $\cP_\mu \overset{\iota_\mu}\hookrightarrow \Pr^L_\mu$ commutes with limits. Furthermore, since $\Pr^L_\mu$ is closed under colimits in $\Pr^L$, $\iota_\mu$ also commutes with colimits. Thus, since $\Pr^L_\mu$ is presentable, by \cref{theorem: reflaction} $\iota_\mu$ has a left adjoint $L_\mu$. We will show that the functors $L_\mu$ assemble into a left adjoint to $\iota$. Note that the natural map $i_\mu^\pi: \cP_\mu \to \cP_\pi $ is the restriction of $\overline{i_\mu^\pi}: \Pr^L_\mu \to \Pr^L_\pi$ to $\cP_\mu$ in the source and corestriction to $\cP_\pi$ in the target. Thus, for $\cD\in \cP_\mu$ and $\cC\in \cP_\pi$
\[
\Map_{\cP_\pi}(i_\mu^\pi(\cD),\cC)\simeq \Map_{\Pr^L_\pi}(\overline{i_\mu^\pi}(\cD),\cC) \simeq \Map_{\Pr^L_\mu}(\cD,\Ind_\mu(\cC^\pi)) \simeq \Map_{\cP_\mu}(\cD,\Ind_\mu(\cC^\pi)).
\]
 Hence, by $(c)$ we have that the right adjoint $\cP_\mu \to \cP_\pi$ is given $\Ind_\mu((-)^\pi)$. It follows that $\iota_\mu$ assemble to a natural transformation between the functors
\[
\mathrm{Pr}^L_\bullet: \mathrm{Crd}^\op_{\geq \kappa} \to \mathrm{Pr}^R, \quad\quad \mathrm{Pr}^L_\bullet(\pi \geq \mu):\mathrm{Pr}^L_\pi \overset{\Ind_\mu((-)^\pi)}\to \mathrm{Pr}^L_\mu
\]
and
\[
\cP_\bullet : \mathrm{Crd}^\op_{\geq \kappa} \to \mathrm{Pr}^R, \quad\quad \  \cP_\bullet(\pi \geq \mu):\cP_\pi \overset{\Ind_\mu((-)^\pi)}\to \cP_\mu
\]
where $\mathrm{Crd}$ is the poset of small cardinals and $\mathrm{Pr}^R$ is the category of presentable categories and accessible right adjoints. Moving to left adjoints we conclude that the $L_\mu$-s assemble to a natural transformation between the functors
\[
\mathrm{Pr}^L_\bullet, \ \cP_\bullet : \mathrm{Crd}_{\geq \kappa} \to \mathrm{Pr}^L
\]
where the maps $\Pr^L_\mu \to \Pr^L_\pi$ and $\cP_\mu \to \cP_\pi$ are $\overline{i_\mu^\pi}$ and $i_\mu^\pi$ respectively. Taking a colimit and using the description of mapping spaces in a filtered colimit of categories given by \cite[0.2.1]{rozenblyum2012filtered}, we get that the embedding $\cP \overset{i}\to \Pr^L$ has a left adjoint $L:=\underset{\mu}{\colim} \ L_\mu$.\\

We now show that $\cP$ is an ideal in $\Pr^L$ i.e. if $\cD \in \cP$ and $\cC \in \Pr^L$, then $\cC \otimes \cD \in \cP$. Fix $\cD \in \cP$ and let $A$ be the full-subcategory of $\Pr^L$ spanned by the categories $\cC$ for which $\cC \otimes \cD \in \cP$. Recall that the tensor product in $\Pr^L$ commutes with colimits in each variable, and so $A$ is closed under colimits.
Furthermore, by $(b)$:
\[
 \mathcal{S}^{\Delta^1} \otimes \cC \simeq \Fun^R((\mathcal{S}^{\Delta^1})^{\op} ,\cC ) \simeq \Fun^L(\mathcal{S}^{\Delta^1} ,\cC^{\op} )^{\op} \simeq \Fun(\Delta^1,\cC)=\cC^{\Delta^1} \in \cP \implies \mathcal{S}^{\Delta^1} \in A.
 \]
Hence, the claim follows from \cref{lemma: genrate PrL}.\\

We finally show that $L$ is smashing. By the previous paragraph it suffices to show that for all $T \in \cP$ the map  
\[
\Fun^L(X \otimes L\mathcal{S},T) \to \Fun^L(X ,T)
\]
coming from pre-composing with the unit of the adjunction, is an equivalence. Note that $\Fun^L(X \otimes L\mathcal{S},T) \simeq \Fun^L(X ,\Fun^L(L\mathcal{S},T))$ and so it suffices to prove the above for the case $X=\mathcal{S}$ i.e. to prove that the map $\Fun^L(L\mathcal{S},T) \to \Fun^L(\mathcal{S},T)$ is an equivalence. Since $L$ is a left adjoint, this map is an equivalence on the space of objects and since $T^{\Delta^1} \in \cP$ it is also an equivalence on the space of arrows. It follows that it is an equivalence. \\

\textbf{$1 \implies 2$:} Assume the inclusion $\cP \hookrightarrow \Pr^L$ admits a left adjoint, $L$, which is a smashing localization. We immediately get that $\cP$ is closed under colimits in $\Pr^L$. Furthermore $L$ is obviously compatible with the symmetric monoidal structure. Hence by \cref{lemma: compatible with mon iff}, if $\cD \in \cP $ then
\[
\Fun^L(\Fun((\Delta^1)^\op,\mathcal{S}),\mathcal{D}) \simeq \Fun(\Delta^1,\cD) \in \cP .
\]

We now show that there exists a regular cardinal $\kappa > \omega$ such that for all $\kappa \leq \mu \leq \pi$ and $\cC \in \cP_\pi$ we have that $\Ind_\mu(\cC^\pi) \in \cP$. Let $\kappa$ be a regular cardinal such that $L\mathcal{S}$ is $\kappa$-compactly generated and the unit of the adjunction $\eta_\mathcal{S}:\mathcal{S} \to L\mathcal{S}$ sends $\kappa$-compact objects to $\kappa$-compact objects. By \cref{lemma: compact in tensor} we get that $L\mathcal{S}^\mu$ is an idempotent algebra in $\Cat^{\Rex(\mu)}$ for all $\mu \geq \kappa$. Therefore, the inclusion $\iota_\mu:\cP_\mu \hookrightarrow \mathrm{Pr}^L_\mu$, has a left adjoint, $L_\mu$, given by tensoring with $L\mathcal{S}$ \footnote{Note that being an $L\mathcal{S}$-module is a property and $\mathrm{Pr}^L_\mu\to \mathrm{Pr}^L$ is strong symmetric monoidal.}. We conclude that we have a commutative diagram
\[
\begin{tikzcd}
	{\Pr^L_\mu} & {\Pr^L_\pi} \\
	{\cP_\mu} & {\cP_\pi}
	\arrow["{L_\mu}"', from=1-1, to=2-1]
	\arrow["{L_\pi}", from=1-2, to=2-2]
	\arrow["{i_\mu^\pi}"', from=2-1, to=2-2]
	\arrow["{\overline{i_\mu^\pi}}", from=1-1, to=1-2]
\end{tikzcd}
\]
where all the maps are left adjoints. By passing to right adjoints we get that the right adjoint of $i_\mu^\pi$ is given by $\Ind_\mu((-)^\pi)$ and the claim follows.\\

Let $\mu \geq \kappa$ be cardinals and $p: I \to \cP_\mu$ be a functor, we need to show that $\Ind_\mu(\lim p(i)^\mu) \in \cP$.
In the previous paragraph we saw that the inclusion $\cP_\mu \to  \mathrm{Pr}^L_\mu$ admits a left adjoint and hence $\cP_\mu$ is closed under limits in $\mathrm{Pr}^L_\mu$. Thus, by \cref{lemma: limits in PrLk} we get
\[
\lim p \simeq  \Ind_\mu(\lim p(i)^\mu) \in \cP
\]    
as desired.
\end{proof}

\begin{remark}
We note that one can substitute conditions $(c)$ and $(d)$ for condition $(c')$: 
\begin{center}
 (c')  \ \ There exists a regular cardinal $\kappa$ such that for all $\kappa \leq \pi$ if $p:I \to \cP_\pi$, then for all $\kappa \leq \mu \leq \pi$,  $\Ind_\mu(\lim p(i)^\pi) \in \cP$. 
\end{center}
By splitting condition $(c')$, we make transparent the fact that it plays different roles in the proof. Condition $(d)$ guarantees that the map $\cP_\mu \to \Pr^L_\mu$ admits a left adjoint while condition $(c)$ guarantees that the different adjoints glue.   
\end{remark}

\bibliographystyle{alpha}
\phantomsection\addcontentsline{toc}{section}{\refname}
\bibliography{main}

@article{DwyerKan79 ,
  title={Calculating simplicial localizations},
  author={Dwyer, William G and Kan, Daniel M},
  journal={Journal of Pure and Applied Algebra},
  volume={18},
  number={1},
  pages={17--35},
  year={1980},
  publisher={Elsevier}
}

@article{AdamekRosicky89,
  title={Reflections in locally presentable categories},
  author={Adámek, Jiří and Rosický, Jiří},
  journal={Archivum Mathematicum},
  volume={25},
  number={1},
  pages={89--94},
  year={1989},
  publisher={Department of Mathematics, Faculty of Science of Masaryk University, Brno}
}

@book{AdamekRosicky94,
  title={Locally presentable and accessible categories},
  author={Adámek, Jiří and Rosický, Jiří},
  volume={189},
  year={1994},
  publisher={Cambridge University Press}
}

@book{lurie2009higher,
  title={Higher topos theory},
  author={Lurie, Jacob},
  year={2009},
  publisher={Princeton University Press}
}

@unpublished{Lurie11,
	author = {Lurie, Jacob},
	title = {Higher algebra},
	year={2011},
	NOTE = {http://www.math.harvard.edu/~lurie/},
}

@article{rozenblyum2012filtered,
  title={Filtered colimits of $\infty$-categories},
  author={Rozenblyum, Nick},
  year={2012},
  publisher={Citeseer}
}

@article{GepnerHaugsengNikolaus15,
  title={Lax colimits and free fibrations in $\infty$-categories},
  author={Gepner, David and Haugseng, Rune and Nikolaus, Thomas},
  journal={arXiv preprint arXiv:1501.02161},
  year={2015}
}

@article{Stevenson15,
  title={Covariant model structures and simplicial localization},
  author={Stevenson, Danny},
  journal={arXiv preprint arXiv:1512.04815},
  year={2015}
}

@article{BarthelSchlankStapleton17,
  title={Chromatic homotopy theory is asymptotically algebraic},
  author={Barthel, Tobias and Schlank, Tomer M and Stapleton, Nathaniel},
  journal={Inventiones mathematicae},
  pages={1--109},
  year={2020},
  publisher={Springer}
}

@article{Lurie18,
    AUTHOR = {Lurie, Jacob},
    YEAR = {2018},
    TITLE = {Elliptic Cohomology {II}: Orientations},
    JOURNAL = {https://www.math.ias.edu/~lurie/papers/Elliptic-II.pdf}
}

@article{NguyenRaptisScharadn18,
  title={Adjoint functor theorems for $\infty$-categories},
  author={Nguyen, Hoang Kim and Raptis, George and Schrade, Christoph},
  journal={Journal of the London Mathematical Society},
  volume={101},
  number={2},
  pages={659--681},
  year={2020},
  publisher={Wiley Online Library}
}

@book{Cisinski19,
  title={Higher categories and homotopical algebra},
  author={Cisinski, Denis-Charles},
  volume={180},
  year={2019},
  publisher={Cambridge University Press}
}

@article{carmeli2021ambidexterity,
  title={Ambidexterity and height},
  author={Carmeli, Shachar and Schlank, Tomer M and Yanovski, Lior},
  journal={Advances in Mathematics},
  volume={385},
  pages={107763},
  year={2021},
  publisher={Elsevier}
}

@article{Yanovski21,
  title={The monadic tower for $\infty$-categories},
  author={Yanovski, Lior},
  journal={Journal of Pure and Applied Algebra},
  volume={226},
  number={6},
  pages={106975},
  year={2022},
  publisher={Elsevier}
}

@article{rosicky2003left,
  title={Left-determined model categories and universal homotopy theories},
  author={Rosick{\`y}, Jir{\i} and Tholen, Walter},
  journal={Transactions of the American Mathematical Society},
  volume={355},
  number={9},
  pages={3611--3623},
  year={2003}
}

@article{makkai1987some,
  title={Some results on locally finitely presentable categories},
  author={Makkai, Michael and Pitts, AM},
  journal={Transactions of the American Mathematical Society},
  volume={299},
  number={2},
  pages={473--496},
  year={1987}
}

@misc{https://doi.org/10.48550/arxiv.2105.04251,
  doi = {10.48550/ARXIV.2105.04251},
  
  url = {https://arxiv.org/abs/2105.04251},
  
  author = {Monaco, Giulio Lo},
  
  keywords = {Category Theory (math.CT), Logic (math.LO), FOS: Mathematics, FOS: Mathematics},
  
  title = {Vopěnka's principle in $\infty$-categories},
  
  publisher = {arXiv},
  
  year = {2021},
  
  copyright = {arXiv.org perpetual, non-exclusive license}
}

@article{harpaz2020ambidexterity,
  title={Ambidexterity and the universality of finite spans},
  author={Harpaz, Yonatan},
  journal={Proceedings of the London Mathematical Society},
  volume={121},
  number={5},
  pages={1121--1170},
  year={2020},
  publisher={Wiley Online Library}
}

@article{burklund2022chromatic,
  title={The Chromatic Nullstellensatz},
  author={Burklund, Robert and Schlank, Tomer M and Yuan, Allen},
  journal={arXiv preprint arXiv:2207.09929},
  year={2022}
}

@article{OrdinalsRamzi,
  title={Ordinals in homotopy theory},
  author={Ramzi, Maxime },
  journal={https://sites.
google.com/view/maxime-ramzi-en/notes/ordinals-homotopy},
  year={2025}
}

@article{ben2024higher,
  title={Higher semiadditive algebraic K-theory and redshift},
  author={Ben-Moshe, Shay and Schlank, Tomer M},
  journal={Compositio Mathematica},
  volume={160},
  number={2},
  pages={237--287},
  year={2024},
  publisher={London Mathematical Society}
}

@article{youtube,
        title = {Lactures on Analytic Stacks, Lacture 9},
        date = {2023},
        organization = {Youtube},
        author = {Dustin Clausen, Peter Scholze},
        Note = {https://www.youtube.com/watch?v=lJTLj8gYAtg},
    }

@article{haine2024exodromy,
  title={Exodromy beyond conicality},
  author={Haine, Peter J and Porta, Mauro and Teyssier, Jean-Baptiste},
  journal={arXiv preprint arXiv:2401.12825},
  year={2024}
}

@article{gleason2023meromorphic,
  title={Meromorphic vector bundles on the Fargues--Fontaine curve},
  author={Gleason, Ian and Ivanov, Alexander B},
  journal={arXiv preprint arXiv:2307.00887},
  year={2023}
}

@article{hamann2023torsion,
  title={Torsion vanishing for some Shimura varieties},
  author={Hamann, Linus and Lee, Si Ying},
  journal={arXiv preprint arXiv:2309.08705},
  year={2023}
}

@article{porta2025stoksderived,
  title={The derived moduli of Stokes data},
  author={Porta, Mauro and Teyssier, Jean-Baptiste},
  journal={arXiv preprint arXiv:2504.05360},
  year={2025}
}

@inproceedings{barthel2024chromatic,
  title={The chromatic Fourier transform},
  author={Barthel, Tobias and Carmeli, Shachar and Schlank, Tomer M and Yanovski, Lior},
  booktitle={Forum of Mathematics, Pi},
  volume={12},
  pages={e8},
  year={2024},
  organization={Cambridge University Press}
}

@article{riehl2017kan,
  title={Kan extensions and the calculus of modules for infinity-categories},
  author={Riehl, Emily and Verity, Dominic},
  journal={Algebraic \& Geometric Topology},
  volume={17},
  number={1},
  pages={189--271},
  year={2017},
  publisher={Mathematical Sciences Publishers}
}

\end{document}